\newcolumntype{L}{>{$}l<{$}} 
\numberwithin{equation}{section}
\theoremstyle{plain}
\newtheorem{theorem}{Theorem}[section]
\newtheorem{lemma}[theorem]{Lemma}
\newtheorem{corollary}[theorem]{Corollary}
\newtheorem{proposition}[theorem]{Proposition}
\newtheorem{conjecture}[theorem]{Conjecture}
\newtheorem{remark}[theorem]{Remark}
\newtheorem{definition}[theorem]{Definition}
\newcommand{\Z}{\mathbb{Z}}
\newcommand{\Q}{\mathbb{Q}}
\newcommand{\R}{\mathbb{R}}
\newcommand{\C}{\mathbb{C}}
\newcommand{\N}{\mathbb{N}}
\newcommand{\F}{\mathbb{F}}
\providecommand{\ceil}[1]{\left \lceil #1 \right \rceil }
\providecommand{\floor}[1]{\left \lfloor #1 \right \rfloor }
\providecommand{\abs}[1]{\left \lvert #1 \right \rvert}
\providecommand{\norm}[1]{\left \lVert #1 \right \rVert}
\newcommand{\im}{\operatorname{Im}}
\newcommand{\gal}{\operatorname{Gal}}
\newcommand{\aff}{\operatorname{Aff}}
\newcommand{\sgn}{\operatorname{Sgn}}
\newcommand{\aut}{\operatorname{Aut}}
\newcommand{\proj}{\operatorname{P}}
\title[Lebesgue--Nagell equation]{On the Lebesgue--Nagell equation $x^2-2 = y^p$}
\author{Ethan Katz}
\author{Kyle Pratt}
\address{Brigham Young University, Department of Mathematics, Provo, UT 84602, USA}
\email{ethanhkatz@gmail.com}
\email{kyle.pratt@mathematics.byu.edu}
\subjclass[2010]{}
\keywords{}
\begin{document}
\date{}

\begin{abstract}
We investigate the Lebesgue--Nagell equation 
\begin{align*}
    x^2-2=y^p
\end{align*}
in integers $x,y,p$ with $p\geq 3$ an odd prime. A longstanding folklore conjecture asserts that the only solutions are the ``trivial'' ones with $y=-1$. We confirm the conjecture unconditionally for $p\leq 13$, and prove the conjecture holds for $p>911$ through a careful application of lower bounds for linear forms in two logarithms. We also show that any ``nontrivial'' solution must satisfy $y > 10^{1000}$. In addition, we establish auxiliary results that may support future progress on the problem, and we revisit some prior claims in the literature.
\end{abstract}

\maketitle

\section{Introduction}

Diophantine equations are one of the most ancient and studied topics in number theory. In part, mathematicians are fascinated by Diophantine equations because they are both simple to state and yet often extremely difficult to solve. Famous equations like the Fermat equation $x^n = y^n + z^n, n\geq 3$, or the Catalan equation 
\begin{align}\label{eq:catalan eq}
  x^n + 1 = y^m, \ \ m,n \geq 2
\end{align}
 show the value in studying Diophantine equations, as studying these equations led mathematicians to develop powerful techniques that are now central to modern number theory.

Positive integer solutions $(x,y)$ to the Catalan equation \eqref{eq:catalan eq} give rise to consecutive perfect powers (gaps between perfect powers of size one), and {Mih\u ailescu} \cite{catalansequation2004} famously showed the only solution to \eqref{eq:catalan eq} is $2^3+1=3^2$. While Catalan's equation has now been solved, the more general problem of bounding the gaps between perfect powers is not well understood. For instance, Pillai's conjecture that there are only finitely many solutions to $x^n + d = y^m$ for any fixed $d$ remains wide open \cite[p. 253--254]{rationalnumbertheory}. 

A weaker, though still very challenging, version of Pillai's conjecture is just to consider the gaps between perfect squares and higher powers. Thus, one is led to consider solutions to the equation
\begin{align}\label{eq:general lebesgue nagell}
x^2 + d = y^n, \ \ n\geq 2,
\end{align}
in integers $x$ and $y$, where $d$ is a fixed, nonzero integer. A Diophantine equation of the form \eqref{eq:general lebesgue nagell} is known as a \emph{Lebesgue--Nagell equation}. There is a vast literature relating to Lebesgue--Nagell equations (see the recent survey \cite{LS2020} for a partial introduction to the subject). It is known that, for fixed $d$, the equation \eqref{eq:general lebesgue nagell} has at most finitely many solutions \cite{ShoreyTijdeman1986}. Moreover, one can use techniques like linear forms in logarithms to obtain effective upper bounds on the size of $x,y,n$. Unfortunately, these bounds are far too large for a direct computation to resolve \eqref{eq:general lebesgue nagell}, so one needs additional techniques to solve these equations completely.

Equation \eqref{eq:general lebesgue nagell} has been solved for all $1 \leq d \leq 100$ \cite{bugeaud2004classicalmodularapproachesexponential}, and for some special cases of negative $d$ like $d=-1$ \cite{ko1965diophantine} and $d=-2^k, k \geq 2$ \cite{Coh2007}. More recent work \cite{BS2023a,BS2023b} has solved infinite families of Lebesgue--Nagell equations.

The simplest unsolved Lebesgue--Nagell equation, and the focus of this paper, is the Lebesgue--Nagell equation with $d=-2$. By taking a prime factor of $n$, we reduce to studying the equation
\begin{align}\label{eq:main lebesgue nagell eqn}
x^2-2=y^p
\end{align}
for $p$ a prime and $x,y \in \mathbb{Z}$. If $p=2$, then we have $(x+y)(x-y) = 2$, and this has no solutions since $x+y$ and $x-y$ have the same parity. Therefore, we may take $p$ to be an odd prime. 

One reason \eqref{eq:main lebesgue nagell eqn} has resisted resolution is because it has the ``trivial'' solutions $(x, y) = (\pm 1, -1)$ for all odd primes $p$. This makes it difficult to rule out solutions with local approaches or techniques arising from the modularity of elliptic curves. In fact, this is true of any Lebesgue--Nagell equation with $d = - a^2+\epsilon$ where $\epsilon \in \{-1,0,1\}$, for then $(x, y) = (\pm a, \epsilon)$ is a solution for all odd $p$ (see some further discussion in \cite[p. 32]{bugeaud2004classicalmodularapproachesexponential}). 

One conjectures that these trivial solutions to \eqref{eq:main lebesgue nagell eqn} are the only solutions.

\begin{conjecture}\label{conj:solns to main lebesgue nagell eqn}
    Let $p\geq 3$ be an odd prime, and let $x,y$ be integers such that $x^2-2=y^p$. Then $y=-1$.
\end{conjecture}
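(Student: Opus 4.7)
The plan is to work in the ring of integers $\Z[\sqrt{2}]$ of $\Q(\sqrt{2})$, which is a PID with unit group generated by $-1$ and the fundamental unit $\varepsilon = 1 + \sqrt{2}$. A mod $4$ analysis shows that $y$ must be odd in any solution to \eqref{eq:main lebesgue nagell eqn} (otherwise $x^{2} \equiv 2 \pmod{4}$, which is impossible), so $x$ is odd and the elements $x - \sqrt{2}$ and $x + \sqrt{2}$ are coprime in $\Z[\sqrt{2}]$. Factoring $(x - \sqrt{2})(x + \sqrt{2}) = y^{p}$ and absorbing $p$-th powers of units, I obtain a representation
$$x + \sqrt{2} = \pm\, \varepsilon^{j}\alpha^{p}, \qquad \alpha = a + b\sqrt{2} \in \Z[\sqrt{2}], \quad j \in \{0, 1, \ldots, p-1\}.$$
Subtracting the Galois conjugate and using $\bar{\varepsilon} = -\varepsilon^{-1}$ yields
$$2\sqrt{2} = \pm\bigl(\varepsilon^{j}\alpha^{p} - (-1)^{j}\varepsilon^{-j}\bar{\alpha}^{p}\bigr),$$
a one-parameter family (indexed by $j$) of polynomial identities in $a, b$. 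The trivial solution $y = -1$ corresponds to $\alpha = \pm 1$ together with a specific $j$, so the goal is to exclude every other pair $(\alpha, j)$.

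From here I would attack the problem from both ends of the range of $p$. For small primes $p \leq 13$, each choice of $j$ produces, after dividing by $2\sqrt{2}$ and expanding via the binomial theorem, a Thue-like equation in $a, b$ of degree at most $p$; these in turn reduce to the determination of $S$-integral points on hyperelliptic or superelliptic curves of modest genus. I would handle them via Chabauty's method, elliptic Chabauty over totally real subfields, and Mordell--Weil sieving on the relevant Jacobians. For large primes, the idea is to exploit the fact that
$$\Lambda := \log\left|\frac{x + \sqrt{2}}{x - \sqrt{2}}\right| = 2j\log\varepsilon + p\log\left|\frac{\alpha}{\bar{\alpha}}\right| + \text{explicit bounded error}$$
is exponentially small in $p$ (since $|\Lambda| \asymp |x|^{-1}$), so a sharp lower bound for linear forms in two logarithms (Laurent--Mignotte type) gives an upper bound roughly of the form $p \ll \log|\alpha| \cdot \log p$. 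Combined with an independent lower bound on $|\alpha|$ (equivalently on $|y|$), this forces $p$ to be bounded, and careful optimization of the parameters should push the threshold down to $p = 911$ or so.

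To establish the auxiliary lower bound $|y| > 10^{1000}$ needed to make the linear-forms argument quantitative, I would sieve: for each small rational prime $q$, the congruence $x^{2} \equiv y^{p} + 2 \pmod{q}$ restricts $y \bmod q$ to a set whose size depends on $p$ and on the order of elements in $(\Z/q\Z)^{\times}$, and combining many such local conditions via CRT rules out all small residue classes. The hard part, and the reason Conjecture \ref{conj:solns to main lebesgue nagell eqn} remains open, will be the intermediate range $13 < p \leq 911$: here the degree of the arising Thue equations is too large for practical resolution with current software, while the known two-logarithm estimates are not sharp enough to close the gap. A secondary obstacle is structural: the trivial solution $(x, y) = (\pm 1, -1)$ exists for every odd $p$, so no purely local or Frey-curve/modularity argument can succeed without additional input distinguishing the trivial and nontrivial cases. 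Any complete resolution will likely require either genuinely new linear-forms bounds tailored to this $\Z[\sqrt{2}]$ setting, or a clever descent that removes the trivial solution before applying modular methods.
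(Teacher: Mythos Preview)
The statement is a \emph{conjecture}, and the paper does not prove it; indeed the authors write explicitly that Conjecture~\ref{conj:solns to main lebesgue nagell eqn} ``remains open (we do not resolve it in this paper!)''. You yourself acknowledge this in your final paragraph, so what you have written is not a proof but a programme for partial results. That programme is broadly aligned with what the paper actually carries out, but there are two substantive differences worth flagging.

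First, your proposed route to the lower bound $|y| > 10^{1000}$ via local sieving cannot work. For every prime $q$, the congruence $x^{2} \equiv y^{p} + 2 \pmod q$ is solved by the trivial solution $(x,y) = (\pm 1, -1)$, so no combination of local conditions can exclude any residue class; the trivial solution poisons every sieve. The paper instead exploits the Thue equation directly: a nontrivial solution $(a,b)$ forces $a/b$ to be an extraordinarily good rational approximation to the unique real root $\theta$ of the Thue polynomial $f_{1,p}$, with $|a/b - \theta| \le C_{p}|b|^{-p}$. Computing the continued fraction of $\theta$ to sufficient depth and checking that no partial quotient is large enough then gives a lower bound on $|b|$, hence on $y = 2b^{2} - a^{2}$. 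This is a global archimedean argument, not a congruence sieve.

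Second, your outline omits the modularity input, which in the paper is essential for reaching $p > 911$. Without it, linear forms in two logarithms give only $p \le 6949$ (the paper's Theorem~\ref{thm:initial bound}). The paper then uses a Frey curve of conductor $128\,\mathrm{rad}(y)$ and level-lowering to the four rational newforms at level $128$ to prove that the unit exponent satisfies $r = \pm 1$ (Theorem~\ref{thm: r is 1}); only with this restriction does a second and third pass through Laurent's bounds bring the threshold down to $911$. Chen's congruence result (Theorem~\ref{thm:chens theorem}), also modular in origin, further thins the remaining primes. Your remark that ``no purely local or Frey-curve/modularity argument can succeed'' is therefore too pessimistic: modularity does not finish the problem, but it is indispensable for the sharpest known bounds.
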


We say Conjecture \ref{conj:solns to main lebesgue nagell eqn} \emph{holds for $p$} if the only solutions to $x^2-2=y^p$ have $y=-1$. While Conjecture \ref{conj:solns to main lebesgue nagell eqn} remains open (we do not resolve it in this paper!), there has been important partial progress. Siksek \cite[Chapter 15]{Coh2007}, reporting joint work with Bugeaud and Mignotte, described what he called ``a partial attempt at solving'' \eqref{eq:main lebesgue nagell eqn}. We discuss their elegant and insightful work further throughout this paper.

It is possible to solve \eqref{eq:main lebesgue nagell eqn} for small values of $p$ by computation\footnote{The work in this paper relies on some computer calculation. Our code and associated data can be found at the following GitHub repository: \href{https://github.com/ethanhkatz/Lebesgue-Nagell-code}{https://github.com/ethanhkatz/Lebesgue-Nagell-code}.}.

\begin{theorem}\label{thm:thue equations small p}
Let $3 \leq p \leq 13$ be a prime. Then Conjecture \ref{conj:solns to main lebesgue nagell eqn} holds for $p$.
\end{theorem}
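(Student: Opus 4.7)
My plan is to follow the classical algebraic approach: factor the equation in $\mathbb{Z}[\sqrt{2}]$, reduce the problem for each prime $p$ in the range to a finite collection of Thue equations of degree $p$, and then solve these equations with an existing Thue-equation solver.

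A parity check first rules out even $y$: if $y$ is even then $y^p \equiv 0 \pmod 4$ for $p \geq 2$, forcing $x^2 \equiv 2 \pmod 4$, which is impossible. Hence $y$ and $x$ are both odd. Work in the ring $\mathcal{O} = \mathbb{Z}[\sqrt{2}]$, a principal ideal domain with fundamental unit $\varepsilon = 1 + \sqrt{2}$ of norm $-1$. Factoring
$$(x - \sqrt{2})(x + \sqrt{2}) = y^p,$$
any common divisor of the two factors divides $2\sqrt{2}$; since $x$ is odd, both factors are coprime to $\sqrt{2}$, so they are coprime in $\mathcal{O}$. Unique factorization gives $x + \sqrt{2} = u \cdot \alpha^p$ with $u \in \mathcal{O}^\times$ and $\alpha \in \mathcal{O}$. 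Since $p$ is odd, we may absorb $-1 = (-1)^p$ and $\varepsilon^p$ into $\alpha^p$, so modulo $p$-th powers
$$x + \sqrt{2} = \varepsilon^k (a + b\sqrt{2})^p,\qquad k \in \{0, 1, \ldots, p-1\},\ a, b \in \mathbb{Z}.$$

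For each fixed $k$, equating coefficients of $\sqrt{2}$ on both sides yields the equation
$$F_{p,k}(a, b) \;:=\; \frac{\varepsilon^k (a + b\sqrt{2})^p - \overline{\varepsilon}^k (a - b\sqrt{2})^p}{2\sqrt{2}} \;=\; 1,$$
where $F_{p,k} \in \mathbb{Z}[X,Y]$ is homogeneous of degree $p$. This is a Thue equation. For each $p \in \{3, 5, 7, 11, 13\}$ we obtain $p$ such equations, giving $39$ in total. Solving each with a standard Thue-equation routine (as implemented in PARI/GP, SageMath, or Magma) and translating the resulting pairs $(a, b)$ back to $(x, y)$ via the displayed formulas, one verifies that every solution arising corresponds to the trivial $y = -1$ (for example, for $p=3$, $k=1$ one recovers $(a,b) = (1,0)$, giving $(x,y) = (1,-1)$, and for $k=2$ one recovers $(a,b)=(-1,1)$, giving $(x,y) = (-1,-1)$).

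The main obstacle is computational rather than conceptual: for $p \in \{11, 13\}$ the forms $F_{p,k}$ have degree up to $13$, and the number fields defined by $F_{p,k}(X, 1)$ can have nontrivial unit rank and sizable regulator. The Tzanakis--de Weger method, which combines Baker-style lower bounds on linear forms in logarithms with LLL-based reduction, must therefore be run with enough precision to obtain a usable upper bound for $\max(|a|, |b|)$ and then enumerate the finitely many small candidates. The certified computation of fundamental units, class numbers, and the final enumeration of small solutions in the splitting fields is what the accompanying code carries out; once these $39$ equations have all been resolved, the theorem follows immediately.
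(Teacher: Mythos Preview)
Your proposal is correct and follows essentially the same route as the paper: factor in $\mathbb{Z}[\sqrt{2}]$, reduce each prime $p\in\{3,5,7,11,13\}$ to a finite family of degree-$p$ Thue equations indexed by the unit exponent, and resolve them with a standard Thue solver. The only cosmetic difference is that the paper indexes by $|r|\le (p-1)/2$ rather than $k\in\{0,\dots,p-1\}$, and it stresses that the PARI/GP computation must be run with the unconditional flag (avoiding GRH), a point you allude to with ``certified computation'' but should make explicit.
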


Solving \eqref{eq:main lebesgue nagell eqn} for a given value of $p$ requires solving related Diophantine equations called \emph{Thue equations} (see Section \ref{sec:ClassicalApproach} for further discussion). It is claimed in \cite[Lemma 15.7.3]{Coh2007} that Conjecture \ref{conj:solns to main lebesgue nagell eqn} holds for $3 \leq p \leq 37$, with the computations performed by GP/PARI (see \cite{PARI2} for the most recent version of the software). However, the default \verb|thue| function in GP/PARI assumes a Generalized Riemann Hypothesis (GRH), and one must pass a flag to the function in order to obtain unconditional results. We were able to use GP/PARI to solve conditionally the relevant Thue equations up to $p=37$, but only up to $p=13$ unconditionally. Thus, it is unclear whether \cite[Lemma 15.7.3]{Coh2007} relies on GRH for its validity.

It is likely that the upper bound in Theorem \ref{thm:thue equations small p} can be extended. The computational bottleneck in solving \eqref{eq:main lebesgue nagell eqn} for larger values of $p$ is the computation of a system of fundamental units of rank $\frac{p-1}{2}$ in a number field of degree $p$ (see \cite{TdW1989} for more information on solving Thue equations). Provided $p$ is not too large, it is likely one can find a system of fundamental units conditionally, assuming GRH, and then use the techniques of \cite{Hanrot2000} to solve the Thue equations unconditionally. The methods of \cite{BH1996} may also be helpful. It would be interesting to see how far Theorem \ref{thm:thue equations small p} can be extended with the aid of significant computing power.

Perhaps the most impressive advance on \eqref{eq:main lebesgue nagell eqn} was made by Chen \cite{Chen_2012}, who used methods related to the modularity of elliptic curves to show that Conjecture \ref{conj:solns to main lebesgue nagell eqn} holds for primes $p$ satisfying certain congruence conditions.

\begin{theorem}[{\cite[Theorem 5]{Chen_2012}}]\label{thm:chens theorem}
    Conjecture \ref{conj:solns to main lebesgue nagell eqn} holds for $p$ if $p\equiv 1, 5, 7, 11 \pmod{24}$.
\end{theorem}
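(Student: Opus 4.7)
The plan is to follow the modular method with $\mathbb{Q}$-curves, which is the strategy Chen develops for equations of the form $x^2 + C = y^p$ with $C$ a small integer. First I would dispose of local cases: if $y$ is even then $p \geq 3$ forces $y^p \equiv 0 \pmod{8}$ and hence $x^2 \equiv 2 \pmod{8}$, which is impossible. So $y$ and $x$ are both odd. The trivial solutions account for $y = -1$, and $y = 1$ gives $x^2 = 3$, so we may assume $|y| \geq 2$. Working in $\mathbb{Z}[\sqrt{2}]$, a principal ideal domain with fundamental unit $1 + \sqrt{2}$, the equation factors as $(x - \sqrt{2})(x + \sqrt{2}) = y^p$, and the two factors are coprime because $x$ is odd. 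After absorbing $p$-th powers of the fundamental unit this yields
\[
x + \sqrt{2} = \varepsilon \alpha^p, \qquad \varepsilon \in \{\pm 1,\ \pm(1+\sqrt{2})\},\ \alpha \in \mathbb{Z}[\sqrt{2}].
\]

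Next, to each putative $(x,y,\varepsilon)$ I would attach a Frey--Hellegouarch elliptic curve $E_\varepsilon$ defined over $K = \mathbb{Q}(\sqrt{2})$, chosen so that its discriminant ideal is supported on $(y)$ and the prime above $2$, with the $(y)$-part appearing to the $p$-th power. The crucial structural feature is that $E_\varepsilon$ is a $\mathbb{Q}$-curve: its Galois conjugate is isogenous to itself over $K$, so after a suitable twist by a cocycle valued in $\{\pm 1\}$ the mod-$p$ representation of $G_K$ extends to a continuous representation $\overline{\rho}: G_\mathbb{Q} \to \operatorname{GL}_2(\overline{\mathbb{F}}_p)$. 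Invoking modularity of $\mathbb{Q}$-curves over real quadratic fields and applying Ribet-style level lowering over $\mathbb{Q}$ to strip out the $p$-th power in the conductor, I would deduce that $\overline{\rho}$ arises from a classical cuspidal newform $f$ of weight $2$ and a specific small level $N_\varepsilon$ with a prescribed (quadratic or trivial) nebentypus, where $N_\varepsilon$ depends only on $\varepsilon$ and not on the solution.

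The final step is to enumerate the finite list of newforms at each $N_\varepsilon$ and rule them out by comparing Frobenius traces at a few small auxiliary primes $\ell$ with the values predicted by $E_\varepsilon$ modulo $p$. The arithmetic conditions $p \equiv 1, 5, 7, 11 \pmod{24}$ are exactly the residue classes for which two things happen simultaneously: first, the standing irreducibility hypothesis for $\overline{\rho}$ can be verified by ruling out the relevant reducible characters (the obstruction being controlled by the splitting of $2$ and $3$ in $K$, hence by $p \bmod 24$); and second, for each of the four possible values of $\varepsilon$, every candidate newform at level $N_\varepsilon$ is eliminated by a local calculation at $\ell = 3$ or $\ell = 5$. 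The main obstacle I expect is the explicit descent: constructing the twisting cocycle and verifying absolute irreducibility of $\overline{\rho}$ requires delicate control of the local behavior of $E_\varepsilon$ at the primes above $2$, and it is this local analysis that dictates exactly which residue classes modulo $24$ are accessible to the method and which are left open.
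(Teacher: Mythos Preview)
The paper does not itself prove this theorem; it is quoted from \cite{Chen_2012} as an external input, so there is no proof in the paper to compare against.

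Viewed as a reconstruction of Chen's argument, your sketch has the right broad shape (modular method, $\mathbb{Q}$-curves, level-lowering) but contains a genuine error in the algebraic setup. After factoring $(x-\sqrt{2})(x+\sqrt{2}) = y^p$ in $\mathbb{Z}[\sqrt{2}]$ and absorbing $p$-th powers of the fundamental unit, the residual unit is $(1+\sqrt{2})^r$ with $|r| \leq (p-1)/2$; since $-1 = (-1)^p$ can be folded into $\alpha^p$, the sign is irrelevant, and you are left with $p$ possible values of $r$, not four. Your assertion that $\varepsilon \in \{\pm 1,\ \pm(1+\sqrt{2})\}$ is therefore false. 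Indeed, the present paper spends nontrivial effort---Theorem~\ref{thm: r is 1}, itself a modularity computation with auxiliary primes---just to reduce to $r = \pm 1$, and only for $p$ below an already-established bound. If your Frey construction $E_\varepsilon$ and target level $N_\varepsilon$ genuinely depend on $\varepsilon$ as you write, then you have $p$ cases rather than four, and the elimination step cannot be the finite check you describe. A correct $\mathbb{Q}$-curve approach must attach the Frey curve to the solution $(x,y)$ in a way that is insensitive to this unit ambiguity; the residue classes of $p$ modulo $24$ then enter through the character of the level-lowered newform and the local analysis at the primes $2$ and $3$, not through a four-way case split over units.
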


Hence, by Theorem \ref{thm:chens theorem}, one only needs to consider odd primes $p \equiv 13,17,19,23 \pmod{24}$ when attempting to prove Conjecture \ref{conj:solns to main lebesgue nagell eqn}.

In order to reduce the proof of Conjecture \ref{conj:solns to main lebesgue nagell eqn} to a finite computation, even in principle, one needs to know that if $p$ is sufficiently large, then the only solutions to \eqref{eq:main lebesgue nagell eqn} have $y=-1$. This is provided by the following theorem.

\begin{theorem}\label{thm:only triv solns for p big}
Conjecture \ref{conj:solns to main lebesgue nagell eqn} holds for $p > 911$. That is, if $x,y \in \mathbb{Z}$ with $x^2-2=y^p$ and $p>911$, then $y=-1$.
\end{theorem}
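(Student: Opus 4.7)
The plan is to convert the equation into a multiplicative identity in $\Z[\sqrt{2}]$ and confront an archimedean upper bound with a sharp lower bound for a linear form in two logarithms.

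I first rule out $2\mid y$ by reducing modulo $4$: otherwise $x^{2}\equiv 2\pmod{4}$, which is impossible. With $y$ odd, the factors $x\pm\sqrt{2}$ are coprime in $\Z[\sqrt{2}]$, since any common divisor must divide $2\sqrt{2}$. Because $\Z[\sqrt{2}]$ is a PID with unit group $\{\pm1\}\times\langle\varepsilon\rangle$, $\varepsilon=1+\sqrt{2}$, I obtain
\[
x+\sqrt{2} \;=\; \delta\,\varepsilon^{k}\,\alpha^{p}, \qquad \delta\in\{\pm1\},\ k\in\Z,\ \alpha\in\Z[\sqrt{2}],\ N(\alpha)=y.
\]
Absorbing $p$-th powers of $\varepsilon$ into $\alpha$, I normalize $|\alpha|\asymp|y|^{1/2}$, which, since $|x+\sqrt{2}|\asymp|y|^{p/2}$, forces $|k|$ to lie in a bounded range independent of $p$.

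Setting $\beta:=x+\sqrt{2}=\delta\varepsilon^{k}\alpha^{p}$ and using $\bar\varepsilon=-\varepsilon^{-1}$, the identity $\beta-\bar\beta=2\sqrt{2}$ rearranges to
\[
1-\frac{2\sqrt{2}}{\beta} \;=\; \frac{\bar\beta}{\beta} \;=\; (-1)^{k}\,\varepsilon^{-2k}\Bigl(\frac{\bar\alpha}{\alpha}\Bigr)^{p}.
\]
For $|y|$ large (the remaining range is covered by the paper's companion bound $|y|>10^{1000}$), both sides are positive, and taking logarithms produces the two-term linear form
\[
\Lambda \;:=\; p\,\log\Bigl|\frac{\bar\alpha}{\alpha}\Bigr| - 2k\,\log\varepsilon,
\]
for which the expansion $\log(1-u)=-u+O(u^{2})$ with $u=2\sqrt{2}/\beta$ yields the archimedean estimate
\[
\log|\Lambda| \;\le\; -\tfrac{p}{2}\,\log|y| + O(1).
\]

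For the matching lower bound I invoke a sharp form of the Laurent--Mignotte--Nesterenko theorem on linear forms in two logarithms (e.g.\ Laurent's 2008 refinement). The logarithmic heights satisfy $h(\varepsilon)=\tfrac{1}{2}\log\varepsilon$ and $h(\bar\alpha/\alpha)=O(\log|y|)$, while $\max(|p|,|2k|)\asymp p$. Laurent's bound then gives
\[
\log|\Lambda| \;\ge\; -C_{1}\,(\log|y|)\,(\log p + C_{2})^{2}
\]
for explicit moderate constants $C_{1},C_{2}$. Cancelling $\log|y|$ against the upper bound leaves the $p$-only inequality $p\le 2C_{1}(\log p+C_{2})^{2}$, which has only finitely many solutions.

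The main obstacle is purely numerical: extracting the concrete threshold $p\le 911$ requires a careful, case-by-case optimization of the free parameters in Laurent's estimate. I would first apply Theorem~\ref{thm:chens theorem} to restrict attention to $p\equiv 13,17,19,23\pmod{24}$, then use these residue classes together with the original equation to impose parity and congruence restrictions on $k$ that further prune the (already small) range in which $k$ lies, and for each admissible $(p\bmod 24,\,k)$ pair select the parameter choice in Laurent's theorem that minimizes the resulting threshold. A subordinate technical point is making the $O(1)$ term in the upper bound and the implicit constant in $h(\bar\alpha/\alpha)=O(\log|y|)$ fully explicit; both are handled by routine estimates on the norm form $a^{2}-2b^{2}=y$.
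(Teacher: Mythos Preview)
Your overall shape—factor in $\Z[\sqrt{2}]$, form a two-term linear form $\Lambda=p\log|\bar\alpha/\alpha|-2k\log\varepsilon$, and pit the archimedean upper bound against a Laurent-type lower bound—is the paper's strategy. The substantive gap is your assertion that the normalization $|\alpha|\asymp|y|^{1/2}$ forces $|k|$ into a range \emph{independent of $p$}. It does not. Replacing $(\alpha,k)$ by $(\alpha\varepsilon^{m},k-pm)$ lets you move $\log|\alpha|$ only in steps of $\log\varepsilon$, so the best you can guarantee is $\bigl|\log|\alpha|-\tfrac12\log|y|\bigr|<\tfrac12\log\varepsilon$. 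Raising to the $p$th power, $\bigl|p\log|\alpha|-\tfrac{p}{2}\log|y|\bigr|<\tfrac{p}{2}\log\varepsilon$, and since $\log|x+\sqrt{2}|=\tfrac{p}{2}\log|y|+O(1)$ this yields only $|k|\le\tfrac{p-1}{2}$—precisely the paper's a priori range for the exponent $r$. With $b_2=2|k|$ possibly of size $p$, Laurent's bound degrades and the direct comparison you sketch gives only a crude threshold (the paper gets $p\le 6949$ at this stage), nowhere near $911$.

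What closes the gap in the paper is an input you do not mention: for each prime $p$ in the remaining range one proves $r=\pm1$ by a modular computation (Frey curve $Y^{2}=X(X^{2}+2xX+2)$, level-lowering to level $128$, and a search over auxiliary primes $\ell\equiv1\pmod p$ as in Bugeaud--Mignotte--Siksek). Only with $|r|=1$ does a second, sharper pass through Laurent's theorems—first Theorem~2, then the more delicate Theorem~1 with parameters tuned for large $y$—push the threshold down; and even then the argument is completed not by quoting Theorem~\ref{thm:lower bound on y} (whose proof already presupposes $r=\pm1$) but by a separate continued-fraction computation on the real root of the associated Thue polynomial that rules out the finitely many ``small $y$'' left over for $919\le p\le 1951$. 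Your proposed pruning of $k$ via Chen's residue classes and parity is not a substitute for this modular step.
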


The technique behind the proof of Theorem \ref{thm:only triv solns for p big} is a careful application of linear forms in two logarithms (see Section \ref{sec:Linear forms in logarithms}). It is claimed without proof in \cite[p. 520]{Coh2007} that, using results on linear forms in two logarithms from \cite{Laurent1995}, one can show Conjecture \ref{conj:solns to main lebesgue nagell eqn} holds for $p \geq 1237$. This was recently confirmed by Bennett, Pink, and Vukusic \cite{BPV2024}. However, in order to reach $p \leq 1237$, the authors of \cite{BPV2024} had to use the more recent bounds on linear forms in two logarithms in \cite{Laurent2008}, and also had to use Theorem \ref{thm:chens theorem}.

By Theorem \ref{thm:only triv solns for p big}, it suffices to consider $p\leq 911$ when making further attempts at Conjecture \ref{conj:solns to main lebesgue nagell eqn}. There are ``only'' 84 primes $17 \leq p \leq 911$ with $p \equiv 13,17,19,23 \pmod{24}$, so Theorems \ref{thm:thue equations small p}, \ref{thm:chens theorem}, and \ref{thm:only triv solns for p big} taken together reduce Conjecture \ref{conj:solns to main lebesgue nagell eqn} to 84 cases.

While we cannot show Conjecture \ref{conj:solns to main lebesgue nagell eqn} holds for all $p \leq 911$, we can show that any counterexample $y$ to Conjecture \ref{conj:solns to main lebesgue nagell eqn} must be large.

\begin{theorem} \label{thm:lower bound on y}
Let $p$ be an odd prime, and let $x,y$ be integers with $x^2-2=y^p$. If $y \neq -1$, then $y > 10^{1000}$.
\end{theorem}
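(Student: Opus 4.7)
Assume $(x,y,p)$ is a solution with $y\neq -1$, so in particular $y$ must be a positive odd integer with $y\ge 2$ (a check modulo $4$ shows $y$ cannot be even, and the equation rules out $y\in\{0,1\}$). By Theorem~\ref{thm:thue equations small p} we may assume $p\ge 17$, by Theorem~\ref{thm:only triv solns for p big} we may assume $p\le 911$, and by Theorem~\ref{thm:chens theorem} we may further restrict $p$ to one of the $84$ primes with $p\equiv 13,17,19,23\pmod{24}$. The plan is to combine these reductions with a careful, quantitative application of linear forms in two logarithms.

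First, I would factor $(x-\sqrt 2)(x+\sqrt 2)=y^p$ in $\mathbb{Z}[\sqrt 2]$. Since $y$ is odd, the two factors are coprime (their ideal gcd divides $(\sqrt 2)$, whose norm is even). Using that $\mathbb{Z}[\sqrt 2]$ is a PID with fundamental unit $u=1+\sqrt 2$, we can write
\[
x-\sqrt 2 \;=\; \varepsilon\, u^{j}\, \alpha^{p}, \qquad \varepsilon\in\{\pm 1\},\ |j|\le (p-1)/2,\ \alpha\in\mathbb{Z}[\sqrt 2],
\]
with $\alpha\bar\alpha=\pm y$. By absorbing powers of $u^p$ into $\alpha^p$, one may choose the representative $j$ so that $|\alpha|$ and $|\bar\alpha|$ are both $\asymp\sqrt y$; in particular the Weil height satisfies $h(\bar\alpha/\alpha)\le \tfrac12\log y+O(1)$.

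Next, taking the conjugate equation and dividing, the identity $(x+\sqrt 2)/(x-\sqrt 2)=(\bar u/u)^{j}(\bar\alpha/\alpha)^{p}$, combined with $(x+\sqrt 2)/(x-\sqrt 2)=1+2\sqrt 2/(x-\sqrt 2)$ and $|x|\asymp y^{p/2}$, yields an upper bound $|\Lambda|\ll y^{-p/2}$ for the real linear form
\[
\Lambda \;=\; p\,\log|\bar\alpha/\alpha| - 2j\,\log u,
\]
with an appropriate branch of the logarithms so that $\Lambda$ is essentially real modulo $2\pi i\mathbb{Z}$. I would then apply Laurent's bound for linear forms in two logarithms~\cite{Laurent2008}, with the auxiliary parameter $\rho$ optimized separately for each of the 84 primes in the residual range, to obtain an explicit lower bound on $|\Lambda|$. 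Combining the upper and lower bounds, rearranging in $\log y$, and substituting the explicit numerical constants for each prime $p$ in the allowed range should then yield the target inequality $\log y>1000\log 10$.

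The main obstacle is the numerical calibration of constants. In the residual range $p\le 911$ the coefficient of $\log y$ coming from Laurent's bound is of the same order of magnitude as $p/2$, so the final inequality is very sensitive to the numerical constants in Laurent's theorem. Achieving the explicit threshold $10^{1000}$ uniformly across the 84 primes will require a careful, case-by-case choice of $\rho$ and of the auxiliary heights appearing in Laurent's hypotheses, and the argument may need to be reinforced for those primes in the residual range where the straightforward bound is not sharp enough. A secondary issue is managing the sign and branch conventions so that $\Lambda$ is genuinely nonzero (requiring the multiplicative independence of $u$ and $\bar\alpha/\alpha$) and lies in the range of applicability of Laurent's estimate.
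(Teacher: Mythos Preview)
Your approach has a fundamental gap: linear forms in logarithms cannot produce a \emph{lower} bound on $y$ for the primes $p\le 911$ that remain after invoking Theorem~\ref{thm:only triv solns for p big}. Combining the upper bound $\log|\Lambda|<c_1-\tfrac{p}{2}\log y$ with Laurent's lower bound $\log|\Lambda|>-C'(\log p)^2\log y$ (here $h(\bar\alpha/\alpha)\asymp\tfrac12\log y$) yields
\[
\Bigl(\tfrac{p}{2}-C'(\log p)^2\Bigr)\log y \;<\; c_1.
\]
When $p$ is large enough that the coefficient on the left is positive, this gives an \emph{upper} bound on $y$; that is exactly how Theorem~\ref{thm:only triv solns for p big} is proved, and the threshold where the coefficient changes sign is precisely $p\approx 911$. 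For $p\le 911$ the coefficient is nonpositive and the inequality is vacuous, so no amount of tuning $\rho$ or the auxiliary heights in Laurent's theorem will squeeze a lower bound on $y$ out of this comparison. Your own closing paragraph essentially identifies this difficulty but underestimates it: it is not a calibration issue but a structural one.

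The paper instead works directly with the Thue equation. After reducing to $r=1$ (Theorem~\ref{thm: r is 1}), any nontrivial solution $(a,b)$ to \eqref{eq4} forces $a/b$ to be an exceptionally good rational approximation to the unique real root $\theta$ of $f_{1,p}$: one shows
\[
\Bigl|\tfrac{a}{b}-\theta\Bigr|\le \frac{C_p}{|b|^p},\qquad C_p=p^{-1}2^{(3-p)/2},
\]
so $a/b$ must be a continued-fraction convergent of $\theta$. If the partial quotients $q_{i+1}$ of $\theta$ stay below the explicit threshold $p\,2^{(3p-7)/2}-2$ for $1\le i\le k$, then $|b|\ge Q_{k+1}$, and hence $y>1.99\,b^2\ge 1.99\,Q_{k+1}^2$. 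A computation of the continued-fraction expansion of $\theta$ for each prime $17\le p\le 911$ then verifies $y>10^{1000}$. The key missing idea in your proposal is this passage through the Thue equation and the continued-fraction test on $\theta$; linear forms in logarithms play no role in this step.
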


Additional computations could increase the lower bound in Theorem \ref{thm:lower bound on y} even further. Theorem \ref{thm:lower bound on y} refines \cite[Corollary 25]{Chen_2012}, which states a lower bound $y>10^{102}$ (note, however, that \cite[Corollary 25]{Chen_2012} relies on \cite[Lemma 15.7.3]{Coh2007}, which we discussed above).

In addition to the highlighted results above, we prove a number of other results in this work relating to solutions of \eqref{eq:main lebesgue nagell eqn}. We refer the reader to the outline of the paper below (subsection \ref{subsec:outline of paper}) for a brief, general overview of these results, and to the relevant sections for particular information. It is hoped that these ancillary results will be useful in further investigations of \eqref{eq:main lebesgue nagell eqn}. For readers interested only in the proofs of the theorems mentioned here in the introduction, it is recommended to read the brief Section \ref{sec:ClassicalApproach}, and then proceed to the technical heart of the paper in Sections \ref{sec:Linear forms in logarithms} and \ref{sec:small y} (following references to results in other sections as desired).

As described above, it is conjectured that all solutions to \eqref{eq:main lebesgue nagell eqn} are ``trivial.'' It is helpful throughout our work to refer to ``nontrivial'' solutions to \eqref{eq:main lebesgue nagell eqn}. We encapsulate these notions in the following definition.

\begin{definition}\label{defn:triv and nontriv solns}
    Let $x,y \in \mathbb{Z}$ and $p\geq 3$ be an odd prime such that $x^2-2=y^p$. We say the solution $(x,y)$ is \emph{trivial} if $y = -1$, and \emph{nontrivial} if $y \neq -1$.
\end{definition}

Observe that a nontrivial solution in the sense of Definition \ref{defn:triv and nontriv solns} has $y$ positive.

\subsection{Notation}

We use the following notation throughout the paper:

\begin{itemize}
    \item $v_p(n)$: $p$-adic valuation of $n$.

    \item $\sgn(x)$: $1$ if $x > 0$ and $-1$ if $x < 0$.

    \item $\F_p$: finite field of prime order $p$.

    \item $\aff(\F_p)$: The group of affine functions $x \mapsto ax+b$ for $a \in \F_p^\times, b \in \F_p$ under function composition. This is a semidirect product $\F_p \rtimes \F_p^\times$.

    \item $\aut(G)$: The automorphism group of a group $G$.

    \item $D_{2n}$: The dihedral group of order $2n$.

    \item $\# S$: cardinality of a finite set $S$.

    \item For an algebraic number $\alpha$ whose minimal polynomial over $\Z$ is $a x^d + \dots$ and conjugates are $\alpha^{(1)}, \dots, \alpha^{(d)}$, the height of $\alpha$ is defined by
\begin{align*}
    h(\alpha) = \frac{1}{d}\left(\log \abs{a} + \sum_{i=1}^d \log \max(1, \abs{\alpha^{(i)}})\right).
\end{align*}

    \item $\left(\frac{n}{p}\right)$: the Legendre symbol for $n, p \in \Z$, $p$ an odd prime.

    \item For a positive integer $n$, we write $\text{rad}(n)=\prod_{p \mid n} p$.

    \item $\proj^1(R)$: the projective line over a ring $R$.

    \item For an elliptic curve $E/\mathbb{Q}$ and a prime $\ell$, we write $a_\ell(E)$ for $\ell+1-\#E(\F_\ell)$.

\end{itemize}

\subsection{Outline of Paper}\label{subsec:outline of paper}

In Section \ref{sec:elementary observations}, we begin by establishing several necessary conditions that any nontrivial solution to equation \eqref{eq:main lebesgue nagell eqn} must satisfy. These elementary results include congruence conditions on $x$ and $y$, such as the fact that $x$ and $y$ must both be odd.

In Section \ref{sec:ClassicalApproach}, following \cite[Chapter 15]{Coh2007}, we factor $x^2-2$ in the ring $\Z[\sqrt{2}]$ and demonstrate that nontrivial solutions of \eqref{eq:main lebesgue nagell eqn} imply the existence of nontrivial solutions to certain Diophantine equations called \emph{Thue equations}. These Thue equations are indexed by the odd prime $p$ of \eqref{eq:main lebesgue nagell eqn}, and also by an integer $r$ with $|r| \leq \frac{p-1}{2}$. We give the (easy) proof of Theorem \ref{thm:thue equations small p} at the end of this section. 

In Section \ref{sec:Galois Theory}, we study the polynomials corresponding to these Thue equations in some depth. We find an explicit formula for the roots and use this to determine the Galois group and discriminant of the polynomials. We also show these polynomials are irreducible over $\Q[\sqrt{2}]$ and have exactly one real root.

Next, in the long Section \ref{sec:Linear forms in logarithms}, we give a detailed account of how to apply linear forms in logarithms to obtain the bound on $p$ in Theorem \ref{thm:only triv solns for p big} for all sufficiently large values of $y$. This section is split into several different subsections, and the argument proceeds in various stages. First, we obtain a somewhat crude, preliminary upper bound on a prime $p$ such that \eqref{eq:main lebesgue nagell eqn} admits a nontrivial solution (Theorem \ref{thm:initial bound}). With $p$ suitably bounded, we use the argument of \cite[Proposition 15.7.1]{Coh2007} to show that $r=\pm 1$. With $r$ now restricted, we apply linear forms in logarithms again to reduce the upper bound on $p$ further (Theorem \ref{thm:improved bound thm 2}). Once the upper bound on $p$ has been reduced yet again, we employ linear forms in logarithms for the third time, now with a delicate choice of parameters and a careful analysis. In order to reduce the upper bound on $p$ as far as possible, we need to assume that any nontrivial solution to \eqref{eq:main lebesgue nagell eqn} has $y$ large.

In Section \ref{sec:small y}, we complete the proof of Theorem \ref{thm:only triv solns for p big} by a continued fraction computation allowing us to rule out small solutions to the Thue equation. We also use the same argument for the remaining values of $p$ to prove Theorem \ref{thm:lower bound on y}.

Next, in Section \ref{sec:other observations}, we make various elementary observations about any nontrivial solutions to equation \eqref{eq:main lebesgue nagell eqn} and the corresponding Thue equations. These observations rely on the fact that $r = \pm 1$.

In Section \ref{sec:simple modular observations}, we examine what information there is to be gained from the modular method applied to an elliptic curve associated to solutions of \eqref{eq:main lebesgue nagell eqn}. As a weaker conjecture than Conjecture \ref{conj:solns to main lebesgue nagell eqn}, we conjecture that all nontrivial solutions to \eqref{eq:main lebesgue nagell eqn} are ``locally trivial,'' in that $x \equiv \pm 1 \pmod{p}$ and $y \equiv -1 \pmod{p}$ (Conjecture \ref{conj:triv solns mod p}). We make very modest progress towards this latter conjecture.

In Section \ref{sec:newform coefficients}, we examine the newforms of level 128 corresponding to solutions of \eqref{eq:main lebesgue nagell eqn} and give a proof sketch of how one can obtain explicit formulas for the coefficients of these newforms. We speculate that studying the explicit formulas for the coefficients of these newforms might allow for a more effective deployment of modularity techniques.

Finally, Section \ref{sec:solutions mod n} contains an examination of the solutions to the Thue equations modulo an integer $n$. We determine, in most cases, exactly how many solutions there are (Theorem \ref{thm:thue equations mod n}).

\section{Elementary observations} \label{sec:elementary observations}

In this section, we establish various conditions that any nontrivial solution to \eqref{eq:main lebesgue nagell eqn} must satisfy. The proofs are elementary, and do not invoke ``heavy'' tools like linear forms in logarithms or modularity. However, some results from this section will find application later on when we do use such tools.

Variants of the following lemma appear in many works on exponential Diophantine equations. For example, see \cite[page 16, exercise 3.2]{Schoof2008}.

\begin{lemma}\label{lemma1}
Let $a,y$ be distinct, coprime integers, and let $p\geq 3$ be a prime. The following are true:
\begin{enumerate}

	\item $\gcd\left(y-a, \frac{y^p-a^p}{y-a}\right)$ is $1$ or $p$.\label{item:lemma1.1}
	
	\item $p \mid (y-a)$ if and only if $p \mid \frac{y^p-a^p}{y-a}$. In this case, $\frac{y^p-a^p}{y-a} \equiv p \pmod{p^2}$ if $p \nmid a$, and $\frac{y^p-a^p}{y-a} \equiv 0 \pmod{p^2}$ if $p \mid a$.\label{item:lemma1.2}
	
	\item If $q \neq p$ is prime and $q \mid \frac{y^p-a^p}{y-a}$, then $q \equiv 1 \pmod{p}$.\label{item:lemma1.3}

\end{enumerate}
\end{lemma}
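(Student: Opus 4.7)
The three parts all hinge on two basic computations with the polynomial
\[
f(y,a) := \frac{y^p - a^p}{y-a} = \sum_{i=0}^{p-1} y^{p-1-i} a^i.
\]
The first is that reducing each occurrence of $y$ to $a$ gives $f(y,a) \equiv p a^{p-1} \pmod{y-a}$. The second is that, since $p$ is odd, the Frobenius identity yields $(y-a)^p \equiv y^p - a^p \pmod p$ as polynomials in $\Z[y,a]$, hence $f(y,a) \equiv (y-a)^{p-1} \pmod p$. My plan is to deploy these two congruences in concert.

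For part (\ref{item:lemma1.1}), the first congruence gives $\gcd(y-a, f(y,a)) = \gcd(y-a, p a^{p-1})$. Since $\gcd(a, y-a) = \gcd(a,y) = 1$, this equals $\gcd(y-a, p) \in \{1, p\}$. For the equivalence in part (\ref{item:lemma1.2}), the identity $f(y,a) \equiv (y-a)^{p-1} \pmod p$ immediately shows $p \mid f(y,a)$ if and only if $p \mid (y-a)$. For the refinement modulo $p^2$, I would write $y = a + pk$, expand each $y^{p-1-i}$ binomially, and keep only terms up to first order in $p$, giving $y^{p-1-i} \equiv a^{p-1-i} + (p-1-i) a^{p-2-i} pk \pmod{p^2}$. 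Summing over $i$, the linear correction gathers a coefficient $\sum_{i=0}^{p-1}(p-1-i) = \binom{p}{2}$, which absorbs an extra factor of $p$ and vanishes modulo $p^2$. Hence $f(y,a) \equiv p a^{p-1} \pmod{p^2}$. The two subcases then follow: if $p \nmid a$, Fermat's little theorem gives $a^{p-1} = 1 + pm$, so $p a^{p-1} \equiv p \pmod{p^2}$; if $p \mid a$ (a vacuous case under coprimality, but still one the formula handles), then $p \mid a^{p-1}$ and so $p^2 \mid p a^{p-1}$.

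For part (\ref{item:lemma1.3}), let $q \neq p$ be a prime with $q \mid f(y,a)$. Then $q \mid y^p - a^p$, and $q \nmid a$, since otherwise $q \mid y$, contradicting $\gcd(a,y) = 1$. Thus $(y a^{-1})^p \equiv 1 \pmod q$, and the multiplicative order $d$ of $y a^{-1}$ in $\F_q^\times$ divides $p$, so $d \in \{1, p\}$. If $d = p$, then $p \mid q - 1$, as claimed. If instead $d = 1$, then $y \equiv a \pmod q$, and reducing the first congruence modulo $q$ gives $0 \equiv f(y,a) \equiv p a^{p-1} \pmod q$, forcing $q \mid p$ and hence $q = p$, a contradiction.

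I anticipate the main obstacle is the bookkeeping in the $p^2$-refinement in part (\ref{item:lemma1.2}); the key observation that the inner sum contributes $\binom{p}{2}$ and hence supplies the missing factor of $p$ is what makes that argument work. Everything else follows cleanly once the two governing congruences modulo $y-a$ and modulo $p$ are in hand.
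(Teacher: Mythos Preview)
Your proof is correct and follows essentially the same approach as the paper's. The only cosmetic differences are in part~(\ref{item:lemma1.2}): the paper obtains the equivalence via Fermat's little theorem applied to $y$ and $a$ separately (so $y-a \equiv y^p - a^p \pmod p$), and for the $p^2$-refinement it writes $y^p = ((y-a)+a)^p$ and expands once, rather than expanding each $y^{p-1-i}$ and summing; both routes land on $f(y,a) \equiv p a^{p-1} \pmod{p^2}$ by the same binomial-coefficient divisibility.
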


\begin{proof}
Note that
\begin{align*}
y^p-a^p &= (y-a)\left(\sum_{i=0}^{p-1} a^{p-1-i} y^i\right).
\end{align*}
Let $d \in \Z$ be a common divisor of $y-a$ and $\frac{y^p-a^p}{y-a}$. Since $d \mid (y-a)$, we have $y \equiv a \pmod{d}$, so\[
\sum_{i=0}^{p-1} a^{p-1-i} y^i \equiv \sum_{i=0}^{p-1} a^{p-1-i} a^i = p a^{p-1} \pmod{d}.
\]
Thus, $d \mid p a^{p-1}$. Observe that $\text{gcd}(d,a)=1$ since $d \mid(y-a)$ and $a,y$ are coprime. Thus, $d \mid p$, proving \eqref{item:lemma1.1}. 

Since $y \equiv y^p \pmod{p}$ and $ a \equiv a^p \pmod{p}$, it follows $p \mid (y-a)$ if and only if $p \mid (y^p-a^p)$. Thus, if $p \mid \frac{y^p-a^p}{y-a}$, then $p \mid (y-a)$. Now, if $p \mid (y-a)$, then
\begin{align*}
\frac{y^p-a^p}{y-a} &= \frac{(y-a+a)^p-a^p}{y-a}= \sum_{k=1}^p \binom{p}{k} a^{p-k} (y-a)^{k-1}\equiv p a^{p-1} \pmod{p^2},
\end{align*}
because any term except $k=1$ vanishes mod $p^2$ since $p \geq 3$. By Fermat's little theorem, if $p \nmid a$ then $a^{p-1} \equiv 1 \pmod{p}$, so $p a^{p-1} \equiv p \pmod{p^2}$. If $p \mid a$ then $p a^{p-1} \equiv 0 \pmod{p^2}$. This proves \eqref{item:lemma1.2}.

Let $q \neq p$ be a prime dividing $\frac{y^p-a^p}{y-a}$. Then $q$ divides $y^p-a^p$, so $y^p \equiv a^p \pmod{q}$. If $q \not\equiv 1 \pmod{p}$, then $p \nmid \phi(q)$, so the map $x \mapsto x^p$ is injective mod $q$, and hence $y \equiv a \pmod{q}$. Thus $q \mid y-a$, contradicting \eqref{item:lemma1.1}. This establishes \eqref{item:lemma1.3}.
\end{proof}

For the remainder of the results in this section, we assume $x, y, p \in \Z$ form a nontrivial solution to \eqref{eq:main lebesgue nagell eqn}, with $p \geq 3$ a prime and $y \neq -1$.

\begin{theorem}\label{thm:elementary 1}
    The integers $x$ and $y$ are both odd, $y \equiv 7 \pmod{8}$, and every prime that divides $y$ is $\equiv \pm 1 \pmod{8}$.
\end{theorem}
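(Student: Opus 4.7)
The plan is to prove the three assertions in sequence, starting with parity, then refining modulo $8$, and finally invoking quadratic reciprocity (the second supplement) for the prime divisors of $y$.

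First I would determine the parity of $y$. Suppose for contradiction that $y$ were even. Then $y^p$ would be divisible by $2^p$, hence by $8$ since $p \geq 3$, so $x^2 = y^p + 2 \equiv 2 \pmod 8$. But $2$ is not a quadratic residue modulo $8$ (indeed not even modulo $4$), a contradiction. Therefore $y$ is odd, and then $x^2 = y^p + 2$ is odd, so $x$ is odd.

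Next I would refine to obtain $y \equiv 7 \pmod 8$. Since $x$ is odd, $x^2 \equiv 1 \pmod 8$, so $y^p \equiv -1 \equiv 7 \pmod 8$. The key observation is that $(\mathbb{Z}/8\mathbb{Z})^\times \cong (\mathbb{Z}/2\mathbb{Z})^2$ has exponent $2$; hence for any odd integer $y$, $y^p = y \cdot y^{p-1} \equiv y \pmod 8$, because $p-1$ is even. Combining, $y \equiv 7 \pmod 8$.

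Finally, for the prime divisors: let $q$ be any prime dividing $y$. Since $y$ is odd, so is $q$. Reducing $x^2 - 2 = y^p$ modulo $q$ gives $x^2 \equiv 2 \pmod q$, so $2$ is a quadratic residue modulo $q$. By the second supplement to quadratic reciprocity, this forces $q \equiv \pm 1 \pmod 8$, as required. There is no real obstacle here — everything follows from elementary congruences — so the main care needed is just to handle the small-modulus bookkeeping (in particular ensuring $p \geq 3$ is used when squeezing $2^p$ into $8\mathbb{Z}$, and invoking that $p$ is odd when reducing the exponent modulo the exponent of $(\mathbb{Z}/8\mathbb{Z})^\times$).
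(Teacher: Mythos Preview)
Your proof is correct and follows essentially the same approach as the paper: rule out even $y$ via a small-modulus contradiction, use $x^2 \equiv 1 \pmod 8$ together with $y^{p-1}\equiv 1 \pmod 8$ to get $y \equiv 7 \pmod 8$, and then apply the second supplement to quadratic reciprocity for the prime divisors of $y$. The only cosmetic difference is that the paper first notes $x$ and $y$ have the same parity and reaches the contradiction modulo $4$, whereas you work modulo $8$; this is immaterial.
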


\begin{proof}
    Clearly $x^2$ and $y^p$ have the same parity, and so $x$ and $y$ must, too. If $x$ and $y$ are even, then $x^2-2 \equiv 2 \pmod{4}$ and $y^p \equiv 0 \pmod{4}$, which is a contradiction. Thus, $x$ and $y$ are odd. This implies $x^2 \equiv 1 \pmod{8}$, so $y^p = x^2-2 \equiv -1 \pmod{8}$. Since $y^{p-1} \equiv 1 \pmod{8}$, this implies $y \equiv -1 \equiv 7 \pmod{8}$. Now, for any prime $q$ dividing $y$, $q$ divides $x^2-2$, so $2$ is a quadratic residue mod $q$. By quadratic reciprocity, since $q \neq 2$, this implies $q \equiv \pm 1 \pmod{8}$.
\end{proof}

\begin{theorem}\label{thm:elementary 2}
We have two cases:
    \begin{enumerate}

        \item If $y \not\equiv -1 \pmod{p}$, then $x \not\equiv \pm 1 \pmod{p}$.

        \item If $y \equiv -1 \pmod{p}$, then $x \equiv \pm 1 \pmod{p^2}$, and, more precisely, $v_p((x-1)(x+1)) = v_p(y+1)+1$.
        
    \end{enumerate}
\end{theorem}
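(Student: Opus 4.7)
The plan is to rewrite $x^2 - 2 = y^p$ as $(x-1)(x+1) = y^p + 1$ and then factor the right-hand side as $y^p - (-1)^p = (y+1) \cdot B$, where
\[
B = \frac{y^p - (-1)^p}{y - (-1)} = y^{p-1} - y^{p-2} + \cdots - y + 1.
\]
Since $y$ is odd (Theorem \ref{thm:elementary 1}), $y$ and $-1$ are distinct coprime integers, so I can directly apply Lemma \ref{lemma1} with $a = -1$. This will give complete $p$-adic information about the factorization of the right-hand side.

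For Case (1), when $y \not\equiv -1 \pmod{p}$, we have $p \nmid (y+1)$, hence by part \eqref{item:lemma1.2} of Lemma \ref{lemma1} also $p \nmid B$. Thus $p \nmid (y+1)B = (x-1)(x+1)$, so neither $x - 1$ nor $x + 1$ is divisible by $p$, giving $x \not\equiv \pm 1 \pmod{p}$.

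For Case (2), when $y \equiv -1 \pmod{p}$, we have $p \mid (y+1)$ and, since $p \nmid -1$, part \eqref{item:lemma1.2} of Lemma \ref{lemma1} yields $B \equiv p \pmod{p^2}$, i.e.\ $v_p(B) = 1$. Hence
\[
v_p\bigl((x-1)(x+1)\bigr) = v_p(y+1) + v_p(B) = v_p(y+1) + 1,
\]
which is the second claim. To extract $x \equiv \pm 1 \pmod{p^2}$, I note that $x$ is odd, so $\gcd(x-1, x+1) = 2$; thus for the odd prime $p$, at most one of $x-1, x+1$ is divisible by $p$, and so one of them carries the full $p$-adic valuation $v_p(y+1) + 1 \geq 2$. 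Therefore $x \equiv \pm 1 \pmod{p^2}$.

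There is no real obstacle here; the argument is essentially a bookkeeping exercise once Lemma \ref{lemma1} is invoked with the correct choice of $a$. The only subtle point is ensuring that the coprimality hypothesis of Lemma \ref{lemma1} holds, which is guaranteed by $y$ being odd, and then leveraging the parity of $x$ to conclude that the $p$-adic mass of $(x-1)(x+1)$ cannot be split between its two factors.
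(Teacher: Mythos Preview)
Your proof is correct and follows essentially the same approach as the paper: rewrite $x^2-1=y^p+1$, factor both sides, and apply Lemma~\ref{lemma1} with $a=-1$. One minor quibble: the distinctness of $y$ and $-1$ comes from the standing hypothesis that the solution is nontrivial ($y\neq -1$), not from $y$ being odd; on the other hand, you spell out the deduction of $x\equiv\pm 1\pmod{p^2}$ from the valuation identity (via $\gcd(x-1,x+1)=2$), which the paper leaves implicit.
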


\begin{proof}
By \eqref{eq:main lebesgue nagell eqn} we have $x^2-1=y^p+1$, and factoring gives
\begin{align}
(x+1)(x-1) &= (y+1)(1-y+y^2-\dots+y^{p-1}) \label{eq1}.
\end{align}
We apply Lemma \ref{lemma1} with $a = -1$. If $p \nmid (y+1)$, then the factors on the right of \eqref{eq1} are both not divisible by $p$. If $p \mid (y+1)$, then both factors are divisible by $p$, and $\frac{y^p+1}{y+1} \equiv p \pmod{p^2}$. In the second case, $v_p\left(\frac{y^p+1}{y+1}\right) = 1$, so $v_p((x-1)(x+1)) = v_p\left((y+1)\left(\frac{y^p+1}{y+1}\right)\right) = v_p(y+1)+1$.
\end{proof}

\begin{remark}\label{rmk:elementary 1}
    Theorem \ref{thm:elementary 2} has the interesting consequence that either $x \not\equiv \pm 1 \pmod{p}$, or $x \equiv \pm 1 \pmod{p^2}$.
\end{remark}

\begin{theorem}\label{thm:elementary 3}
    If $3 \mid x$, then $y \equiv 7 \pmod{24}$. If $3 \nmid x$, then $y \equiv 23 \pmod{24}$. Also, $3 \mid (y-1)$ if and only if $3 \mid x$.
\end{theorem}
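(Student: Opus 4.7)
The plan is to reduce the equation $x^2 - 2 = y^p$ modulo $3$ and combine with the congruence $y \equiv 7 \pmod 8$ already established in Theorem~\ref{thm:elementary 1}. First I would observe that $3 \nmid y$: otherwise $y^p \equiv 0 \pmod 3$, which would force $x^2 \equiv 2 \pmod 3$, impossible since the quadratic residues modulo $3$ are $0$ and $1$.

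Next, I would split into the two cases based on $x \pmod 3$. If $3 \mid x$, then $y^p \equiv -2 \equiv 1 \pmod 3$; since $p$ is odd and $y \not\equiv 0 \pmod 3$, this forces $y \equiv 1 \pmod 3$ (the alternative $y \equiv -1 \pmod 3$ would give $y^p \equiv -1 \pmod 3$). If instead $3 \nmid x$, then $x^2 \equiv 1 \pmod 3$ so $y^p \equiv -1 \pmod 3$, and the same parity-of-exponent argument forces $y \equiv -1 \equiv 2 \pmod 3$.

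To finish, I would combine each case with $y \equiv 7 \pmod 8$ via the Chinese Remainder Theorem: the pair $(y \equiv 7 \pmod 8, \ y \equiv 1 \pmod 3)$ yields $y \equiv 7 \pmod{24}$, and the pair $(y \equiv 7 \pmod 8, \ y \equiv 2 \pmod 3)$ yields $y \equiv 23 \pmod{24}$. The final claim that $3 \mid (y-1)$ if and only if $3 \mid x$ is an immediate reformulation of the case analysis above, since $3 \mid (y-1)$ exactly when $y \equiv 1 \pmod 3$, which happens precisely when $3 \mid x$.

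There is no real obstacle here; the argument is purely a mod-$3$ reduction combined with the mod-$8$ information from Theorem~\ref{thm:elementary 1}. The only thing worth being careful about is using the hypothesis that $p$ is odd when extracting $y \pmod 3$ from $y^p \pmod 3$, and recording the preliminary observation $3 \nmid y$ so that $y$ is invertible modulo $3$.
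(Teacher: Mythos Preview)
Your argument is correct and essentially identical to the paper's: reduce $x^2-2=y^p$ modulo $3$, deduce $y\pmod 3$ in each case, and combine with $y\equiv 7\pmod 8$ via the Chinese Remainder Theorem. The paper is marginally slicker in writing $y\equiv y^p\equiv x^2-2\pmod 3$ directly (which holds for all $y$, making your preliminary observation $3\nmid y$ unnecessary), but the content is the same.
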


\begin{proof}
    Note $y \equiv y^p = x^2-2 \pmod{3}$, which is $\equiv 1$ if $3 \mid x$, and $\equiv 2$ if $3 \nmid x$. Combining with $y \equiv 7 \pmod{8}$ from Theorem \ref{thm:elementary 1}, we have $y \equiv 7, 23 \pmod{24}$ if $3 \mid x, 3 \nmid x$, respectively. Then $y-1$ is congruent to $6$ or $22 \pmod{24}$, respectively, so $3 \mid (y-1)$ if and only if $3 \mid x$.
\end{proof}

In the next two theorems, we rearrange \eqref{eq:main lebesgue nagell eqn} as $x^2 - 3 = y^p - 1$, so
\begin{align}
x^2-3 &= (y-1)(1+y+y^2+\dots+y^{p-1}) \label{eq2}.
\end{align}
\begin{lemma}\label{thm:elementary lemma 2}
    Any prime $q$ that divides one side of \eqref{eq2} is either $2, 3$, or $\equiv \pm 1 \pmod{12}$.
\end{lemma}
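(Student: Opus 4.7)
The plan is to reduce the statement to a quadratic residue problem and apply quadratic reciprocity. Since \eqref{eq2} is an identity, any prime $q$ that divides one side divides the other side, hence divides $x^2-3$. So the claim is: if $q$ is a prime with $q \mid x^2-3$, then $q \in \{2,3\}$ or $q \equiv \pm 1 \pmod{12}$.

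First I would dispose of the small primes $q = 2$ and $q = 3$, which are explicitly allowed in the statement. For any other prime $q$, the congruence $x^2 \equiv 3 \pmod{q}$ together with $q \nmid 3$ (and $q \neq 2$) forces $3$ to be a nonzero quadratic residue modulo $q$. I would then compute $\left(\frac{3}{q}\right)$ using quadratic reciprocity:
\begin{align*}
\left(\frac{3}{q}\right) = (-1)^{\frac{q-1}{2}}\left(\frac{q}{3}\right).
\end{align*}
Splitting into cases based on $q \pmod{4}$ and $q \pmod{3}$, one checks that $\left(\frac{3}{q}\right) = 1$ exactly when $q \equiv 1 \pmod{12}$ (both factors $+1$) or $q \equiv -1 \pmod{12}$ (both factors $-1$), and the other two residue classes $q \equiv \pm 5 \pmod{12}$ yield $\left(\frac{3}{q}\right) = -1$. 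This gives the desired congruence condition on $q$.

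There is no real obstacle here; the result is a direct consequence of quadratic reciprocity once one observes that both sides of \eqref{eq2} equal $x^2-3$. The only minor point to be careful about is excluding the case $q = 3$ from the reciprocity computation (since $\left(\frac{3}{3}\right)$ is not defined in the usual sense), which is why the statement lists $3$ as an allowed exceptional prime.
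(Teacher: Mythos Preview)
Your proposal is correct and follows essentially the same approach as the paper: both reduce the claim to the observation that any prime $q \geq 5$ dividing $x^2-3$ must have $3$ as a quadratic residue, and then apply quadratic reciprocity to conclude $q \equiv \pm 1 \pmod{12}$. The paper's proof is just a terser version of the same case analysis you carry out.
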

\begin{proof}
Any prime $q \geq 5$ that divides $x^2-3$ must have $\left(\frac{3}{q}\right) = 1$. By quadratic reciprocity, if $q\equiv 1 \pmod{4}$, then $q\equiv 1 \pmod{3}$, and if $q \equiv -1 \pmod{4}$, then $q\equiv -1 \pmod{3}$. Hence, $q \equiv \pm 1 \pmod{12}$.
\end{proof}

\begin{theorem}\label{thm:elementary 4}
    We have $v_2(y-1) = 1, v_3(y-1) = 0$ or $1$, and every other prime that divides $y-1$ is $\equiv \pm 1 \pmod{12}$.
\end{theorem}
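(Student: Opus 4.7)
The plan is to extract all three statements from the factorization $x^2-3=(y-1)(1+y+\cdots+y^{p-1})$ in \eqref{eq2}, using the earlier elementary congruences and Lemma~\ref{thm:elementary lemma 2} as the main inputs.

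First I would handle $v_2(y-1)$ directly: by Theorem~\ref{thm:elementary 1}, $y\equiv 7\pmod 8$, so $y-1\equiv 6\pmod 8$ and hence $v_2(y-1)=1$. This step is essentially immediate and requires no further analysis.

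Next I would treat $v_3(y-1)$ by splitting on whether $3\mid x$. If $3\nmid x$, Theorem~\ref{thm:elementary 3} already gives $3\nmid(y-1)$, so $v_3(y-1)=0$. If $3\mid x$, write $x=3m$ so that $x^2-3=3(3m^2-1)$ with $3m^2-1\equiv -1\pmod 3$; hence $v_3(x^2-3)=1$. Using the identity \eqref{eq2} and the fact that $y\equiv 1\pmod 3$ (from Theorem~\ref{thm:elementary 3}), one has $1+y+\cdots+y^{p-1}\equiv p\pmod 3$. For $p\neq 3$ this factor is a $3$-adic unit, forcing $v_3(y-1)=v_3(x^2-3)=1$. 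The case $p=3$ should be addressed by the short calculation $1+y+y^2=3(1+3t+3t^2)$ when $y=1+3t$, which gives $v_3(1+y+y^2)=1$ and therefore $v_3(y-1)=0$, contradicting $3\mid(y-1)$; so no nontrivial solution exists in that case and the conclusion $v_3(y-1)\in\{0,1\}$ is vacuously preserved. In every case $v_3(y-1)\le 1$.

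Finally, for any prime $q\ge 5$ with $q\mid(y-1)$, $q$ divides the left-hand side $x^2-3$ of \eqref{eq2}, so Lemma~\ref{thm:elementary lemma 2} immediately yields $q\equiv\pm 1\pmod{12}$. Combining these three ingredients completes the proof. The only subtle step is the $p=3$ subcase of the $v_3$ analysis, where one must be careful not to divide by zero in the cyclotomic factor; everything else is a straightforward application of earlier results.
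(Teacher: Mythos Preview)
Your proof is correct and uses the same inputs as the paper: Theorem~\ref{thm:elementary 1} for $v_2$, Theorem~\ref{thm:elementary 3} and the value of $x^2-3$ modulo $9$ for $v_3$, and Lemma~\ref{thm:elementary lemma 2} for the remaining primes. The only difference is that for $v_3$ the paper argues more directly: once $3\mid(y-1)$ implies $3\mid x$ and hence $x^2-3\equiv 6\pmod 9$, the divisibility $(y-1)\mid(x^2-3)$ from \eqref{eq2} immediately gives $9\nmid(y-1)$, with no need to analyze the cyclotomic factor or treat $p=3$ separately.
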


\begin{proof}
    By Theorem \ref{thm:elementary 1}, $y-1 \equiv 6 \pmod{8}$, so $v_2(y-1) = 1$. If $3 \mid (y-1)$, then $3 \mid x$ by Theorem \ref{thm:elementary 3}, so $x^2-3 \equiv 6 \pmod{9}$, so $9 \nmid (y-1)$. Hence, $v_3(y-1) = 0$ or $1$. By Lemma \ref{thm:elementary lemma 2}, every other prime that divides $y-1$ is $\equiv\pm 1 \pmod{12}$.
\end{proof}

\begin{theorem}\label{thm:elementary 5}
    We have the following:
    \begin{enumerate}
	
	\item Every prime factor of $\frac{y^p-1}{y-1}$ is $\equiv\pm 1 \pmod{12}$\label{item:elementary 5.1}.
    
        \item Every prime factor of $\frac{y^p-1}{y-1}$ is $p$  or $\equiv 1 \pmod{p}$. Either $p \nmid (y-1)$ and $\frac{y^p-1}{y-1} \equiv 1 \pmod{p}$, or $p \mid (y-1)$ and $\frac{y^p-1}{y-1} \equiv p \pmod{p^2}$. The latter case cannot happen unless $p \equiv \pm 1 \pmod{12}$.\label{item:elementary 5.2}

        \item $\frac{y^p-1}{y-1} \equiv 1 \pmod{24}$\label{item:elementary 5.3}.
	
	\item If $p \equiv 2 \pmod{3}$ or $p=3$, then $3 \nmid x$.\label{item:elementary 5.4}
    \end{enumerate}
\end{theorem}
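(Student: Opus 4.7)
The plan is to prove the four parts in the order $(1)$, $(2)$, $(4)$, $(3)$, since the final clause of $(2)$ and the proof of $(3)$ both invoke $(4)$. Throughout I exploit the factorization $x^2 - 3 = (y-1) \cdot \frac{y^p-1}{y-1}$ from \eqref{eq2} together with Lemma \ref{lemma1} applied at $a = 1$, feeding in the parity/$8$-adic data from Theorem \ref{thm:elementary 1} and the $3$-adic data from Theorems \ref{thm:elementary 3} and \ref{thm:elementary 4}.

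For $(1)$, Lemma \ref{thm:elementary lemma 2} forces every prime factor of $\frac{y^p-1}{y-1}$ to lie in $\{2,3\}\cup\{q : q\equiv\pm 1\pmod{12}\}$, so I only need to exclude $2$ and $3$. The prime $2$ is excluded because $\frac{y^p-1}{y-1} = 1 + y + \cdots + y^{p-1}$ is a sum of $p$ odd numbers with $p$ odd. If $3$ divided the quotient, then $3 \mid x^2 - 3$, hence $3 \mid x$, hence $y \equiv 1\pmod 3$ by Theorem \ref{thm:elementary 3}; the sum then collapses to $p \pmod 3$, forcing $p = 3$. But $p = 3$ with $3 \mid x$ gives $y^3 \equiv 7 \pmod 9$, impossible since the cubes modulo $9$ are $\{0, 1, 8\}$. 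Part $(2)$ is then essentially a restatement of Lemma \ref{lemma1}\eqref{item:lemma1.2}--\eqref{item:lemma1.3} with $a = 1$, together with the $\F_p$-identity $\frac{y^p-1}{y-1} \equiv \frac{y-1}{y-1} \equiv 1 \pmod p$ (via Fermat's $y^p \equiv y$) whenever $p \nmid y-1$; the final sentence uses Theorem \ref{thm:elementary 4} to trap $p \in \{2,3\}\cup\{p \equiv \pm 1\pmod{12}\}$, with $p = 2$ ruled out by parity and $p = 3$ ruled out by the same mod $9$ contradiction.

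The crux is part $(4)$, and the non-obvious case $p \equiv 2 \pmod 3$ with $p \neq 3$ is the main obstacle. For $p = 3$ this is the mod $9$ argument from the previous paragraph. For $p \equiv 2 \pmod 3$, suppose $3 \mid x$; then $y \equiv 1 \pmod 3$ (Theorem \ref{thm:elementary 3}) gives $\frac{y^p-1}{y-1} \equiv p \equiv 2 \pmod 3$, while $y \equiv -1 \pmod 8$ (Theorem \ref{thm:elementary 1}) combined with $p$ odd gives the alternating-sum computation $\frac{y^p-1}{y-1} \equiv 1 - 1 + 1 - \cdots + 1 \equiv 1 \pmod 8$. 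By CRT, $\frac{y^p-1}{y-1} \equiv 17 \pmod{24}$, hence $\equiv 5 \pmod{12}$, contradicting the consequence of $(1)$ that the quotient, being a product of primes $\equiv \pm 1 \pmod{12}$, is itself $\equiv \pm 1 \pmod{12}$. This ``mod $3$ meets mod $8$'' clash is the one ingredient that does not flow directly from the earlier lemmas.

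Part $(3)$ then falls out cleanly. Mod $8$, the alternating sum gives $\frac{y^p-1}{y-1} \equiv 1 \pmod 8$ as above. Mod $3$: if $3 \nmid x$, then $y \equiv -1 \pmod 3$ (Theorem \ref{thm:elementary 3}) and the same alternating identity yields $\equiv 1 \pmod 3$; if $3 \mid x$, then by $(4)$ necessarily $p \equiv 1 \pmod 3$, so $\frac{y^p-1}{y-1} \equiv p \equiv 1 \pmod 3$. CRT then gives $\frac{y^p-1}{y-1} \equiv 1 \pmod{24}$.
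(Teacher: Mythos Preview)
Your proof is correct and follows essentially the same approach as the paper: both hinge on Lemma~\ref{thm:elementary lemma 2}, Lemma~\ref{lemma1} at $a=1$, the $8$-adic data from Theorem~\ref{thm:elementary 1}, and the $3$-adic data from Theorem~\ref{thm:elementary 3}, culminating in the observation that the quotient, being a product of primes $\equiv\pm 1\pmod{12}$, is itself $\equiv\pm 1\pmod{12}$.

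The organizational differences are cosmetic. You prove (4) first and then feed it into (3), whereas the paper computes the quotient $\equiv 4p-3\pmod{24}$ directly (using $y\equiv 7\pmod{24}$), reads off $4p-3\equiv 1\pmod{12}$, and extracts both (3) and (4) from that single congruence; your separate mod~$3$/mod~$8$ computations with CRT amount to the same thing. For excluding the prime $3$ in part~(1), the paper takes a slightly shorter route: once $3\mid \frac{y^p-1}{y-1}$ forces $3\mid(y-1)$, both factors on the right of \eqref{eq2} are divisible by $3$, so $9\mid x^2-3$, which is impossible for any integer $x$---this avoids your case split on $p=3$ and the mod~$9$ cube check. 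Either argument is fine.
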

\begin{proof}
Since $y$ is odd by Theorem \ref{thm:elementary 1}, $1+y+y^2+\dots + y^{p-1} \equiv p \equiv 1 \pmod{2}$, so $2 \nmid \frac{y^p-1}{y-1}$. If $3 \mid \frac{y^p-1}{y-1}$, then $3 \mid (y^p-1)$, so $y-1 \equiv y^p-1 \equiv 0 \pmod{3}$, so $3 \mid (y-1)$. By \eqref{eq2}, we then have $9 \mid x^2-3$, which is impossible. Thus, $2$ and $3$ do not divide $\frac{y^p-1}{y-1}$. By Lemma \ref{thm:elementary lemma 2}, $\frac{y^p-1}{y-1}$ is only divisible by primes that are $\equiv\pm 1 \pmod{12}$. This proves \eqref{item:elementary 5.1}.

By Lemma \ref{lemma1} with $a=1$, every prime factor of $\frac{y^p-1}{y-1}$ is $p$ or $\equiv 1 \pmod{p}$. If $p \nmid (y-1)$, then $\frac{y^p-1}{y-1} \equiv \frac{y-1}{y-1} \equiv 1 \pmod{p}$. If $p \mid (y-1)$, then $\frac{y^p-1}{y-1} \equiv p \pmod{p^2}$ by Lemma \ref{lemma1}, so $p \mid \frac{y^p-1}{y-1}$, so $p \equiv \pm 1 \pmod{12}$ by statement (1). This proves \eqref{item:elementary 5.2}.

Since $y \equiv -1 \pmod{8}$ by Theorem \ref{thm:elementary 1}, $1+y+y^2+\dots + y^{p-1} \equiv 1 \pmod{8}$, so to prove \eqref{item:elementary 5.3} it suffices to show $\frac{y^p-1}{y-1} \equiv 1 \pmod{3}$. If $3 \nmid (y-1)$, then $\frac{y^p-1}{y-1} \equiv \frac{y-1}{y-1} = 1 \pmod{3}$ and we are done. If $3 \mid (y-1)$, then $3 \mid x$ and $y \equiv 7 \pmod{24}$ by Theorem \ref{thm:elementary 3}, so $1 + y + \dots + y^{p-1} \equiv 1 + 7 + 1 + 7 + \dots + 1 = \frac{p+1}{2} + 7 \frac{p-1}{2} = 4p-3 \pmod{24}$. Since $\frac{y^p-1}{y-1}$ is only divisible by primes that are $\equiv\pm 1 \pmod{12}$ by \eqref{item:elementary 5.1}, it is itself $\equiv\pm 1 \pmod{12}$, so $4p-3 \equiv \pm 1 \pmod{12}$. We cannot have $4p-3\equiv -1 \pmod{12}$, so $\frac{y^p-1}{y-1} \equiv 1 \pmod{12}$. This concludes the proof of \eqref{item:elementary 5.3}. 

In the above paragraph, we showed that if $3 \mid x$, then $4p-3 \equiv 1 \pmod{12}$, so $p \equiv 1 \pmod{3}$, proving \eqref{item:elementary 5.4}.
\end{proof}

\section{Reduction to Thue equations}{\label{sec:ClassicalApproach}}

The following result, contained in \cite[p. 518]{Coh2007}, is of great importance in our work.
\begin{theorem}\label{thm:reduction to thue equation}
Let $x, y, p \in \Z$ with $p \geq 3$ a prime such that $x^2-2=y^p$. Then there exist $a, b, r \in \Z$ with $\abs{r} \leq \frac{p-1}{2}$ such that
\begin{align}
x+\sqrt{2} = (1+\sqrt{2})^r (a+b\sqrt{2})^p. \label{eq5}
\end{align}
Hence, the integers $a$ and $b$ form a solution to the \emph{Thue equation}
\begin{align}
\frac{1}{2\sqrt{2}} \left((1+\sqrt{2})^r (a+b\sqrt{2})^p - (1-\sqrt{2})^r (a-b\sqrt{2})^p\right) = 1. \label{eq3}
\end{align}
\end{theorem}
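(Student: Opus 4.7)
The plan is to carry out a standard Kummer-style descent in the ring $R = \Z[\sqrt{2}]$, exploiting that $R$ is a Euclidean domain (hence a PID and UFD) with unit group $\{\pm 1\}\cdot \langle 1 + \sqrt{2}\rangle$. Factoring the left side of \eqref{eq:main lebesgue nagell eqn} in $R$ gives
\begin{align*}
(x + \sqrt{2})(x - \sqrt{2}) = y^p.
\end{align*}
First I would verify that $x$ is odd: if $x$ were even then $x^2 \equiv 0 \pmod 4$, forcing $y^p \equiv -2 \pmod 4$, which is impossible for integer $y$. The only prime of $R$ above $2$ is $\sqrt{2}$, since $2 = (\sqrt{2})^2$. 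Any common divisor $\delta$ of $x+\sqrt{2}$ and $x-\sqrt{2}$ must divide their difference $2\sqrt{2}$, hence (up to units) must be a power of $\sqrt{2}$; but $\sqrt{2} \mid x + \sqrt{2}$ would force $\sqrt{2} \mid x$, i.e.\ $2 \mid x$, contradicting $x$ odd. So $(x+\sqrt{2})$ and $(x-\sqrt{2})$ are coprime in $R$.

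Next, since $R$ is a UFD and the two coprime factors $(x+\sqrt{2})$ and $(x-\sqrt{2})$ have product a $p$th power, unique factorization yields $\alpha \in R$ and a unit $u \in R^\times$ with
\begin{align*}
x + \sqrt{2} = u\,\alpha^p.
\end{align*}
The Dirichlet unit theorem for the real quadratic field $\Q(\sqrt{2})$ identifies $R^\times = \{\pm 1\}\cdot\langle 1 + \sqrt{2}\rangle$, so I would write $u = \pm(1+\sqrt{2})^s$ for some $s \in \Z$. Because $p$ is odd, $\pm 1 = (\pm 1)^p$ can be absorbed into the $p$th power. Now choose $r \in \Z$ to be the symmetric residue of $s$ mod $p$, so $|r| \leq \frac{p-1}{2}$, and write $s = pq + r$. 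Then $(1+\sqrt{2})^s = (1+\sqrt{2})^r \cdot \bigl((1+\sqrt{2})^q\bigr)^p$, and absorbing $(1+\sqrt{2})^q$ into $\alpha$ produces an element $a + b\sqrt{2} \in R$ with
\begin{align*}
x + \sqrt{2} = (1+\sqrt{2})^r (a + b\sqrt{2})^p,
\end{align*}
which is \eqref{eq5}.

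Finally, applying the nontrivial Galois automorphism $\sqrt{2} \mapsto -\sqrt{2}$ (noting $\overline{1+\sqrt{2}} = 1 - \sqrt{2}$) gives
\begin{align*}
x - \sqrt{2} = (1-\sqrt{2})^r (a - b\sqrt{2})^p.
\end{align*}
Subtracting this from \eqref{eq5} cancels the $x$ terms and leaves $2\sqrt{2}$ on the left; dividing through by $2\sqrt{2}$ yields \eqref{eq3}.

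There is no real obstacle here, as the argument is a textbook descent; the only points requiring care are (i) ensuring $x$ is odd so that $\sqrt{2}$ does not enter the gcd, and (ii) choosing the symmetric representative $r$ of $s \bmod p$ so that $|r| \leq \frac{p-1}{2}$, which is possible precisely because $p$ is odd. The overall virtue of the statement is that it packages all solutions of \eqref{eq:main lebesgue nagell eqn} into a finite family (indexed by $r$) of Thue equations amenable to further analysis.
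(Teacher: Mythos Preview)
Your proof is correct and follows essentially the same approach as the paper: factor in the UFD $\Z[\sqrt{2}]$, show $x+\sqrt{2}$ and $x-\sqrt{2}$ are coprime via the oddness of $x$, deduce each is a unit times a $p$th power, and normalize the unit to $(1+\sqrt{2})^r$ with $|r|\le\frac{p-1}{2}$ by absorbing $\pm 1$ and $p$th powers of the fundamental unit. The only cosmetic difference is that the paper cites its earlier Theorem~\ref{thm:elementary 1} for $x$ odd, whereas you prove it directly inline.
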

\begin{proof}
We work in the ring of integers $\mathbb{Z}[\sqrt{2}]$. Factoring \eqref{eq:main lebesgue nagell eqn} over this ring yields
\begin{align*}
(x+\sqrt{2})(x-\sqrt{2}) &= y^p.
\end{align*}
Fortunately, $\Z[\sqrt{2}]$ is a unique factorization domain. If a prime $\pi$ of $\mathbb{Z}[\sqrt{2}]$ divides $x+\sqrt{2}$ and $x-\sqrt{2}$, then it divides their difference $2\sqrt{2}$, so it must be $\sqrt{2}$, which is the prime of $\mathbb{Z}[\sqrt{2}]$ above $2$. Hence $\sqrt{2}$ divides $x$, so for some $a, b \in \Z$ we have $x = (a+b\sqrt{2})\sqrt{2} = 2b+a\sqrt{2}$, so $a=0$ and $x = 2b$. Thus $x$ is even, but this contradicts the fact that $x$ is odd by Theorem \ref{thm:elementary 1}. Thus, the two factors $x+\sqrt{2}$ and $x-\sqrt{2}$ are relatively prime, so they are both a unit times a $p$th power. The units in $\Z[\sqrt{2}]$ are generated by $-1$ and $1+\sqrt{2}$, so by folding the $-1$ into the $p$th power we have \eqref{eq5}. By folding $p$th powers of $1+\sqrt{2}$ into the $(a+b\sqrt{2})^p$, we may also assume $-\frac{p-1}{2} \leq r \leq \frac{p-1}{2}$. 

We obtain \eqref{eq3} by subtracting from \eqref{eq5} its conjugate and dividing by $2\sqrt{2}$.
\end{proof}

Later, in Theorem \ref{thm: r is 1}, we show that we must have $r = \pm 1$. Notice that if $x+\sqrt{2} = (1+\sqrt{2})^{-1} (a+b\sqrt{2})^p$, then
\begin{align*}
x-\sqrt{2} &= (1-\sqrt{2})^{-1} (a-b\sqrt{2})^p\\
&= -(1+\sqrt{2})(a-b\sqrt{2})^p
\end{align*}
By making the substitution $x \mapsto -x, b \mapsto -b$, we obtain again $x+\sqrt{2} = (1+\sqrt{2})(a+b\sqrt{2})^p$. Thus, we may in fact assume $r=1$, although we can no longer control the sign of $x$ by doing so. This implies that, once we have established Theorem \ref{thm: r is 1}, we only need to solve the single Thue equation
\begin{align}
1 &= \frac{1}{2\sqrt{2}} \left((1+\sqrt{2})(a+b\sqrt{2})^p - (1-\sqrt{2}) (a-b\sqrt{2})^p\right)  \label{eq4}\\
&= \sum_{k=0}^p \binom{p}{k} 2^{\floor{\frac{k}{2}}} a^{p-k} b^k \nonumber
\end{align}
for each value of $p$. This equation has the ``trivial'' solution $(a, b) = (1, 0)$ for all $p$, corresponding to the trivial solutions of \eqref{eq:main lebesgue nagell eqn}.

\begin{proof}[Proof of Theorem \ref{thm:thue equations small p}]
    Let $p\geq 3$ be a prime. By Theorem \ref{thm:reduction to thue equation}, any solution $x^2-2=y^p$ to \eqref{eq:main lebesgue nagell eqn} gives rise to a solution to a Thue equation \eqref{eq3} with $|r| \leq \frac{p-1}{2}$. We use GP/PARI's built-in Thue equation solver (accessed through Sage \cite{sagemath}) to solve unconditonally all the Thue equations when $p \leq 13$. The Thue equations only have solutions when $r = \pm 1$, and in each case the solutions correspond to $y=-1$.
\end{proof}

\section{Observations from Galois theory} \label{sec:Galois Theory}

In this section, we make several observations about the roots and field extensions corresponding to the Thue equations in Theorem \ref{thm:reduction to thue equation}. 

Let $f_{r,p}(x) = \frac{1}{2\sqrt{2}} \left((1+\sqrt{2})^r (x+\sqrt{2})^p - (1-\sqrt{2})^r (x-\sqrt{2})^p\right)\in \mathbb{Z}[x]$ be the polynomial corresponding to the Thue equation \eqref{eq3}. We assume throughout this section that $p$ is an odd prime and $p \nmid r$.

\begin{theorem}\label{thm:formula for roots}
    The polynomial $f_{r,p}$ has $p$ distinct roots $\rho_0, \dots, \rho_{p-1}$, given by the formula
    \begin{align}
        \rho_i &= \sqrt{2} + \frac{2\sqrt{2}}{(-1)^r (1+\sqrt{2})^{-2r/p} \zeta_p^i - 1}. \label{eq503}
    \end{align}
    where $\zeta_p = e^{2\pi i/p}$. The only real root is $\theta = \theta_{r,p} = \rho_0$.
\end{theorem}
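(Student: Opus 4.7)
The plan is to solve $f_{r,p}(\rho)=0$ by reducing it to a Möbius-type equation. Clearing the factor $\tfrac{1}{2\sqrt 2}$, the equation $f_{r,p}(\rho)=0$ reads $(1+\sqrt 2)^r(\rho+\sqrt 2)^p = (1-\sqrt 2)^r(\rho-\sqrt 2)^p$. Using the identity $(1+\sqrt 2)(1-\sqrt 2)=-1$, so that $1-\sqrt 2 = -(1+\sqrt 2)^{-1}$, this is equivalent to
\[
\left(\frac{\rho+\sqrt 2}{\rho-\sqrt 2}\right)^{\!p} \;=\; (-1)^r (1+\sqrt 2)^{-2r}.
\]
The right-hand side is a nonzero real number, and since $p$ is odd it has the unique real $p$-th root $(-1)^r(1+\sqrt 2)^{-2r/p}$; so the full set of its complex $p$-th roots is $\alpha_i := (-1)^r(1+\sqrt 2)^{-2r/p}\zeta_p^i$ for $0\le i\le p-1$. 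Solving the linear equation $(\rho+\sqrt 2)/(\rho-\sqrt 2) = \alpha_i$ for $\rho$ gives $\rho_i = \sqrt 2\,(\alpha_i+1)/(\alpha_i-1)$, which I would simplify to $\sqrt 2 + 2\sqrt 2/(\alpha_i-1)$; this is exactly \eqref{eq503}.

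Next I would verify well-definedness, distinctness, and counting. If some $\alpha_i$ equalled $1$, taking absolute values would give $(1+\sqrt 2)^{-2r/p}=1$, hence $r=0$, contradicting $p\nmid r$; so every $\alpha_i-1$ is nonzero and each $\rho_i$ is defined. The $\alpha_i$ are distinct, as they are distinct unit multiples of a fixed nonzero constant, and the Möbius map $\alpha\mapsto \sqrt 2(\alpha+1)/(\alpha-1)$ is injective wherever defined, so $\rho_0,\dots,\rho_{p-1}$ are pairwise distinct. Since $f_{r,p}$ has degree exactly $p$ — its leading coefficient $\tfrac{1}{2\sqrt 2}\bigl((1+\sqrt 2)^r-(1-\sqrt 2)^r\bigr)$ is nonzero because $r\ne 0$ forces $|1+\sqrt 2|^r\ne |1-\sqrt 2|^r$ — these $p$ distinct values account for all of its roots.

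Finally, to identify the real roots, I would note that $\rho_i\in\R$ iff $\alpha_i\in\R$, and since $(-1)^r(1+\sqrt 2)^{-2r/p}$ is a nonzero real, this holds iff $\zeta_p^i\in\R$. Because $p$ is odd, the only real $p$-th root of unity is $1$, which occurs only at $i=0$. Hence $\rho_0$ is the unique real root, as claimed. There is no real obstacle here: the argument is a direct computation, and the only minor point of care is selecting the correct parametrization of the $p$ complex $p$-th roots by $\zeta_p^i$ relative to the preferred real $p$-th root of $(1+\sqrt 2)^{-2r}$.
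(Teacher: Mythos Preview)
Your proposal is correct and follows essentially the same approach as the paper: rewrite $f_{r,p}(\rho)=0$ as $\bigl((\rho+\sqrt{2})/(\rho-\sqrt{2})\bigr)^p=(-1)^r(1+\sqrt{2})^{-2r}$, extract the $p$ distinct $p$-th roots, invert the M\"obius map, and identify the real root via $\zeta_p^i\in\R\iff i=0$. If anything, you are slightly more careful than the paper in explicitly checking that $\alpha_i\neq 1$ and that the leading coefficient of $f_{r,p}$ is nonzero.
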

\begin{proof}
We have that $f_{r,p}(x) = 0$ if and only if $(1+\sqrt{2})^r(x+\sqrt{2})^p = (1-\sqrt{2})^r(x-\sqrt{2})^p$. We put the $p$th powers on one side of the equation, and the $r$th powers on the other. Simplifying slightly, we obtain
\begin{align*}
    \left(1+\frac{2\sqrt{2}}{x-\sqrt{2}}\right)^p = (-1)^r (1+\sqrt{2})^{-2r}.
\end{align*}
Taking $p$th roots and rearranging, we find the roots are given by
\begin{align}
    \rho_i = \sqrt{2} + \frac{2\sqrt{2}}{(-1)^r(1+\sqrt{2})^{-2r/p} \zeta_p^i - 1} \label{eq501}
\end{align}
for $0 \leq i \leq p-1$.

Now, $\zeta_p^i$ is determined by $x$, since solving for $\zeta_p^i$ in the equation above we have
\begin{align}
\zeta_p^i = (-1)^r (1+\sqrt{2})^{2r/p} \left(1+\frac{2\sqrt{2}}{\rho_i-\sqrt{2}}\right) \label{eq502}
\end{align}
From \eqref{eq501} and \eqref{eq502} we see that $\rho_i$ is real if and only if $\zeta_p^i$ is real, that is, if $\zeta_p^i = 1$. Therefore, there is precisely one real root, which we call $\theta$. Since $\zeta_p^i \neq \zeta_p^j$ for $i \not\equiv j \pmod{p}$, from \eqref{eq502} we deduce $\rho_i \neq \rho_j$ for $i \not\equiv j \pmod{p}$. Thus, $\theta = \rho_0, \rho_1, \rho_2, \dots, \rho_{p-1}$ are $p$ distinct roots of $f_{r,p}$, a degree $p$ polynomial, so the $\rho_i$ are all of the roots of $f_{r,p}$, each occurring with a multiplicity of one.
\end{proof}

\begin{remark}
    We think of the index $i$ in $\rho_i$ as a residue class mod $p$. See, particularly, the proof of Theorem \ref{thm:gal gp of L over Q} below.
\end{remark}

Theorem \ref{thm:formula for roots} has the following immediate corollary.

\begin{corollary}\label{corollary1}
The splitting field $L$ of $f_{r,p}$ is a radical extension of $\Q$ lying in the field $S = \Q((1+\sqrt{2})^{1/p}, \zeta_p)$.
\end{corollary}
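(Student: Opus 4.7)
The plan is to unpack the explicit formula \eqref{eq503} for the roots $\rho_i$ established in Theorem \ref{thm:formula for roots} and verify that each ingredient appearing there lies in $S = \Q((1+\sqrt{2})^{1/p}, \zeta_p)$. Three quantities feature in \eqref{eq503}: $\sqrt{2}$, the fractional power $(1+\sqrt{2})^{-2r/p}$, and the $p$-th root of unity $\zeta_p^i$. Of these, $\zeta_p^i$ is obviously in $S$. For the fractional power, the convention implicit in Theorem \ref{thm:formula for roots} is that $(1+\sqrt{2})^{1/p}$ denotes a single fixed choice of $p$-th root (for concreteness, the positive real one), so $(1+\sqrt{2})^{-2r/p}$ should be interpreted as the integer power $\bigl((1+\sqrt{2})^{1/p}\bigr)^{-2r}$, which manifestly lies in $S$. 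Finally, $\sqrt{2} = \bigl((1+\sqrt{2})^{1/p}\bigr)^p - 1 \in S$.

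Once each $\rho_i$ has been shown to lie in $S$, the containment $L = \Q(\rho_0, \dots, \rho_{p-1}) \subseteq S$ is immediate. To see that $S/\Q$ is radical, I would exhibit the tower
$$\Q \;\subset\; \Q(\sqrt{2}) \;\subset\; \Q(\sqrt{2}, \zeta_p) \;\subset\; \Q(\sqrt{2}, \zeta_p, (1+\sqrt{2})^{1/p}) \;=\; S,$$
in which the successively adjoined elements $\sqrt{2}$, $\zeta_p$, and $(1+\sqrt{2})^{1/p}$ are roots of $x^2 - 2$, $x^p - 1$, and $x^p - (1+\sqrt{2})$ respectively, so each stage adjoins a radical over the preceding field.

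There is essentially no genuine obstacle here; the only point requiring mild care is to fix a single, consistent branch of $(1+\sqrt{2})^{1/p}$ throughout, so that $(1+\sqrt{2})^{-2r/p}$ is unambiguously the $(-2r)$-th integer power of that specific element rather than some other $p$-th root of $(1+\sqrt{2})^{-2r}$. This convention is the same one already used to state \eqref{eq503}, and with it the formula displays each $\rho_i$ transparently as an element of $S$, giving both assertions of the corollary at once.
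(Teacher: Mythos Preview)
Your proposal is correct and follows essentially the same approach as the paper: the corollary is stated there as an immediate consequence of Theorem \ref{thm:formula for roots}, and your argument simply makes explicit why the root formula \eqref{eq503} places every $\rho_i$ inside $S$, together with the obvious radical tower for $S/\Q$. The only addition you provide beyond the paper's one-line justification is spelling out that $\sqrt{2} = \bigl((1+\sqrt{2})^{1/p}\bigr)^p - 1$ and fixing the branch convention, both of which are routine.
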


Note that the field $S$ in Corollary \ref{corollary1} is a Galois extension of $\Q$.

It is helpful for our later work on linear forms in logarithms to record some information about the location of the real root $\theta$ of $f_{r,p}$.

\begin{proposition} \label{prop:real roots}
$ $
\begin{enumerate}
	\item If $r > 0$ and $r$ is even, then $\theta < -\sqrt{2}$.
	
	\item If $r > 0$ and $r$ is odd, then $-\sqrt{2} < \theta < 0$.
	
	\item If $r < 0$ and $r$ is even, then $\theta > \sqrt{2}$.
	
	\item If $r < 0$ and $r$ is odd, then $0 < \theta < \sqrt{2}$.
\end{enumerate}
\end{proposition}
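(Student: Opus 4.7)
The plan is to plug $i=0$ directly into the explicit formula \eqref{eq503} for the real root from Theorem \ref{thm:formula for roots}, namely
\[
\theta = \rho_0 = \sqrt{2} + \frac{2\sqrt{2}}{(-1)^r (1+\sqrt{2})^{-2r/p} - 1},
\]
and then carry out a straightforward case analysis on the sign and parity of $r$. Setting $u := (1+\sqrt{2})^{-2r/p}$, I will first note that since $1+\sqrt{2} > 1$ we have $0 < u < 1$ when $r > 0$ and $u > 1$ when $r < 0$. The parity of $r$ enters only through the factor $(-1)^r$ in the denominator, so the four cases in the statement correspond exactly to the four combinations $(\sgn r,\,r \bmod 2)$.

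In each case I would compute the denominator $D := (-1)^r u - 1$ and then translate the inequality on $\theta$ into an elementary inequality on $u$. For example, in case (1), $r > 0$ and even gives $D = u - 1$ with $-1 < D < 0$, so $\frac{2\sqrt{2}}{D} < -2\sqrt{2}$, and therefore $\theta < -\sqrt{2}$. In case (2), $r > 0$ and odd gives $D = -u-1$ with $-2 < D < -1$, so $\frac{2\sqrt{2}}{D} \in (-2\sqrt{2}, -\sqrt{2})$, and therefore $\theta \in (-\sqrt{2}, 0)$. Cases (3) and (4) are completely analogous with the roles of the bounds on $u$ reversed: case (3) yields $D > 0$ and hence $\theta > \sqrt{2}$, while case (4) yields $-2 < D < -1$ again and a symmetric calculation showing $0 < \theta < \sqrt{2}$.

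There is no real obstacle here; the only care needed is in tracking signs when inverting inequalities through a negative denominator. I would organize the write-up as a single short argument that introduces the abbreviation $u$, records the two possible ranges of $u$ determined by $\sgn r$, and then walks through the four cases as brief bulleted displays, each reducing to the already-established bound on $u$.
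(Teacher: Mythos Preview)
Your approach is exactly the paper's: the paper simply cites the formula
\[
\theta = \sqrt{2} + \frac{2\sqrt{2}}{(-1)^r (1+\sqrt{2})^{-2r/p} - 1}
\]
and says the result follows immediately, and your case analysis with $u = (1+\sqrt{2})^{-2r/p}$ is precisely how one unpacks that.

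One small slip to fix: in case (4) you wrote ``$-2 < D < -1$ again,'' but with $r<0$ odd you have $u>1$, so $D = -u-1 < -2$ (not between $-2$ and $-1$). With $D<-2$ you get $-\tfrac{1}{2} < \tfrac{1}{D} < 0$, hence $\tfrac{2\sqrt{2}}{D} \in (-\sqrt{2},0)$ and $\theta \in (0,\sqrt{2})$, which is the correct conclusion. Just be sure the written-up version carries the right inequality on $D$ in that case.
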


\begin{proof}
This follows immediately from the expression
\begin{align}
\theta &= \sqrt{2} + \frac{2\sqrt{2}}{(-1)^r (1+\sqrt{2})^{-2r/p} - 1}. \label{eq504}
\end{align}
\end{proof}

With the above as preamble, our main task now is to determine the Galois group of $L$, the splitting field of $f_{r,p}$. It is somewhat difficult to access $L$ directly, and it will turn out to be more convenient to study $L$ by studying the larger field $S$.

The expression \eqref{thm:formula for roots} for the roots can also be written as
\begin{align}
\rho_i &= \sqrt{2} \frac{(1-\sqrt{2})^{r/p} \zeta_p^i + (1+\sqrt{2})^{r/p}}{(1-\sqrt{2})^{r/p} \zeta_p^i - (1+\sqrt{2})^{r/p}}. \label{eq505}
\end{align}
The expression in \eqref{eq505} will be useful for determining how the Galois group acts on the roots.

\begin{lemma}\label{lemma4.4}
The minimal polynomial of $(1+\sqrt{2})^{r/p}$ over $\Q(\sqrt{2})$ is $x^p-(1+\sqrt{2})^r$ if $p \nmid r$.
\end{lemma}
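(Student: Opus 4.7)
My plan is to first observe that $(1+\sqrt{2})^{r/p}$ is visibly a root of $x^p - (1+\sqrt{2})^r \in \Q(\sqrt{2})[x]$, so the content of the lemma is the irreducibility of this polynomial over $\Q(\sqrt{2})$. For this I would invoke the Vahlen--Capelli theorem: for an odd prime $p$ and any field $K$, the polynomial $x^p - \alpha$ is irreducible over $K$ if and only if $\alpha \notin (K^\times)^p$. (The usual exceptional case $4 \mid n$, $\alpha = -4c^4$ does not arise here since $p$ is odd.) The task thus reduces to showing $(1+\sqrt{2})^r$ is not a $p$th power in $\Q(\sqrt{2})^\times$ whenever $p \nmid r$.

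To rule out $p$th powers, I would argue by contradiction: suppose $(1+\sqrt{2})^r = \beta^p$ with $\beta \in \Q(\sqrt{2})$. Then $\beta$ satisfies a monic polynomial with coefficients in $\Z[\sqrt{2}]$, so $\beta$ is integral over $\Z[\sqrt{2}]$; since $\Z[\sqrt{2}]$ is the full ring of integers of $\Q(\sqrt{2})$, we get $\beta \in \Z[\sqrt{2}]$. Taking norms gives $N(\beta)^p = N((1+\sqrt{2})^r) = (-1)^r = \pm 1$, and since $N(\beta) \in \Z$ this forces $N(\beta) = \pm 1$, i.e., $\beta$ is a unit. The unit group of $\Z[\sqrt{2}]$ is $\{\pm 1\} \times \langle 1+\sqrt{2}\rangle$, so $\beta = \pm(1+\sqrt{2})^s$ for some $s \in \Z$.

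Raising to the $p$th power (and using that $p$ is odd) yields $\beta^p = \pm(1+\sqrt{2})^{sp}$, so $(1+\sqrt{2})^{r-sp} = \pm 1$. But $\Q(\sqrt{2})$ is a real field, and $(1+\sqrt{2})^{r-sp} > 0$, so in fact $(1+\sqrt{2})^{r-sp} = 1$. Since $1+\sqrt{2}$ has infinite order in $\R^\times$, we conclude $r = sp$, contradicting $p \nmid r$.

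The only non-trivial ingredient is Vahlen--Capelli, which is classical and available in the odd-prime case without exceptions; everything else is a straightforward unit/norm computation in the real quadratic field $\Q(\sqrt{2})$. I do not foresee a serious obstacle, though I would be careful to explicitly note that $\Q(\sqrt{2})$ is real (ruling out the sign $-1$ on the fundamental unit) and that $p$ is odd (so $(-1)^p = -1$ does not cause sign trouble when raising $\pm(1+\sqrt{2})^s$ to the $p$th power).
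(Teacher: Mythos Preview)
Your proposal is correct and follows essentially the same approach as the paper: both reduce via the Vahlen--Capelli criterion (the paper cites it as \cite[Proposition 4.2.6]{Cox2012}) to showing $(1+\sqrt{2})^r$ is not a $p$th power in $\Q(\sqrt{2})$, and then derive a contradiction from the unit structure of $\Z[\sqrt{2}]$. Your route to concluding that the putative $p$th root is a unit (integrality plus the norm computation) is slightly more streamlined than the paper's explicit factorization of numerator and denominator in $\Z[\sqrt{2}]$, but the substance is identical.
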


\begin{proof}
By a well-known theorem in Galois theory (see, for example, \cite[Proposition 4.2.6]{Cox2012}), it suffices to show that $(1+\sqrt{2})^r$ is not a $p$th power in $\Q(\sqrt{2})$. So, assume by way of contradiction that $(1+\sqrt{2})^r = \alpha^p$ for some $\alpha \in \Q(\sqrt{2})$. Write $\alpha = \frac{a+b\sqrt{2}}{d}$, where $a, b, d \in \Z$. Then we have
\begin{align*}
(a+b\sqrt{2})^p = (1+\sqrt{2})^r d^p
\end{align*}
We use the fact that $\Z[\sqrt{2}]$ has unique factorization and the fundamental unit is $1+\sqrt{2}$ to write
\begin{align*}
a+b\sqrt{2} &= \epsilon_1 (1+\sqrt{2})^s \prod_k \pi_k, \ \ \ \ \ \ d = \epsilon_2 (1+\sqrt{2})^t \prod_j \kappa_j,
\end{align*}
where $\epsilon_1, \epsilon_2 \in \{\pm 1\}$ and $\pi_k$ and $\kappa_j$ are primes in $\Z[\sqrt{2}]$. Then we have
\begin{align*}
\epsilon_1^p (1+\sqrt{2})^{sp} \prod_k \pi_k^p &= \epsilon_2^p (1+\sqrt{2})^{r+tp} \prod_j \kappa_j^p.
\end{align*}
By unique factorization, $\prod_k \pi_k^p = \prod_j \kappa_j^p$, so dividing out we have
\begin{align*}
(1+\sqrt{2})^{r+p(t-s)} = \pm 1
\end{align*}
But since $p \nmid r$, $r+p(t-s) \neq 0$, so some nonzero power of $1+\sqrt{2}$ equals $\pm 1$, a contradiction.
\end{proof}

Lemma \ref{lemma4.4} shows $(1+\sqrt{2})^{r/p}$ has degree $p$ over $\mathbb{Q}(\sqrt{2})$, so we have the following corollary.

\begin{corollary} \label{corollary2}
    $[\Q((1+\sqrt{2})^{1/p}):\Q] = 2p$
\end{corollary}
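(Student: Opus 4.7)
The plan is to deduce this directly from Lemma \ref{lemma4.4} and the tower law for field extensions. Let $\alpha = (1+\sqrt{2})^{1/p}$, so that $\alpha^p = 1+\sqrt{2}$. The key observation is that $\sqrt{2} = \alpha^p - 1 \in \Q(\alpha)$, so we have the inclusion of fields $\Q \subseteq \Q(\sqrt{2}) \subseteq \Q(\alpha)$.

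First I would invoke Lemma \ref{lemma4.4} with $r=1$ (which satisfies $p \nmid r$ since $p \geq 3$) to conclude that the minimal polynomial of $\alpha$ over $\Q(\sqrt{2})$ is $x^p - (1+\sqrt{2})$, a polynomial of degree $p$. This immediately gives $[\Q(\alpha):\Q(\sqrt{2})] = p$.

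Next, since $\sqrt{2}$ is irrational, its minimal polynomial over $\Q$ is $x^2-2$, so $[\Q(\sqrt{2}):\Q] = 2$. The tower law then yields
\begin{align*}
[\Q(\alpha):\Q] = [\Q(\alpha):\Q(\sqrt{2})] \cdot [\Q(\sqrt{2}):\Q] = p \cdot 2 = 2p,
\end{align*}
which is the desired conclusion. There is no real obstacle here: the content is entirely contained in Lemma \ref{lemma4.4}, and the corollary amounts to combining that lemma with the observation that $\Q(\alpha)$ automatically contains $\Q(\sqrt{2})$.
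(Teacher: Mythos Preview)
Your proof is correct and follows essentially the same approach as the paper: the paper simply notes that Lemma \ref{lemma4.4} gives $(1+\sqrt{2})^{1/p}$ degree $p$ over $\Q(\sqrt{2})$ and states the corollary, while you spell out the tower law and the (implicit) observation that $\sqrt{2}=\alpha^p-1\in\Q(\alpha)$.
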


\begin{theorem}$ $
\begin{enumerate}
    \item $\Q(\sqrt{2}, \theta) = \Q((1+\sqrt{2})^{1/p})$
    \item $f_{r,p}$ is irreducible over $\Q(\sqrt{2})$, hence also over $\Q$.
\end{enumerate}
\end{theorem}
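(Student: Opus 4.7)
The plan is to first establish the equality of fields in (1) by a direct manipulation of the explicit formula for $\theta$, and then deduce (2) from a degree comparison using Corollary \ref{corollary2}. Throughout, fix a consistent choice of $(1+\sqrt{2})^{1/p}$, so that $(1+\sqrt{2})^{2r/p}$ unambiguously denotes $\bigl((1+\sqrt{2})^{1/p}\bigr)^{2r}$. The key algebraic observation is that the formula \eqref{eq503} for $\theta = \rho_0$ can be inverted to give
\begin{align*}
(1+\sqrt{2})^{-2r/p} \;=\; (-1)^r\left(1 + \frac{2\sqrt{2}}{\theta - \sqrt{2}}\right),
\end{align*}
so $(1+\sqrt{2})^{2r/p}$ lies in $\Q(\sqrt{2}, \theta)$. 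Since $p$ is an odd prime with $p \nmid r$, we have $\gcd(2r, p) = 1$, and Bezout supplies integers $u, v$ with $2ru + pv = 1$. Then
\begin{align*}
(1+\sqrt{2})^{1/p} \;=\; \bigl((1+\sqrt{2})^{2r/p}\bigr)^{u} (1+\sqrt{2})^{v} \;\in\; \Q(\sqrt{2}, \theta),
\end{align*}
giving the inclusion $\Q((1+\sqrt{2})^{1/p}) \subseteq \Q(\sqrt{2}, \theta)$.

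The reverse inclusion is immediate. The field $\Q((1+\sqrt{2})^{1/p})$ contains $\bigl((1+\sqrt{2})^{1/p}\bigr)^{p} - 1 = \sqrt{2}$, and plugging the element $(1+\sqrt{2})^{-2r/p}$ back into the formula \eqref{eq503} for $\theta = \rho_0$ shows $\theta \in \Q((1+\sqrt{2})^{1/p})$. This establishes statement (1).

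For (2), Corollary \ref{corollary2} gives $[\Q((1+\sqrt{2})^{1/p}) : \Q] = 2p$, so part (1) together with the fact that $[\Q(\sqrt{2}):\Q] = 2$ yields $[\Q(\sqrt{2}, \theta) : \Q(\sqrt{2})] = p$. Hence the minimal polynomial of $\theta$ over $\Q(\sqrt{2})$ has degree $p$ and divides $f_{r,p}$, which also has degree $p$; the two must therefore agree up to a nonzero scalar in $\Q(\sqrt{2})$. Thus $f_{r,p}$ is irreducible over $\Q(\sqrt{2})$. Any factorization of $f_{r,p}$ over $\Q$ would a fortiori be a factorization over $\Q(\sqrt{2})$, so irreducibility over $\Q$ follows.

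I do not anticipate any real obstacle: the proof is essentially a direct calculation with the explicit root formula, combined with a Bezout trick exploiting $\gcd(2r, p) = 1$ and a single degree count leveraging Corollary \ref{corollary2}. The only mildly delicate point is the bookkeeping for the fractional power $(1+\sqrt{2})^{2r/p}$, which is handled by fixing the $p$th root of $1+\sqrt{2}$ at the outset.
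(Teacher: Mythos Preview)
Your proof is correct and follows essentially the same approach as the paper's own proof: both invert the explicit formula \eqref{eq504} for $\theta$ to extract $(1+\sqrt{2})^{2r/p}$, use the coprimality of $2r$ and $p$ to recover $(1+\sqrt{2})^{1/p}$ (you via Bezout, the paper via the multiplicative inverse of $2r$ mod $p$, which is the same thing), and then deduce irreducibility from the degree count $[\Q(\sqrt{2},\theta):\Q(\sqrt{2})]=p$. The only cosmetic difference is that the paper cites Lemma~\ref{lemma4.4} directly for the degree, whereas you cite its Corollary~\ref{corollary2}; the content is identical.
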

\begin{proof}
Clearly $\sqrt{2} \in \Q((1+\sqrt{2})^{1/p})$. From \eqref{eq504} it is obvious that $\theta \in \Q((1+\sqrt{2})^{1/p})$, and also that $(1+\sqrt{2})^{2r/p} \in \Q(\sqrt{2}, \theta)$. Since $2r$ is coprime to $p$, by raising this to the power of the multiplicative inverse of $2r$ mod $p$, we see that $(1+\sqrt{2})^{1/p} \in \Q(\sqrt{2}, \theta)$. Thus, $\Q(\sqrt{2}, \theta) = \Q((1+\sqrt{2})^{1/p})$. 

Since $\theta$ is a root of the degree $p$ polynomial $f_{r,p}$ over $\Q(\sqrt{2})$, and $[\Q(\sqrt{2}, \theta):\Q(\sqrt{2})] = [\Q((1+\sqrt{2})^{1/p}):\Q(\sqrt{2})] = p$ by Lemma \ref{lemma4.4}, it follows that $f_{r,p}$ is irreducible over $\Q(\sqrt{2})$.
\end{proof}

\begin{corollary} \label{corollary3}
    Let $L$ and $S$ be defined as in Corollary \ref{corollary1}. Then $L(\sqrt{2}) = S$.
\end{corollary}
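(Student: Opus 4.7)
The plan is to prove the two inclusions $L(\sqrt{2}) \subseteq S$ and $S \subseteq L(\sqrt{2})$ separately, with the hard direction being the second one.

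First, the containment $L(\sqrt{2}) \subseteq S$ is essentially immediate from Corollary \ref{corollary1}: we already know $L \subseteq S$, and $\sqrt{2} \in S$ since $\sqrt{2} = (1+\sqrt{2})^{1/p} \cdot (1+\sqrt{2})^{(p-1)/p} - 1$, or more simply because $(1+\sqrt{2})^{1/p} \in S$ raised to the $p$th power gives $1+\sqrt{2} \in S$, hence $\sqrt{2} \in S$. So $L(\sqrt{2}) \subseteq S$.

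For the reverse inclusion $S \subseteq L(\sqrt{2})$, it suffices to show that both generators of $S$ over $\mathbb{Q}$, namely $(1+\sqrt{2})^{1/p}$ and $\zeta_p$, lie in $L(\sqrt{2})$. The first generator is handled by the preceding theorem in the excerpt, which states $\mathbb{Q}(\sqrt{2},\theta) = \mathbb{Q}((1+\sqrt{2})^{1/p})$; since $\theta \in L$ and $\sqrt{2} \in L(\sqrt{2})$, we get $(1+\sqrt{2})^{1/p} \in L(\sqrt{2})$. For the second generator, I would use equation \eqref{eq502}, which was derived in the proof of Theorem \ref{thm:formula for roots} and gives
\begin{align*}
\zeta_p^i = (-1)^r (1+\sqrt{2})^{2r/p}\left(1 + \frac{2\sqrt{2}}{\rho_i - \sqrt{2}}\right).
\end{align*}
Taking $i=1$, the right-hand side lies in $L(\sqrt{2})$: the factor $(1+\sqrt{2})^{2r/p}$ is a power of $(1+\sqrt{2})^{1/p} \in L(\sqrt{2})$, and $\rho_1 \in L \subseteq L(\sqrt{2})$. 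Hence $\zeta_p \in L(\sqrt{2})$, completing the inclusion.

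The only subtlety worth flagging is making sure that $(1+\sqrt{2})^{2r/p}$ really lies in the subfield generated by $(1+\sqrt{2})^{1/p}$ — this is just because it equals $\bigl((1+\sqrt{2})^{1/p}\bigr)^{2r}$ — and that all manipulations respect the chosen $p$th root (the formula \eqref{eq501} in the excerpt fixes a consistent branch, so no ambiguity arises). No hard step is involved; the argument is essentially a bookkeeping exercise that leverages the explicit formula for the roots established in Theorem \ref{thm:formula for roots}.
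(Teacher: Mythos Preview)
Your proof is correct. Both inclusions are handled validly, and your use of \eqref{eq502} to place $\zeta_p$ explicitly inside $L(\sqrt{2})$ is clean.

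The paper's proof of the inclusion $S \subseteq L(\sqrt{2})$ takes a slightly different route: rather than chasing down $\zeta_p$ directly via the root formula, it observes that $L(\sqrt{2})$ is the splitting field of $(x^2-2)f_{r,p}(x)$ over $\Q$ and hence is Galois over $\Q$; since $\Q((1+\sqrt{2})^{1/p}) = \Q(\sqrt{2},\theta) \subseteq L(\sqrt{2})$ and $S$ is the Galois closure of $\Q((1+\sqrt{2})^{1/p})$, the Galois property forces $S \subseteq L(\sqrt{2})$ automatically. Your approach is more explicit and computational, leveraging the formula \eqref{eq502} already established in Theorem \ref{thm:formula for roots}; the paper's approach is more structural, avoiding the need to locate $\zeta_p$ by hand. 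Both are short and neither requires any idea beyond what is already on the table.
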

\begin{proof}
Since $L$ is the splitting field of $f_{r,p}$ over $\Q$, $L(\sqrt{2})$ is the splitting field of $(x^2-2)f_{r,p}(x)$ over $\Q$, hence it is Galois over $\Q$. Clearly, $S$ is the Galois closure of $\Q((1+\sqrt{2})^{1/p})$. Since $\Q(\theta) \subseteq L$ and $\Q((1+\sqrt{2})^{1/p}) = \Q(\sqrt{2}, \theta) \subseteq L(\sqrt{2})$, we see $S \subseteq L(\sqrt{2})$. But from Corollary \ref{corollary1}, $L \subseteq S$, and $\sqrt{2} \in S$, so also $L(\sqrt{2}) \subseteq S$.
\end{proof}

\begin{theorem} \label{thm:degree of S}
    $[S:\Q] = 2p(p-1)$
\end{theorem}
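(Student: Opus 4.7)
The plan is to exhibit $S$ as the compositum of $\Q((1+\sqrt{2})^{1/p})$ and $\Q(\zeta_p)$, and combine the known degrees $[\Q((1+\sqrt{2})^{1/p}):\Q] = 2p$ (Corollary \ref{corollary2}) and $[\Q(\zeta_p):\Q] = p-1$. Since $\Q(\zeta_p)/\Q$ is Galois, the standard compositum formula gives
\begin{align*}
[S:\Q] = \frac{2p(p-1)}{[\Q((1+\sqrt{2})^{1/p}) \cap \Q(\zeta_p) : \Q]},
\end{align*}
so it suffices to show that the intersection $\Q((1+\sqrt{2})^{1/p}) \cap \Q(\zeta_p)$ equals $\Q$.

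First I would observe that the degree of this intersection over $\Q$ divides both $2p$ and $p-1$; since $p$ is an odd prime, $\gcd(p,p-1) = 1$ and $p-1$ is even, so $\gcd(2p,p-1) = 2$. Hence the intersection is either $\Q$ or a quadratic field. To rule out the latter, I would invoke the standard fact that $\Gal(\Q(\zeta_p)/\Q) \cong (\Z/p\Z)^{\times}$ is cyclic of order $p-1$, so it has a unique subgroup of index $2$, and consequently $\Q(\zeta_p)$ contains a \emph{unique} quadratic subfield, namely $\Q(\sqrt{p^*})$ with $p^* = (-1)^{(p-1)/2}p$ (by the Gauss sum computation).

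If the intersection were quadratic, it would have to equal $\Q(\sqrt{p^*})$, giving $\Q(\sqrt{p^*}) \subseteq \Q((1+\sqrt{2})^{1/p})$. But $\Q((1+\sqrt{2})^{1/p})$ already contains $\Q(\sqrt{2})$, and these two quadratic fields are distinct because $\pm p$ is not $2$ times a rational square for any odd prime $p$. The compositum $\Q(\sqrt{2},\sqrt{p^*})$ would then be a degree-$4$ subfield of $\Q((1+\sqrt{2})^{1/p})$. Since every subfield of $\Q((1+\sqrt{2})^{1/p})$ has degree dividing $2p$ and $4 \nmid 2p$, this is a contradiction. I expect this last rigidity argument—that two incomparable quadratic subfields would force a degree-$4$ subfield inside a field whose degree has only one factor of $2$—to be the crux; apart from it, the proof is a mechanical application of the compositum formula together with the standard facts about quadratic subfields of cyclotomic fields.
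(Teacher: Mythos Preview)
Your proof is correct. Both your argument and the paper's hinge on the same key fact: the unique quadratic subfield of $\Q(\zeta_p)$ is $\Q(\sqrt{p^*})$ with $p^* = (-1)^{(p-1)/2}p$, not $\Q(\sqrt{2})$. The organizational difference is minor: the paper argues that $\sqrt{2}\notin\Q(\zeta_p)$, deduces $[\Q(\zeta_p,\sqrt{2}):\Q]=2(p-1)$, and then combines the divisibilities $2p \mid [S:\Q]$ and $2(p-1)\mid [S:\Q]$ with the obvious upper bound $[S:\Q]\le p\cdot 2(p-1)$; you instead apply the compositum formula and compute the intersection $\Q((1+\sqrt{2})^{1/p})\cap\Q(\zeta_p)$ directly, ruling out the quadratic case by the ``no degree-$4$ subfield in a degree-$2p$ field'' observation. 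Your route is a touch more structural (it identifies exactly why the two towers are as disjoint as possible), while the paper's lcm-plus-upper-bound argument is marginally quicker; neither buys anything the other lacks.
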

\begin{proof}
It is a well-known fact from algebraic number theory that the cyclotomic field $\Q(\zeta_p)$ has cyclic Galois group of order $p-1$ over $\Q$ (see \cite[p. 238]{Cox2012}), hence it has a unique quadratic subfield, which is $\Q(\sqrt{(-1)^{(p-1)/2} p})$ (see, for example, \cite[p. 40, exercise 8]{NumberFieldsMarcus}). In particular, $\sqrt{2} \not\in \Q(\zeta_p)$, so $[\Q(\zeta_p, \sqrt{2}): \Q(\zeta_p)] = 2$. Because $[\Q(\zeta_p):\Q] = p-1$, we deduce by the tower theorem that $[\Q(\zeta_p, \sqrt{2}) : \Q] = 2(p-1)$. Because $[\Q((1+\sqrt{2})^{1/p}) : \Q] = 2p$ by Corollary \ref{corollary2}, we deduce that both $2p$ and $2(p-1)$ divide $[S:\Q]$, so $2p(p-1)$ divides $[S:\Q]$. But since $(1+\sqrt{2})^{1/p}$ is a root of a degree $p$ polynomial over $\Q(\zeta_p, \sqrt{2})$, the degree of $S$ can be no more than $2p(p-1)$, so $[S:\Q] = 2p(p-1)$.
\end{proof}

\begin{theorem} \label{thm:galois group of S}
    We have the following:
    \begin{enumerate}
        \item $\gal(S/\Q(\zeta_p)) \cong D_{2p}$\label{item:galois gp of S over Q zetap}

        \item If $D_{2p} = \langle \sigma, \tau \mid \sigma^p = \tau^2 = 1, \tau^{-1} \sigma \tau = \sigma^{-1} \rangle$, and $\varphi: (\Z/p\Z)^\times \to \aut(D_{2p})$ is the map $k \mapsto \varphi_k$ where $\varphi_k(\tau) = \tau, \varphi_k(\sigma) = \sigma^k$, then $\gal(S/\Q) \cong D_{2p} \rtimes_\varphi (\Z/p\Z)^\times$.
    \end{enumerate}
\end{theorem}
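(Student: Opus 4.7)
The plan is to replace the generator $(1+\sqrt{2})^{1/p}$ with one whose minimal polynomial over $\Q$ is completely explicit, which is what will make the splitting in part~(2) manifest. Setting $\alpha = (1+\sqrt{2})^{1/p}$, we have $S = \Q(\zeta_p, \alpha)$ and $\sqrt{2} = \alpha^p - 1$; squaring shows $\alpha$ satisfies
\begin{align*}
g(x) := x^{2p} - 2x^p - 1 \in \Q[x].
\end{align*}
By Corollary~\ref{corollary2}, $[\Q(\alpha):\Q] = 2p$, so $g$ is the minimal polynomial of $\alpha$ over $\Q$. Combining Theorem~\ref{thm:degree of S} with the tower law gives $[S:\Q(\zeta_p)] = 2p$, so $g$ is also the minimal polynomial of $\alpha$ over $\Q(\zeta_p)$. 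Solving $g(x) = 0$ via the substitution $X = x^p$ and using $1-\sqrt{2} = -(1+\sqrt{2})^{-1}$ identifies the roots of $g$ in $S$ as $\{\zeta_p^i\alpha : 0 \leq i < p\} \cup \{-\zeta_p^i\alpha^{-1} : 0 \leq i < p\}$.

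For part~(1), I would define $\sigma, \tau \in \gal(S/\Q(\zeta_p))$ by $\sigma(\alpha) = \zeta_p\alpha$ and $\tau(\alpha) = -\alpha^{-1}$; both are well-defined automorphisms because each sends $\alpha$ to a root of its minimal polynomial $g$ over $\Q(\zeta_p)$. A short direct computation yields $\sigma^p = \tau^2 = 1$ and $\tau^{-1}\sigma\tau = \sigma^{-1}$, so $\langle \sigma, \tau \rangle$ is a surjective image of $D_{2p}$. Since $|\gal(S/\Q(\zeta_p))| = 2p = |D_{2p}|$, this subgroup is the whole Galois group, proving (1).

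For part~(2), the extension $\Q(\zeta_p)/\Q$ is Galois with $\gal(\Q(\zeta_p)/\Q) \cong (\Z/p\Z)^\times$, giving the short exact sequence
\begin{align*}
    1 \to \gal(S/\Q(\zeta_p)) \to \gal(S/\Q) \to (\Z/p\Z)^\times \to 1.
\end{align*}
The main step is to split this sequence, and this is possible precisely because $g$ lies in $\Q[x]$: for each $k \in (\Z/p\Z)^\times$ the assignment $\psi_k(\zeta_p) = \zeta_p^k$, $\psi_k(\alpha) = \alpha$ defines an element of $\gal(S/\Q)$, since the automorphism $\zeta_p \mapsto \zeta_p^k$ of $\Q(\zeta_p)$ fixes the coefficients of $g$ and so can be extended by sending the root $\alpha$ to itself. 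The map $k \mapsto \psi_k$ is a group homomorphism splitting the sequence, so $\gal(S/\Q) \cong D_{2p} \rtimes (\Z/p\Z)^\times$ for some action. Finally, to match the stated action $\varphi$, I would compute the conjugation on generators: $\psi_k\sigma\psi_k^{-1}(\alpha) = \psi_k(\zeta_p\alpha) = \zeta_p^k\alpha = \sigma^k(\alpha)$ and $\psi_k\tau\psi_k^{-1}(\alpha) = \psi_k(-\alpha^{-1}) = -\alpha^{-1} = \tau(\alpha)$, giving $\sigma \mapsto \sigma^k$ and $\tau \mapsto \tau$, exactly $\varphi_k$. The most delicate point is really the choice of the involution $\tau: \alpha \mapsto -\alpha^{-1}$ (as opposed to other involutions $\tau \sigma^j \in D_{2p}$): it is this particular choice that is centralized by every $\psi_k$ and hence compatible with the splitting, making the action on $\langle \sigma,\tau\rangle$ trivial on $\tau$ and equal to the $k$-th power map on $\sigma$.
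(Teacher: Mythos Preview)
Your proof is correct and follows essentially the same approach as the paper. Both arguments identify the minimal polynomial $x^{2p}-2x^p-1$ of $\alpha=(1+\sqrt2)^{1/p}$ over $\Q(\zeta_p)$, define the same generators $\sigma:\alpha\mapsto\zeta_p\alpha$ and $\tau:\alpha\mapsto -\alpha^{-1}$ (the paper writes $\tau(\alpha)=(1-\sqrt2)^{1/p}$, which is the same element), and then exhibit the same lifts $\psi_k:\zeta_p\mapsto\zeta_p^k,\ \alpha\mapsto\alpha$. The only difference is presentational: you phrase part~(2) via a split short exact sequence and compute the conjugation action, whereas the paper parametrizes $\gal(S/\Q)$ by pairs $(g,k)$ and verifies the full multiplication table directly; these are the same computation in different packaging.
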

\begin{proof}
Theorem \ref{thm:degree of S} and the tower law imply $[S:\mathbb{Q}(\zeta_p)]=2p$. Note that the minimal polynomial of $(1+\sqrt{2})^{1/p}$ over $\Q(\zeta_p)$ is $(x^p-(1+\sqrt{2}))(x^p-(1-\sqrt{2}))=x^{2p}-x^p-1$, and that $S/\mathbb{Q}(\zeta_p)$ is Galois. Since $S = \mathbb{Q}(\zeta_p)((1+\sqrt{2})^{1/p})$, any automorphism in $\gal(S/\Q(\zeta_p))$ is determined by the root to which it sends $(1+\sqrt{2})^{1/p}$. 

Let $\sigma, \tau \in \gal(S/\Q(\zeta_p))$ be such that
\begin{align*}
\sigma((1+\sqrt{2})^{1/p}) &= \zeta_p (1+\sqrt{2})^{1/p}, \ \ \ \ \ \ \ \ \tau((1+\sqrt{2})^{1/p}) = (1-\sqrt{2})^{1/p}.
\end{align*}
Note that $(\tau \circ \sigma^k)((1+\sqrt{2})^{1/p}) = \tau(\zeta_p^k (1+\sqrt{2})^{1/p}) = \zeta_p^k (1-\sqrt{2})^{1/p}$. Also, $(\sigma^k \circ \tau)((1+\sqrt{2})^{1/p}) = \sigma^k((1-\sqrt{2})^{1/p}) = \sigma^k\left(-(1+\sqrt{2})^{-1/p})\right) = - \zeta_p^{-k} (1+\sqrt{2})^{-1/p} = \zeta_p^{-k} (1-\sqrt{2})^{1/p}$. Hence, $\tau \circ \sigma^k = \sigma^{-k} \circ \tau$. Also, these computations show any element of $\gal(S/\Q(\zeta_p))$ can be written uniquely as either $\sigma^k$ or $\tau \circ \sigma^k$. Therefore, $\gal(S/\Q(\zeta_p))$ is isomorphic to the dihedral group $D_{2p}$, generated by $\sigma$ and $\tau$. This gives \eqref{item:galois gp of S over Q zetap}.

We turn to computing $\gal (S/\Q)$. Each automorphism is determined by where it sends $\zeta_p$ and $(1+\sqrt{2})^{1/p}$. As there are $(p-1)\cdot 2p$ combinations for these possibilities, each combination must be realized by some automorphism. For $g \in \gal(S/\Q(\zeta_p))$, we denote the automorphism that sends $\zeta_p$ to $\zeta_p^k$ and $(1+\sqrt{2})^{1/p}$ to $g((1+\sqrt{2})^{1/p})$ by the tuple $(g,k)$. Note that
\begin{align*}
((g, k) \circ (\sigma^{j'}, k'))\left((1+\sqrt{2})^{1/p}\right) &= (g, k)(\zeta_p^{j'} (1+\sqrt{2})^{1/p})\\
&= \zeta_p^{kj'} g\left((1+\sqrt{2})^{1/p}\right),
\end{align*}
and
\begin{align*}
(g, k) \circ (\tau \sigma^{j'}, k')\left((1+\sqrt{2})^{1/p}\right) &= (g, k) (\zeta_p^{j'}(1-\sqrt{2})^{1/p})\\
&= \zeta_p^{kj'} (g, k)\left(-\frac{1}{(1+\sqrt{2})^{1/p}}\right)\\
&= \zeta_p^{kj'} \frac{-1}{g((1+\sqrt{2})^{1/p})}.
\end{align*}
Thus, we obtain the multiplication table
\begin{align*}
\begin{tabular}{|L|L L|}
\hline
\circ & (\sigma^{j'}, k') & (\tau \sigma^{j'}, k')\\
\hline
(\sigma^j, k) & (\sigma^{kj'+j}, kk') & (\tau \sigma^{kj'-j}, kk')\\
(\tau \sigma^j, k) & (\tau \sigma^{kj'+j}, kk') & (\sigma^{kj'-j}, kk')\\
\hline
\end{tabular}
\end{align*}
This multiplication table agrees with that of $D_{2p} \rtimes_\varphi (\Z/p\Z)^\times$.
\end{proof}

Now that we understand the Galois group of $S/\Q$, we can determine $\gal (L/\Q)$.

\begin{theorem}\label{thm:gal gp of L over Q}
We have the following:
    \begin{enumerate}
        \item With the tuple notation from the proof of Theorem \ref{thm:galois group of S},
        \begin{align}
        (\tau^s \sigma^j, k)(\rho_i) = \rho_{(-1)^s(ki-2rj)} \label{eq:galois group on roots},
        \end{align}

        \item $[L:\Q] = p(p-1)$,

        \item $\gal(L/\Q) \cong \aff(\F_p)$.
    \end{enumerate}
\end{theorem}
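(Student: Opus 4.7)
My plan is to rewrite the roots in a form that makes the Galois action transparent, and then read off (1), (2), and (3) in turn. Set $\alpha := (1+\sqrt{2})^{1/p}$ and $\beta := (1-\sqrt{2})^{1/p}$, taking real $p$-th roots so that $\alpha\beta = -1$ (using that $p$ is odd). Formula \eqref{eq505} then reads
\[
\rho_i \;=\; \sqrt{2}\cdot\frac{\beta^r \zeta_p^i + \alpha^r}{\beta^r \zeta_p^i - \alpha^r}.
\]
With this normalization, every quantity in $\rho_i$ is built from the generators $\alpha$, $\zeta_p$, $\sqrt{2}$ of $S$, so the action of an arbitrary $(\tau^s \sigma^j, k) \in \gal(S/\Q)$ on $\rho_i$ becomes a direct substitution.

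For part (1), I would use the definitions from the proof of Theorem \ref{thm:galois group of S}: $(\tau^s \sigma^j, k)$ sends $\zeta_p \to \zeta_p^k$, and sends $\alpha$ to $\zeta_p^j \alpha$ (when $s=0$) or $\zeta_p^j \beta$ (when $s=1$). Raising to the $p$-th power shows the automorphism fixes $\sqrt{2}$ when $s=0$ and negates it when $s=1$; the relation $\alpha\beta = -1$ then determines the image of $\beta$. Substituting into the expression above and factoring $\zeta_p^{jr}$ out of numerator and denominator (when $s=0$), or $\zeta_p^{-jr}$ followed by $\zeta_p^{-ki}$ (when $s=1$), one recovers $(\tau^s \sigma^j, k)(\rho_i) = \rho_{(-1)^s(ki - 2rj)}$, proving \eqref{eq:galois group on roots}. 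The delicate step is the case $s=1$, where the sign flip of $\sqrt{2}$ must combine with the swap of $\alpha$ and $\beta$ to produce the overall sign $(-1)^s$.

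Parts (2) and (3) are then bookkeeping on top of (1). Since $L$ is the splitting field of $f_{r,p}$, we have $\gal(S/L) = \{g : g(\rho_i) = \rho_i \text{ for all } i\}$. By (1), $(\tau^s \sigma^j, k)$ lies in this kernel iff $(-1)^s(ki - 2rj) \equiv i \pmod{p}$ for every $i \in \Z/p\Z$, which (using $p \nmid r$) forces $j \equiv 0$ and $(-1)^s k \equiv 1$; hence $\gal(S/L) = \{e,\, (\tau, -1)\}$ has order $2$. Combined with Theorem \ref{thm:degree of S}, this yields $[L:\Q] = p(p-1)$. For (3), (1) shows the faithful action of $\gal(L/\Q)$ on the roots identifies each automorphism with the affine map $i \mapsto ((-1)^s k)\, i - 2r(-1)^s j$ on $\F_p$, and so lands in $\aff(\F_p)$. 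As $(s, k, j)$ ranges over $\{0,1\}\times\F_p^\times\times\F_p$, the coefficient pair $((-1)^s k,\, -2r(-1)^s j)$ hits every element of $\F_p^\times \times \F_p$ exactly twice; by cardinality, the induced map $\gal(L/\Q) \to \aff(\F_p)$ is an isomorphism. The main obstacle is carrying out the algebraic simplification in (1) without sign errors; once that formula is in hand, (2) and (3) are immediate.
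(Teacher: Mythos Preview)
Your proposal is correct and follows essentially the same approach as the paper: both compute the action on the roots via direct substitution into the expression \eqref{eq505}, and then deduce (2) and (3) from the resulting formula. The only minor variation is in (2), where you compute $\gal(S/L)$ directly as the pointwise stabilizer of all the $\rho_i$, whereas the paper first observes $L$ is contained in the fixed field of $(\tau,-1)$ and then invokes Corollary~\ref{corollary3} ($S=L(\sqrt{2})$) to pin down $[S:L]=2$; your route is self-contained and avoids that corollary, but the two are logically equivalent.
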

\begin{proof}
Recall from \eqref{eq505} that
\begin{align*}
\rho_i &= \sqrt{2} \frac{(1-\sqrt{2})^{r/p} \zeta_p^i + (1+\sqrt{2})^{r/p}}{(1-\sqrt{2})^{r/p} \zeta_p^i - (1+\sqrt{2})^{r/p}}.
\end{align*}
Observe that $(\sigma^j,k)(\sqrt{2}) = \sqrt{2}$, and $(\tau \sigma^j,k)(\sqrt{2}) = -\sqrt{2}$. Therefore,
\begin{align*}
(\sigma^j, k)(\rho_i) &= \sqrt{2} \frac{\zeta_p^{-rj} (1-\sqrt{2})^{r/p} \zeta_p^{ki} + \zeta_p^{rj} (1+\sqrt{2})^{r/p}}{\zeta_p^{-rj}(1-\sqrt{2})^{r/p} \zeta_p^{ki} - \zeta_p^{rj} (1+\sqrt{2})^{r/p}}= \rho_{ki-2rj},\\
(\tau \sigma^j, k)(\rho_i) &= -\sqrt{2} \frac{\zeta_p^{-rj} (1+\sqrt{2})^{r/p} \zeta_p^{ki} + \zeta_p^{rj} (1-\sqrt{2})^{r/p}}{\zeta_p^{-rj}(1+\sqrt{2})^{r/p} \zeta_p^{ki} - \zeta_p^{rj} (1-\sqrt{2})^{r/p}}\\
&= \sqrt{2} \frac{\zeta_p^{rj}(1-\sqrt{2})^{r/p} + \zeta_p^{-rj}(1+\sqrt{2})^{r/p} \zeta_p^{ki}}{\zeta_p^{rj}(1-\sqrt{2})^{r/p}-\zeta_p^{-rj}(1+\sqrt{2})^{r/p}\zeta_p^{ki}}= \rho_{-ki+2rj}.
\end{align*}
This proves \eqref{eq:galois group on roots}.

From \eqref{eq:galois group on roots}, we see that each $\rho_i$ is fixed under $(\tau, -1)$. This implies that $L$ is contained in the fixed field of $\langle (\tau, -1) \rangle$, and in particular is a proper subfield of $S$. We have $S = L(\sqrt{2})$ by Corollary \ref{corollary3}, so $\sqrt{2} \not\in L$ and $[S : L] = 2$, hence $L$ is the fixed field of $\langle (\tau, -1) \rangle$. By the tower theorem and Theorem \ref{thm:degree of S}, we then see $[L:\Q] = p(p-1)$. (In fact, one can show that the $\rho_i$ for $i \neq 0$ are Galois conjugates of each other over $\Q(\theta)$, hence their minimal polynomial is $f_{r,p}(x)/(x-\theta)$. Thus, after adjoining any two roots of $f_{r,p}$, we obtain all of them).

From \eqref{eq:galois group on roots}, we see that the permutations of the roots induced by $\gal(L/\Q)$ are precisely the permutations of the form $\rho_i \mapsto \rho_{ki+b}$ for $k \in (\Z/p\Z)^\times, b \in \Z/p\Z$. (Indeed, one sees from \eqref{eq:galois group on roots} that every permutation induced by $\gal(L/\Q)$ is of this form, but there are only $p(p-1)$ such permutations and $|\gal(L/\Q)|=p(p-1)$, so these must be all the permutations.) Thus, $\gal(L/\Q)$ is isomorphic to the group $\aff(\F_p)$ of affine transformations in $\F_p$. (One can also see this via the isomorphism $\gal(L(\sqrt{2})/\Q(\sqrt{2})) \cong \gal(L/\Q)$, obtained by restricting an automorphism of $L(\sqrt{2})$ to $L$, and then noting that $L(\sqrt{2}) = S$ is the splitting field of $x^p-(1+\sqrt{2})$ over $\Q(\sqrt{2})$. One can check such an extension has Galois group $\aff(\F_p)$.)
\end{proof}

\begin{corollary}
    The field $\Q(\theta)$ depends only on $p$, and not on $r$.
\end{corollary}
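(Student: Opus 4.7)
The plan is to apply the Galois correspondence for $S/\Q$ (which is Galois by Corollary \ref{corollary1} and Theorem \ref{thm:galois group of S}) to realize $\Q(\theta_{r,p})$ as the fixed field of a subgroup of $\gal(S/\Q)$ that does not involve $r$. Since $\theta_{r,p} \in L \subseteq S$ and $S/\Q$ is Galois, the Galois correspondence gives $\Q(\theta_{r,p}) = S^{H_r}$, where
\begin{align*}
H_r = \{g \in \gal(S/\Q) : g(\theta_{r,p}) = \theta_{r,p}\}
\end{align*}
is the stabilizer of $\theta_{r,p}$. Thus the task reduces to verifying that $H_r$ is the same subgroup of $\gal(S/\Q)$ for every $r$ with $p \nmid r$.

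To compute $H_r$, I would invoke the action formula \eqref{eq:galois group on roots} from Theorem \ref{thm:gal gp of L over Q}. Specializing to $i = 0$ and using $\theta_{r,p} = \rho_0$, one has
\begin{align*}
(\tau^s \sigma^j, k)(\theta_{r,p}) = \rho_{(-1)^s(k \cdot 0 - 2rj)} = \rho_{(-1)^{s+1} \cdot 2rj}.
\end{align*}
For this to equal $\rho_0$ one needs $2rj \equiv 0 \pmod{p}$, and since $p$ is odd and $p \nmid r$, this forces $j \equiv 0 \pmod{p}$. Consequently
\begin{align*}
H_r = \{(\tau^s, k) : s \in \{0,1\},\ k \in (\Z/p\Z)^\times\},
\end{align*}
a subgroup of order $2(p-1)$ whose description does not refer to $r$. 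As a consistency check, $[\Q(\theta_{r,p}):\Q] = |\gal(S/\Q)|/|H_r| = 2p(p-1)/(2(p-1)) = p$, matching the irreducibility of $f_{r,p}$ from the previous theorem.

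Since $H_r$ is independent of $r$, so is its fixed field $S^{H_r} = \Q(\theta_{r,p})$, which proves the corollary. There is really no main obstacle: once the explicit action of $\gal(S/\Q)$ on the labeled roots $\rho_i$ has been worked out in Theorem \ref{thm:gal gp of L over Q}, the corollary follows by inspection, since the dependence on $r$ in the formula $\rho_{(-1)^s(ki - 2rj)}$ disappears exactly when $j = 0$, i.e., precisely on the stabilizer of $\rho_0$.
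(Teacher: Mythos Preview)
Your proof is correct and follows essentially the same approach as the paper: both compute the stabilizer of $\theta_{r,p}=\rho_0$ in $\gal(S/\Q)$ using the action formula \eqref{eq:galois group on roots}, observe that it equals $\{(\tau^s,k)\}$ independently of $r$, and conclude via the Galois correspondence. Your version is more detailed (including the degree consistency check), but the argument is the same.
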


\begin{proof}
    From \eqref{eq:galois group on roots}, we see $\theta =\theta_{r,p} = \rho_0$ is fixed by $(\tau^s \sigma^j, k)$ precisely when $j$ is zero, so $\gal(S/\Q(\theta))$ is the subgroup of elements of the form $(\tau^s, k)$. Thus $\Q(\theta)$ is the fixed field of this subgroup, which clearly does not depend on $r$.
\end{proof}

It is interesting to understand the discriminant of $\Q(\theta)$. As a first step, we compute the discriminant of $f_{r,p}(x)$.

\begin{theorem}\label{thm:disc of frp}
The discriminant of $f_{r,p}(x)$ is $(-1)^{\frac{p(p-1)}{2}} 2^{\frac{3}{2}(p-1)(p-2)} p^p$.
\end{theorem}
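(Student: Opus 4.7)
The plan is to apply the standard formula
\begin{align*}
    \text{disc}(f_{r,p}) = (-1)^{p(p-1)/2}\, U_r^{\,p-2} \prod_{i=0}^{p-1} f_{r,p}'(\rho_i),
\end{align*}
where $U_r$ denotes the leading coefficient of $f_{r,p}$. Reading off the definition of $f_{r,p}$ gives $U_r = \frac{(1+\sqrt{2})^r - (1-\sqrt{2})^r}{2\sqrt{2}}$, the $r$th Pell number, and the goal becomes to show that $\prod_i f_{r,p}'(\rho_i) = p^p \cdot 2^{3(p-1)(p-2)/2}/U_r^{\,p-2}$, so that the leading-coefficient factors cancel and an $r$-independent expression survives.

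First I would evaluate $f_{r,p}'(\rho_i)$. Since $\rho_i$ is a root of $f_{r,p}$, the identity $(1+\sqrt{2})^r(\rho_i+\sqrt{2})^p = (1-\sqrt{2})^r(\rho_i-\sqrt{2})^p$ holds, and writing $A_i$ for this common value, differentiating $f_{r,p}$ term by term collapses the two halves of $f_{r,p}'(\rho_i)$ to
\begin{align*}
    f_{r,p}'(\rho_i) = -\frac{p\,A_i}{\rho_i^2 - 2}.
\end{align*}
Hence $\prod_i f_{r,p}'(\rho_i) = (-p)^p \cdot \prod_i A_i \big/ \prod_i(\rho_i^2 - 2)$.

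Next, I would evaluate both remaining products through $f_{r,p}(\pm\sqrt{2})$. Direct substitution in the definition gives $f_{r,p}(\pm\sqrt{2}) = (1\pm\sqrt{2})^r \cdot 2^{3(p-1)/2}$, so $f_{r,p}(\sqrt{2})\,f_{r,p}(-\sqrt{2}) = (-1)^r \cdot 2^{3(p-1)}$. Combined with the factorization $f_{r,p}(x) = U_r \prod_i(x-\rho_i)$ and the identity $\rho_i^2 - 2 = -(\sqrt{2}-\rho_i)(-\sqrt{2}-\rho_i)$, this yields $\prod_i(\rho_i^2-2) = (-1)^r \cdot 2^{3(p-1)} / U_r^2$. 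For $\prod_i A_i$, choose the representation $A_i = (1-\sqrt{2})^r(\rho_i - \sqrt{2})^p$ and use $\prod_i(\rho_i-\sqrt{2}) = -f_{r,p}(\sqrt{2})/U_r$ (with the sign coming from $p$ odd) to obtain $\prod_i A_i = -(-1)^r \cdot 2^{3p(p-1)/2}/U_r^{\,p}$.

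Assembling the pieces, the conjugate identity $(1+\sqrt{2})(1-\sqrt{2}) = -1$ and the oddness of $p$ cause the $r$-dependence to collapse: the factors of $(-1)^r$ in $\prod_i A_i$ and $\prod_i(\rho_i^2-2)$ cancel, and $(-p)^p = -p^p$ absorbs the remaining sign, leaving precisely $\prod_i f_{r,p}'(\rho_i) = p^p \cdot 2^{3(p-1)(p-2)/2}/U_r^{\,p-2}$. Multiplying by $(-1)^{p(p-1)/2}\,U_r^{\,p-2}$ then gives the claimed discriminant. The only real obstacle is the sign bookkeeping: carefully tracking powers of $-1$ (exploiting $(-1)^p = -1$) and correctly applying the discriminant formula with the non-monic leading coefficient $a_p = U_r$. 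As a sanity check, the case $r=1$, $p=3$ gives $f_{1,3}(x) = x^3 + 3x^2 + 6x + 2$ with discriminant $-216 = (-1)^3 \cdot 2^3 \cdot 3^3$, in agreement with the formula.
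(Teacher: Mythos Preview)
Your argument is correct and takes a genuinely different route from the paper. The paper works with the root-difference formula $\Delta = c^{2(p-1)}(-1)^{p(p-1)/2}\prod_{i\neq j}(\rho_i-\rho_j)$ and evaluates the double product directly by substituting the explicit expression \eqref{eq503} for the roots in terms of $\zeta_p$ and $(1+\sqrt{2})^{-2r/p}$; this leads to a fairly long calculation involving products over roots of unity and the evaluation of a cyclotomic polynomial at $1$. Your approach, by contrast, never uses the explicit root formula: the derivative identity $f_{r,p}'(\rho_i) = -pA_i/(\rho_i^2-2)$ reduces everything to the two special values $f_{r,p}(\pm\sqrt{2})$, which are trivial to read off from the definition. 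This is considerably more economical and makes the cancellation of $U_r$ (and hence the $r$-independence of the discriminant) transparent. The paper's method, on the other hand, keeps the roots visible throughout, which fits naturally with the surrounding Galois-theoretic analysis of the $\rho_i$ in Section~\ref{sec:Galois Theory}.
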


\begin{proof}
From \eqref{eq503}, if $\lambda = (1+\sqrt{2})^{-2r/p}$, then
\begin{align*}
\rho_i &= \sqrt{2} + \frac{2\sqrt{2}}{(-1)^r \lambda \zeta_p^i - 1}.
\end{align*}
Then since the leading coefficient of $f_{r,p}(x)$ is $c = \frac{(1+\sqrt{2})^r - (1-\sqrt{2})^r}{2\sqrt{2}}$, the discriminant $\Delta$ is
\begin{align*}
\Delta &= c^{2(p-1)} (-1)^{\frac{p(p-1)}{2}} \prod_{i=0}^{p-1} \prod_{\substack{j=0 \\ j \neq i}}^{p-1} (\rho_i-\rho_j)\\
&= c^{2(p-1)} (-1)^{\frac{p(p-1)}{2}} \prod_{i=0}^{p-1} \prod_{\substack{j=0 \\ j \neq i}}^{p-1} \left(\frac{2\sqrt{2}}{(-1)^r\zeta_p^i \lambda - 1} - \frac{2\sqrt{2}}{(-1)^r \zeta_p^j \lambda - 1}\right)\\
&= c^{2(p-1)} (-1)^{\frac{p(p-1)}{2}} (2\sqrt{2})^{p(p-1)} \prod_{i=0}^{p-1} \prod_{\substack{j=0 \\ j \neq i}}^{p-1} \left(\frac{1}{(-1)^r\zeta_p^i \lambda-1} - \frac{1}{(-1)^r\zeta_p^j \lambda-1}\right)\\
&= c^{2(p-1)} (-1)^{\frac{p(p-1)}{2}} 2^{\frac{3}{2}p(p-1)} \prod_{i=0}^{p-1} \prod_{\substack{j=0 \\ j \neq i}}^{p-1} \frac{(-1)^r \lambda (\zeta_p^j-\zeta_p^i)}{((-1)^r \zeta_p^i \lambda-1)((-1)^r\zeta_p^j\lambda-1)}\\
&= c^{2(p-1)} (-1)^{\frac{p(p-1)}{2}} 2^{\frac{3}{2}p(p-1)} (1+\sqrt{2})^{-2r(p-1)} \prod_{i=0}^{p-1} \prod_{\substack{j=0 \\ j \neq i}}^{p-1} \frac{\zeta_p^j-\zeta_p^i}{((-1)^r\zeta_p^i \lambda-1)((-1)^r\zeta_p^j \lambda-1)},
\end{align*}
where we have used that $\lambda^p = (1+\sqrt{2})^{-2r}$.

For a fixed $i$, note that
\begin{align*}
\prod_{\substack{j=0 \\ j \neq i}}^{p-1} \frac{\zeta_p^j-\zeta_p^i}{((-1)^r\zeta_p^i \lambda-1)((-1)^r\zeta_p^j \lambda-1)} &= \frac{1}{((-1)^r\zeta_p^i \lambda-1)^{p-2}} \prod_{\ell=0}^{p-1} \frac{1}{(-1)^r\zeta_p^\ell \lambda-1} \prod_{\substack{j=0\\j\neq i}}^{p-1} (\zeta_p^j-\zeta_p^i).
\end{align*}
Now,
\begin{align*}
\prod_{\substack{j=0\\j\neq i}}^{p-1} (\zeta_p^j-\zeta_p^i) &= (-1)^{p-1} \zeta_p^{i(p-1)} \prod_{\ell=1}^{p-1} (1-\zeta_p^\ell) = p \zeta_p^{-i},
\end{align*}
where the last equality follows from evaluating the $p$th cyclotomic polynomial at $1$. 

Note that $\prod_{\ell=0}^{p-1} ((-1)^r\zeta_p^\ell \lambda-1)$ is the negative of the constant term of the polynomial $(x+1)^p-(-1)^r(1+\sqrt{2})^{-2r}$, so
\begin{align*}
\prod_{\ell=0}^{p-1} ((-1)^r\zeta_p^\ell \lambda-1) &= -1+(-1)^r(1+\sqrt{2})^{-2r}.
\end{align*}

\begin{align*}
\Delta &= c^{2(p-1)} (-1)^{\frac{p(p-1)}{2}} 2^{\frac{3}{2}p(p-1)} (1+\sqrt{2})^{-2r(p-1)} \prod_{i=0}^{p-1} \frac{p \zeta_p^{-i}}{((-1)^r\zeta_p^i \lambda-1)^{p-2} (-1+(-1)^r(1+\sqrt{2})^{-2r})}\\
&= c^{2(p-1)} (-1)^{\frac{p(p-1)}{2}} 2^{\frac{3}{2}p(p-1)} (1+\sqrt{2})^{-2r(p-1)} (-1+(-1)^r(1+\sqrt{2})^{-2r})^{-(p-2)-p} p^p \zeta_p^{-\frac{p(p-1)}{2}}\\
&= c^{2(p-1)} (-1)^{\frac{p(p-1)}{2}} 2^{\frac{3}{2}p(p-1)}((1+\sqrt{2})^r-(1-\sqrt{2})^r)^{-2(p-1)} p^p\\
&= (-1)^{\frac{p(p-1)}{2}} 2^{\frac{3}{2}p(p-1)} (2\sqrt{2})^{-2(p-1)} p^p\\
&= (-1)^{\frac{p(p-1)}{2}} 2^{\frac{3}{2}(p-1)(p-2)} p^p. \qedhere
\end{align*}

\end{proof}

Thus, we know that the discriminant of $\Q(\theta)$ divides $(-1)^{\frac{p(p-1)}{2}} 2^{\frac{3}{2}(p-1)(p-2)} p^p$ by a factor of a square \cite{Simon2001}. From computations, it appears that the discriminant of $f_{r,p}(x)$ has a much higher power of $2$ than the discriminant of $\Q(\theta)$. We make the following conjecture:

\begin{conjecture}
The power of $2$ in the discriminant of $\Q(\theta)$ is $2^{\frac{3}{2}(p-1)}$.
\end{conjecture}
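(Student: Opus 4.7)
My plan is to relate $K := \mathbb{Q}(\theta)$ to the field $K' := K(\sqrt{2}) = \mathbb{Q}(\alpha)$ with $\alpha = (1+\sqrt{2})^{1/p}$ via the tower formula, exploiting the fact that $K'$ has a convenient power basis. The corollary to Theorem \ref{thm:gal gp of L over Q} lets me assume $r=1$ throughout.

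The minimal polynomial of $\alpha$ over $\mathbb{Q}$ is $g(x) = x^{2p} - 2x^p - 1$, with derivative $g'(x) = 2p\, x^{p-1}(x^p-1)$. A direct computation of $\operatorname{Res}(g,g')$ using $\operatorname{Res}(g,x) = g(0) = -1$ and $\operatorname{Res}(g, x^p - 1) = \prod_{\zeta^p = 1} g(\zeta) = (-2)^p$ yields $\operatorname{disc}(g) = 2^{3p} p^{2p}$. Modulo $2$ one has $g(x) \equiv (x^p - 1)^2$, and $(g(x) - (x^p - 1)^2)/2 = -1$ is a unit modulo $2$. Dedekind's criterion therefore shows that $\mathbb{Z}[\alpha]$ is maximal at the prime $2$, so $v_2(\operatorname{disc}(K')) = 3p$.

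By the tower formula $\operatorname{disc}(K'/\mathbb{Q}) = N_{K/\mathbb{Q}}(\mathfrak{d}_{K'/K}) \cdot \operatorname{disc}(K/\mathbb{Q})^2$, the conjecture is equivalent to showing $v_2(N_{K/\mathbb{Q}}(\mathfrak{d}_{K'/K})) = 3$. Decomposing the norm as $\sum_{\mathfrak{q} \mid 2} f(\mathfrak{q}|2) \cdot v_\mathfrak{q}(\mathfrak{d}_{K'_{\mathfrak{q}'}/K_\mathfrak{q}})$, I need to analyze the primes of $K$ above $2$. Since the coefficient of $x^{p-k}$ in $f_{1,p}(x)$ is $\binom{p}{k} 2^{\lfloor k/2 \rfloor}$, one has $f_{1,p}(x) \equiv x^{p-1}(x+1) \pmod 2$, and Hensel's lemma yields a factorization $f_{1,p}(x) = f_1(x)(x-\theta_2)$ in $\mathbb{Z}_2[x]$ with $\theta_2 \in \mathbb{Z}_2^{\times}$ and $\theta_2 \equiv -1 \pmod 2$. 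Thus $K \otimes_\mathbb{Q} \mathbb{Q}_2 \cong \bigl(\prod_\mathfrak{q} K_\mathfrak{q}\bigr) \times \mathbb{Q}_2$, where the product ranges over the irreducible factors of $f_1$ over $\mathbb{Q}_2$. The $\mathbb{Q}_2$ factor contributes $v_2(\operatorname{disc}(\mathbb{Q}_2(\sqrt{2})/\mathbb{Q}_2)) = 3$ to the sum.

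The main remaining task is to show each $K_\mathfrak{q}(\sqrt{2})/K_\mathfrak{q}$ is unramified, so the corresponding contribution is $0$. Since $p$ is coprime to the residue characteristic, the $p$-th power map is bijective on the group of $1$-units of $\mathbb{Q}_2(\sqrt{2})$, so $\mu := (1+\sqrt{2})^{1/p}$ exists uniquely in $\mathbb{Q}_2(\sqrt{2})$, and the formula \eqref{eq505} realizes every root as $\rho_i = -\sqrt{2}\,(\mu^2 - \zeta_p^i)/(\mu^2 + \zeta_p^i) \in \widetilde{L} := \mathbb{Q}_2(\sqrt{2}, \zeta_p)$. The Newton polygon of $f_{1,p}$ at $2$ (a single slope of $-\tfrac12$ of length $p-1$, immediate from the coefficient formula above) forces $v_2(\rho_i) = \tfrac12$ for $i \neq 0$, so $e(K_\mathfrak{q}/\mathbb{Q}_2) \geq 2$; combining with $K_\mathfrak{q} \subseteq \widetilde{L}$ and $e(\widetilde{L}/\mathbb{Q}_2) = 2$ yields $e(K_\mathfrak{q}/\mathbb{Q}_2) = 2$. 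Multiplicativity of ramification then gives $e(K_\mathfrak{q}(\sqrt{2})/K_\mathfrak{q}) = 1$, so the extension is unramified and the local relative discriminant is trivial. Summing contributions yields $v_2(N_{K/\mathbb{Q}}(\mathfrak{d}_{K'/K})) = 3$, and hence $v_2(\operatorname{disc}(K)) = (3p - 3)/2 = 3(p-1)/2$. The principal obstacle is the local analysis at the primes of $K$ coming from $f_1$, which relies on the explicit embedding of each $K_\mathfrak{q}$ into $\widetilde{L}$ afforded by the formula for $\rho_i$ together with the Newton polygon computation.
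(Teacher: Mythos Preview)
The paper does not prove this statement: it is presented as an open conjecture, supported only by numerical computation, with no argument offered. Your proposal therefore cannot be compared to ``the paper's own proof'' --- you are supplying a proof where none exists.

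Your argument is essentially correct and proves the conjecture. The key steps all check out: the discriminant $\operatorname{disc}(g) = 2^{3p}p^{2p}$ follows from the resultant computation you sketch; the Dedekind criterion at~$2$ goes through cleanly because $g(x) - (x^p-1)^2 = -2$, so the auxiliary polynomial $M$ reduces to a unit in $\mathbb{F}_2[x]$; the tower formula reduces the problem to showing $v_2\bigl(N_{K/\mathbb{Q}}(\mathfrak{D}_{K'/K})\bigr)=3$; and your local analysis correctly identifies the single degree-one prime $\mathfrak{q}_0$ of $K$ above~$2$ (from the Hensel factor $x-\theta_2$) as contributing exactly $3$, with all remaining primes contributing $0$ because each $K_\mathfrak{q}\subseteq \mathbb{Q}_2(\sqrt{2},\zeta_p)$ already has ramification index~$2$ over $\mathbb{Q}_2$.

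Two small expository points. First, the Newton polygon of $f_{1,p}$ itself has \emph{two} segments --- one of slope $0$ and length $1$, then one of slope $-\tfrac12$ and length $p-1$ --- so ``a single slope of $-\tfrac12$'' is literally true only for the factor $f_1$; this does not affect the argument, since what you need is that every root of $f_1$ has valuation $\tfrac12$. Second, to pin down that it is specifically $\rho_0$ (rather than some other $\rho_i$) that equals $\theta_2\in\mathbb{Z}_2$, one can either observe directly from the formula that $\rho_0$ is fixed by $\sqrt{2}\mapsto -\sqrt{2}$ (since $\sigma(\mu)=-\mu^{-1}$), or simply note that the direct valuation computation $v_2(\rho_i)=\tfrac12$ for $i\neq 0$ leaves $\rho_0$ as the unique root of valuation~$0$. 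Either way the conclusion stands.
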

It appears that the power of $p$ in the discriminant of $\Q(\theta)$ is often $p^p$, but not always; for instance, when $p=13,31$, it is $p^{p-2}$. It would be interesting to understand the precise power of $p$ dividing the discriminant of $\Q(\theta)$.

\section{Linear forms in logarithms} \label{sec:Linear forms in logarithms}

\subsection{Overview of this section}

The main result of this section is the following.

\begin{theorem}\label{thm:no nontrivial solutions p and y big}
    There are no nontrivial solutions to $x^2-2 = y^p$ for $p > 1951$. If $911 < p \leq 1951$, then any nontrivial solution has $y$ less than the value given in Table \ref{Tab:parameters}.
\end{theorem}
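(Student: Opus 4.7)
The plan is to iteratively apply lower bounds for linear forms in two logarithms against an upper bound coming directly from the Diophantine equation, tightening the admissible ranges of $p$ and $r$ at each pass, until the residual situation forces either a contradiction (when $p > 1951$) or an explicit upper bound on $y$ (when $911 < p \le 1951$).

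From \eqref{eq5} and its conjugate, the ratio satisfies
\begin{align*}
\frac{x-\sqrt{2}}{x+\sqrt{2}} = (-1)^{r}(1+\sqrt{2})^{-2r}\left(\frac{a-b\sqrt{2}}{a+b\sqrt{2}}\right)^{p},
\end{align*}
and passing to absolute values produces the two-term linear form
\begin{align*}
\Lambda := p\,\log\left|\frac{a-b\sqrt{2}}{a+b\sqrt{2}}\right| - 2r\log(1+\sqrt{2}).
\end{align*}
The identity $1 - (x-\sqrt{2})/(x+\sqrt{2}) = 2\sqrt{2}/(x+\sqrt{2})$ together with $|x|\asymp y^{p/2}$ yields the upper estimate $\log|\Lambda| \le -\tfrac{p}{2}\log y + O(1)$.

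The argument proceeds in four stages. In stage one I pair the upper estimate with a general two-logarithm lower bound (Th\'eor\`eme 2 of \cite{Laurent2008}, applied with generous parameters) to extract a preliminary upper bound on $p$ uniform in $r$, which is Theorem \ref{thm:initial bound}. In stage two I combine this preliminary bound with the divisibility argument of \cite[Proposition 15.7.1]{Coh2007} to force $r = \pm 1$, and by the substitution noted after Theorem \ref{thm:reduction to thue equation} I reduce to the case $r = 1$. In stage three I re-apply Laurent's theorem with $r = 1$, obtaining the sharper Theorem \ref{thm:improved bound thm 2}. In stage four, with $p$ now small enough to treat $p$ and $y$ jointly, I invoke Laurent's bound once more with auxiliary parameters optimized as functions of $p$ and $\log y$: the height $h((a+b\sqrt{2})/(a-b\sqrt{2}))$ is only $\asymp \tfrac12\log y$ when $|a+b\sqrt{2}| \asymp y^{1/2}$, so the lower bound depends on $\log y$ only logarithmically, whereas the upper bound is linear in $\log y$. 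For $p > 1951$ the resulting inequality is violated for every nontrivial $y$, yielding no solutions; for each $p$ in the window $911 < p \le 1951$, solving the inequality for $\log y$ produces the explicit threshold recorded in Table \ref{Tab:parameters}.

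The main obstacle is the parameter optimization at stage four. Each invocation of Laurent's theorem involves a multi-parameter minimization whose admissible region tightens as the a priori bound on $p$ shrinks, so the optimal choices at stage four differ from those used at stages one and three and must be re-tuned jointly in $p$ and $\log y$ in order to push the critical $p$ as low as $1951$. A secondary technical point is establishing the precise relationship between $h((a+b\sqrt{2})/(a-b\sqrt{2}))$ and $y$ via the norm identity $a^{2}-2b^{2} = -y$ (the case $r = 1$), including verifying that $|a+b\sqrt{2}| \asymp y^{1/2}$ is valid throughout the relevant range; the implicit constants must be tracked through to the final inequality without slack, since a loose constant at this stage would inflate the table and shrink the list of primes the subsequent continued-fraction argument of Section \ref{sec:small y} can eliminate.
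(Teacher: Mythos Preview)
Your four-stage architecture mirrors the paper's exactly, and stages one through three are correct as described. Stage four, however, contains a substantive misconception and an omission that together constitute a gap.

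The misconception is your claim that ``the lower bound depends on $\log y$ only logarithmically, whereas the upper bound is linear in $\log y$.'' This is false. Once $r=\pm1$, the height $h(\alpha_1)=\log|a+\epsilon b\sqrt{2}|\asymp\tfrac12\log y$ feeds into Laurent's parameter $a_1\asymp 2\log y$, and every known lower bound on $\log|\Lambda|$ is (at least) linear in $a_1$, hence linear in $\log y$. Both sides of the inequality are therefore affine in $\log y$; the mechanism producing the table is a \emph{leading-coefficient comparison}. One must arrange the auxiliary parameters so that the leading coefficient on the lower-bound side is strictly smaller in absolute value than $p/2$, whereupon the contradiction holds for all sufficiently large $y$, and the table entry is the crossover point coming from the lower-order terms. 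If the dependence were really logarithmic versus linear as you say, the inequality would be violated for large $y$ regardless of $p$, and Conjecture~\ref{conj:solns to main lebesgue nagell eqn} would follow immediately.

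The omission is that ``invoke Laurent's bound once more'' understates what is required. Th\'eor\`eme~2 of \cite{Laurent2008} (your stages one and three) only reaches $p\le1951$; to descend to the window $911<p\le1951$ the paper switches to the stronger Th\'eor\`eme~1 (Proposition~\ref{thm:LaurentThm1} here), which carries the additional integer parameters $K,L,R_1,R_2,S_1,S_2$. The decisive idea is to let $K=\lceil K'\log y\rceil$ grow with $y$ while holding $L,R_1,R_2,\mu$ fixed; $S_1,S_2$ are then forced by conditions \eqref{eq:thm1cnd1}--\eqref{eq:thm1cnd2}, and $\varrho$ is determined implicitly from \eqref{eq:thm1cnd3}. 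The resulting leading-coefficient condition is $\mu LK'\log\varrho<p/2$, and the search over $(K',L,R_1,R_2,\mu)$ shows this first becomes feasible around $p=916$. Without naming Th\'eor\`eme~1 and the $K\asymp\log y$ scaling explicitly, stage four of your plan does not reach the claimed range.
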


In Section \ref{sec:small y}, we will rule out solutions for the values of $y$ under the lower bound in Table \ref{Tab:parameters}, thus proving Theorem \ref{thm:only triv solns for p big}.

We proceed in subsection \ref{subsec:upper bound} by defining a linear form in two logarithms of algebraic numbers and then derive an upper bound that is exponentially small in its coefficients. The results of Laurent \cite{Laurent2008} can then be used to obtain a lower bound on the linear form that can contradict the upper bound for sufficiently large values of $p$.

The proof of Theorem \ref{thm:no nontrivial solutions p and y big} proceeds in several stages. First, we obtain the following initial bound on $p$ in subsection \ref{subsec:initial bound}, without making any assumption on $r$ beyond what is stated in Theorem \ref{thm:reduction to thue equation}.

\begin{theorem} \label{thm:initial bound}
    There are no nontrivial solutions to $x^2-2 = y^p$ for $p > 6949$. That is, if $p > 6949$, then every solution to \eqref{eq:main lebesgue nagell eqn} is trivial.
\end{theorem}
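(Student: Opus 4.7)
The approach is to apply the theory of linear forms in two logarithms (specifically Laurent's bounds in \cite{Laurent2008}), following the general strategy sketched in \cite[p.~520]{Coh2007}. The starting point is to divide the identity $x+\sqrt{2} = (1+\sqrt{2})^r(a+b\sqrt{2})^p$ from Theorem~\ref{thm:reduction to thue equation} by its Galois conjugate, producing
\[
\frac{x+\sqrt{2}}{x-\sqrt{2}} = \left(\frac{1+\sqrt{2}}{1-\sqrt{2}}\right)^{r} \left(\frac{a+b\sqrt{2}}{a-b\sqrt{2}}\right)^{p}.
\]
After absorbing signs so that both bases on the right become positive reals, I would set $\alpha_1 = (1+\sqrt{2})^2$ and $\alpha_2 = \pm(a+b\sqrt{2})/(a-b\sqrt{2}) > 0$, and study the linear form
\[
\Lambda = r\log\alpha_1 + p\log\alpha_2 = \log\left|\frac{x+\sqrt{2}}{x-\sqrt{2}}\right|,
\]
which is nonzero because $x \neq 0$.

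For the upper bound, I would use $(x+\sqrt{2})/(x-\sqrt{2}) = 1 + 2\sqrt{2}/(x-\sqrt{2})$ together with $|\log(1+z)| \leq 2|z|$ for small $z$, giving $|\Lambda| \ll |x|^{-1}$. Since $|x| = \sqrt{y^p+2} \geq y^{p/2}$ and $y \geq 7$ by Theorem~\ref{thm:elementary 1}, this yields $\log|\Lambda| \leq -\tfrac{p}{2}\log 7 + O(1)$, which is linear in $p$.

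For the matching lower bound, I would apply \cite[Theorem~2]{Laurent2008}. The height $h(\alpha_1) = 2\log(1+\sqrt{2})$ is an absolute constant. The element $\alpha_2$ has $\Q$-norm $1$, and because $y$ is odd with $\gcd(a,b)=1$, its primitive integer minimal polynomial is of shape $\pm yX^2 - 2(a^2+2b^2)X \pm y$, so that
\[
h(\alpha_2) = \tfrac{1}{2}\bigl(\log y + |\log\alpha_2|\bigr).
\]
Isolating $p\log\alpha_2 = \Lambda - r\log\alpha_1$ and using a preliminary crude bound on $|\Lambda|$ controls $|\log \alpha_2|$ and shows $h(\alpha_2) \leq \tfrac{1}{2}\log y + O(1)$. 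Laurent's theorem then produces a lower bound of the form $\log|\Lambda| \geq -C\,h(\alpha_2)\,(\log\max(|r|,p))^{2}$; comparing with the upper bound gives $p\log 7 \ll (\log y)(\log p)^{2}$, and since $\log y$ cancels against $h(\alpha_2)$ this collapses to $p \ll (\log p)^{2}$, forcing $p$ to be bounded.

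The main obstacle is extracting the sharp numerical threshold $p \leq 6949$ rather than a merely asymptotic statement. This requires tuning the free parameters in Laurent's theorem (likely with numerical assistance) and carefully tracking the dependence on $r$, which can be as large as $(p-1)/2$ and so is comparable to $p$ in the ``size'' parameter of the linear form. A further delicate point is justifying that $h(\alpha_2)$ can be controlled as above without circularity, since $|\log\alpha_2|$ depends on the very quantity $|\Lambda|$ we are trying to bound; a short bootstrap, starting from a deliberately coarse preliminary bound on $|\Lambda|$, handles this. Theorem~\ref{thm:initial bound} is intentionally crude and will be refined in the subsequent subsections of Section~\ref{sec:Linear forms in logarithms}, first by using it to pin down $r = \pm 1$ via the argument of \cite[Proposition~15.7.1]{Coh2007}, and then by reapplying Laurent's theorem with sharper parameter choices.
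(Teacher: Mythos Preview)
Your approach is essentially the paper's own: the same linear form $\Lambda = \log\bigl((x+\sqrt{2})/(x-\sqrt{2})\bigr)$, the same upper bound $\log|\Lambda| < -\tfrac{p}{2}\log y + O(1)$, and Laurent's Theorem~2 for the lower bound, with the $\log y$ in $h(\alpha_2)$ cancelling against the $\log y$ in the upper bound to leave $p \ll (\log p)^2$. One correction: you must retain $\log y$ (not weaken to $\log 7$) in the upper bound for this cancellation to go through---your displayed inequality $p\log 7 \ll (\log y)(\log p)^2$ does not bound $p$ since $y$ is unbounded; the paper keeps the full $-\tfrac{p}{2}\log y$, verifies multiplicative independence of $\alpha_1,\alpha_2$ in $\Z[\sqrt{2}]$, uses $y \geq 23$, and then numerically optimizes Laurent's free parameters to $(\mu,\varrho) \approx (0.509,\,7.99)$ to extract the explicit threshold $6949$.
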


In subsection \ref{subsec:r is 1}, we use a method of Bugeaud, Mignotte, and Siksek \cite[Proposition 15.7.1]{Coh2007}, which relies on the modularity of elliptic curves, to prove the following.

\begin{theorem} \label{thm: r is 1}
    Let $x, y, p \in \Z$, $p\geq 17$ a prime such that $x^2-2 = y^p$. Let $r$ be as in Theorem \ref{thm:reduction to thue equation}. If $p < 20000$, then $r = \pm 1$.
\end{theorem}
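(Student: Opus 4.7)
The plan is to adapt the method of Bugeaud--Mignotte--Siksek \cite[Proposition 15.7.1]{Coh2007}. To a putative nontrivial solution $(x,y)$ of \eqref{eq:main lebesgue nagell eqn} with associated parameters $(a,b,r)$ as in Theorem \ref{thm:reduction to thue equation}, we attach a Frey elliptic curve $E = E_{x,y,r}$ (defined over $\Q$ or, if necessary, over $\Q(\sqrt{2})$) whose conductor, by Theorem \ref{thm:elementary 1} and a standard Serre conductor computation, is supported on $2$ and on the primes dividing $y$, with multiplicative reduction at each odd prime $\ell \mid y$.

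Applying modularity together with Ribet's level-lowering theorem then yields that the mod-$p$ Galois representation $\bar\rho_{E,p}$ arises from a cuspidal newform of weight $2$ at a small level $N$; the Frey curves appropriate to this setting lead to $N = 128$, matching the level discussed later in Section \ref{sec:newform coefficients}. Two technical hypotheses have to be verified for every $p$ in the range: absolute irreducibility of $\bar\rho_{E,p}$ (which, for $p \geq 17$, follows from the rational $2$-torsion of $E$ combined with a Mazur-type argument using the constraints on $y$ from Theorem \ref{thm:elementary 1}), and the absence of unwanted ramification that would obstruct level lowering.

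With level lowering in hand, for each newform $f$ of level $128$ (of which there are only a handful, listed explicitly) and each auxiliary prime $\ell \nmid 2p$ of good reduction for $E$, we obtain a congruence
\[a_\ell(E_{x,y,r}) \equiv c_\ell(f) \pmod{p},\]
where $c_\ell(f)$ denotes the $\ell$-th Fourier coefficient of $f$. The trace $a_\ell(E_{x,y,r})$ depends only on $x \bmod \ell$; in turn, for fixed $r$, the parameterization
\[x + \sqrt{2} \equiv (1+\sqrt{2})^r (a+b\sqrt{2})^p \pmod{\ell}\]
in $\Z[\sqrt{2}]/\ell$, combined with the fact that $p$-th powering acts as an explicit endomorphism of $(\Z[\sqrt{2}]/\ell)^{\times}$ determined by $p \bmod (\ell-1)$ and by the splitting behavior of $\ell$ in $\Z[\sqrt{2}]$, lets us enumerate every possible value of $x \bmod \ell$ as $(a,b) \in \F_\ell^2$ varies, and hence every possible value of $a_\ell(E_{x,y,r})$. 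This produces a finite set $T_{r,\ell,p} \subseteq \Z$ of possible values of $a_\ell(E_{x,y,r}) - c_\ell(f)$, each of absolute value at most $2\sqrt{\ell} + (\ell+1)$, and $p$ must divide some element of $T_{r,\ell,p}$.

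The final step is a computer search. For each prime $17 \le p < 20000$ and each $r$ with $2 \le |r| \le (p-1)/2$, we loop over a small collection of auxiliary primes $\ell$ until either every element of $T_{r,\ell,p}$ vanishes (a degenerate case handled separately by intersecting constraints from several $\ell$), or some element is a nonzero integer with all prime factors below $17$, which contradicts $p$ dividing it. Combining the divisibility constraints obtained at several $\ell$ via a gcd is expected to eliminate every candidate pair $(r,p)$ with $|r| \ge 2$ in the stated range. The main obstacle is the combinatorial size of the search, but a crucial simplification is that for each fixed $\ell$ the set $T_{r,\ell,p}$ depends only on the residues of $r$ and $p$ modulo $\ell-1$, collapsing the effective search to a manageable size.
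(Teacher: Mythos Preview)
Your framework—Frey curve, level lowering to conductor $128$, comparing traces at auxiliary primes—is the same as the paper's, but the execution misses the ingredient that makes the method actually constrain $r$. Unless you take $\ell \equiv 1 \pmod{p}$, the $p$-th power map on $(\Z[\sqrt 2]/\ell)^\times$ is a bijection, so as $(a,b)$ ranges over $\F_\ell^2$ the residue $x\bmod \ell$ runs over \emph{all} of $\F_\ell$ regardless of $r$, and your set $T_{r,\ell,p}$ is independent of $r$. Your remark that things are ``determined by $p \bmod (\ell-1)$'' gestures at this but never pins it down. The paper chooses $\ell = np+1$ with $\ell \equiv \pm 1\pmod 8$ (so that $\sqrt 2\in\F_\ell$), and further imposes $a_\ell(F)\not\equiv \pm(\ell+1)\pmod p$ to force $\ell\nmid y$ (otherwise $E$ has multiplicative reduction at $\ell$ and the trace comparison changes form), and $(1+\theta)^n\not\equiv 1\pmod\ell$ so that $1+\theta$ is not a $p$-th power in $\F_\ell^\times$. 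None of these conditions appears in your proposal, and without the first one the method yields nothing.

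The second difference is that the paper never iterates over $r$. Once $\ell$ is chosen as above, the relation $\delta+\theta\equiv (1+\theta)^r(a+b\theta)^p\pmod\ell$ combined with the surjection $\Phi\colon \F_\ell^\times\to\Z/p\Z$ (discrete log followed by reduction mod $p$) gives $r\equiv \Phi(\delta+\theta)/\Phi(1+\theta)\pmod p$ directly for each admissible $\delta\in X_\ell$. One then intersects the resulting sets $R_\ell(F)\subset\Z/p\Z$ over a handful of auxiliary $\ell$ and over the four newforms $F$ at level $128$ to conclude $r\equiv\pm 1\pmod p$. Your proposed loop over all $2\le|r|\le(p-1)/2$ would be far less efficient even if correctly set up, and your ``crucial simplification'' that $T_{r,\ell,p}$ depends only on residues of $r,p$ modulo $\ell-1$ is not usable here: since one is forced to take $\ell\equiv 1\pmod p$, the auxiliary prime necessarily grows with $p$ and there is no collapse of the search across different $p$.
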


In the next stage of the proof, we use Theorem \ref{thm: r is 1} to obtain stronger estimates on the linear form.

Throughout this section, we consider the general Thue equation \eqref{eq3}. We may assume $\abs{r} \leq \frac{p-1}{2}$. We will also assume $x > 0$ by possibly negating $x$.

We make frequent use of the following simple lower bound on $y$ without further comment.

\begin{proposition}
    Let $x, y \in \Z$, $p \geq 3$ be a prime such that $x^2-2 = y^p$. If $y \neq -1$, then $y \geq 23$.
\end{proposition}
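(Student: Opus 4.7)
My plan is to narrow the candidates for $y$ using Theorem \ref{thm:elementary 1} and then eliminate the sole surviving small value $y = 7$ by combining the remaining elementary congruence results with Chen's theorem and a single modular computation.

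A nontrivial solution has $y > 0$ (by the remark following Definition \ref{defn:triv and nontriv solns}) and satisfies $y \equiv 7 \pmod{8}$ with every prime divisor of $y$ congruent to $\pm 1 \pmod{8}$ (Theorem \ref{thm:elementary 1}). The only positive integers less than $23$ with $y \equiv 7 \pmod{8}$ are $y = 7$ and $y = 15$, and $y = 15 = 3 \cdot 5$ is excluded at once by the prime-divisor condition (neither $3$ nor $5$ is $\equiv \pm 1 \pmod 8$). Thus it suffices to rule out $y = 7$.

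For $y = 7$, I would chain several constraints on $p$. Since $7 \equiv 7 \pmod{24}$, Theorem \ref{thm:elementary 3} forces $3 \mid x$, and Theorem \ref{thm:elementary 5}\eqref{item:elementary 5.4} then forces $p \equiv 1 \pmod{3}$; together with $p$ odd this yields $p \pmod{24} \in \{1, 7, 13, 19\}$. Chen's theorem (Theorem \ref{thm:chens theorem}) eliminates $p \equiv 1, 7 \pmod{24}$, leaving only $p \equiv 13$ or $19 \pmod{24}$. To finish I reduce $x^2 = 7^p + 2$ modulo $73$: since $7^{12} \equiv -1 \pmod{73}$, the element $7$ has order exactly $24$ modulo $73$, so $7^p \pmod{73}$ depends only on $p \pmod{24}$. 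A short computation gives $7^{13} + 2 \equiv -5 \pmod{73}$ and $7^{19} + 2 \equiv 45 \pmod{73}$, and quadratic reciprocity (both reducing to $\left(\frac{3}{5}\right) = -1$) shows that neither residue is a square mod $73$. This contradicts $x^2 \equiv 7^p + 2 \pmod{73}$ and rules out $y = 7$.

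The delicate step is the choice of modulus: $y = 7$ passes every immediate local test from Section \ref{sec:elementary observations}, so one needs a prime where the order of $7$ equals $24$ in order to distinguish residue classes of $p \pmod{24}$; even then a single such modulus cannot by itself kill all four surviving classes, which is why Chen's theorem must be invoked first to trim the problem down to the two classes that $73$ can then finish.
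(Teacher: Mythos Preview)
Your proof is correct, and the arithmetic at $73$ checks out: $7^{12}\equiv -1\pmod{73}$, so $7^{13}+2\equiv -5$ and $7^{19}+2\equiv 45$ modulo $73$, and both Legendre symbols reduce to $\left(\frac{3}{5}\right)=-1$ as you say. (One nitpick: from $7^{12}\equiv -1$ alone the order of $7$ mod $73$ could a priori be $8$ or $24$; it is $24$ because $7^4\equiv -8\not\equiv -1\pmod{73}$. This does not affect the argument, since all you actually use is $7^{24}\equiv 1$.)

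The paper takes a different route. After the same reduction to $y=7$ with $p\equiv 1\pmod 3$, it writes $y^p=7z^3$ with $z=7^{(p-1)/3}$, so that $x^2-2=7z^3$; clearing denominators lands on the Mordell curve $Y^2=X^3+98$ (with $Y=7x$, $X=7z$), whose only integral points are $(7,\pm 21)$, forcing $z=1$ and hence $p=1$, a contradiction.

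The tradeoff is this: the paper's argument needs a single integral-point computation on a Mordell curve but is otherwise independent of any modular machinery, keeping the proposition logically prior to Chen's theorem. Your argument is computationally lighter---just a residue check---but leans on Theorem~\ref{thm:chens theorem}, a much deeper external input, to dispose of half the residue classes before the local obstruction at $73$ can finish the job. Both are valid; the paper's choice keeps this lemma inside the ``classical'' toolkit.
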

\begin{proof}
Note that since $x^2-2 = y^p$ and we are assuming $y \neq -1$, we must have $y > 0$. By Theorem \ref{thm:elementary 3} and part \eqref{item:elementary 5.4} of Theorem \ref{thm:elementary 5}, either $y \geq 23$ or $y = 7$ and $p \equiv 1 \pmod{3}$. However, we can easily rule out the case $y = 7$ and $p \equiv 1 \pmod{3}$ using elliptic curves. Indeed, if $p \equiv 1 \pmod{3}, y = 7$ then $y^p = 7z^3$ for $z = 7^{\frac{p-1}{3}}$, so we have $x^2-2 = 7z^3$. Multiplying through by $7^2$ and setting $Y = 7x, X = 7z$ we have $Y^2 = X^3 + 98$. The integral points on this elliptic curve are $(X, Y) = (7, \pm 21)$ \cite{lmfdb:28224.dp2}, which implies $z = 1$ and $ p = 1$, a contradiction.
\end{proof}

\begin{remark}
We could continue ruling out small values of $y$ using this method, but we will eventually get a much larger lower bound on $y$ anyway in Section \ref{sec:small y}, so $y \geq 23$ will suffice for now.
\end{remark}

The key bounds on linear forms in logarithms we use in this work are Theorems 1 and 2 from \cite{Laurent2008}. We give some notation before restating these results. Suppose $\alpha_1, \alpha_2 \in \C$ are nonzero algebraic numbers and $b_1, b_2 \in \N$ are positive integers. Let $D = [\Q(\alpha_1, \alpha_2):\Q]/[\R(\alpha_1, \alpha_2):\R]$, and consider the linear form
\begin{align*}
    \Lambda &= b_2 \log \alpha_2 - b_1 \log \alpha_1.
\end{align*}

\begin{proposition}[{\cite[Theorem 1]{Laurent2008}}]\label{thm:LaurentThm1}
    Let $K, L, R_1, R_2, S_1, S_2 \in \N$, $K \geq 2$. Let $\varrho, \mu \in \R$ with $\varrho > 1, \frac{1}{3} \leq \mu \leq 1$. Put
    \begin{align*}
        &R = R_1 + R_2 - 1, &&S = S_1 + S_2 - 1, &&N = KL, &&g = \frac{1}{4}-\frac{N}{12RS}
    \end{align*}
    \begin{align*}
        &\sigma = 1-\frac{(\mu-1)^2}{2}, &&b = \frac{(R-1)b_2+(S-1)b_1}{2}\left(\prod_{k=1}^{K-1}k!\right)^{-2/(K^2-K)}
    \end{align*}
    \begin{align*}
        \epsilon(N) &= 2\log(N!N^{-N+1}(e^N+(e-1)^N))/N.
    \end{align*}
    Let $a_1, a_2 \in \R_{>0}$ such that
    \begin{align*}
        a_i \geq \varrho \abs{\log \alpha_i} - \log \abs{\alpha_i} + 2Dh(\alpha_i)
    \end{align*}
    for $i=1, 2$. Suppose that
    \begin{align}
        &\#\{\alpha_1^t \alpha_2^s: 0 \leq t < R_1, 0 \leq s < S_1\} \geq L, \label{eq:thm1cnd1}\\
        &\#\{tb_2+sb_1: 0 \leq t < R_2, 0 \leq s < S_2\} > (K-1)L, \label{eq:thm1cnd2}
    \end{align}
    and
    \begin{align}
        K(\sigma L-1) \log \varrho - (D+1)\log N-D(K-1)\log b-gL(Ra_1+Sa_2) > \epsilon(N). \label{eq:thm1cnd3}
    \end{align}
    Then
    \begin{align}
        \abs{\Lambda'} > \varrho^{-\mu K L} \quad \text{ with } \quad \Lambda' = \Lambda \max\left\lbrace\frac{LSe^{LS\abs{\Lambda}/(2b_2)}}{2b_2}, \frac{LRe^{LR\abs{\Lambda}/(2b_1)}}{2b_1}\right\rbrace. \label{eq:thm1 bound}
    \end{align}
    
\end{proposition}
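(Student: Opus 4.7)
The plan is to appeal directly to Theorem~1 of \cite{Laurent2008}, which Proposition~\ref{thm:LaurentThm1} restates verbatim; the proof in the paper will therefore just be a citation to that reference. Nonetheless, let me sketch the interpolation-determinant argument Laurent uses, since understanding its shape provides useful context for the applications later in Section~\ref{sec:Linear forms in logarithms}.

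I would argue by contradiction, assuming $\abs{\Lambda'}\leq\varrho^{-\mu KL}$, and build an interpolation matrix $M$ of size $N=KL$. Its rows would be indexed by pairs $(k,\ell)$ with $0\leq k<K$ and $0\leq\ell<L$, and its columns by pairs $(t,s)$ ranging over rectangles prescribed by $R_1,R_2,S_1,S_2$. The entries should encode values, in an auxiliary variable $z$, of the analytic family $z\mapsto \alpha_1^{tb_2 z}\alpha_2^{sb_1 z}$ sampled at integer nodes; a typical entry has the shape
\[
M_{(k,\ell),(t,s)} \;=\; \binom{tb_2+sb_1}{k}\,\alpha_1^{\ell t b_2}\alpha_2^{\ell s b_1}.
\]

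I would then estimate $\abs{\det M}$ in two ways. Analytically from above: the hypothesis that $\abs{\Lambda'}$ is tiny forces $\alpha_1^{b_1}$ and $\alpha_2^{b_2}$ to be extremely close, so after column operations and a Schwarz-lemma/contour-integration step one obtains an upper bound of roughly the shape $\varrho^{-\mu KL\cdot(\text{const})}$ multiplied by an explicit analytic size governed by $Ra_1+Sa_2$. Arithmetically from below: the entries lie in the degree-$D$ field $\Q(\alpha_1,\alpha_2)$, so the product formula applied to $\det M$ yields a lower bound of the shape $\exp\bigl(-c(N,D,b)(Ra_1+Sa_2)\bigr)$, \emph{provided} $\det M\neq 0$. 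Hypotheses \eqref{eq:thm1cnd1} and \eqref{eq:thm1cnd2} exist precisely to rule out Vandermonde-type linear dependence among the columns and thus force the nonvanishing, while \eqref{eq:thm1cnd3} is calibrated so that the upper and lower estimates become incompatible; the $\epsilon(N)$ correction absorbs the Stirling error from $N!$ appearing in the arithmetic bound.

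The main obstacle in reproducing the proof is the zero estimate $\det M\neq 0$, which requires a careful combinatorial argument on the exponent set $\{(t,s)\}$ and is where the two counting hypotheses are really used. The subsequent comparison of analytic and arithmetic bounds is technically intricate but essentially explicit, and is exactly what Laurent packages into \eqref{eq:thm1cnd3} together with the precise definitions of $\sigma$, $g$, and $b$. Since none of this machinery is improved on here, I would simply cite \cite[Theorem~1]{Laurent2008} as a black box.
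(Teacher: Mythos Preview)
Your assessment is correct: the paper simply states Proposition~\ref{thm:LaurentThm1} as a quotation of \cite[Theorem~1]{Laurent2008} with no proof given, so citing the reference as a black box is exactly what the paper does. Your sketch of Laurent's interpolation-determinant method is additional context not present in the paper, but it is accurate and does no harm.
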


By specializing the values of the parameters in Proposition \ref{thm:LaurentThm1}, one obtains the following weaker result.

\begin{proposition}[{\cite[Theorem 2]{Laurent2008}}]\label{thm:LaurentThm2}
    Suppose $\alpha_1$ and $\alpha_2$ are multiplicatively independent (that is, if $m,n \in \mathbb{Z}$ with $\alpha_1^m \alpha_2^n = 1$, then $m = n = 0$). Let $a_1, a_2, h, \varrho, \mu \in \R$ with $\varrho > 1$ and $\frac{1}{3} \leq \mu \leq 1$. Put
    \begin{align*}
        &\sigma = 1-\frac{(\mu-1)^2}{2}, &&\lambda = \sigma \log \varrho, &&H = \frac{h}{\lambda} + \frac{1}{\sigma},
    \end{align*}
    \begin{align*}
        &\omega = 2\left(1+\sqrt{1+\frac{1}{4H^2}}\right), &&\theta = \sqrt{1+\frac{1}{4H^2}}+\frac{1}{2H},
    \end{align*}
    \begin{align*}
    C &= \frac{\mu}{\lambda^3 \sigma} \left(\frac{\omega}{6} + \frac{1}{2} \sqrt{\frac{\omega^2}{9} + \frac{8\lambda \omega^{5/4} \theta^{1/4}}{3\sqrt{a_1 a_2} H^{1/2}} + \frac{4}{3} \left(\frac{1}{a_1} + \frac{1}{a_2}\right) \frac{\lambda \omega}{H}}\right)^2,\\
    C' &= \sqrt{\frac{C \sigma \omega \theta}{\lambda^3 \mu}}.
    \end{align*}
    Assume that
    \begin{align}
        &h \geq \max\left\lbrace D\left(\log\left(\frac{b_1}{a_2}+\frac{b_2}{a_1}\right)+\log \lambda + 1.75\right)+0.06, \lambda, \frac{D\log 2}{2}\right\rbrace \label{eq:thm2cnd1}\\
        &a_i \geq \max\{1, \varrho\abs{\log \alpha_i}-\log \abs{\alpha_i}+2Dh(\alpha_i)\} \quad (i=1,2),\label{eq:thm2cnd2}\\
        &a_1 a_2 \geq \lambda^2. \label{eq:thm2cnd3}
    \end{align}
    Then
    \begin{align}
        \log \abs{\Lambda} &\geq -C\left(h + \frac{\lambda}{\sigma}\right)^2 a_1 a_2 - \sqrt{\omega \theta} \left(h+\frac{\lambda}{\sigma}\right) - \log\left(C' \left(h + \frac{\lambda}{\sigma}\right)^2 a_1 a_2\right). \label{eq:thm2 bound}
    \end{align}
\end{proposition}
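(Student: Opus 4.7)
The plan is to derive Proposition \ref{thm:LaurentThm2} by specializing the six integer parameters $K, L, R_1, R_2, S_1, S_2$ in Proposition \ref{thm:LaurentThm1} as explicit functions of $a_1, a_2, b_1, b_2, h$. The goal is to engineer the specialization so that conditions \eqref{eq:thm1cnd1}--\eqref{eq:thm1cnd3} are forced by the hypotheses \eqref{eq:thm2cnd1}--\eqref{eq:thm2cnd3}, and the bound \eqref{eq:thm1 bound} collapses into the stated form \eqref{eq:thm2 bound}.

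First I would take $L = \ceil{H}$ with $H = h/\lambda + 1/\sigma$ as in the statement, and choose $K$ essentially as the integer nearest $\tfrac{1}{\lambda}\sqrt{C a_1 a_2}\,(h + \lambda/\sigma)$; the constant $C$ in the conclusion is precisely calibrated so that this is the near-optimal balance point for \eqref{eq:thm1cnd3}. The pairs $(R_i, S_i)$ are chosen with $R_i$ roughly proportional to $\sqrt{a_2/a_1}$ and $S_i$ to $\sqrt{a_1/a_2}$, subject to $R_1 S_1 \geq L$ and $R_2 S_2 \geq (K-1)L + 1$. Condition \eqref{eq:thm1cnd1} is then immediate from the multiplicative independence of $\alpha_1$ and $\alpha_2$: the monomials $\alpha_1^t \alpha_2^s$ for $0 \leq t < R_1,\ 0 \leq s < S_1$ are pairwise distinct. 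Condition \eqref{eq:thm1cnd2} is a combinatorial distinctness statement about $tb_2 + sb_1$; after possibly swapping $(\alpha_1, b_1) \leftrightarrow (\alpha_2, b_2)$ so that $b_2/b_1$ is suitably bounded, it follows from the elementary arithmetic-progression argument used by Laurent.

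The main technical task is verifying the master condition \eqref{eq:thm1cnd3}. Substituting the chosen parameters and using \eqref{eq:thm2cnd1} to absorb the $D(K-1)\log b$ term, \eqref{eq:thm2cnd2} to control $Ra_1 + Sa_2$, and \eqref{eq:thm2cnd3} to guarantee that $Ra_1 + Sa_2 \sim 2\sqrt{RS\, a_1 a_2}$, one obtains a quadratic inequality in $K$ whose discriminant gives rise to exactly the quantities $\omega$ and $\theta$; the formula for $C$ is the positive root of this quadratic. Finally, to pass from \eqref{eq:thm1 bound} to \eqref{eq:thm2 bound}, I would split into the case $\abs{\Lambda} \geq \min\{2b_1/(LR),\, 2b_2/(LS)\}$, where the desired bound is trivially weaker, and its complement, where the exponentials inside $\Lambda'$ are $O(1)$ and so $\log\abs{\Lambda'} = \log\abs{\Lambda} + O(\log(LRS/b_i))$. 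Combining with $\log\abs{\Lambda'} \geq -\mu K L \log \varrho$ and inserting the chosen $K$ and $L$ yields the bound claimed, with $C'$ emerging from the logarithmic correction.

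The main obstacle I anticipate is the careful bookkeeping required to verify \eqref{eq:thm1cnd3} with constants matching the explicit expressions for $C, C', \omega, \theta$. The underlying calculus is routine, but calibrating the proportions in the choices of $K$ and $(R_i, S_i)$ so that the square-root structure of $C$ emerges naturally — rather than being imposed by hand — is the most delicate part, and is what justifies the specific shape of the conclusion.
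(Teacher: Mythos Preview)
The paper does not prove this proposition at all; it is quoted verbatim as Theorem~2 of Laurent~\cite{Laurent2008}, with only the one-line remark that it ``is obtained by specializing the values of the parameters in Proposition~\ref{thm:LaurentThm1}.'' Your sketch is exactly this specialization argument as carried out in Laurent's original paper, so there is nothing to compare against here --- you are reconstructing the proof of a cited result that the present paper simply imports.
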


\begin{remark}
    The number $\lambda$ in the statement of the proposition is unrelated to the number $\lambda$ in the proof of Theorem \ref{thm:disc of frp} above.
\end{remark}

\subsection{The upper bound} \label{subsec:upper bound}

In order to utilize linear forms in logarithms, we need to show that a linear combination of logarithms of algebraic numbers is exponentially small in its coefficients. Recalling Theorem \ref{thm:reduction to thue equation}, the linear form we use is
\begin{align}
    \Lambda &= \log\left(\frac{x+\sqrt{2}}{x-\sqrt{2}}\right) = \log\left(\left(\frac{1+\sqrt{2}}{1-\sqrt{2}}\right)^r \cdot \left(\frac{a+b\sqrt{2}}{a-b\sqrt{2}}\right)^p\right).\label{eq:def of lambda}
\end{align}
Note that $\Lambda > 0$ since we assume $x$ is positive. An upper bound on $\Lambda$ is provided by the following theorem.

\begin{theorem}\label{Thm}
    Let $x^2-2 = y^p$ and $a, b, r \in \Z$ as in Theorem \ref{thm:reduction to thue equation}, with $y \neq -1$. Then with $\Lambda$ as in \eqref{eq:def of lambda},
    \begin{align}
\log \Lambda < 1.053 - \log(y) p / 2. \label{eq8}
\end{align}
\end{theorem}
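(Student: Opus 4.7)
The plan is to exploit the factorization $x+\sqrt 2 = (x-\sqrt 2) + 2\sqrt 2$, which lets us write $\Lambda = \log(1 + 2\sqrt 2/(x-\sqrt 2))$, so that a single application of the elementary inequality $\log(1+t) < t$ (valid for $t > 0$) reduces the problem to estimating $(x-\sqrt 2)^{-1}$ from below by a power of $y$. Since $x^2 = y^p + 2$, one has $x > y^{p/2}$, hence $x - \sqrt 2 > y^{p/2} - \sqrt 2$; this quantity is positive because $y \geq 23$ and $p \geq 3$ force $y^{p/2} \geq 23^{3/2} > \sqrt 2$. Note that $\Lambda > 0$ follows from $x > \sqrt 2 > 0$.

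Concretely, I would proceed in three steps. First, derive the chain
\[
0 \;<\; \Lambda \;<\; \frac{2\sqrt 2}{x-\sqrt 2} \;<\; \frac{2\sqrt 2}{y^{p/2}-\sqrt 2}.
\]
Second, take logarithms and rearrange to
\[
\log \Lambda \;<\; \tfrac{3}{2}\log 2 \;-\; \tfrac{p}{2}\log y \;-\; \log\!\bigl(1 - \sqrt 2\, y^{-p/2}\bigr).
\]
Third, bound the residual term using $y^{p/2} \geq 23^{3/2}$ to get $-\log(1-\sqrt 2\, y^{-p/2}) \leq -\log(1-\sqrt 2\cdot 23^{-3/2})$, a small numerical constant; adding this to $\tfrac{3}{2}\log 2 \approx 1.0397$ and checking the sum lies below $1.053$ finishes the proof.

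The main obstacle is purely numerical, namely tracking the constants tightly enough to land under the stated $1.053$. A cleaner (and in fact slightly sharper) route that I would actually prefer uses the hyperbolic identity: from $(x-\sqrt 2)(x+\sqrt 2) = y^p$ one computes
\[
\sinh(\Lambda/2) \;=\; \frac{(x+\sqrt 2) - (x-\sqrt 2)}{2\, y^{p/2}} \;=\; \frac{\sqrt 2}{y^{p/2}},
\]
and then the elementary inequality $\sinh(u) > u$ for $u > 0$ gives $\Lambda < 2\sqrt 2\, y^{-p/2}$ directly, whence $\log \Lambda < \tfrac{3}{2}\log 2 - \tfrac{p}{2}\log y$. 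This version avoids the auxiliary $-\log(1-\cdot)$ term altogether and comfortably beats the target constant.
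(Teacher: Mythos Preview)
Your first approach is essentially the paper's own proof: the paper writes $e^\Lambda - 1 = \dfrac{2\sqrt 2}{x-\sqrt 2}$ and uses $\Lambda < e^\Lambda - 1$ (your $\log(1+t) < t$), then bounds $x - \sqrt 2 > y^{p/2}(1 - \sqrt 2\,/23^{3/2})$ exactly as you do, arriving at $\Lambda < 2.866\, y^{-p/2}$ and hence $\log \Lambda < 1.053 - \tfrac{p}{2}\log y$.

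Your second route via the hyperbolic identity is a genuine and pleasant improvement. Because $(x-\sqrt 2)(x+\sqrt 2) = y^p$, the relation $\sinh(\Lambda/2) = \sqrt 2\, y^{-p/2}$ is \emph{exact}, and $\sinh u > u$ immediately yields $\Lambda < 2\sqrt 2\, y^{-p/2}$, giving the sharper constant $\tfrac{3}{2}\log 2 \approx 1.0397$ in place of $1.053$. The paper's version is marginally looser because it estimates $x-\sqrt 2$ rather than the geometric mean $\sqrt{(x-\sqrt 2)(x+\sqrt 2)} = y^{p/2}$; your identity exploits the latter directly and so never incurs the $(1-\sqrt 2\cdot 23^{-3/2})^{-1}$ correction. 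Both arguments rely on the standing convention $x > 0$ (stated in the paper just before the theorem), which you invoke implicitly when writing $x > y^{p/2}$ and $\Lambda > 0$.
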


\begin{proof}
We have
\begin{align}\label{eq7}
e^\Lambda - 1 &= \frac{x+\sqrt{2}}{x-\sqrt{2}}-1 = \frac{2\sqrt{2}}{x-\sqrt{2}}. 
\end{align}
Note $x = \sqrt{y^p+2} > y^{p/2}$, so $x-\sqrt{2} > y^{p/2}-\sqrt{2} = y^{p/2}\left(1-\frac{\sqrt{2}}{y^{p/2}}\right)$. Since $p \geq 3$ and $y \geq 23$, we have $\frac{\sqrt{2}}{y^{p/2}} \leq \frac{\sqrt{2}}{23^{3/2}}$. By plugging this into \eqref{eq7}, we obtain the upper bound
\begin{align}\label{eq35}
\Lambda < e^\Lambda-1 < \frac{2\sqrt{2}}{1-\frac{\sqrt{2}}{23^{3/2}}} y^{-p/2} < 2.866 e^{-\log(y) p / 2}.
\end{align}
Taking logarithms, we obtain \eqref{eq8}.
\end{proof}

Thus, we have an upper bound for $\Lambda$, and the theory of linear forms in logarithms will provide a lower bound.

\subsection{Setup for linear forms in logarithms}\label{subsec:setup for linear forms in logarithms}

First, we prove some facts about the assumed solution $a$ and $b$ of \eqref{eq3}.

\begin{lemma} \label{lemma3.1}
Let $x, y, a, b, r$ be as in \eqref{eq5}, $y \neq -1$. Then $y = (-1)^r (a^2-2b^2)$, and $a,b$ are nonzero and coprime.
\end{lemma}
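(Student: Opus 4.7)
The plan is to pair \eqref{eq5} with its conjugate under $\sqrt{2}\mapsto -\sqrt{2}$, read off the formula for $y$ from the product, and then dispose of the three degenerate possibilities $b=0$, $a=0$, and $\gcd(a,b)>1$ by short independent arguments.

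First I would multiply \eqref{eq5} by its conjugate. Using $(1+\sqrt{2})(1-\sqrt{2})=-1$ and $(a+b\sqrt{2})(a-b\sqrt{2})=a^2-2b^2$, this produces
\[
y^p \;=\; x^2-2 \;=\; (-1)^r(a^2-2b^2)^p.
\]
Since $p$ is odd, taking real $p$-th roots yields $y=(-1)^r(a^2-2b^2)$.

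Next I would dispatch the two easy nondegeneracy claims. For $a\neq 0$: Theorem \ref{thm:elementary 1} tells us $y$ is odd, so $a^2-2b^2$ is odd, which forces $a$ odd. For coprimality, if $d\in\Z$ divides both $a$ and $b$, then $d$ divides $a+b\sqrt{2}$ in $\Z[\sqrt{2}]$, so $d^p$ divides $(a+b\sqrt{2})^p$. Because $(1+\sqrt{2})^r$ is a unit in $\Z[\sqrt{2}]$, equation \eqref{eq5} then gives $d^p \mid x+\sqrt{2}$ in $\Z[\sqrt{2}]$; equating the $\sqrt{2}$-components forces $d^p \mid 1$, hence $d=\pm 1$.

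The only step that takes a little care is $b\neq 0$. If $b=0$, write $(1+\sqrt{2})^r=u_r+v_r\sqrt{2}$ with $u_r,v_r\in\Z$. These satisfy $u_{r+1}=u_r+2v_r$ and $v_{r+1}=u_r+v_r$, with $v_0=0$ and $v_1=v_{-1}=1$, and a quick induction shows $|v_r|$ is strictly increasing on $|r|\geq 1$. Equation \eqref{eq5} then reads $x+\sqrt{2}=(u_r+v_r\sqrt{2})a^p$, so comparing the irrational parts yields $v_r a^p=1$. This forces $|v_r|=1$, hence $r=\pm 1$, and then $a^p=v_r=1$ gives $a=1$. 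Plugging each of the two remaining cases $(r,a)\in\{(1,1),(-1,1)\}$ back into \eqref{eq5} gives $x=\pm 1$ and $y^p=-1$, i.e., $y=-1$, contradicting the hypothesis. The hard part is really just this brief case analysis; everything else is bookkeeping in $\Z[\sqrt{2}]$.
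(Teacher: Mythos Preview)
Your proof is correct and follows essentially the same path as the paper's: conjugation/norm for the formula $y=(-1)^r(a^2-2b^2)$, divisibility in $\Z[\sqrt{2}]$ for coprimality, and parity of $y$ for $a\neq 0$. The only (minor) difference is the $b=0$ case: the paper simply invokes the already-proven coprimality, so $\gcd(a,0)=|a|=1$ gives $a=\pm 1$ and hence $y=(-1)^r$, whereas you compare $\sqrt{2}$-components and analyze the Pell sequence $v_r$ --- a valid but slightly longer route to the same conclusion.
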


\begin{proof}
Note that $y^p = (x+\sqrt{2})(x-\sqrt{2})$ is the norm from $\Z[\sqrt{2}]$ to $\Z$ of $x+\sqrt{2}$, so taking the norm of \eqref{eq5}, we find $y^p = (-1)^r(a^2-2b^2)^p$. Since both $y$ and $(-1)^r(a^2-2b^2)$ are real, $y = (-1)^r(a^2-2b^2)$.

Note that equation \eqref{eq5} implies that any common divisor of $a$ and $b$ divides $1$, so $a$ and $b$ are coprime. If $a = 0$, then $y = (-1)^{r+1} 2b^2$, contradicting that $y$ is odd by Theorem \ref{thm:elementary 1}. If $b = 0$, then since $\gcd(a, b) = 1$, $a = \pm 1$. Then $y = (-1)^r$, so $x^2-2 = (-1)^r$ which implies $y = -1, x = \pm 1$, and these are the trivial solutions.
\end{proof}

\begin{lemma}\label{lemma2}
Let $x, y, a, b, r$ be as in \eqref{eq5}, and assume $x > 0, y \neq 1$.
\begin{enumerate}
	
	\item $\sgn(a+b\sqrt{2}) = 1, \sgn(a-b\sqrt{2}) = (-1)^r$.
	
	\item $r \neq 0$
	
	\item If $r < 0$, then $\sgn(a) = \sgn(b) = 1$.

	\item If $r > 0$, then $\sgn(a) = (-1)^r, \sgn(b) = (-1)^{r+1}$.

\end{enumerate}
\end{lemma}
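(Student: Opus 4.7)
The plan is to exploit \eqref{eq5} together with its $\sqrt{2}$-conjugate $x - \sqrt{2} = (1 - \sqrt{2})^r (a - b\sqrt{2})^p$: part (1) will come from matching signs, part (2) from expanding the $r = 0$ case, and parts (3)--(4) from comparing the magnitudes of $a \pm b\sqrt{2}$. I interpret the hypothesis ``$y \neq 1$'' as ``$y \neq -1$'' (a typo, given the context), so that Lemma \ref{lemma3.1} applies and the earlier bound $y \geq 23$ is in force.

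First I would observe that $x^2 = y^p + 2 \geq 23^3 + 2$ gives $x > \sqrt{2}$, so $x \pm \sqrt{2} > 0$. Because $p$ is odd, $\sgn((a \pm b\sqrt{2})^p) = \sgn(a \pm b\sqrt{2})$, while $\sgn((1+\sqrt{2})^r) = 1$ and $\sgn((1 - \sqrt{2})^r) = (-1)^r$. Reading off signs in \eqref{eq5} and its conjugate yields (1) at once. For part (2), I would assume $r = 0$ and compare $\sqrt{2}$-coefficients in $x + \sqrt{2} = (a + b\sqrt{2})^p$, obtaining
\begin{align*}
1 = b \sum_{k=0}^{(p-1)/2} \binom{p}{2k+1}\, a^{p - 1 - 2k}\, b^{2k}\, 2^{k}.
\end{align*}
Each summand is non-negative (the exponent $p - 1 - 2k$ of $a$ is even), and the $k = (p-1)/2$ term is strictly positive, so the inner sum is a positive integer, forcing $b = 1$. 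Lemma \ref{lemma3.1} then gives $y = a^2 - 2$, so $y \geq 23$ forces $|a| \geq 5$, and the $k = 0$ term alone is $p\,a^{p-1} \geq 3 \cdot 25 > 1$, a contradiction.

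For parts (3) and (4), I would divide \eqref{eq5} by its conjugate and take absolute values. Using $(1+\sqrt{2})(1-\sqrt{2}) = -1$, this becomes
\begin{align*}
\left|\frac{a + b\sqrt{2}}{a - b\sqrt{2}}\right|^{p} = \frac{x + \sqrt{2}}{x - \sqrt{2}}\,(1 + \sqrt{2})^{-2r}.
\end{align*}
Since $y \geq 23$ gives $x \geq 110$, the factor $(x + \sqrt{2})/(x - \sqrt{2})$ lies in $(1, 1.03)$, whereas $(1 + \sqrt{2})^{2} > 5$. Hence the right-hand side is strictly less than $1$ for $r \geq 1$ and strictly greater than $1$ for $r \leq -1$, so $|a + b\sqrt{2}| < |a - b\sqrt{2}|$ in the former case and $|a + b\sqrt{2}| > |a - b\sqrt{2}|$ in the latter. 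Coupling this with the signs from (1) determines $\sgn(a)$ and $\sgn(b)$ in each of the four cases indexed by the parity and sign of $r$. For example, when $r < 0$ is odd, (1) supplies $a + b\sqrt{2} > 0 > a - b\sqrt{2}$, whose difference $2b\sqrt{2}$ is positive, so $b > 0$; and the magnitude inequality rewrites as $a + b\sqrt{2} > -(a - b\sqrt{2})$, forcing $a > 0$. The other three cases are strictly analogous.

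The only step calling for any numerical care is the comparison $(x + \sqrt{2})/(x - \sqrt{2}) < (1+\sqrt{2})^{2}$ (together with its reciprocal) in the magnitude argument, both of which are easy consequences of $y \geq 23$. Part (2) also uses $y \geq 23$; without it, one would need to dispose of the case $y = 7$ separately via the elliptic-curve argument for $Y^2 = X^3 + 98$ given earlier.
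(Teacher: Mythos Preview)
Your proof is correct and follows essentially the same strategy as the paper: read off signs from \eqref{eq5} and its conjugate for (1), and compare $|a+b\sqrt{2}|$ with $|a-b\sqrt{2}|$ via the ratio $(x+\sqrt{2})/(x-\sqrt{2})$ against powers of $(1+\sqrt{2})^2$ for (3) and (4). The one genuine difference is in part (2): the paper first shows $a,b>0$ for $r\le 0$ and then, assuming $r=0$, runs the inequality chain
\[
x+\sqrt{2}=(a+b\sqrt{2})^p>a^p+pa^{p-1}b\sqrt{2}>a^p+2\sqrt{2}>(a-b\sqrt{2})^p+2\sqrt{2}=x+\sqrt{2},
\]
whereas you equate $\sqrt{2}$-coefficients to force $b=1$ and then get a size contradiction from the $k=0$ term. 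Both arguments are elementary; yours is slightly more self-contained (it does not need to establish $a,b>0$ first), while the paper's avoids invoking $y\ge 23$ for this step. Your observation that the hypothesis should read $y\neq -1$ is also correct.
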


\begin{proof}
First, since $y \geq 23$, $p \geq 3$, we have $x > 100$. We have $x+\sqrt{2} = (1+\sqrt{2})^r (a+b\sqrt{2})^p$ by \eqref{eq5}, so $a+b\sqrt{2} > 0$. Also, $x-\sqrt{2} = (1-\sqrt{2})^r (a-b\sqrt{2})^p$. Since $1-\sqrt{2}$ is negative, we must have $\sgn(a-b\sqrt{2}) = (-1)^r$. If $r$ is even, then $a = \frac{1}{2}(a+b\sqrt{2}+a-b\sqrt{2}) > 0$. If $r$ is odd, then $b = \frac{1}{2\sqrt{2}}(a+b\sqrt{2}-(a-b\sqrt{2})) > 0$.

Now, if $r \leq 0$, then since $x+\sqrt{2} > x-\sqrt{2}$ and $(\sqrt{2}-1)^r \geq (\sqrt{2}+1)^r$, we have
\begin{align}
\abs{a-b\sqrt{2}}^p = \frac{x-\sqrt{2}}{(\sqrt{2}-1)^r} < \frac{x+\sqrt{2}}{(\sqrt{2}+1)^r} = \abs{a+b\sqrt{2}}^p, \label{eq34}
\end{align}
so $\abs{a-b\sqrt{2}} < \abs{a+b\sqrt{2}}$. Therefore, $a$ and $b$ have the same sign, so since either $a > 0$ or $b > 0$, both $a$ and $b$ are positive.

If $r = 0$, then
\begin{align*}
x+\sqrt{2} &= (a+b\sqrt{2})^p > a^p+p a^{p-1}b\sqrt{2} > a^p+2\sqrt{2}\\
&> (a-b\sqrt{2})^p+2\sqrt{2} = x-\sqrt{2}+2\sqrt{2} = x+\sqrt{2}
\end{align*}
a contradiction. Therefore, $r \neq 0$.

Now suppose $r > 0$. Note that the function $\frac{t+\sqrt{2}}{t-\sqrt{2}} = 1+\frac{2\sqrt{2}}{t-\sqrt{2}}$ is decreasing and positive for $t > \sqrt{2}$, so since $x > 2$, we see
\begin{align*}
&\frac{x+\sqrt{2}}{x-\sqrt{2}} < \frac{2+\sqrt{2}}{2-\sqrt{2}} = \frac{\sqrt{2}+1}{\sqrt{2}-1} \leq \left(\frac{\sqrt{2}+1}{\sqrt{2}-1}\right)^r.
\end{align*}
Hence
\begin{align}
    \frac{x-\sqrt{2}}{(\sqrt{2}-1)^r} > \frac{x+\sqrt{2}}{(\sqrt{2}+1)^r},
\end{align}
so by the same reasoning as in \eqref{eq34}, $\abs{a-b\sqrt{2}} > \abs{a+b\sqrt{2}}$, so $a$ and $b$ have opposite signs. If $r$ is even, then $a > 0$ so $b < 0$, and if $r$ is odd, then $b > 0$ so $a < 0$.
\end{proof}

In view of this lemma, it is convenient to introduce the following notation:
\begin{align*}
\epsilon &= -\sgn(r), \ \ \ \ \ \ \epsilon' = (-1)^r.
\end{align*}
Then from Lemma \ref{lemma2}, it follows that
\begin{align}
\Lambda &= 2r \log\left(\sqrt{2}+1\right) + \epsilon p \log\left(\epsilon' \frac{a+\epsilon b\sqrt{2}}{a-\epsilon b \sqrt{2}}\right),\label{eq9}
\end{align}
and the logarithms are all taken of positive real numbers greater than $1$ (here we follow \cite{BPV2024} in arranging the linear form). Note that if $r > 0$, then the first term is positive and the second is negative, while the opposite happens if $r < 0$. Therefore, to match with the notation from \cite{Laurent2008}, we multiply \eqref{eq9} by $-\epsilon$, so that if we define
\begin{align}
\alpha_1 &= \epsilon' \frac{a+\epsilon b \sqrt{2}}{a - \epsilon b \sqrt{2}} = \frac{(a+\epsilon b\sqrt{2})^2}{y},\label{eq:linear form variables}\\
\alpha_2 &= \sqrt{2}+1,\nonumber\\
b_1 &= p, \ \ \ \ \ \ \  b_2 = 2\abs{r},\nonumber
\end{align}
then the linear form becomes $-\epsilon\Lambda = b_2 \log(\alpha_2) - b_1 \log(\alpha_1)$, where $b_1, b_2$ are positive integers and $\alpha_1, \alpha_2$ are algebraic numbers greater than $1$.

\begin{lemma} \label{lemma3.3}
With notation as above,
\begin{align*}
h(\alpha_1) &= \log \abs{a+\epsilon b \sqrt{2}}, \ \ \ \ \ h(\alpha_2) = \frac{1}{2}\log(1+\sqrt{2}).
\end{align*}
\end{lemma}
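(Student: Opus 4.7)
\textbf{Proof proposal for Lemma \ref{lemma3.3}.}

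The plan is to directly apply the height formula from the notation section. I first handle $\alpha_2 = 1+\sqrt{2}$: its minimal polynomial over $\Z$ is $x^2-2x-1$ (leading coefficient $1$), with conjugates $1+\sqrt{2}>1$ and $1-\sqrt{2}\in(-1,0)$. Since $|1-\sqrt{2}|<1$, the height formula gives $h(\alpha_2)=\tfrac12\log(1+\sqrt{2})$.

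For $\alpha_1 = \epsilon'\,\tfrac{a+\epsilon b\sqrt{2}}{a-\epsilon b\sqrt{2}}$, I first identify its $\Q$-conjugate. Sending $\sqrt{2}\mapsto -\sqrt{2}$ yields $\alpha_1' = \epsilon'\,\tfrac{a-\epsilon b\sqrt{2}}{a+\epsilon b\sqrt{2}} = 1/\alpha_1$, so the two conjugates are reciprocal. Thus $\alpha_1\alpha_1'=1$ and $\alpha_1+\alpha_1' = \epsilon'\,\tfrac{2a^2+4b^2}{a^2-2b^2}$. The candidate minimal polynomial over $\Z$ is therefore
\begin{align*}
(a^2-2b^2)x^2 - \epsilon'(2a^2+4b^2)x + (a^2-2b^2).
\end{align*}
To confirm its leading coefficient is $|y|$, I need $\gcd(a^2-2b^2,\,2a^2+4b^2)=1$. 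Since $y=\epsilon'(a^2-2b^2)$ is odd by Theorem \ref{thm:elementary 1}, $a^2-2b^2$ is odd, hence this gcd equals $\gcd(a^2-2b^2,a^2+2b^2)$. That in turn divides $\gcd(4b^2,2a^2)$, which equals $2$ by Lemma \ref{lemma3.1} (using $\gcd(a,b)=1$). An odd divisor of $2$ is $1$, as required. Hence the minimal polynomial over $\Z$ has leading coefficient $|y|=|a^2-2b^2|$.

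Because $\alpha_1\alpha_1'=1$, one of $|\alpha_1|,|\alpha_1'|$ is $\geq 1$ and the other is $\leq 1$, so
\begin{align*}
\max(1,|\alpha_1|)\,\max(1,|\alpha_1'|) = \max(|\alpha_1|,|\alpha_1'|) = \frac{\max\!\bigl((a+\epsilon b\sqrt{2})^2,(a-\epsilon b\sqrt{2})^2\bigr)}{|y|}.
\end{align*}
Plugging this into the height formula yields
\begin{align*}
h(\alpha_1) = \tfrac12\log|y| + \tfrac12\log\frac{\max\!\bigl((a+\epsilon b\sqrt{2})^2,(a-\epsilon b\sqrt{2})^2\bigr)}{|y|} = \log\max\!\bigl(|a+\epsilon b\sqrt{2}|,\,|a-\epsilon b\sqrt{2}|\bigr).
\end{align*}

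The final step is to identify the maximum. Using Lemma \ref{lemma2} and the displayed inequality \eqref{eq34}, I split on the sign of $r$: if $r<0$ then $\epsilon=1$ and \eqref{eq34} gives $|a-b\sqrt{2}|<|a+b\sqrt{2}|$, so the maximum is $|a+\epsilon b\sqrt{2}|$; if $r>0$ then $\epsilon=-1$ and the reverse inequality $|a-b\sqrt{2}|>|a+b\sqrt{2}|$ (established in the proof of Lemma \ref{lemma2}) again gives the maximum as $|a+\epsilon b\sqrt{2}|$. Either way, $h(\alpha_1) = \log|a+\epsilon b\sqrt{2}|$, completing the proof.

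The argument is entirely mechanical once the minimal polynomial is in hand; the only mild subtlety is justifying that the leading coefficient of the minimal polynomial is exactly $|y|$ (not a proper divisor), which is where oddness of $y$ from Theorem \ref{thm:elementary 1} is used.
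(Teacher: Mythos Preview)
Your proof is correct and follows essentially the same approach as the paper: compute the minimal polynomial of $\alpha_1$ over $\Z$, verify its content is $1$ using oddness of $y$ and $\gcd(a,b)=1$, and apply the height formula. The only organizational difference is that the paper uses directly that $\alpha_1>1$ (established just before \eqref{eq:linear form variables}) to pick out the contributing conjugate, whereas you recover the same conclusion at the end by case-splitting on $\operatorname{sgn}(r)$ via Lemma \ref{lemma2}; this is the same fact unpacked. One small omission: you should note explicitly that $\alpha_1\notin\Q$ (equivalently $ab\neq 0$, by Lemma \ref{lemma3.1}) so that your degree-$2$ polynomial really is the minimal polynomial---the paper states this in one line.
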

\begin{proof}
First we compute the minimal polynomial for $\alpha_1$ over $\Z$. Note that $\alpha_1$ is irrational since $ab \neq 0$, so its minimal polynomial over $\Q$ is
\begin{align*}
\left(x-\epsilon'\frac{a+\epsilon b \sqrt{2}}{a - \epsilon b \sqrt{2}}\right) \left(x-\epsilon'\frac{a-\epsilon b \sqrt{2}}{a+\epsilon b \sqrt{2}}\right) &= x^2 - 2\epsilon' \frac{a^2+2b^2}{a^2-2b^2} x + 1,
\end{align*}
and therefore it is also a root of
\begin{align}\label{eq:min poly of alpha1 over Z}
(a^2-2b^2) x^2 - 2\epsilon' (a^2+2b^2) x + (a^2-2b^2).
\end{align}
To show this is the minimal polynomial over $\Z$, it suffices to show the coefficients are coprime. Suppose a prime $q$ divides $a^2-2b^2$ and $2(a^2+2b^2)$. Note $a^2-2b^2 = (-1)^r y$ by Lemma \ref{lemma3.1}, so it is odd by Theorem \ref{thm:elementary 1}. Hence, $q \neq 2$. Thus, $q$ divides $a^2+2b^2$, and since it also divides $a^2-2b^2$, $q$ divides their sum $2a^2$ and their difference $4b^2$, so $q$ divides $a$ and $b$. However, this contradicts the fact that $\gcd(a, b) = 1$ by Lemma \ref{lemma3.1}. Therefore, these coefficients of the polynomial in \eqref{eq:min poly of alpha1 over Z} are coprime, so this is the minimal polynomial of $\alpha_1$ over $\Z$. Therefore,
\begin{align*}
h(\alpha_1) &= \frac{1}{2} \left(\log \abs{a^2-2b^2} + \log \abs{\frac{a+\epsilon b \sqrt{2}}{a-\epsilon b \sqrt{2}}}\right) = \log \abs{a+\epsilon b\sqrt{2}}.
\end{align*}

Next, note that the minimal polynomial for $\alpha_2 = 1+\sqrt{2}$ is $(x-1)^2-2 = x^2-2x-1$. Thus,
\begin{align*}
h(\alpha_2) &= \frac{1}{2} \log(1+\sqrt{2}). \qedhere
\end{align*}
\end{proof}

\subsection{Initial bound}\label{subsec:initial bound}

We shall get our initial bound on $p$ through Proposition \ref{thm:LaurentThm2} since it is significantly simpler than Proposition \ref{thm:LaurentThm1}. To apply this proposition, we must verify the condition that $\alpha_1$ and $\alpha_2$ are multiplicatively independent.

\begin{lemma} \label{lemma3.4}
Let the notation be as above, and assume $y \neq -1$ (that is, assume $y$ comes from a nontrivial solution to \eqref{eq:main lebesgue nagell eqn}). Then $\alpha_1$ and $\alpha_2$ are multiplicatively independent.
\end{lemma}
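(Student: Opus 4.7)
The plan is to argue by contradiction: suppose integers $m,n$, not both zero, satisfy $\alpha_1^m \alpha_2^n = 1$, and derive an impossibility. The case $m=0$ is handled immediately, since $\alpha_2 = 1+\sqrt{2} > 1$ forces $n=0$, contradicting $(m,n) \neq (0,0)$. So I would assume $m \neq 0$ and work inside $\mathbb{Z}[\sqrt{2}]$.

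Setting $\beta = a+\epsilon b\sqrt{2}$ and $\bar\beta = a-\epsilon b\sqrt{2}$, substituting the definition of $\alpha_1$ from \eqref{eq:linear form variables} and clearing denominators turns $\alpha_1^m \alpha_2^n = 1$ into an equation of the form $\beta^m = \pm\, \bar\beta^m (1+\sqrt{2})^{-n}$. Since $1+\sqrt{2}$ is a unit in $\mathbb{Z}[\sqrt{2}]$, passing to ideals yields the identity $(\beta)^m = (\bar\beta)^m$.

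The key step is to establish that $(\beta)$ and $(\bar\beta)$ are coprime ideals. Any common prime divisor $\pi$ must divide both $\beta+\bar\beta = 2a$ and $\beta-\bar\beta = 2\epsilon b\sqrt{2}$. The only prime of $\mathbb{Z}[\sqrt{2}]$ above $2$ is $\sqrt{2}$, but $\sqrt{2}\nmid \beta$ because $a$ is odd: indeed $a^2-2b^2 = \pm y$ is odd by Theorem \ref{thm:elementary 1}, so $a$ must be odd. Any prime $\pi$ coprime to $2$ would have to divide both $a$ and $b$ in $\mathbb{Z}$, contradicting $\gcd(a,b)=1$ from Lemma \ref{lemma3.1}.

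With coprimality in hand, $(\beta)^m$ and $(\bar\beta)^m$ remain coprime; their equality then forces both to be the unit ideal, so $\beta$ is a unit in $\mathbb{Z}[\sqrt{2}]$. This forces $|a^2-2b^2|=1$, and Lemma \ref{lemma3.1} then gives $|y|=1$. Since $y\neq -1$ by hypothesis, we would need $y=1$, whence $x^2=3$, which has no integer solution. I do not anticipate a significant obstacle: the argument is essentially a clean application of unique factorization in $\mathbb{Z}[\sqrt{2}]$, with the only subtlety being careful bookkeeping at the ramified prime $\sqrt{2}$.
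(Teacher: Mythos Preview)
Your proof is correct and follows essentially the same approach as the paper: both arguments reduce the relation $\alpha_1^m\alpha_2^n=1$ to an equation in $\mathbb{Z}[\sqrt{2}]$, then analyze common prime divisors of $a+\epsilon b\sqrt{2}$ and $a-\epsilon b\sqrt{2}$ (ruling out $\sqrt{2}$ via the parity of $y$ and all other primes via $\gcd(a,b)=1$). The only cosmetic difference is that the paper first notes $a+b\sqrt{2}$ is not a unit and then finds a common prime divisor to reach a contradiction, whereas you establish coprimality of the ideals and deduce that $\beta$ must be a unit; these are logically equivalent reorderings of the same argument.
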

\begin{proof}
Let $m, n \in \Z$ such that $\alpha_1^m \alpha_2^n = 1$. We may assume without loss of generality that $m \geq 0$. We have $\alpha_1^m \alpha_2^n = 1$ if and only if
\begin{align*}
(\sqrt{2}+1)^n (a+\epsilon b\sqrt{2})^m = ({\epsilon'})^m (a-\epsilon b\sqrt{2})^m.
\end{align*}
Now, $a+b\sqrt{2}$ is not a unit since its norm is $a^2-2b^2$, which is equal to $(-1)^r y$ by Lemma \ref{lemma3.1}, and this is only $\pm 1$ when $y = -1$. Therefore, assuming $m \neq 0$, some prime $\pi$ of $\Z[\sqrt{2}]$ divides both sides. Since $\sqrt{2}+1$ and $\epsilon'$ are units, we must have $\pi \mid (a+b\sqrt{2})$ and $\pi \mid (a-b\sqrt{2})$. Then $\pi \mid 2a$ and $\pi \mid 2\sqrt{2} b$. If $\pi \neq \sqrt{2}$, then $\pi \mid a$ and $\pi \mid b$, so taking norms to $\Z$, if $\norm{\pi} = p^f$ for $p$ a prime of $\Z$, then $p^f \mid a^2$ and $p^f \mid b^2$, so $p \mid a$ and $p \mid b$, contradicting Lemma \ref{lemma3.1}. Therefore, $\pi = \sqrt{2}$. Since $\sqrt{2} \mid (a+b\sqrt{2})$, taking norms again we have $2 \mid y$, contradicting Theorem \ref{thm:elementary 1}. Therefore, $m = 0$, so $n = 0$ as well.
\end{proof}

Next, to apply Proposition \ref{thm:LaurentThm2}, we must choose the five parameters $a_1, a_2, h, \varrho, \mu \in \R$ with $\varrho > 1$ and $\frac{1}{3} \leq \mu \leq 1$. We choose $a_1, a_2,h$ in terms of $\varrho, \mu, y$.

\begin{lemma} \label{thm:Thm2 choice of a1 a2 h}
We may choose
\begin{align}
a_1 &= 0.9(\varrho+1) + 2\log(y), \ \ \ \ a_2 = (\varrho+1) \log(\sqrt{2}+1), \label{eq12}
\end{align}
to satisfy \eqref{eq:thm2cnd2} and
\begin{align}
h &= 2 \left(\log\left(\frac{p}{a_2} + \frac{p}{a_1}\right) + \log \lambda + 1.78\right) \label{eq13}
\end{align}
to satisfy \eqref{eq:thm2cnd1}, under the assumption that, with this value of $h$,
\begin{align}
    h \geq \max(\lambda, \log 2). \label{eq:h max assumption}
\end{align}
\end{lemma}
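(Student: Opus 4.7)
The plan is to verify the hypotheses \eqref{eq:thm2cnd1} and \eqref{eq:thm2cnd2} of Proposition \ref{thm:LaurentThm2} directly for the stated parameters. First observe that $\alpha_1 \in \Q(\sqrt{2})$ and $\alpha_2 = 1+\sqrt{2} \in \Q(\sqrt{2})$ are both real, so $D = [\Q(\alpha_1,\alpha_2):\Q]/[\R(\alpha_1,\alpha_2):\R] = 2$. For $\alpha_2$, Lemma \ref{lemma3.3} gives $h(\alpha_2) = \tfrac{1}{2}\log(1+\sqrt{2})$, and a short calculation yields
\[
\varrho\log\alpha_2 - \log\alpha_2 + 2Dh(\alpha_2) = (\varrho-1)\log(1+\sqrt{2}) + 2\log(1+\sqrt{2}) = (\varrho+1)\log(1+\sqrt{2}) = a_2,
\]
so \eqref{eq:thm2cnd2} holds with equality for $i=2$; moreover $a_2 \geq 2\log(1+\sqrt{2}) > 1$ since $\varrho > 1$.

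For $\alpha_1$, the identity $\alpha_1 = (a+\epsilon b\sqrt{2})^2/y$ from \eqref{eq:linear form variables} together with Lemma \ref{lemma3.3} gives $4h(\alpha_1) = 2\log(\alpha_1 y)$. Since $\alpha_1 > 1$, condition \eqref{eq:thm2cnd2} for $i=1$ reduces to
\[
a_1 \geq (\varrho-1)\log\alpha_1 + 2\log(\alpha_1 y) = (\varrho+1)\log\alpha_1 + 2\log y,
\]
which, given the stated $a_1$, is equivalent to the bound $\log \alpha_1 \leq 0.9$. This is the main (and essentially only nontrivial) step. To establish it, I would rearrange the defining relation $-\epsilon\Lambda = b_2\log\alpha_2 - b_1\log\alpha_1$ into $p\log\alpha_1 = 2|r|\log(1+\sqrt{2}) + \epsilon\Lambda$, and then use $|r| \leq (p-1)/2$ to obtain $\log\alpha_1 \leq (1-1/p)\log(1+\sqrt{2}) + \Lambda/p$. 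The upper bound on $\Lambda$ from Theorem \ref{Thm}, together with $y \geq 23$ and $p \geq 3$, makes $\Lambda/p$ vanishingly small (well under $0.01$), so $\log\alpha_1 < \log(1+\sqrt{2}) + 0.01 < 0.891 < 0.9$. The bound $a_1 \geq 1$ is immediate from $\varrho > 1$ and $y \geq 23$.

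Finally, for \eqref{eq:thm2cnd1}, with $D=2$, $b_1=p$, $b_2=2|r|$, the required inequality is
\[
h \geq 2\log\!\Bigl(\tfrac{p}{a_2} + \tfrac{2|r|}{a_1}\Bigr) + 2\log\lambda + 3.56.
\]
Since $2|r| \leq p-1 < p$, replacing $2|r|$ by $p$ in the logarithm only enlarges the right side, while the constant $2\cdot 1.78 = 3.56$ matches, so the stated choice of $h$ satisfies this bound. The remaining two conditions in the max of \eqref{eq:thm2cnd1}, namely $h \geq \lambda$ and $h \geq D\log 2/2 = \log 2$, are exactly the content of the standing assumption \eqref{eq:h max assumption}. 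Thus the only real work is the bound $\log\alpha_1 \leq 0.9$, which is settled by Theorem \ref{Thm}; the remainder is algebraic bookkeeping using $D=2$ and $|r| \leq (p-1)/2$.
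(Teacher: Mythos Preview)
Your proposal is correct and follows essentially the same route as the paper. Both reduce the $a_1$-condition to the estimate $\log\alpha_1 < 0.9$, obtain it from the identity $p\log\alpha_1 = 2|r|\log(1+\sqrt{2}) + \epsilon\Lambda$ together with $|r| < p/2$ and the upper bound on $\Lambda$ from Theorem~\ref{Thm}, and handle $h$ by noting $2|r| < p$ and $2\cdot 1.75 + 0.06 = 2\cdot 1.78$; your use of $4h(\alpha_1) = 2\log(\alpha_1 y)$ is just a slightly slicker repackaging of the paper's intermediate step via $\log|a+\epsilon b\sqrt{2}| + \log|a-\epsilon b\sqrt{2}| = \log y$.
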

\begin{proof}
Note that the value of $D$ is $[\Q[\alpha_1, \alpha_2]:\Q]/[\R(\alpha_1,\alpha_2):\R] = [\Q[\sqrt{2}]:\Q]/[\R:\R] = 2$. Substituting our values of $\alpha_i$ and $h(\alpha_i)$ from Lemma \ref{lemma3.3} into \eqref{eq:thm2cnd2}, we must satisfy the bounds
\begin{align*}
a_1 &\geq \max\left\lbrace 1, (\varrho-1) \log \abs{\frac{a+\epsilon b \sqrt{2}}{a-\epsilon b \sqrt{2}}} + 4 \log \abs{a+\epsilon b \sqrt{2}}\right\rbrace,\\
a_2 &\geq \max\left\lbrace 1, (\varrho-1) \log(\sqrt{2}+1) + 2 \log(\sqrt{2}+1)\right\rbrace.
\end{align*}
Notice the bound on $a_2$ simplifies to
\begin{align*}
a_2 &\geq \max\{1, (\varrho+1) \log(\sqrt{2}+1)\}.
\end{align*}
Also, by rearranging \eqref{eq9},
\begin{align}
\log \abs{\frac{a+\epsilon b\sqrt{2}}{a-\epsilon b\sqrt{2}}} &= \frac{2\abs{r} \log(\sqrt{2}+1) + \epsilon \Lambda}{p}. \label{eq11}
\end{align}
By \eqref{eq8}, using $y \geq 23$ and $p \geq 3$, we get $\frac{\Lambda}{p} < 0.009$. Since we have chosen $r$ so that $\abs{r} < \frac{p}{2}$,
\begin{align*}
\log \abs{\frac{a+\epsilon b\sqrt{2}}{a-\epsilon b\sqrt{2}}} < \log(\sqrt{2}+1) + 0.009 < 0.9.
\end{align*}

Now, by Lemma \ref{lemma3.1}, we have
\begin{align*}
&\log \abs{a+\epsilon b\sqrt{2}} + \log \abs{a-\epsilon b \sqrt{2}} = \log \abs{a^2-2b^2} = \log(y),
\end{align*}
and therefore
\begin{align*}
    \log \abs{a+\epsilon b \sqrt{2}} = \log(y) - \log \abs{a-\epsilon b \sqrt{2}}.
\end{align*}
It follows that
\begin{align*}
&(\varrho-1) \log \abs{\frac{a+\epsilon b \sqrt{2}}{a-\epsilon b \sqrt{2}}} + 4 \log \abs{a+\epsilon b \sqrt{2}}\\
&= (\varrho-1) \log \abs{\frac{a+\epsilon b \sqrt{2}}{a-\epsilon b \sqrt{2}}} + 2\log \abs{a+\epsilon b\sqrt{2}} + 2\log(y)-2\log \abs{a-\epsilon b\sqrt{2}}\\
&= (\varrho+1) \log \abs{\frac{a+\epsilon b\sqrt{2}}{a-\epsilon b\sqrt{2}}} + 2\log(y)\\
&< 0.9 (\varrho+1) + 2\log(y).
\end{align*}
Also, the given choices of $a_1$ and $a_2$ are greater than $1$ because $\varrho > 1$. Thus, they satisfy \eqref{eq:thm2cnd2}.

In order to satisfy \eqref{eq:thm2cnd1}, we must choose $h$ so that
\begin{align*}
h &\geq \max\left\lbrace D \left(\log\left(\frac{b_1}{a_2} + \frac{b_2}{a_1}\right) + \log \lambda + 1.75\right) + 0.06, \lambda, \frac{D \log 2}{2}\right\rbrace\\
&= \max\left\lbrace 2 \left(\log\left(\frac{p}{a_2} + \frac{2\abs{r}}{a_1}\right) + \log \lambda + 1.75\right) + 0.06, \lambda, \log 2\right\rbrace.
\end{align*}
Thus, assuming the first expression is the maximum, the given value of $h$ satisfies \eqref{eq:thm2cnd1} since $\abs{r} < \frac{p}{2}$.
\end{proof}

We now have $a_1, a_2$, and $h$ expressed as functions of $y, p, \mu, \varrho$, such that the conditions of Proposition \ref{thm:LaurentThm2} are satisfied, assuming $h \geq \max(\lambda, \log 2)$ and $a_1 a_2 \geq \lambda^2$. Proposition \ref{thm:LaurentThm2} then gives a bound of the form $\log \abs{\Lambda} \geq f(y, p, \mu, \varrho)$. Combining the lower bound with the upper bound \eqref{eq8}, if we define
\begin{align*}
g(y, p, \mu, \varrho) &:= 1.053-\log(y)p/2 - f(y, p, \mu, \varrho),
\end{align*}
then we must have $g(y, p, \mu, \varrho) > 0$ if $x^2-2 = y^p$ is a nontrivial solution to \eqref{eq:main lebesgue nagell eqn}. (This function $g$ should not be confused with the number $g$ in Proposition \ref{thm:LaurentThm1}.)

Note that as $p$ increases, the main term $-\log(y)p/2$ in $g$ dominates, allowing us to reach a contradiction for sufficiently large $p$. In order to prove this, we make some estimates on the terms in $g$ to simplify it.

\begin{lemma} \label{thm:simplified g bound}
Fix $\mu, \varrho, p^{(\ell)} \in \R$ with $\frac{1}{3} \leq \mu \leq 1, \varrho \geq 1, p^{(\ell)} > 0$. Set $a_2, \sigma, \lambda$ as in Lemma \ref{thm:Thm2 choice of a1 a2 h} and Proposition \ref{thm:LaurentThm2}. Set
\begin{align*}
&a_1^{(\ell)} = 0.9(\varrho+1) + 2\log(23),\\
&h^{(\ell)} = 2\left(\log p^{(\ell)}-\log a_2 + \log \lambda + 1.78\right),\\
&H^{(\ell)} = \frac{h^{(\ell)}}{\lambda} + \frac{1}{\sigma},\\
&\omega^{(u)} = 2\left(1+\sqrt{1+\frac{1}{4 {H^{(\ell)}}^2}}\right),\\
&\theta^{(u)} = \sqrt{1+\frac{1}{4 {H^{(\ell)}}^2}} + \frac{1}{2 H^{(\ell)}},\\
&C^{(u)} = \frac{\mu}{\lambda^3 \sigma} \left(\frac{\omega^{(u)}}{6} + \frac{1}{2} \sqrt{\frac{{\omega^{(u)}}^2}{9} + \frac{8 \lambda {\omega^{(u)}}^{5/4} {\theta^{(u)}}^{1/4}}{3\sqrt{a_2 a_1^{(\ell)}} {H^{(\ell)}}^{1/2}} + \frac{4}{3}\left(\frac{1}{a_2} + \frac{1}{a_1^{(\ell)}}\right) \frac{\lambda \omega^{(u)}}{H^{(\ell)}}}\right)^2,\\
&{C'}^{(u)} = \sqrt{\frac{C^{(u)} \sigma \omega^{(u)} \theta^{(u)}}{\lambda^3 \mu}}.
\end{align*}
Then for $y, p \in \R_{>0}$ with $p \geq p^{(\ell)}$, we have $g(y, p, \mu, \varrho) \leq g^{(u)}(y, p)$, where
\begin{align}
g^{(u)}(y, p) &= C_1 - \log(y) p / 2 + C_2 (\log p + C_3)^2 (\log y + C_4) + C_5 \log p \label{eq:g upper bound}\\
&+ \log\left((\log p + C_3)^2 (\log y + C_4)\right), \nonumber
\end{align}
where $C_1$ through $C_5$ are constants (depending on $\mu, \varrho, p^{(\ell)}$) given as follows:
\begin{align*}
C_3 &= \log\left(\frac{1}{a_2} + \frac{1}{a_1^{(\ell)}}\right) + \log \lambda + 1.78 + \frac{\lambda}{2\sigma},\\
C_1 &= 1.053 + 2 \sqrt{\omega^{(u)} \theta^{(u)}} C_3 + \log\left(8 {C'}^{(u)} a_2\right),\\
C_2 &= 8 C^{(u)} a_2, \qquad C_4 = 0.45(\varrho+1), \qquad C_5 = 2 \sqrt{\omega^{(u)} \theta^{(u)}}.
\end{align*}
\end{lemma}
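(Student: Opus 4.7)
The plan is to start from the lower bound $\log\Lambda\geq f(y,p,\mu,\varrho)$ furnished by Proposition~\ref{thm:LaurentThm2} and expand
\[
-f \;=\; C\bigl(h+\tfrac{\lambda}{\sigma}\bigr)^{2} a_1 a_2 \;+\; \sqrt{\omega\theta}\bigl(h+\tfrac{\lambda}{\sigma}\bigr) \;+\; \log\!\Bigl(C'\bigl(h+\tfrac{\lambda}{\sigma}\bigr)^{2} a_1 a_2\Bigr).
\]
Since $g = 1.053 - \tfrac{1}{2}\log(y)\,p - f$, it suffices to upper bound each factor above by replacing the $y$- and $p$-dependent auxiliary quantities $a_1, h, H, \omega, \theta, C, C'$ with the simpler surrogates named in the lemma, always in the direction that pushes $-f$ up.

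The heart of the argument is the monotonicity bookkeeping. Under $y\geq 23$ (the standing bound on a nontrivial solution), $a_1 = 0.9(\varrho+1) + 2\log y = 2(\log y + C_4)$ satisfies $a_1\geq a_1^{(\ell)}$. Using $|r| < p/2$ together with $p\geq p^{(\ell)}$, the value of $h$ fixed in Lemma~\ref{thm:Thm2 choice of a1 a2 h} satisfies $h\geq h^{(\ell)}$ after discarding the $p/a_1$ summand inside the logarithm. Since $H = h/\lambda + 1/\sigma$ is increasing in $h$, we obtain $H\geq H^{(\ell)}$; and both $\omega$ and $\theta$ are manifestly decreasing in $H$, so $\omega\leq\omega^{(u)}$ and $\theta\leq\theta^{(u)}$. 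A direct inspection of the defining formula for $C$ shows it is increasing in $\omega,\theta$ and decreasing in $a_1, H$ (each of the three summands under the nested square root varies in the same sign in each variable), hence $C\leq C^{(u)}$; the identical reasoning applied to $C' = \sqrt{C\sigma\omega\theta/(\lambda^3\mu)}$ gives $C'\leq {C'}^{(u)}$.

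With these bounds in place, I would estimate
\[
h + \tfrac{\lambda}{\sigma} \;\leq\; 2\bigl(\log p + \log(1/a_2 + 1/a_1^{(\ell)}) + \log\lambda + 1.78\bigr) + \tfrac{\lambda}{\sigma} \;=\; 2(\log p + C_3),
\]
and combine with $a_1 a_2 = 2a_2(\log y + C_4)$ to get $(h+\lambda/\sigma)^2 a_1 a_2 \leq 8a_2 (\log p + C_3)^2 (\log y + C_4)$. Substituting into the three summands of $-f$ contributes $C_2(\log p + C_3)^2(\log y + C_4)$ from the first, $C_5\log p + C_5 C_3$ from the second after expanding $\sqrt{\omega\theta}(h + \lambda/\sigma) \leq C_5(\log p + C_3)$, and $\log\bigl(8{C'}^{(u)}a_2\bigr) + \log\bigl((\log p + C_3)^2(\log y + C_4)\bigr)$ from the third. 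Summing with $1.053 - \tfrac12\log(y)\,p$ and absorbing the $y,p$-independent constants into $C_1 = 1.053 + C_5 C_3 + \log(8{C'}^{(u)}a_2)$ reproduces $g^{(u)}(y,p)$ verbatim.

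The computation is entirely mechanical, but the one step that requires care is the sign check on $C$ and $C'$: we need that replacing $\omega,\theta$ by their upper surrogates and $a_1, H$ by their lower surrogates truly pushes $C, C'$ upward, rather than the reverse direction. Once this monotonicity audit is confirmed, the rest of the proof is term collection.
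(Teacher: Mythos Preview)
Your proposal is correct and follows essentially the same approach as the paper: verify that each $(\ell)$-superscripted quantity is a lower bound and each $(u)$-superscripted quantity is an upper bound via the evident monotonicities, then substitute into $-f$ and collect terms. The paper's proof is terser (it asserts the monotonicities without spelling them out), while you have filled in the bookkeeping explicitly; your one flagged point of care---that $C$ is increasing in $\omega,\theta$ and decreasing in $a_1,H$---is exactly what the paper is implicitly relying on.
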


\begin{proof}
It is seen that each of the variables with an $(\ell)$ superscript is a lower bound for the variable of which it is a superscript when $p \geq p^{(\ell)}$; similarly, those with a $(u)$ superscript are an upper bound. Then if we set $a_1, a_2, h$ as in Lemma \ref{thm:Thm2 choice of a1 a2 h}, we have $h \leq h^{(u)}$, where
\begin{align*}
    h^{(u)} = 2\left(\log p+\log\left(\frac{1}{a_2} + \frac{1}{a_1^{(\ell)}}\right) + \log \lambda + 1.78\right).
\end{align*}
Note that $h^{(u)}$ depends on $p$ and $a_1$ depends on $y$.

From the definition of $f$ from \eqref{eq:thm2 bound}, $f(y, p, \mu, \varrho) \geq f^{(\ell)}(y, p)$ where
\begin{align*}
&f^{(\ell)}(y, p)\\
&= -C^{(u)} \left(h^{(u)} + \frac{\lambda}{\sigma}\right)^2 a_1 a_2 - \sqrt{\omega^{(u)} \theta^{(u)}} \left(h^{(u)} + \frac{\lambda}{\sigma}\right) - \log\left({C'}^{(u)} \left(h^{(u)} + \frac{\lambda}{\sigma}\right)^2 a_1 a_2\right),
\end{align*}
and thus $g(y, p, \mu, \varrho) \leq g^{(u)}(y, p)$ where $g^{(u)}(y, p) = 1.053-\log(y)p/2 - f^{(\ell)}(y, p)$, which simplifies to \eqref{eq:g upper bound}.
\end{proof}

\begin{lemma} \label{lemma3.5}
Let $u$ be a function of the form
\begin{align*}
u(y, p) &= C_1 - \log(y) p / 2 + C_2 (\log p + C_3)^2 (\log y + C_4) + C_5 \log p\\
&+ \log\left((\log p + C_3)^2 (\log y + C_4)\right),
\end{align*}
where $C_1$ through $C_5$ are positive quantities that do not depend on $y$ or $p$. Assume that $y_0 > e^e, p_0 > e^2$, and \[
(\log p_0 + C_3) \log(\log p_0 + C_3) > 1.
\]
If $u(y_0, p_0) < 0$, then $u(y, p) < 0$ for all $y \geq y_0$ and $p \geq p_0$.
\end{lemma}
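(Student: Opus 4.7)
The plan is to show $u$ is non-increasing in each argument on the quadrant $R := [y_0, \infty) \times [p_0, \infty)$; then for every $(y, p) \in R$ we have $u(y, p) \leq u(y_0, p) \leq u(y_0, p_0) < 0$. Writing $A_0 = \log y_0$, $B_0 = \log p_0$, $L_0 = B_0 + C_3$, $M_0 = A_0 + C_4$ (and analogously $L, M$ at $(y, p)$), a key elementary observation is that $C_3 > 0$ and $p_0 > e^2$ together force $L_0 > 2$.

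For monotonicity in $y$, compute
\begin{align*}
\frac{\partial u}{\partial y} = \frac{1}{y}\left(-\frac{p}{2} + C_2 L^2 + \frac{1}{M}\right), \qquad \frac{\partial^2 u}{\partial(\log y)^2} = -\frac{1}{M^2} < 0,
\end{align*}
so $u$ is concave in $\log y$ and it suffices to verify $\partial u/\partial y \leq 0$ at $y = y_0$ for each $p \geq p_0$. Dividing $u(y_0, p_0) < 0$ by $A_0$ and dropping the non-negative remainder yields $p_0/2 > C_2 L_0^2 + \log(M_0)/A_0$; since $y_0 > e^e$ and $C_4 > 0$ force $M_0 > e$, one has $M_0 \log M_0 > M_0 > A_0$, so $\log(M_0)/A_0 > 1/M_0$ and thus $p_0/2 - C_2 L_0^2 \geq 1/M_0$. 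For general $p \geq p_0$, set $\psi(p) := p/2 - C_2 L^2$; because $p_0 > 2 C_2 L_0^2 \geq 4 C_2 L_0$ (using $L_0 > 2$), $\psi'(p_0) \geq 0$, and since $2 C_2 L/p$ is decreasing in $p$ (its derivative $-2C_2(L-1)/p^2$ is negative because $L > 1$), $\psi'$ is non-decreasing on $[p_0, \infty)$, so $\psi(p) \geq \psi(p_0) \geq 1/M_0 \geq 1/M$ throughout the quadrant.

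For monotonicity in $p$ at $y = y_0$, the derivative
\begin{align*}
\frac{d u(y_0, p)}{dp} = -\frac{A_0}{2} + \frac{1}{p}\left(2 C_2 L M_0 + C_5 + \frac{2}{L}\right)
\end{align*}
is $\leq 0$ at $p = p_0$ iff $A_0 p_0/2 \geq 2 C_2 L_0 M_0 + C_5 + 2/L_0$. The hypothesis supplies the strictly stronger $A_0 p_0/2 > C_1 + C_2 L_0^2 M_0 + C_5 B_0 + 2\log L_0 + \log M_0$, and the excess over what is needed equals
\begin{align*}
C_1 + C_2 L_0 M_0(L_0 - 2) + C_5(B_0 - 1) + (2\log L_0 - 2/L_0) + \log M_0,
\end{align*}
which is non-negative term by term: $L_0 > 2$ handles the second summand, $p_0 > e^2$ the third, the hypothesis $L_0 \log L_0 > 1$ the fourth, and $M_0 > e$ the fifth. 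For general $p \geq p_0$, each of $2 C_2 M_0 L/p$, $C_5/p$, and $2/(pL)$ is decreasing in $p$ (the last because $(pL)' = L + 1 > 0$), so $du(y_0, p)/dp$ is non-increasing on $[p_0, \infty)$ and remains $\leq 0$ there.

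The main obstacle is not any single computation but the bookkeeping of squeezing all the needed pointwise inequalities out of the one assumption $u(y_0, p_0) < 0$; once the clean observation $L_0 > 2$ is in hand, each monotonicity claim reduces to routine calculus and the two cleanly separated monotonicities combine to give the result.
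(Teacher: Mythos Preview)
Your proof is correct, but takes a genuinely different route from the paper's.  The paper does not show that $u$ itself is monotone; instead it divides $u(y,p)$ by $p\log y$ and checks term by term that the resulting quotient is decreasing in both $y$ and $p$ on the region $y>e^e$, $p>e^2$, $(\log p+C_3)\log(\log p+C_3)>1$.  This monotonicity of the \emph{ratio} holds unconditionally (no use of the hypothesis $u(y_0,p_0)<0$), so the conclusion follows immediately: the ratio stays below its negative value at $(y_0,p_0)$.

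Your argument instead shows $u$ itself is non-increasing in each variable on the quadrant, bootstrapping the required pointwise inequalities (e.g.\ $p_0/2>C_2L_0^2$, $A_0p_0/2>2C_2L_0M_0+C_5+2/L_0$) from the single assumption $u(y_0,p_0)<0$ together with $L_0>2$.  This is sound but more delicate: you must extract enough quantitative information from $u(y_0,p_0)<0$ to launch two separate monotonicity arguments, and then propagate each to all $p\ge p_0$ by a secondary convexity/monotonicity check on the derivatives.  The paper's normalization sidesteps this bookkeeping entirely, since after dividing by $p\log y$ every non-constant term is visibly decreasing without any appeal to the hypothesis.  On the other hand, your approach gives the slightly stronger intermediate statement that $u$ is genuinely monotone on the quadrant, not just bounded above by $u(y_0,p_0)$.
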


\begin{proof}

The main term is $-\log(y) p/2$. We must show that this dominates for sufficiently large $y$ and $p$. If we divide $u(y, p)$ by $p \log y$, we get
\begin{align*}
\frac{u(y,p)}{p\log y} &=-\frac{1}{2} + \frac{C_1}{p \log y} + \frac{C_2 (\log p + C_3)^2 \left(1+\frac{C_4}{\log y}\right)}{p} + \frac{C_5 \log p}{p \log y}\\
&+ \frac{2\log(\log p + C_3)}{p \log y} + \frac{\log(\log y + C_4)}{p \log y}.
\end{align*}
Every term except the last is always decreasing in $y$. The derivative of the last term with respect to $\log y$ is
\begin{align*}
\frac{1}{p \log y (\log y + C_4)} - \frac{\log(\log y + C_4)}{p (\log y)^2} < \frac{1-\log \log y}{p (\log y)^2},
\end{align*}
so it is decreasing when $y > e^e$. Thus, $u(y, p)/(p \log y)$ is decreasing in $y$ for $y \geq y_0$. The derivative of the third term with respect to $p$ is
\begin{align*}
C_2 \left(1+\frac{C_4}{\log y}\right) \frac{(\log p + C_3)(2-\log p - C_3)}{p^2},
\end{align*}
which is negative when $p > e^2$. The derivative of the fourth term with respect to $p$ is $\frac{C_5(1-\log p)}{p^2 \log y}$ which is negative when $p > e$. The derivative of the fifth term with respect to $p$ is
\begin{align*}
\frac{2-2(\log p + C_3)\log(\log p + C_3)}{(\log p + C_3) p^2 \log y},
\end{align*}
which by assumption is negative. Thus, $u(y, p)/(p \log y)$ is decreasing in both $y$ and $p$.
\end{proof}

\begin{proof}[Proof of Theorem \ref{thm:initial bound}]
We choose the parameters $\mu$ and $\varrho$ as follows. We fix $y$ at our current lower bound $23$, and assume $p$ is set to the implicit function in $\mu$ and $\varrho$ so that $g(23, p(\mu, \varrho), \mu, \varrho) = 0$ (assuming the upper bound dominates and causes $g$ to decrease in $p$). We then wish to minimize the function $p(\mu, \varrho)$. Then we must have $0 = \frac{\partial p}{\partial \mu} = \frac{\partial p}{\partial \varrho}$, and because $g$ is constant with $p$ set to $p(\mu, \varrho)$, differentiating with respect to $\mu$ yields $\frac{\partial g}{\partial p} \frac{\partial p}{\partial \mu} + \frac{\partial g}{\partial \mu} = 0$. But since we are assuming $\frac{\partial p}{\partial \mu} = 0$, we must have $\frac{\partial g}{\partial \mu} = 0$. Similarly, $\frac{\partial g}{\partial \varrho} = 0$. Therefore, we search for solutions to the following system of $3$ equations in $3$ variables:
\begin{align*}
&\left(g(23, p, \mu, \varrho), \frac{\partial g}{\partial \mu}(23, p, \mu, \varrho), \frac{\partial g}{\partial \varrho}(23, p, \mu, \varrho)\right) = (0, 0, 0)
\end{align*}
and we find the solution $(p, \mu, \varrho) \approx (6950.6, 0.508613, 7.99202)$. Thus, we shall set $\mu = 0..508613, \varrho = 7.99202$. We may assume that $p$ is at least the next prime, so set $p^{(\ell)} = 6959$.

With these choices of parameters, we have $h^{(\ell)} > 18$, which easily exceeds $\lambda$ and $\log 2$, and $a_1^{(\ell)} a_2 > 113$, which easily exceeds $\lambda^2$. Therefore, the assumptions \eqref{eq:thm2cnd3} and \eqref{eq:h max assumption} are satisfied. It is seen that if we evaluate $g^{(u)}$ when $(y_0, p_0) = (23, 6993)$ or $(y_0, p_0) = (31, 6959)$, then it is negative. Then the assumptions of Lemma \ref{lemma3.5} are satisfied, so when $p \geq 6993$ or $y \geq 31$ then $g^{(u)}$ is negative. Therefore, after checking that $g(y, p, \mu, \varrho)$ is negative for the finitely many cases $y = 23, 6959 \leq p < 6993$, we conclude $g(y, p, \mu, \varrho) < 0$ whenever $y \geq 23, p \geq 6959$. Therefore, there are no nontrivial solutions when $p \geq 6959$, so we have the bound $p \leq 6949$.
\end{proof}

\subsection{Computation to prove Theorem \ref{thm: r is 1}} \label{subsec:r is 1}

Here we use modularity techniques. Let $E$ and $F$ be two elliptic curves over $\Q$ with conductors $N$ and $N'$, respectively. If $E$ and $F$ are related via level-lowering (see \cite[Definition 15.2.1]{Coh2007}), then for all prime numbers $\ell$ we have the following:
\begin{enumerate}
\item If $\ell \nmid NN'$, then $a_\ell(E) \equiv a_\ell(F) \pmod{p}$.
\item If $\ell \| N$ and $\ell \nmid N'$, then $a_\ell(F) \equiv \pm (\ell+1) \pmod{p}$.
\end{enumerate}
(See \cite[Proposition 15.2.3]{Coh2007}.)

\begin{proof}[Proof of Theorem \ref{thm: r is 1}]

In \cite[Proposition 15.7.1]{Coh2007}, Siksek described joint work with Bugeaud and Mignotte showing how, using modularity, one can prove for any given prime $p$ that the $r$ of \eqref{eq3} must equal $\pm 1$ by finding a set of auxiliary primes satisfying certain conditions for each of four elliptic curves. This computation proceeds as follows.

By Theorem \ref{thm:thue equations small p}, we may assume $p\geq 17$. We may associate to any solution of \eqref{eq:main lebesgue nagell eqn} the elliptic curve (a ``Frey curve'')
\begin{align}\label{eq:frey curve for lebesgue-nagell eqn}
    Y^2 = X(X^2+2xX+2),
\end{align}
which has minimal discriminant $\Delta_{\text{min}}=2^8y^p$ and conductor $N=2^7\text{rad}(y)$ (see \cite[p. 518]{Coh2007}, which draws upon \cite[Lemma 2.1]{BS2004}). The curve \eqref{eq:frey curve for lebesgue-nagell eqn} is related via level-lowering to a newform of level 128 (see Lemmas 3.2 and Lemma 3.3 of \cite{BS2004}). There are four newforms on $\Gamma_0(128)$ without character, and these correspond via modularity to the elliptic curves with Cremona labels $128A1, 128B1, 128C1, 128D1$. Let $F$ denote one of these elliptic curves. Given $p$ and $F$, we search for a prime $\ell$ such that:
\begin{enumerate}
    \item $\ell = np+1$ for some positive integer $n$,

    \item $\ell \equiv \pm 1 \pmod{8}$,\label{item:make sure ell ha sq rt of 2}

    \item $a_\ell(F) \not\equiv \pm (\ell+1) \pmod{p}$, \label{item:ensure ell not divide y}

    \item $(1+\theta)^n \not\equiv 1 \pmod{\ell}$, where $\theta$ is a square root of $2$ in $\F_\ell$ (this exists by \eqref{item:make sure ell ha sq rt of 2}). \label{item:ensure phi 1 + theta not zero mod p}
\end{enumerate}
Condition \eqref{item:ensure ell not divide y} and modularity ensure $\ell \nmid y$, so that $y^p$ is congruent modulo $\ell$ to an element of
\begin{align*}
    &\mu_n(\F_\ell) = \{\delta \in \F_\ell : \delta^n = 1\},
\end{align*}
and therefore $x$ is congruent modulo $\ell$ to an element of
\begin{align*}
    X_\ell' = \{\delta \in \F_\ell : \delta^2-2 \in \mu_n(\F_\ell)\}.
\end{align*}
For $\delta \in X_\ell'$, let $E_\delta$ be the elliptic curve $Y^2 = X^3 + 2\delta X^2 + 2X$ over $\F_\ell$, and note that, by the modularity result at the beginning of the subsection, we have that $x$ is congruent modulo $\ell$ to an element of
\begin{align*}
    X_\ell = \{\delta \in X_\ell' : a_\ell(E_\delta) \equiv a_\ell(F) \pmod{p}\}.
\end{align*}
From \eqref{eq5}, we see that if $x \equiv \delta \pmod{\ell}$ with $\delta \in X_\ell$, then
\begin{align*}
    \delta + \theta \equiv (1+\theta)^r (a+b\theta)^p \pmod{\ell},
\end{align*}
and we observe $a+b\theta \not \equiv 0 \pmod{\ell}$. If we let $\Phi$ denote the composition of the discrete logarithm $\F_\ell^\times \to \Z/(\ell-1)\Z$ (with respect to an arbitrary, but fixed, primitive root of $\ell$) with the reduction map $\Z/(\ell-1)\Z \to \Z/p\Z$, then we see $r$ is congruent modulo $p$ to an element of
\begin{align*}
    R_\ell(F) &= \left\lbrace \frac{\Phi(\delta+\theta)}{\Phi(1+\theta)}: \delta \in X_\ell\right\rbrace.
\end{align*}
Here we are using \eqref{item:ensure phi 1 + theta not zero mod p} to ensure $\Phi(1+\theta) \not \equiv 0 \pmod{p}$. Since $|r| < \frac{p}{2}$, in order to show that $r = \pm 1$, it suffices to show that $r \equiv \pm 1 \pmod{p}$.

We find, for each choice of $p$ and $F$, a set of primes $\ell_1, \dots, \ell_k$ satisfying the four conditions above such that
\begin{align*}
    \bigcap_{1 \leq j \leq k} R_{\ell_j}(F) \subseteq \{1, -1\}.
\end{align*}
We verified this computation in Sage and output a text file with the auxiliary primes used in the proof for each prime $11 \leq p < 20000$. The computation takes less than thirty minutes.

It does not seem possible to find suitable auxiliary primes $\ell_i$ to show $r = \pm 1$ when $p=7$.
\end{proof}

\subsection{Improved bound with Proposition \ref{thm:LaurentThm2} using Theorem \ref{thm: r is 1}} \label{subsec:improved bound thm 2}

From Theorems \ref{thm:initial bound} and \ref{thm: r is 1}, we may now assume $r = \pm 1$. We will use this information to refine some of the estimates from subsection \ref{subsec:initial bound} to get the following improvement to Theorem \ref{thm:initial bound}.

\begin{theorem}\label{thm:improved bound thm 2}
    There are no nontrivial solutions to $x^2-2 = y^p$ for $p > 1951$.
\end{theorem}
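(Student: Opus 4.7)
The plan is to repeat, with minor modifications, the argument used to prove Theorem \ref{thm:initial bound}, but now exploiting the key new input that $r = \pm 1$. Since Theorem \ref{thm:initial bound} already gives $p \leq 6949 < 20000$, Theorem \ref{thm: r is 1} applies, so we may assume $r = \pm 1$ throughout. In particular, in the notation of subsection \ref{subsec:setup for linear forms in logarithms}, we now have $b_2 = 2|r| = 2$, which is much smaller than the previously used bound $b_2 < p$.

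The first step is to revise Lemma \ref{thm:Thm2 choice of a1 a2 h} under the hypothesis $|r|=1$. In the estimate \eqref{eq11}, we now have
\begin{align*}
\log\left|\tfrac{a+\epsilon b\sqrt{2}}{a-\epsilon b\sqrt{2}}\right| = \tfrac{2\log(\sqrt{2}+1) + \epsilon\Lambda}{p},
\end{align*}
which, combined with $\Lambda < 2.866 e^{-\log(y)p/2}$ from \eqref{eq35}, is at most a small quantity $\eta(p)$ with $\eta(p) \to 0$ as $p \to \infty$. This allows us to replace the choice $a_1 = 0.9(\varrho+1) + 2\log(y)$ by $a_1 = \eta(\varrho+1) + 2\log(y)$ for an explicit small $\eta$ valid for $p \geq p^{(\ell)}$, and to replace \eqref{eq13} by
\begin{align*}
h = 2\bigl(\log(p/a_2 + 2/a_1) + \log\lambda + 1.78\bigr),
\end{align*}
where the inner expression has shrunk from $p/a_2 + p/a_1$ to $p/a_2 + 2/a_1$.

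The second step is to run the analog of Lemma \ref{thm:simplified g bound} with these improved $a_1$ and $h$, producing an explicit upper bound $g^{(u)}(y,p)$ of the same shape as \eqref{eq:g upper bound} but with smaller constants $C_1,\dots,C_5$. The monotonicity argument of Lemma \ref{lemma3.5} then carries over unchanged, reducing the verification of $g^{(u)} < 0$ for all $(y,p)$ with $y\geq 23$ and $p$ above our target bound to checking finitely many values.

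The third step is the numerical optimization: fix $y = 23$ and solve the critical-point system
\begin{align*}
\bigl(g(23,p,\mu,\varrho),\, \tfrac{\partial g}{\partial \mu}(23,p,\mu,\varrho),\, \tfrac{\partial g}{\partial \varrho}(23,p,\mu,\varrho)\bigr) = (0,0,0)
\end{align*}
to minimize the implicit $p(\mu,\varrho)$. With $r=\pm 1$ baked in, I expect this optimum to land just below $1952$, giving the theorem's bound $p > 1951$. The optimal $(\mu,\varrho)$ should then be fixed, $p^{(\ell)}$ set to the next prime above the optimum, and the conditions \eqref{eq:thm2cnd3} and \eqref{eq:h max assumption} (i.e.\ $a_1a_2\geq\lambda^2$ and $h\geq\max(\lambda,\log 2)$) verified with the new, smaller $a_1$ and $h$; this is where the argument can fail if the optimum drifts too low, and so constitutes the main potential obstacle — everything else is a mechanical reprise of the proof of Theorem \ref{thm:initial bound}. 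Finally, the residual finite range of primes $p \in [1951,p^{(\ell)}+\text{small}]$ and $y = 23$ is handled by direct numerical evaluation of $g$.
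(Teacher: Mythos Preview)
Your proposal follows essentially the same approach as the paper: sharpen $a_1$ and $h$ using $|r|=1$, rebuild the simplified bound $g^{(u)}$ with smaller constants, and then optimize parameters numerically. Two points are worth flagging.

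First, the three-variable critical-point search you propose does not succeed here. The paper reports that ``searching for parameters as before failed, and it appears that the boundary condition $\mu \geq \tfrac{1}{3}$ is satisfied in the optimal solution.'' In other words, the unconstrained optimum has $\mu < \tfrac{1}{3}$, so the system $(g,\partial g/\partial\mu,\partial g/\partial\varrho)=(0,0,0)$ has no admissible solution. The fix is to set $\mu = \tfrac{1}{3}$ and solve the two-variable system $(g,\partial g/\partial\varrho)=(0,0)$, which yields $(p,\varrho)\approx(1971.41,\,22.5978)$. You correctly anticipated that this step is where the argument is most delicate, but the specific resolution (hitting the $\mu$-boundary) is something you would discover only upon running the numerics.

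Second, your expectation that the optimum lands ``just below 1952'' is off: it lands near $1971$. The stated bound $p>1951$ arises because there are no primes strictly between $1951$ and $1973$, so ruling out $p\geq 1973$ is equivalent to ruling out $p>1951$. With $\mu=\tfrac{1}{3}$, $\varrho=22.5978$, $p^{(\ell)}=1973$, the paper verifies $h^{(\ell)}>14>\lambda$, $a_1^{(\ell)}a_2>130>\lambda^2$, and then checks $g^{(u)}(23,1973)<0$, after which Lemma~\ref{lemma3.5} finishes as you describe.
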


We begin by refining our estimate of \eqref{eq11}, which gives
\begin{align*}
\log \abs{\frac{a+\epsilon b\sqrt{2}}{a-\epsilon b\sqrt{2}}} &= \frac{2 \log(\sqrt{2}+1) + \epsilon \Lambda}{p}.
\end{align*}
By \eqref{eq8}, assuming $p \geq 100$ (which is much smaller than the bound we will obtain), we already get $\Lambda < 10^{-50}$, say, so by the same reasoning as in the proof of Lemma \ref{thm:Thm2 choice of a1 a2 h}, we may set
\begin{align}
a_1 &= (\varrho+1) \frac{2\log(\sqrt{2}+1)+10^{-50}}{p} + 2\log(y). \label{eq14}
\end{align}
We also sharpen our choice of $h$ to be
\begin{align*}
h &= 2\left(\log\left(\frac{p}{a_2} + \frac{2}{a_1}\right) + \log \lambda + 1.78\right),
\end{align*}
again under the assumption that $h \geq \max(\lambda, \log 2)$ and $a_1 a_2 \geq \lambda^2$. With these choices, we set $f(y, p, \mu, \varrho)$ and $g(y, p, \mu, \varrho)$ in the same way as in subsection \ref{subsec:initial bound}.

Then given $\mu, \varrho, p^{(\ell)} \in \R, \frac{1}{3} \leq \mu \leq 1, \varrho \geq 1, p^{(\ell)} > 0$, we set
\begin{align*}
a_1^{(\ell)} &= 2 \log(23),\\
a_1^{(u)} &= (\varrho+1) \frac{2\log(\sqrt{2}+1)+10^{-50}}{p^{(\ell)}} + 2\log(y),\\
h^{(u)} &= 2\left(\log p + \log\left(\frac{1}{a_2} + \frac{2}{a_1^{(\ell)} p^{(\ell)}}\right) + \log \lambda + 1.78\right),
\end{align*}
and set $h^{(\ell)}, H^{(\ell)}, \omega^{(u)}, \theta^{(u)}, C^{(u)}, {C'}^{(u)}$ in the same way as Lemma \ref{thm:simplified g bound}. We set $f^{(\ell)}$ in the same way, except the occurrences of $a_1$ are replaced with $a_1^{(u)}$. This puts $g^{(u)}$ in the form of Lemma \ref{lemma3.5} with
\begin{align*}
C_3 &= \log\left(\frac{1}{a_2} + \frac{2}{a_1^{(\ell)} p^{(\ell)}}\right) + \log \lambda + 1.78 + \frac{\lambda}{2\sigma},\\
C_1 &= 1.053 + 2 \sqrt{\omega^{(u)} \theta^{(u)}} C_3 + \log\left(8 {C'}^{(u)} a_2\right),\\
C_2 &= 8 C^{(u)} a_2, \qquad C_4 = (\varrho+1) \frac{2\log(\sqrt{2}+1)+10^{-50}}{2p^{(\ell)}}, \qquad C_5 = 2 \sqrt{\omega^{(u)} \theta^{(u)}}.
\end{align*}

\begin{proof}[Proof of Theorem \ref{thm:improved bound thm 2}]

To choose the parameters $\mu$ and $\varrho$, again we assume $p$ is set to the implicit function so that $g(23, p(\mu, \varrho), \mu, \varrho) = 0$, and we wish to minimize $p(\mu, \varrho)$. Searching for parameters as before failed, and it appears that the boundary condition $\mu \geq \frac{1}{3}$ is satisfied in the optimal solution. Hence, we instead set $\mu = \frac{1}{3}$ and search for solutions to the following system of two equations in two variables:\[
\left(g\left(23, p, \frac{1}{3}, \varrho\right), \frac{\partial g}{\partial \varrho}\left(23, p, \frac{1}{3}, \varrho\right)\right) = (0, 0).
\]
This yields the solution $(p, \varrho) \approx (1971.41, 22.5978)$. Thus, we set $\mu = \frac{1}{3}, \varrho = 22.5978$, and $p^{(\ell)} = 1973$.

With these choices of parameters, $h^{(\ell)} > 14$ which easily exceeds $\lambda = 2.4249\ldots$ and $\log 2$, and $a_1^{(\ell)} a_2 > 130$, which easily exceeds $\lambda^2$. We also check that
\begin{align}
    (\log p^{(\ell)} + C_3) \log(\log p^{(\ell)} + C_3) = 19.07\ldots > 1.
\end{align}
Then, we find $g^{(u)}$ is negative when we evaluate it with $(y_0, p_0) = (23, 1973)$, so by Lemma \ref{lemma3.5} we see $g^{(u)}$ is negative for $y \geq 23, p \geq 1973$. Therefore, $g(y, p, \mu, \varrho) < 0$ whenever $y \geq 23, p \geq 1973$, so there are no nontrivial solutions when $p \geq 1973$.
\end{proof}

\subsection{Improved bound with Proposition \ref{thm:LaurentThm1}} \label{subsec:improved bound thm 1}

We use the information from Theorem \ref{thm: r is 1} that $r = \pm 1$, and the stronger Proposition \ref{thm:LaurentThm1}, to obtain a sharper upper bound on $p$. To apply this proposition, we must choose the parameters $K, L, R_1, R_2, S_1, S_2 \in \Z_{>0}$ with $K \geq 2$, and $\varrho, \mu, a_1, a_2 \in \R_{>0}$ with $\varrho > 1$ and $\frac{1}{3} \leq \mu \leq 1$. We are subject to three conditions: \eqref{eq:thm1cnd1}, \eqref{eq:thm1cnd2}, and \eqref{eq:thm1cnd3}.

Starting with \eqref{eq:thm1cnd1} and \eqref{eq:thm1cnd2}, we compute the cardinalities as follows.

\begin{lemma} \label{lemma3.6}
With notation as above, for all $R_1, R_2, S_1, S_2 \in \N$, $S_2 \geq 2$,
\begin{align*}
&\#\{\alpha_1^t \alpha_2^s: 0 \leq t < R_1, 0 \leq s < S_1\} = R_1 S_1.\\
&\#\{tb_2 + s b_1: 0 \leq t < R_2, 0 \leq s < S_2\} = (S_2-2)\min(R_2, p) + 2R_2.
\end{align*}
\end{lemma}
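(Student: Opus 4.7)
The first count follows immediately from Lemma \ref{lemma3.4}. Since $\alpha_1$ and $\alpha_2$ are multiplicatively independent, the map $(t,s) \mapsto \alpha_1^t \alpha_2^s$ is injective on $\Z \times \Z$: if $\alpha_1^{t_1} \alpha_2^{s_1} = \alpha_1^{t_2} \alpha_2^{s_2}$, then $\alpha_1^{t_1 - t_2} \alpha_2^{s_1 - s_2} = 1$ forces $t_1 = t_2$ and $s_1 = s_2$. Hence all $R_1 S_1$ pairs in the rectangle produce distinct values.

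For the second count, recall $b_1 = p$ and, by Theorem \ref{thm: r is 1}, $b_2 = 2|r| = 2$; the set in question is therefore $\{2t + sp : 0 \leq t < R_2,\, 0 \leq s < S_2\}$. Since $p$ is odd, $\gcd(2, p) = 1$, so the equation $2t + sp = 2t' + s'p$ forces $t \equiv t' \pmod{p}$ (and then $s' - s = 2(t - t')/p$). This suggests partitioning the range $[0, R_2)$ of $t$ by residue class mod $p$.

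My plan is as follows. For $c \in \{0, 1, \ldots, p-1\}$, let $m_c$ count the $t \in [0, R_2)$ with $t \equiv c \pmod p$, so that $\sum_c m_c = R_2$ and exactly $\min(R_2, p)$ of the $m_c$ are nonzero. Writing the elements of class $c$ as $c, c+p, \ldots, c + (m_c-1)p$, the contribution of class $c$ is
\[
\{2c + (2j + s)\, p : 0 \leq j < m_c,\ 0 \leq s < S_2\}.
\]
The key combinatorial claim is that, for $S_2 \geq 2$, the set $\{2j + s : 0 \leq j < m_c,\, 0 \leq s < S_2\}$ equals the full interval $\{0, 1, 2, \ldots, 2m_c + S_2 - 3\}$, so class $c$ contributes exactly $2m_c + S_2 - 2$ values (and zero if $m_c = 0$). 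Since values from distinct classes have distinct residues $2c \pmod p$, they cannot coincide, and summing gives $\sum_{c : m_c \geq 1} (2m_c + S_2 - 2) = 2R_2 + (S_2 - 2)\min(R_2, p)$.

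The main technical point is the combinatorial identity above, and the hypothesis $S_2 \geq 2$ is essential: for $S_2 = 1$ one recovers only the even integers in $[0, 2m_c - 2]$, and the formula fails. For $S_2 \geq 2$, I would verify the identity by, given $u$ in the target interval, choosing $s \in \{S_2 - 1, S_2 - 2\}$ to match the parity of $u$ and setting $j = (u - s)/2$; the bound $u \leq 2m_c + S_2 - 3$ then yields $j \leq m_c - 1$, while $u \geq 0$ and the chosen $s \leq u$ yield $j \geq 0$. This is straightforward but is the only place where more than bookkeeping is required.
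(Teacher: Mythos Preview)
Your argument is correct in structure and takes a genuinely different route from the paper. The paper splits into the cases $R_2 \leq p$ (where all $(t,s)$ pairs give distinct values, so the count is $R_2 S_2$) and $R_2 > p$ (where it separates even and odd values of $s$ and observes that each parity class fills out a contiguous run of integers). Your decomposition by the residue of $t$ modulo $p$ is more uniform: it avoids the case split entirely and makes transparent why $\min(R_2, p)$ appears, namely as the number of residue classes with $m_c \geq 1$. Both approaches are equally elementary.

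One small slip in your verification of the combinatorial identity: choosing $s \in \{S_2-1, S_2-2\}$ does \emph{not} guarantee $s \leq u$ when $u$ is small (for instance $u = 0$, $S_2 = 5$ forces $s \in \{3,4\}$ and hence $j < 0$). The fix is immediate: for $0 \leq u \leq 2m_c - 1$ take $s \in \{0,1\}$ matching the parity of $u$ and $j = (u-s)/2 \in [0, m_c-1]$; for $2m_c - 2 \leq u \leq 2m_c + S_2 - 3$ take $j = m_c - 1$ and $s = u - 2(m_c-1) \in [0, S_2-1]$. With this patch the argument is complete.
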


\begin{proof}
The first line follows from Lemma \ref{lemma3.4}, since $\alpha_1$ and $\alpha_2$ are multiplicatively independent. For the second set, since $r = \pm 1$, we have $b_1 = p, b_2 = 2$ from \eqref{eq:linear form variables}. Therefore, the second set is
\begin{align*}
\{2t + ps : 0 \leq t < R_2, 0 \leq s < S_2\}.
\end{align*}

We consider two cases. First, if $R_2 \leq p$, then each distinct $(t, s)$ pair gives a different integer, so the cardinality is $R_2 S_2$.

If $R_2 > p$, then when $s$ is even, $2t+sp$ ranges over all even integers from $0$ to $2(R_2-1)+E p$ where $E$ is the largest even integer less than $S_2$. When $s$ is odd, $2t+sp$ ranges over all odd integers from $p$ to $2(R_2-1)+O p$ where $O$ is the largest odd integer less than $S_2$. Thus the cardinality is $R_2 + p E/2 + R_2 + p(O-1)/2 = 2 R_2 + p (E+O-1)/2$. If $S_2$ is even this is $2 R_2 + p(S_2-2+S_2-1-1)/2 = 2 R_2 + (S_2-2)p$. If $S_2$ is odd this is $2 R_2 + p(S_2-1+S_2-2-1)/2 = 2 R_2 + (S_2-2) p$. Thus, either way the cardinality is $2 R_2 + (S_2-2) p$.

In either case, the cardinality is $(S_2-2)\min(R_2, p) + 2 R_2$.
\end{proof}

By Lemma \ref{lemma3.6}, the conditions \eqref{eq:thm1cnd1} and \eqref{eq:thm1cnd2} are equivalent to the conditions
\begin{align}
&R_1 S_1 \geq L, \label{eq16}\\
&(S_2-2)\min(R_2, p) + 2R_2 > (K-1)L.\nonumber
\end{align}

Next, we are subject to the condition
\begin{align*}
a_i &\geq \varrho \abs{\log \alpha_i} - \log \abs{\alpha_i} + 2 D h(\alpha_i),
\end{align*}
which is the same as \eqref{eq:thm2cnd2} without the max with $1$. Hence, by the same reasoning as in Lemma \ref{thm:Thm2 choice of a1 a2 h}, these bounds simplify to
\begin{align*}
a_1 &\geq (\varrho+1) \log \abs{\frac{a+\epsilon b\sqrt{2}}{a-\epsilon b\sqrt{2}}} + 2\log(y),\\
a_2 &\geq (\varrho+1) \log(\sqrt{2}+1),
\end{align*}
and, with the same estimate as \eqref{eq14}, assuming $p \geq 100$, we may set
\begin{align*}
a_1 &= (\varrho+1) \frac{2\log(\sqrt{2}+1)+10^{-50}}{p} + 2\log(y),\\
a_2 &= (\varrho+1) \log(\sqrt{2}+1).
\end{align*}

With the notation from Proposition \ref{thm:LaurentThm1}, the third constraint \eqref{eq:thm1cnd3} becomes
\begin{align}
K(\sigma L -1) \log \varrho - 3 \log N - 2(K-1) \log b - g L (R a_1 + S a_2) > \epsilon(N). \label{eq17}
\end{align}
If all the constraints are satisfied, Proposition \ref{thm:LaurentThm1} gives the following implicit bound on $\Lambda$:
\begin{align*}
&\abs{\Lambda'} > \varrho^{-\mu K L}, &&\text{where }\Lambda' = \Lambda \max\left\lbrace \frac{LS e^{LS\abs{\Lambda}/4}}{4}, \frac{LR e^{LR\abs{\Lambda}/(2p)}}{2p} \right\rbrace.
\end{align*}
Note that $\Lambda' = \Lambda \max\left(f\left(\frac{LS}{4}\right), f\left(\frac{LR}{2p}\right)\right)$ where $f(x) = x e^{\abs{\Lambda} x}$. Since $f$ is increasing, $\Lambda' = \Lambda f(T)$ where $T = L\max\left(\frac{S}{4}, \frac{R}{2p}\right)$. Then stating the above lower bound logarithmically, it is equivalent to
\begin{align}
\log \abs{\Lambda} + \log T + \abs{\Lambda} T > -\mu KL \log \varrho. \label{eq18}
\end{align}
We summarize these results in the following proposition.

\begin{proposition}\label{thm:thm 1 lower bound}
Suppose we have a nontrivial solution of $x^2-2 = y^p$, $p \geq 100$, with the linear form $-\epsilon\Lambda = b_2 \log(\alpha_2) - b_1 \log(\alpha_1)$ set as in \eqref{eq:linear form variables}. Let $K, L, R_1, R_2, S_1, S_2 \in \N, \varrho, \mu \in \R, K \geq 2, \varrho > 1, \frac{1}{3} \leq \mu \leq 1$, $S_2 \geq 2$, and set $a_1, a_2, T$ as above and $R, S, N, g, \sigma, b, \epsilon(N)$ as in Proposition \ref{thm:LaurentThm1}. If
\begin{align*}
&R_1 S_1 \geq L,\\
&(S_2-2)\min(R_2, p) + 2R_2 > (K-1)L,\\
&K(\sigma L -1) \log \varrho - 3 \log N - 2(K-1) \log b - g L (R a_1 + S a_2) > \epsilon(N),
\end{align*}
then
\begin{align*}
\log \abs{\Lambda} + \log T + \abs{\Lambda} T > -\mu KL \log \varrho.
\end{align*}
\end{proposition}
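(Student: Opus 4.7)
The plan is to recognize this proposition as essentially a repackaging of Proposition \ref{thm:LaurentThm1} (Laurent's Theorem 1), specialized to the linear form \eqref{eq:linear form variables} under the standing assumption $r = \pm 1$, so that $b_1 = p$ and $b_2 = 2$. The proof would consist of verifying that each hypothesis of Laurent's theorem translates into the hypothesis given here, and then simplifying the conclusion \eqref{eq:thm1 bound} into the logarithmic form stated.

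First I would check the translation of the three conditions. Condition \eqref{eq:thm1cnd1} becomes $R_1 S_1 \geq L$ by the first identity in Lemma \ref{lemma3.6}, which relies on the multiplicative independence of $\alpha_1, \alpha_2$ established in Lemma \ref{lemma3.4}. Condition \eqref{eq:thm1cnd2} becomes $(S_2 - 2)\min(R_2, p) + 2 R_2 > (K-1)L$ by the second identity in Lemma \ref{lemma3.6}, which is the reason we need $r = \pm 1$. Condition \eqref{eq:thm1cnd3} simplifies using $D = 2$, giving $(D+1)\log N = 3\log N$ and $D(K-1)\log b = 2(K-1)\log b$, matching \eqref{eq17}.

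Next I would verify that the prescribed values of $a_1, a_2$ satisfy $a_i \geq \varrho|\log \alpha_i| - \log|\alpha_i| + 2Dh(\alpha_i)$. Since $\alpha_1, \alpha_2 > 1$, this reduces to $a_i \geq (\varrho - 1)\log \alpha_i + 2Dh(\alpha_i)$. For $\alpha_2$, Lemma \ref{lemma3.3} gives $h(\alpha_2) = \tfrac{1}{2}\log(1+\sqrt{2})$, so the bound becomes $a_2 \geq (\varrho + 1)\log(1+\sqrt{2})$. For $\alpha_1$, using $\alpha_1 y = (a + \epsilon b\sqrt{2})^2$ together with $h(\alpha_1) = \log|a + \epsilon b\sqrt{2}|$ from Lemma \ref{lemma3.3}, one gets $(\varrho - 1)\log \alpha_1 + 4h(\alpha_1) = (\varrho + 1)\log \alpha_1 + 2\log y$. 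Inserting the identity \eqref{eq11} with $|r| = 1$ yields $\log \alpha_1 = (2\log(\sqrt{2}+1) + \epsilon\Lambda)/p$, and invoking the upper bound \eqref{eq8} with $y \geq 23, p \geq 100$ to bound $|\Lambda| < 10^{-50}$ validates the specific choice of $a_1$.

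Finally I would simplify the conclusion of Laurent's theorem. Writing $\Lambda'$ as in \eqref{eq:thm1 bound} with $b_1 = p, b_2 = 2$, the two quantities inside the maximum take the form $f(LS/4)$ and $f(LR/(2p))$, where $f(x) = x e^{|\Lambda|x}$. Since $f$ is monotonically increasing on $[0, \infty)$, the maximum equals $f(T)$ with $T = L\max(S/4, R/(2p))$; taking logarithms of $|\Lambda'| > \varrho^{-\mu K L}$ then gives the inequality in the conclusion. The work is essentially bookkeeping, with no real obstacle beyond the mildly delicate step of exchanging the maximum with the function $f$ — a manipulation that is only legitimate because $f$ is increasing, which in turn depends on $|\Lambda|$ being non-negative (evident from our definitions).
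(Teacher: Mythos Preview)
Your proposal is correct and matches the paper's approach exactly: the proposition is stated there as a summary of the preceding discussion, which translates the three hypotheses of Laurent's Theorem~1 via Lemma~\ref{lemma3.6} and the computation of $D=2$, verifies the $a_i$ bounds using Lemma~\ref{lemma3.3} and \eqref{eq11} with $|r|=1$, and rewrites the conclusion logarithmically via the monotonicity of $f(x)=xe^{|\Lambda|x}$.
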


\begin{remark}\label{rmk:solving for S1 and S2}
Note that from condition \eqref{eq16}, we have $R_1 S_1 \geq L, R_2 S_2 > (K-1)L$, and from this and the fact that $R_1, R_2, S_1, S_2 \geq 1$ one can show $RS \geq KL$, so $g$ is always between $\frac{1}{6}$ and $\frac{1}{4}$. In particular, $g$ is nonnegative, which shows that constraint \eqref{eq17} is easier to satisfy when $a_1$ and $a_2$ are smaller, so it is best to set them to their lower bound as we have done. Similarly, one sees that other than condition \eqref{eq16}, both condition \eqref{eq17} and the lower bound \eqref{eq18} are better when $R_1, S_1, R_2, S_2$ are smaller (it seems backwards for \eqref{eq18} since we are trying to derive a contradiction), so in the optimal solution it must not be possible to decrease them while still satisfying \eqref{eq16}. Thus, one may solve for $S_1$ and $S_2$ in terms of $R_1, R_2, K, L, p$, which reduces the number of variables by $2$.
\end{remark}

Notice that inequality \eqref{eq17} is of the form $A \log \varrho - B \varrho > C$, where
\begin{align*}
A &= K(\sigma L-1),\\
B &= gL\left(\frac{2\log(\sqrt{2}+1)+10^{-50}}{p} R + \log(\sqrt{2}+1) S\right),\\
C &= \epsilon(N) + 3\log N + 2(K-1) \log b\\
&+ gL\left(\left(\frac{2\log(\sqrt{2}+1)+10^{-50}}{p} + 2\log(y)\right)R + \log(\sqrt{2}+1)S\right).
\end{align*}
The derivative with respect to $\varrho$ of the left-hand side is $\frac{A}{\varrho} - B$, which is decreasing in $\varrho$ assuming $L \geq 2$ (which can be assumed since otherwise the left-hand side of \eqref{eq17} is negative). Therefore, $A\log \varrho - B \varrho$ is concave, so there is at most one interval in $\varrho$ on which \eqref{eq17} is satisfied assuming all other variables are fixed. The only two places where $\varrho$ shows up are \eqref{eq17} and \eqref{eq18}. Thus, it is best when $\varrho$ is smaller, so $\varrho$ should be set to the lower bound of the interval on which $A \log \varrho - B \varrho > C$, if it exists.

To determine if a suitable value of $\varrho$ exists, we find the maximum of $A \log \varrho - B \varrho$, which occurs at $\varrho_0 = \frac{A}{B}$. If $A \log \varrho_0-B \varrho_0 \leq C$, we cannot satisfy \eqref{eq17} with the choices of the other variables. If $\varrho_0 < 1$, then whenever $\varrho \geq 1$, $A \log \varrho - B \varrho \leq -B < 0$, so we cannot satisfy \eqref{eq17}. Otherwise, there is exactly one value of $\varrho$ between $1$ and $\varrho_0$ at which $A \log \varrho - B \varrho = C$, and this is the optimal choice.

Thus, we have reduced the problem to choosing $K, L, R_1, R_2, \mu$, such that the constraints are automatically satisfied assuming there exists any suitable value of $\varrho$.

We note that the factorial appearing in the definition of $\epsilon(N)$ and the superfactorial in the definition of $b$ can be made easier to deal with by replacing them with good approximations from their asymptotic expansions. Specifically, we want an upper bound on $N!$ and a lower bound on $\prod_{k=1}^{K-1} k!$. This is provided by the following proposition.

\begin{proposition}\label{thm:asymptotic expansions}
For $N \in \Z_{>0}$,
\begin{align}
\log(N!) &< \left(N+\frac{1}{2}\right) \log N - N + \frac{1}{2} \log(2\pi) + \frac{1}{12N}. \label{eq19}
\end{align}
With $\epsilon(N)$ set as in Proposition \ref{thm:LaurentThm1}, we have
\begin{align}
\epsilon(N) \leq \epsilon^{(u)}(N) := \frac{3 \log N + \log(2\pi) + \frac{1}{6N} + 2\log\left(1+\left(1-\frac{1}{e}\right)^N\right)}{N}. \label{eq21}
\end{align}
For $K \in \Z_{>0}$, if $A$ we let denote the Glaisher-Kinkelin constant, then
\begin{align}
\log\left(\prod_{k=1}^{K-1} k!\right) &> \left(\frac{K^2}{2}-\frac{1}{12}\right) \log K - \frac{3}{4} K^2 + \frac{K}{2} \log(2\pi) + \frac{1}{12} - \log A - \frac{1}{240 K^2}. \label{eq20}
\end{align}
\end{proposition}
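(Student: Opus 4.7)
The plan is to treat the three inequalities in turn: \eqref{eq19} and \eqref{eq21} are routine consequences of Stirling's formula with an explicit error term, while \eqref{eq20} is the serious one and requires an asymptotic expansion of the superfactorial with explicit remainder control.

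For \eqref{eq19}, I would invoke Robbins' refinement of Stirling's formula,
\[
N! < \sqrt{2\pi}\, N^{N+1/2} e^{-N}\, e^{1/(12N)},
\]
valid for all positive integers $N$, and then just take logarithms. For \eqref{eq21}, note that $e^N + (e-1)^N = e^N(1 + (1-1/e)^N)$, so
\[
\epsilon(N) = \frac{2}{N}\bigl(\log(N!) + (1-N)\log N + N + \log(1 + (1-1/e)^N)\bigr).
\]
Substituting the bound from \eqref{eq19} into $\log(N!)$, the $\pm N$ terms cancel and the $\log N$ contributions combine to give exactly the right-hand side of \eqref{eq21}.

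For \eqref{eq20}, the cleanest route is via the Barnes $G$-function, since $\prod_{k=1}^{K-1} k! = G(K+1)$. The classical asymptotic expansion, obtained by Euler--Maclaurin applied to $\log\Gamma$, is
\[
\log G(K+1) = \left(\frac{K^2}{2} - \frac{1}{12}\right)\log K - \frac{3K^2}{4} + \frac{K}{2}\log(2\pi) + \frac{1}{12} - \log A + R(K),
\]
where the leading term of $R(K)$ is $-1/(720 K^2)$, coming from $B_4 = -1/30$ in the Euler--Maclaurin expansion. To land on $R(K) > -1/(240 K^2)$, one bounds the tail of the expansion starting from the $1/K^4$ term using the standard estimate that the Euler--Maclaurin remainder is at most a constant multiple of the first omitted term. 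Alternatively, one can avoid Barnes $G$ altogether by swapping the order of summation,
\[
\sum_{k=1}^{K-1}\log(k!) = K\log((K-1)!) - \sum_{j=1}^{K-1} j \log j,
\]
and applying Stirling and Euler--Maclaurin to the two pieces, with $\log A$ appearing as the limiting constant in the expansion of $\sum j \log j$.

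The main obstacle is extracting the clean constant $1/240$ in \eqref{eq20}: the natural ``first-term'' bound only yields $1/720$, so one must carefully track the full tail of the asymptotic series so that the combined error remains below $1/(240 K^2)$ uniformly in $K$. This is a bookkeeping exercise with Bernoulli numbers rather than a conceptual difficulty, but it has to be done with some care in order to produce a closed-form inequality valid for all $K \in \Z_{>0}$.
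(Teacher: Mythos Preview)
Your treatment of \eqref{eq19} and \eqref{eq21} matches the paper exactly: Robbins' Stirling bound, then substitute into the definition of $\epsilon(N)$.

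For \eqref{eq20} you have a sign confusion that makes you overestimate the difficulty. You say the first-term bound ``only yields $1/720$'' and that extra work is needed to reach $1/240$. But $1/720 < 1/240$, so an inequality $R(K) > -1/(720K^2)$ is \emph{stronger} than $R(K) > -1/(240K^2)$; if you had the former you would already be done, with no tail-tracking required.

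The reason the constant $1/240$ (rather than $1/720$) appears is that the paper does not use the pure Barnes~$G$ expansion you wrote down. It instead quotes Nemes' form
\[
\log G(K+1) \;=\; \tfrac{1}{4}K^2 + K\log K! - \Bigl(\tfrac{1}{2}K(K+1)+\tfrac{1}{12}\Bigr)\log K - \log A + \cdots,
\]
which still contains $K\log K!$. Dropping Nemes' remainder (positive, by the same-sign-as-first-omitted-term property) leaves a correction $-1/(720K^2)$. The paper then expands $\log K!$ by Stirling with $m=3$, dropping that remainder (also positive) for a further lower bound; after multiplying by $K$ this contributes $-1/(360K^2)$. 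Summing the two errors gives
\[
-\frac{1}{720K^2}-\frac{1}{360K^2} \;=\; -\frac{1}{240K^2},
\]
which is exactly the constant in \eqref{eq20}.

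So your direct route via the fully-expanded $G$-asymptotic would in fact give a \emph{sharper} inequality (with $1/720$ in place of $1/240$), provided you can cite an explicit remainder bound for that version of the expansion. The paper's two-step route is slightly lossier but makes the origin of every constant transparent and rests on a single cited error estimate (Nemes) plus classical Stirling.
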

\begin{proof}
We first quote the classical Stirling's formula (see, for example, \cite[Section 6.3]{Edw1974}),
\begin{align*}
\log(N!) &= \left(N+\frac{1}{2}\right) \log N - N + \frac{1}{2} \log(2\pi) + \sum_{i=1}^{m-1} \frac{B_{2i}}{2i(2i-1) N^{2i-1}} + R_m(N),
\end{align*}
where $B_{2i}$ are the Bernoulli numbers and $R_m(N)$ has the same sign as the first term of the asymptotic expansion omitted, that is, $(-1)^{m+1}$. Setting $m = 2$, we obtain \eqref{eq19} (this also follows directly from \cite{Rob1955}). Plugging \eqref{eq19} into the definition of $\epsilon(N)$, we obtain \eqref{eq21} (this is also noted in \cite[p. 342]{Laurent2008}).

Second, we use the asymptotic expansion of the superfactorial, as it relates to the Barnes $G$-function, from \cite{Nemes2014}. Taking $m=2$, and using the fact that the error term in the expansion has the same sign as the first omitted term (see \cite[Theorem 1.2]{Nemes2014}), we have
\begin{align*}
\log\left(\prod_{k=1}^{K-1} k!\right) &> \frac{1}{4} K^2 + K \log K! - \left(\frac{1}{2} K(K+1) + \frac{1}{12}\right) \log K - \log A\\
&+ \sum_{i=1}^{m-1} \frac{B_{2i+2}}{2i(2i+1)(2i+2)K^{2i}} -\frac{1}{720K^2}.
\end{align*}
We then expand $\log K!$ using Stirling's formula with $m=3$, dropping the remainder term for a lower bound. Some simplification then gives the desired result.
\end{proof}

\begin{remark}\label{rmk:epsilon decreasing}
From \eqref{eq21} it follows that $\epsilon(N) \to 0$ as $N \to \infty$. Note that every term is monotonically decreasing in $N$ except $\frac{3 \log N}{N}$, which is decreasing when $\frac{1-\log N}{N^2} < 0$, which is true if and only if $N > e$.
\end{remark}

\subsection{Applying the bounds for large $y$}\label{subsec:applying bounds for large y}

We now give a choice of parameters that depends on $y$ so that we may apply Proposition \ref{thm:thm 1 lower bound} for all sufficiently large $y$. We give a heuristic justification for why this choice is optimal, but make no attempt to prove it rigorously. 

In order to satisfy the bound \eqref{eq17}, the single positive term $K(\sigma L - 1) \log \varrho$ must be large enough to counteract the negative terms. Specifically, since the term $gL(R a_1 + S a_2)$ has $a_1$ in it which grows as $2\log y$, the positive term must also grow as a constant times $\log y$. If $L$ or $R$ are large, this only amplifies the $a_1$ term, so it seems that the optimal solution is to leave $L$ and $R$ as constants. If $\varrho \to \infty$ that increases $a_1$ and $a_2$ linearly in $\varrho$, while only increasing the positive term logarithmically in $\varrho$, so that would be counterproductive. Therefore, it seems that we are forced to have $K$ grow like a constant times $\log y$. Therefore, we set $K = \ceil{K' \log y}$ for some constant $K'$, and keep $L, R_1, R_2, \mu$ constant. Then, as indicated in Remark \ref{rmk:solving for S1 and S2}, we may set $S_1 = \ceil{\frac{L}{R_1}}, S_2 = \ceil{\frac{(K-1)L+1-2R_2}{\min(R_2, p)}}+2$, and condition \eqref{eq16} is satisfied. In order to ensure $S_2 \geq 2$, we impose the mild condition
\begin{align}\label{eq:upp bound R2 less than KL}
    2R_2 \leq (K-1)L + 1.
\end{align}
This condition is easily satisfied when $y$ is large, since we will choose $R_2$ to be a constant and $K \asymp \log y$.

As was the case with our initial bound, we shall find it helpful to replace some of the variables that arise with appropriate bounds as $y \to \infty$. Specifically, in addition to the upper bound $\epsilon^{(u)}(N)$ from \eqref{eq21}, we want upper bounds on the quantities $S$, $b$, and $g$ from Proposition \ref{thm:LaurentThm1}.

\begin{proposition}\label{thm:bounds on S b g}
We have the bounds $S < S^{(u)}, \log b < \log b^{(u)}, g < g^{(u)}$ where
\begin{align*}
S^{(u)} &:= C_1 + C_2 \log y,\\
\log b^{(u)} &:= \log\left(\frac{C_3}{K} + C_4\right) + \frac{3}{2},\\
g^{(u)} &:= \frac{1}{4} - \frac{C_5 K}{C_6 + K} = \frac{1}{4} - C_5 + \frac{C_5 C_6}{C_6 + K},
\end{align*}
and
\begin{align*}
C_1 &= S_1 + 2 + \frac{1-2R_2}{\min(R_2, p)},\\
C_2 &= \frac{K'L}{\min(R_2, p)},\\
C_3 &= \frac{1}{2} \left(2(R-1) + \left(S_1 + \frac{1-L-2R_2}{\min(R_2, p)} + 1\right)p\right),\\
C_4 &= \frac{Lp}{2\min(R_2, p)},\\
C_5 &= \frac{\min(R_2, p)}{12R},\\
C_6 &= \frac{(S_1+2) \min(R_2, p) + 1-L-2R_2}{L}.
\end{align*}
\end{proposition}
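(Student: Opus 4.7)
The plan is to establish the three bounds in order $S$, then $g$, then $\log b$, because the $g$-bound turns out to be an algebraic rearrangement of the $S$-bound, while $\log b$ requires the $S$-bound plus the superfactorial estimate from Proposition \ref{thm:asymptotic expansions}. All three inequalities are strictly algebraic consequences of substituting $K = \lceil K'\log y\rceil$, $S_1 = \lceil L/R_1\rceil$, $S_2 = \lceil ((K-1)L + 1 - 2R_2)/\min(R_2,p)\rceil + 2$, and $b_1 = p$, $b_2 = 2$ into the definitions in Proposition \ref{thm:LaurentThm1}.

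For the $S$ bound, I would start from $S = S_1 + S_2 - 1$ and use $\lceil x\rceil < x+1$ to obtain $S_2 - 2 < ((K-1)L + 1 - 2R_2)/\min(R_2,p) + 1$. Splitting the numerator as $(K-1)L = KL - L$ and using $K - 1 < K'\log y$ (which follows from $K = \lceil K'\log y\rceil$), the $K$-dependent term contributes at most $K'L\log y/\min(R_2,p) = C_2\log y$, and the remaining constants collect into $C_1 = S_1 + 2 + (1 - 2R_2)/\min(R_2,p)$. This gives $S < C_1 + C_2\log y$ exactly as stated.

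For the $g$ bound, using $g = \tfrac14 - KL/(12RS)$ and $C_5 = \min(R_2,p)/(12R)$, the claimed inequality $g < \tfrac14 - C_5K/(C_6+K)$ is equivalent, after clearing denominators, to $S\min(R_2,p) < L(C_6 + K)$. Expanding $C_6$ with its stated definition, $LC_6/\min(R_2,p) = (S_1 + 2) + (1 - L - 2R_2)/\min(R_2,p)$, which is precisely $C_1 - L/\min(R_2,p)$. Hence the $g$-inequality reduces to $S < C_1 + (KL - L)/\min(R_2,p) = C_1 + (K-1)L/\min(R_2,p)$, and this is exactly the intermediate bound produced in the $S$ derivation \emph{before} replacing $K-1$ by $K'\log y$. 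Thus the $g$-bound follows immediately from the same calculation.

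For $\log b$, with $b_2 = 2$ and $b_1 = p$ we have $b = \bigl((R-1) + (S-1)p/2\bigr)\bigl(\prod_{k=1}^{K-1}k!\bigr)^{-2/(K(K-1))}$. Substituting the upper bound for $S - 1$ from the $S$-step and grouping the terms proportional to $K$ gives $(R-1) + (S-1)p/2 < C_3 + KC_4 = K(C_3/K + C_4)$, where $C_3, C_4$ pick up the constant and $K$-linear contributions respectively. Taking logarithms reduces the desired inequality $\log b < \log(C_3/K + C_4) + 3/2$ to proving $\log K - \tfrac{2}{K(K-1)}\log\prod_{k=1}^{K-1}k! \le 3/2$, equivalently $\log\prod_{k=1}^{K-1}k! \ge \tfrac{K(K-1)}{2}(\log K - 3/2)$. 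Inserting the lower bound \eqref{eq20} from Proposition \ref{thm:asymptotic expansions} and cancelling the leading $\tfrac{K^2}{2}\log K - \tfrac{3K^2}{4}$ on both sides, this becomes $\tfrac{K}{2}\log K + \tfrac{K}{2}\log(2\pi) - \tfrac{3K}{4} - \tfrac{\log K}{12} + \tfrac{1}{12} - \log A - \tfrac{1}{240K^2} \ge 0$, which is easily verified for all $K \ge 2$ since the two leading positive terms dominate the $-3K/4$.

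The main obstacle is bookkeeping rather than conceptual: one has to track how the constants $C_1,\ldots,C_6$ are assembled from $S_1, R_1, R_2, L, p$ and verify that the formula for $C_6$ matches the left-over terms from the $S$-derivation exactly, so that the $g$-bound collapses to the $S$-bound with no slack. The only genuinely analytic input is the superfactorial estimate \eqref{eq20}, and it is applied to a single scalar inequality in $K$ that holds uniformly for $K \ge 2$.
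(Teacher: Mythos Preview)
Your proposal is correct and follows essentially the same approach as the paper: the same intermediate bound on $S$ (the paper's display \eqref{eq22}), the same superfactorial estimate \eqref{eq20} for $\log b$, and the same substitution of the intermediate $S$-bound into the formula for $g$. Your organization is slightly tighter in two places---you observe explicitly that the $g$-inequality is algebraically equivalent to the intermediate $S$-bound (the paper simply redoes that substitution), and you keep the factor $K(K-1)$ in the superfactorial step rather than relaxing it to $K^2$---but these are presentational differences, not a different route.
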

\begin{proof}
For $S$, note that
\begin{align}
S &= S_1 + \ceil{\frac{(K-1)L+1-2R_2}{\min(R_2, p)}}+1\nonumber\\
&< S_1 + \frac{(K-1)L+1-2R_2}{\min(R_2, p)} + 2\label{eq22}\\
&< S_1 + \frac{K' \log(y) L + 1 - 2R_2}{\min(R_2, p)} + 2\nonumber\\
&= S_1 + 2 + \frac{1-2R_2}{\min(R_2, p)} + \frac{K'L}{\min(R_2, p)} \log y,\nonumber
\end{align}
which is $S^{(u)}$.

For $b$, note that by \eqref{eq20}, we have
\begin{align*}
\log b &< \log(2(R-1) + (S-1)p) - \log 2\\
&-\frac{2}{K^2-K} \left(\left(\frac{K^2}{2} - \frac{1}{12}\right) \log K - \frac{3}{4} K^2 + \frac{1}{2} \log(2\pi) K + \frac{1}{12} - \log A - \frac{1}{240K^2}\right)\\
&< \log(2(R-1)+(S-1)p)-\log 2 - \frac{2}{K^2} \left(\frac{K^2}{2} \log K - \frac{3}{4} K^2 + f(K)\right)\\
&= \log(2(R-1)+(S-1)p) - \log 2 - \log K + \frac{3}{2} - \frac{2}{K^2} f(K),
\end{align*}
where $f(K) = \frac{K}{2} \log(2\pi) - \frac{1}{12} \log K + \frac{1}{12} - \log A - \frac{1}{240 K^2}$ consists of asymptotically unimportant terms. Now, note that $f'(K) = \frac{\log(2\pi)}{2} - \frac{1}{12 K} + \frac{1}{120 K^3}$ which is always positive when $K \geq 1$, so $f$ is increasing. Also, $f(1) > 0$, so $f(K)$ is always positive, so we may drop it for an upper bound. Thus, using \eqref{eq22},
\begin{align*}
\log b &< \log(2(R-1) + (S-1)p) - \log(2K) + \frac{3}{2}\\
&< \log\left(2(R-1) + \left(S_1 + \frac{(K-1)L+1-2R_2}{\min(R_2, p)}+1\right)p\right) - \log(2K) + \frac{3}{2}\\
&= \log\left\{\frac{1}{2K}\left(2(R-1) + \left(S_1 + \frac{1-L-2R_2}{\min(R_2, p)}+1\right)p\right) + \frac{Lp}{2\min(R_2, p)}\right\} + \frac{3}{2},
\end{align*}
which is $\log b^{(u)}$. In fact, $\log b$ converges to $\log C_4 + \frac{3}{2}$ as $K \to \infty$, but we only need the upper bound.

For $g$, note that, by \eqref{eq22},
\begin{align*}
g &= \frac{1}{4} - \frac{LK}{12RS}\\
&< \frac{1}{4} - \frac{LK}{12R\left(S_1 + \frac{(K-1)L+1-2R_2}{\min(R_2, p)}+2\right)}\\
&= \frac{1}{4} - \frac{LK}{12R\left(S_1 + \frac{1-L-2R_2}{\min(R_2, p)} + 2 + \frac{KL}{\min(R_2, p)}\right)},
\end{align*}
which is $g^{(u)}$. Again, $g$ converges to $\frac{1}{4}-C_5$ as $K \to \infty$, but we do not need this fact.
\end{proof}

In addition to the constants in Proposition \ref{thm:bounds on S b g}, write $a_1 = C_7 + 2\log y$, where
\begin{align*}
C_7 &= (\varrho+1) \frac{2\log(\sqrt{2}+1)+10^{-50}}{p}.
\end{align*}
Recall also the definition of $\epsilon^{(u)}(N)$ in \eqref{eq21}.

\begin{proposition}
   Assume the notation above. Assuming
    \begin{align}
    &\left(K'(\sigma L - 1) \log \varrho - 2 K' \log b^{(u)} - g^{(u)}L(2R + C_2 a_2)\right) \log y\label{eq23}\\
    &-3 \log(L(K' \log y + 1)) - g^{(u)}L(R C_7 + C_1 a_2) > \epsilon^{(u)}(N)\nonumber
    \end{align}
    and $\frac{S}{4} \geq \frac{R}{2p}$, if $y$ and $p$ are in a solution to \eqref{eq:main lebesgue nagell eqn}, then
    \begin{align}
    &\left(\log(\varrho)\mu L K' - p/2\right)\log y + 1.053 + \log\left(L \frac{C_1 + C_2 \log y}{4}\right)\label{eq25}\\
    &+ 0.7165 L (C_1 + C_2 \log y) y^{-\frac{p}{2}} + \log(\varrho) \mu L > 0.\nonumber
    \end{align}
    Therefore, if
    \begin{align}
    K'(\sigma L - 1) \log \varrho - 2 K' \left(\log C_4 + \frac{3}{2}\right) - \left(\frac{1}{4}-C_5\right) L(2R + C_2 a_2) > 0 \label{eq24}
    \end{align}
    and
    \begin{align}
    \log(\varrho) \mu L K' < p/2,\label{eq26}
    \end{align}
    then there are no solutions to \eqref{eq:main lebesgue nagell eqn} for all sufficiently large $y$ for the given value of $p$.
\end{proposition}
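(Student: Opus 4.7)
The plan is to deduce inequality \eqref{eq25} from \eqref{eq23} together with the hypothesis $S/4\ge R/(2p)$ via Proposition \ref{thm:thm 1 lower bound}, and then to analyze the asymptotic behavior of both inequalities as $y\to\infty$ to establish the ``Therefore'' clause.

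First, I would verify that \eqref{eq23} implies the linear-forms-in-logarithms condition \eqref{eq17}. Because $K=\lceil K'\log y\rceil$, we have $K\ge K'\log y$, $K-1\le K'\log y$, and $N=KL\le L(K'\log y+1)$. Substituting the upper bounds $g<g^{(u)}$, $S<S^{(u)}=C_1+C_2\log y$, $\log b<\log b^{(u)}$ from Proposition \ref{thm:bounds on S b g}, together with $a_1=C_7+2\log y$ and $\epsilon(N)\le\epsilon^{(u)}(N)$ from Proposition \ref{thm:asymptotic expansions}, and splitting the resulting bound on $g^{(u)}L(Ra_1+S^{(u)}a_2)$ into its $\log y$-part and its constant part, one finds that $\mathrm{LHS}-\mathrm{RHS}$ of \eqref{eq17} is bounded below by $\mathrm{LHS}-\mathrm{RHS}$ of \eqref{eq23}. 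Thus \eqref{eq23} implies \eqref{eq17}, and Proposition \ref{thm:thm 1 lower bound} applies, yielding
\[
\log|\Lambda|+\log T+|\Lambda|T>-\mu KL\log\varrho.
\]
The hypothesis $S/4\ge R/(2p)$ gives $T=LS/4\le LS^{(u)}/4$. Applying \eqref{eq8} to bound $\log|\Lambda|$, and \eqref{eq35} to obtain $|\Lambda|T<(2.866/4)\,LS^{(u)}\,y^{-p/2}=0.7165\,L(C_1+C_2\log y)\,y^{-p/2}$, and bounding $-\mu KL\log\varrho\ge -\mu L\log\varrho\,(K'\log y+1)$ via $K\le K'\log y+1$, a direct rearrangement produces exactly \eqref{eq25}.

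For the ``Therefore'' clause, observe that as $y\to\infty$ we have $K\to\infty$, so $\log b^{(u)}\to\log C_4+\tfrac{3}{2}$ and $g^{(u)}\to\tfrac14-C_5$. Hence the coefficient of $\log y$ in \eqref{eq23} converges to the expression in \eqref{eq24}, which is strictly positive by hypothesis, while the subtracted terms $3\log(L(K'\log y+1))$ and $\epsilon^{(u)}(N)$ are $O(\log\log y)$ and $g^{(u)}L(RC_7+C_1a_2)$ is bounded. Therefore \eqref{eq23} holds for all $y$ above some explicit threshold. Since $S\to\infty$ with $K$, the condition $S/4\ge R/(2p)$ is also eventually satisfied, so the first half of the proposition forces \eqref{eq25} for any putative nontrivial solution. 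But \eqref{eq26} makes the coefficient $(\log\varrho)\mu LK'-p/2$ of $\log y$ in \eqref{eq25} strictly negative, while the remaining terms are either constants, grow like $O(\log\log y)$, or decay like $y^{-p/2}$ (the $0.7165\,L(C_1+C_2\log y)y^{-p/2}$ term vanishes since $p\ge 3$). Thus the left-hand side of \eqref{eq25} tends to $-\infty$ as $y\to\infty$, yielding the required contradiction and hence ruling out nontrivial solutions for sufficiently large $y$.

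The main obstacle is the bookkeeping of the first step: each of the bounds from Propositions \ref{thm:bounds on S b g} and \ref{thm:asymptotic expansions} must be applied in the correct direction (upper vs.\ lower) and to the correct term so that \eqref{eq23} genuinely becomes a \emph{sufficient} condition for \eqref{eq17}, and one must carefully track which parts of each term contribute to the coefficient of $\log y$ versus to the constant. Once this matching is made explicit, the remainder of the argument is routine asymptotic analysis.
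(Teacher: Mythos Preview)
Your proposal is correct and follows essentially the same approach as the paper's proof: you reduce \eqref{eq17} to \eqref{eq23} via the upper bounds of Propositions \ref{thm:bounds on S b g} and \ref{thm:asymptotic expansions} together with $K'\log y\le K\le K'\log y+1$, then combine the conclusion of Proposition \ref{thm:thm 1 lower bound} with \eqref{eq8}, \eqref{eq35}, and $T\le LS^{(u)}/4$ to obtain \eqref{eq25}, and finally read off the asymptotic conditions \eqref{eq24} and \eqref{eq26} from the coefficients of $\log y$. The paper proceeds in exactly this way, and your remark about the bookkeeping of upper versus lower bounds is the only point requiring care.
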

\begin{proof}
Recall constraint \eqref{eq17}, which with our choice of $K$ and $a_1$ substituted in, is
\begin{align*}
&\ceil{K' \log y}(\sigma L - 1) \log \varrho - 3 \log(L \ceil{K' \log y})\\
&- 2(\ceil{K' \log y} - 1) \log b - gL(R (C_7 + 2 \log y) + S a_2) > \epsilon(N).
\end{align*}
This holds if the following inequality is satisfied:
\begin{align*}
&K' \log y (\sigma L - 1) \log \varrho - 3 \log(L(K' \log y + 1))\\
&- 2 K' (\log y) (\log b^{(u)}) - g^{(u)} L (R(C_7 + 2 \log y) + S^{(u)} a_2) > \epsilon^{(u)}(N),
\end{align*}
and this is equivalent to \eqref{eq23} after grouping $\log y$ terms.

Clearly, \eqref{eq23} holds as $y \to \infty$ if and only if the coefficient of $\log y$ is positive. Thus, constraint \eqref{eq17} is implied by \eqref{eq24} in the limit as $y \to \infty$.

Note that the lower bound \eqref{eq18} is
\begin{align*}
\log \Lambda + \log T + T \Lambda > -\log(\varrho) \mu \ceil{K' \log y} L.
\end{align*}
Recall $T = L \max\left(\frac{S}{4}, \frac{R}{2p}\right)$. Now, $\frac{R}{2p}$ is a constant, while $S \sim C_2 \log y$, so if $y$ is large then $S$ will be the maximum. Thus, assuming $y$ is sufficiently large so this is the case, and combining with the upper bounds \eqref{eq8} and \eqref{eq35}, we have
\begin{align*}
1.053 - \log(y)\frac{p}{2} + \log\left(L \frac{S^{(u)}}{4}\right) + L \frac{S^{(u)}}{4} 2.866 y^{-\frac{p}{2}} > -\log(\varrho) \mu L (K'\log y+1).
\end{align*}
Grouping the $\log y$ terms, this is equivalent to \eqref{eq25}.

We obtain a contradiction for all sufficiently large $y$ if and only if the coefficient of $\log y$ in \eqref{eq25} is negative. Thus, to obtain a contradiction we need \eqref{eq26}.
\end{proof}

\begin{remark}
As was the case for \eqref{eq17}, the inequality \eqref{eq24} is of the form $A \log \varrho - B \varrho > C$, where
\begin{align*}
A &= K' (\sigma L - 1)\\
B &= \left(\frac{1}{4}-C_5\right) L C_2 \log(\sqrt{2}+1)\\
C &= 2 K' \left(\log C_4 + \frac{3}{2}\right) +  \left(\frac{1}{4}-C_5\right) L (2R +  C_2 \log(\sqrt{2}+1))
\end{align*}
so we may determine if a suitable value of $\varrho$ exists as before.
\end{remark}

Thus, to optimize the bound on $p$ for the limit as $y \to \infty$, we must choose $K', L, R_1, R_2, \mu$ so that there exists a suitable value of $\varrho$ so that \eqref{eq24} and \eqref{eq26} are satisfied for the minimal value of $p$. (We also need that \eqref{eq:upp bound R2 less than KL} is satisfied, but this is automatic for sufficiently large $y$.) We searched the parameter space using code written in Python to find the minimal value of $\log(\varrho) \mu L K'$ for each value of $p$, and the lowest value of $p$ for which it was less than $p/2$ was $p=916$ with $(K', L, R_1, R_2, \mu) = (26.62, 9, 1, 64, 0.58)$. This sets the limit of Theorem \ref{thm:only triv solns for p big}.

We know now what happens when $y$ is sufficiently large. Our next step is to make these results explicit with respect to the size of $y$. That is, we find an explicit lower bound on $y$ for which \eqref{eq23} is satisfied and \eqref{eq25} is not satisfied with a given choice of parameters satisfying \eqref{eq24} and \eqref{eq26}.

\begin{proposition} \label{proposition3.8}
Let $b_0, g_0, \epsilon_0$ be the values of $b^{(u)}, g^{(u)}, \epsilon^{(u)}(N)$, respectively at a particular value $y_0$ of $y$. Put
\begin{align*}
C_8 &= K'(\sigma L - 1) \log \varrho - 2 K' \log b_0 - g_0L(2R + C_2 a_2),\\
C_9 &= 3 \log L + g_0 L(R C_7 + C_1 a_2) + \epsilon_0,
\end{align*}
and suppose
\begin{align}
C_8 \log y_0 - 3 \log (K' \log y_0+1) > C_9 \label{eq27}
\end{align}
is satisfied for a particular choice of $K', L, R_1, R_2, \mu, \varrho$. Assuming $C_3, C_6, C_8 > 0$, $\log y_0 > \frac{3}{C_8} - \frac{1}{K'}$, $N > e$, then \eqref{eq23} is satisfied for all $y \geq y_0$ with the same choice of $K', L, R_1, R_2, \mu, \varrho$.
\end{proposition}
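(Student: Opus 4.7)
The strategy is to replace each of the three $y$-dependent quantities $\log b^{(u)}$, $g^{(u)}$, and $\epsilon^{(u)}(N)$ appearing in \eqref{eq23} by its (larger) value at $y_0$, thereby reducing \eqref{eq23} to a single scalar inequality in $t := \log y$, and then to propagate this inequality from $y_0$ to all $y \geq y_0$ by a one-variable calculus argument.

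First I will establish the needed monotonicities. Since $K = \ceil{K' \log y}$ is non-decreasing in $y$, the explicit formulas in Proposition~\ref{thm:bounds on S b g} give $\log b^{(u)} = \log(C_3/K + C_4) + 3/2$ and $g^{(u)} = \tfrac{1}{4} - C_5 + C_5 C_6/(C_6 + K)$, which are manifestly non-increasing in $K$ (hence in $y$) using $C_3 > 0$, $C_6 > 0$, and $C_5 > 0$ (the last following from positivity of $\min(R_2,p)$ and $R$). For the third, $N = KL$ is non-decreasing in $y$, and by Remark~\ref{rmk:epsilon decreasing}, $\epsilon^{(u)}(N)$ is decreasing in $N$ for $N > e$; the hypothesis $N > e$ at $y_0$ then gives $\epsilon^{(u)}(N) \leq \epsilon_0$ throughout. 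Feeding these bounds into \eqref{eq23}, and using that $K'$, $L$, $R$, $C_1$, $C_2$, $C_7$, and $a_2$ are all positive, the coefficient of $\log y$ on the left of \eqref{eq23} is at least $C_8$, while after splitting $3\log(L(K'\log y + 1)) = 3\log L + 3\log(K'\log y + 1)$ and absorbing the $3\log L$ into $C_9$, the right-hand side is at most $C_9$. Thus \eqref{eq23} reduces to proving
\begin{align*}
F(t) := C_8 t - 3\log(K' t + 1) > C_9 \quad \text{for all } t \geq \log y_0.
\end{align*}

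Finally, differentiating gives $F'(t) = C_8 - 3K'/(K' t + 1)$, which is strictly positive exactly when $t > 3/C_8 - 1/K'$. The hypotheses $C_8 > 0$ and $\log y_0 > 3/C_8 - 1/K'$ therefore guarantee that $F$ is strictly increasing on $[\log y_0, \infty)$, so $F(\log y) \geq F(\log y_0) > C_9$ by \eqref{eq27}, which is precisely the scalar inequality that we needed. The only mildly delicate point in the argument is the bookkeeping in the second paragraph that converts \eqref{eq23} into the clean univariate inequality $F(t) > C_9$; the calculus step is routine, and the ceiling in the definition of $K$ causes no complication since only non-increasing dependence on $K$ is invoked throughout.
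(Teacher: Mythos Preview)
Your proof is correct and follows essentially the same approach as the paper's: you bound $\log b^{(u)}$, $g^{(u)}$, and $\epsilon^{(u)}(N)$ above by their values at $y_0$ using the monotonicity in $K$ (and hence in $y$), reduce \eqref{eq23} to the scalar inequality $C_8\log y - 3\log(K'\log y+1) > C_9$, and then propagate from $y_0$ via the derivative computation $F'(t) = C_8 - 3K'/(K't+1)$. Your bookkeeping is slightly more explicit than the paper's (e.g., you note that $C_5>0$ is automatic), but the argument is otherwise identical.
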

\begin{proof}
Clearly $\log b^{(u)}$ and $g^{(u)}$ are decreasing in $K$ when $C_3, C_6 > 0$, and, as mentioned in Remark \ref{rmk:epsilon decreasing}, $\epsilon^{(u)}(N)$ is decreasing when $N > e$, so we may substitute them for their values at $y_0$ as an upper bound. After the substitution, \eqref{eq23} becomes
\begin{align*}
    C_8 \log y - 3 \log(K' \log y + 1) > C_9.
\end{align*}
The derivative of the left-hand side with respect to $\log y$ is $C_8 - \frac{3K'}{K' \log y + 1}$, which is increasing, and positive when $\log y > \frac{3}{C_8} - \frac{1}{K'}$, assuming $C_8 > 0$. Thus, if \eqref{eq27} is satisfied at $y_0$, then it is satisfied for all $y \geq y_0$.
\end{proof}

\begin{proposition} \label{proposition3.9}
Suppose \eqref{eq25} is not satisfied for a particular choice of parameters at a particular value $y_0$ of $y$. Also assume $C_1 > 0$ and
\begin{align}
\log(\varrho) \mu L K' - \frac{p}{2} + \frac{C_2}{C_1 + C_2 \log y_0} + 0.7165 L C_2 y_0^{-\frac{p}{2}} < 0. \label{eq29}
\end{align}
Then \eqref{eq25} is not satisfied for all $y \geq y_0$.
\end{proposition}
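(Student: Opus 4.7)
The plan is to regard the left-hand side of \eqref{eq25} as a function
\begin{align*}
F(u) &= \left(\log(\varrho)\mu L K' - \tfrac{p}{2}\right) u + 1.053 + \log\!\left(L \tfrac{C_1 + C_2 u}{4}\right) + 0.7165\,L(C_1 + C_2 u)\,e^{-pu/2} + \log(\varrho)\mu L
\end{align*}
of the single variable $u = \log y$, and to show that $F$ is strictly decreasing on the half-line $[\log y_0,\infty)$. Since ``\eqref{eq25} is not satisfied at $y_0$'' is precisely the statement $F(\log y_0)\leq 0$, strict monotonicity will then give $F(u) \leq 0$ for every $u \geq \log y_0$, which is the desired conclusion.

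The key step is the derivative computation. Differentiating term-by-term yields
\begin{align*}
F'(u) &= \log(\varrho)\mu L K' - \tfrac{p}{2} + \frac{C_2}{C_1 + C_2 u} + 0.7165\,L C_2\,e^{-pu/2} - 0.7165\,L(C_1 + C_2 u)\,\tfrac{p}{2}\,e^{-pu/2}.
\end{align*}
The last term is non-positive provided $C_1 + C_2 u > 0$, which holds for every $u \geq \log y_0 > 0$ because $C_1 > 0$ by hypothesis and $C_2 = K'L/\min(R_2,p) > 0$ from the definitions in Proposition \ref{thm:bounds on S b g}. Discarding this negative term gives the upper bound
\begin{align*}
F'(u) \leq \log(\varrho)\mu L K' - \tfrac{p}{2} + \frac{C_2}{C_1 + C_2 u} + 0.7165\,L C_2\,e^{-pu/2}.
\end{align*}

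Both of the $u$-dependent summands on the right, namely $C_2/(C_1 + C_2 u)$ and $e^{-pu/2}$, are strictly decreasing in $u$ (again using $C_1,C_2>0$ and $p>0$). Therefore the upper bound itself is a decreasing function of $u$, and consequently, for every $u \geq \log y_0$,
\begin{align*}
F'(u) \leq \log(\varrho)\mu L K' - \tfrac{p}{2} + \frac{C_2}{C_1 + C_2 \log y_0} + 0.7165\,L C_2\,y_0^{-p/2} < 0,
\end{align*}
where the strict inequality is exactly the hypothesis \eqref{eq29}. Hence $F$ is strictly decreasing on $[\log y_0,\infty)$, and combining with $F(\log y_0) \leq 0$ yields $F(u) \leq 0$ for all $u \geq \log y_0$, proving that \eqref{eq25} fails for every $y \geq y_0$.

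There is no serious obstacle in this argument; the only thing to be careful about is justifying the sign of $C_1 + C_2 u$ (needed to drop the negative term), and monotonicity of the two decreasing summands, both of which follow immediately from the hypothesis $C_1 > 0$ together with the positivity of $C_2$ that comes for free from the definitions.
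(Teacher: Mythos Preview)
Your proof is correct and follows essentially the same approach as the paper: differentiate the left-hand side of \eqref{eq25} with respect to $\log y$, use $C_1>0$ to discard the non-positive term $-0.7165\,L(C_1+C_2 u)\tfrac{p}{2}e^{-pu/2}$, observe that the resulting upper bound is decreasing in $u$, and then invoke hypothesis \eqref{eq29} at $y_0$. The only difference is that you spell out the substitution $u=\log y$ and the monotonicity argument a bit more explicitly than the paper does.
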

\begin{proof}
The derivative of \eqref{eq25} with respect to $\log y$ is
\begin{align*}
\log(\varrho) \mu L K' - \frac{p}{2} + \frac{C_2}{C_1 + C_2 \log y} + 0.7165 L\left(-\frac{p}{2} (C_1 + C_2 \log y) + C_2\right) y^{-\frac{p}{2}}.
\end{align*}
This is negative if $C_1 > 0$ and
\begin{align*}
    \log(\varrho) \mu L K' - \frac{p}{2} + \frac{C_2}{C_1 + C_2 \log y} + 0.7165 L C_2 y^{-\frac{p}{2}} < 0.
\end{align*}
Since the left-hand side is decreasing in $y$, if it is negative at $y_0$, then it is negative for all $y \geq y_0$.
\end{proof}

After searching for solutions to \eqref{eq24} and \eqref{eq26} that minimized $\log(\varrho) \mu L K'$ in the limit as $y \to \infty$, we increased $\varrho$ slightly to satisfy \eqref{eq27} at a smaller value of $y$ while still not satisfying \eqref{eq25}. Through this process, we found the choices of parameters in Table \ref{Tab:parameters} below to satisfy the hypotheses of Propositions \ref{proposition3.8} and \ref{proposition3.9} for the primes between $916$ and $1951$:

\begin{table}[ht]
\caption{Values of parameters for $919 \leq p \leq 1951$}
\centering
\begin{tabular}{|L|L|L|L|L|L|L|L|}
\hline
p & y & K' & L & R_1 & R_2 & \mu & \varrho\\
\hline
919 & 10^{800} & 26.64 & 9 & 1 & 64 & 0.58 & 27.22\\
\hline
929 & 10^{300} & 26.71 & 9 & 1 & 64 & 0.58 & 27.8\\
\hline
937, 941 & 10^{150} & 26.76 & 9 & 1 & 64 & 0.58 & 28.55\\
\hline
947 & 10^{100} & 26.83 & 9 & 1 & 64 & 0.58 & 29.3\\
\hline
953 & 10^{100} & 26.42 & 9 & 1 & 64 & 0.59 & 29.8\\
\hline
967-997 & 10^{100} & 26.67 & 9 & 1 & 64 & 0.59 & 29.8\\
\hline
1000-1200 & 10^{100} & 27.04 & 10 & 1 & 64 & 0.57 & 26.3\\
\hline
1200-1951 & 10^{50} & 28.69 & 10 & 1 & 69 & 0.59 & 33\\
\hline
\end{tabular}
\label{Tab:parameters}
\end{table}

This concludes the proof of Theorem \ref{thm:no nontrivial solutions p and y big}.

\section{Ruling out small $y$}\label{sec:small y}

By Theorem \ref{thm:no nontrivial solutions p and y big}, we know any nontrivial solutions to \eqref{eq:main lebesgue nagell eqn} with $p > 1951$ must have $y$ smaller than the values in Table \ref{Tab:parameters}. In order to finish the proof of Theorem \ref{thm:only triv solns for p big}, we must show there are no nontrivial solutions less than these bounds. In order to accomplish this, we return to studying the Thue equation \eqref{eq4}. By Theorem \ref{thm: r is 1}, we may assume $r = \pm 1$. As mentioned in Section \ref{sec:ClassicalApproach}, we may then assume $r=1$, at the cost of losing control of the sign of $x$. Let $f = f_{1,p}$ be the polynomial corresponding to this equation, and $\theta, \rho_i$ be as in Section \ref{sec:Galois Theory}.

Let $a$ and $b$ be a solution to the Thue equation \eqref{eq4}. If $b=0$, then clearly the Thue equation implies $a=1$, so we have the trivial solution to the Thue equation. Therefore, we may assume $|b| \geq 1$, so we have a \emph{nontrivial} solution to \eqref{eq4}. By factoring $f$, we obtain
\begin{align}
\left(\frac{a}{b}-\theta\right) \left(\frac{a}{b}-\rho_1\right) \dots \left(\frac{a}{b}-\rho_{p-1}\right) &= \frac{1}{b^p}. \label{eq30}
\end{align}
Since $\theta$ is the only real root of $f$ (Theorem \ref{thm:formula for roots}), the absolute value of the factor $\frac{a}{b}-\rho_i$ for $i \neq 0$ is bounded below by the absolute value of the imaginary part of $\rho_i$.

\begin{theorem}\label{thm:product of imag rho_i}
We have
\begin{align*}
\prod_{i=1}^{p-1} \abs{\im(\rho_i)} \geq p 2^{\frac{p-3}{2}}.
\end{align*}
\end{theorem}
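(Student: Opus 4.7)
The plan is to apply Theorem \ref{thm:formula for roots} with $r=1$ and compute $|\im(\rho_i)|$ in closed form, then evaluate the product by two standard identities for roots of unity. Setting $\mu = (1+\sqrt{2})^{-2/p}\in(0,1)$, equation \eqref{eq503} reads $\rho_i = \sqrt{2} - \frac{2\sqrt{2}}{1+\mu \zeta_p^i}$. Rationalizing, $\frac{1}{1+\mu\zeta_p^i} = \frac{1+\mu\zeta_p^{-i}}{|1+\mu\zeta_p^i|^2}$, so
\[
|\im(\rho_i)| \;=\; \frac{2\sqrt{2}\,\mu\,|\sin(2\pi i/p)|}{|1+\mu\zeta_p^i|^2}.
\]
Taking the product over $i=1,\dots,p-1$ reduces matters to two elementary evaluations.

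For the sine product, since $\gcd(2,p)=1$ the map $i\mapsto 2i \pmod p$ permutes $\{1,\dots,p-1\}$, so $\prod_{i=1}^{p-1}|\sin(2\pi i/p)| = \prod_{k=1}^{p-1}|\sin(\pi k/p)|$. Combined with $|1-\zeta_p^k| = 2|\sin(\pi k/p)|$ and the classical identity $\prod_{k=1}^{p-1}(1-\zeta_p^k)=p$, this gives $\prod_{i=1}^{p-1}|\sin(2\pi i/p)| = p/2^{p-1}$. For the product of $|1+\mu\zeta_p^i|^2$, I would apply $\prod_{i=0}^{p-1}(x+\mu\zeta_p^i) = x^p+\mu^p$ at $x=1$ to get $\prod_{i=0}^{p-1}(1+\mu\zeta_p^i) = 1+\mu^p$; using $\mu^p = (1+\sqrt{2})^{-2} = (\sqrt{2}-1)^2 = 3-2\sqrt{2}$, this equals $2(2-\sqrt{2})$. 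Dividing out the $i=0$ factor and squaring, and using $2-\sqrt{2} = \sqrt{2}(\sqrt{2}-1)$ so that $(2-\sqrt{2})^2 = 2\mu^p$, one obtains
\[
\prod_{i=1}^{p-1}|1+\mu\zeta_p^i|^2 \;=\; \frac{8\mu^p}{(1+\mu)^2}.
\]

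Assembling the pieces and using $(2\sqrt{2})^{p-1}/2^{p-1} = 2^{(p-1)/2}$ together with $\mu^{p-1}/\mu^p = 1/\mu$, all $\mu^p$ factors cancel and the expression collapses to
\[
\prod_{i=1}^{p-1}|\im(\rho_i)| \;=\; p \cdot 2^{(p-3)/2}\cdot \frac{(1+\mu)^2}{4\mu}.
\]
The theorem then follows immediately from the AM–GM inequality $(1+\mu)^2 \geq 4\mu$ (with strict inequality since $\mu<1$). There is no substantive obstacle: every ingredient is either established in the paper or is a standard cyclotomic identity. The only step requiring care is the bookkeeping of signs and the indexing when extracting $\im(\rho_i)$, and the small algebraic manipulation identifying $(2-\sqrt{2})^2$ with $2\mu^p$ in $\Z[\sqrt{2}]$, which causes the seemingly complicated $\mu^p$ dependence to disappear.
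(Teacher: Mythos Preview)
Your proof is correct and follows essentially the same route as the paper's own argument: both start from \eqref{eq503} with $r=1$, extract $|\im(\rho_i)|$ by rationalizing, evaluate the sine product via the cyclotomic identity $\prod_{k=1}^{p-1}(1-\zeta_p^k)=p$, evaluate $\prod(1+\mu\zeta_p^i)$ by specializing $x^p+\mu^p$ at $x=1$, and finish with the elementary inequality $(1+\mu)^2\geq 4\mu$ (which the paper phrases as $x+1/x\geq 2$ with $x=(1+\sqrt{2})^{1/p}=\mu^{-1/2}$). The only differences are notational and in the bookkeeping of the final simplification.
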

\begin{proof}
Recall, from \eqref{eq503} with $r=1$,
\begin{align*}
\rho_i = \sqrt{2} - \frac{2\sqrt{2}}{(1+\sqrt{2})^{-2/p} \zeta_p^i+1}.
\end{align*}
Thus,
\begin{align*}
\rho_i = \sqrt{2} - 2\sqrt{2} \frac{(1+\sqrt{2})^{-2/p} \zeta_p^{-i}+1}{\abs{(1+\sqrt{2})^{-2/p} \zeta_p^i+1}^2},
\end{align*}
and
\begin{align*}
\im(\rho_i) &= 2\sqrt{2} \frac{(1+\sqrt{2})^{-2/p} \sin\left(\frac{2\pi i}{p}\right)}{\abs{(1+\sqrt{2})^{-2/p} \zeta_p^i+1}^2}.
\end{align*}
Therefore,
\begin{align*}
\prod_{i=1}^{p-1} \im(\rho_i) &= \left(2\sqrt{2}(1+\sqrt{2})^{-2/p}\right)^{p-1} \frac{\prod_{i=1}^{p-1} \sin\left(\frac{2\pi i}{p}\right)}{\prod_{i=1}^{p-1} \abs{(1+\sqrt{2})^{-2/p} \zeta_p^i+1}^2}.
\end{align*}
The polynomial $(x-1)^p-(1+\sqrt{2})^{-2}$ has roots $(1+\sqrt{2})^{-2/p} \zeta_p^i + 1$ for $0 \leq i \leq p-1$, so
\begin{align*}
&\prod_{i=0}^{p-1} \left((1+\sqrt{2})^{-2/p} \zeta_p^i + 1\right) = (1+\sqrt{2})^{-2}+1,
\end{align*}
and hence
\begin{align*}
    \prod_{i=1}^{p-1} \abs{(1+\sqrt{2})^{-2/p} \zeta_p^i+1} = \frac{(1+\sqrt{2})^{-2}+1}{(1+\sqrt{2})^{-2/p}+1}.
\end{align*}
By rearranging and relating the product over roots of unity to the value of a cyclotomic polynomial, we find
\begin{align*}
\prod_{j=1}^{p-1} \sin\left(\frac{2\pi j}{p}\right) &= \prod_{j=1}^{p-1} \frac{e^{2\pi i \frac{j}{p}}-e^{-2\pi i \frac{j}{p}}}{2i}= (2i)^{1-p} \prod_{j=1}^{p-1} e^{-2\pi i \frac{j}{p}} \left(e^{2\pi i \frac{2j}{p}}-1\right)\\
&= (2i)^{1-p} e^{-2\pi i \frac{p(p-1)}{2p}} \prod_{k=1}^{p-1} \left(e^{2\pi i \frac{k}{p}}-1\right) = (-1)^{\frac{p-1}{2}} \frac{p}{2^{p-1}}.
\end{align*}
Putting this all together, we have,
\begin{align*}
\prod_{i=1}^{p-1} \abs{\im(\rho_i)} &= \left(2\sqrt{2}(1+\sqrt{2})^{-2/p}\right)^{p-1} \cdot \frac{p}{2^{p-1}} \cdot \left(\frac{(1+\sqrt{2})^{-2/p}+1}{(1+\sqrt{2})^{-2}+1}\right)^2\\
&= p\left(\sqrt{2} (1+\sqrt{2})^{-2/p}\right)^{p-1} \left(\frac{(1+\sqrt{2})^{-2/p}+1}{(1+\sqrt{2})^{-2}+1}\right)^2\\
&= p 2^{\frac{p-1}{2}} \left(\frac{(1+\sqrt{2})^{-1/p}+(1+\sqrt{2})^{1/p}}{(1+\sqrt{2})^{-1}+1+\sqrt{2}}\right)^2\\
&= p 2^{\frac{p-1}{2}} \frac{((1+\sqrt{2})^{-1/p}+(1+\sqrt{2})^{1/p})^2}{8}.
\end{align*}
Since $x+\frac{1}{x} \geq 2$ for $x>0$, we have
\begin{align*}
\prod_{i=1}^{p-1} \abs{\im(\rho_i)} &\geq p 2^{\frac{p-3}{2}}. \qedhere
\end{align*}
\end{proof}

Combining Theorem \ref{thm:product of imag rho_i} with \eqref{eq30}, we have
\begin{align}
\abs{\frac{a}{b}-\theta} \leq \frac{C_p}{\abs{b}^p}, \label{eq31}
\end{align}
where $C_p = p^{-1} 2^{\frac{3-p}{2}}$. Note that $C_p < \frac{1}{2}$ for $p \geq 3$, so $\abs{\frac{a}{b}-\theta} < \frac{1}{2\abs{b}^p} \leq \frac{1}{2b^2}$, and therefore $\frac{a}{b}$ is a convergent to $\theta$ (see \cite[Theorem 19]{khinchin1964continued}). Inequality \eqref{eq31} implies $\frac{a}{b}$ is a much better approximation to $\theta$ than we would expect, since by Roth's theorem \cite{Roth1955} there are only finitely many solutions to $\abs{\frac{a}{b}-\theta} < |b|^{-2-\epsilon}$ for any fixed $\epsilon > 0$.

\begin{lemma}\label{thm:ratio is small}
    Let $a$ and $b$ be a nontrivial solution to the Thue equation \eqref{eq4}. Then $\abs{\frac{a}{b}} < 0.1$.
\end{lemma}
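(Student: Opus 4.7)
The plan is to bound $|\theta|$ and $|a/b - \theta|$ separately and then combine them via the triangle inequality. The key observation is that in Section \ref{sec:small y} we work under Theorem \ref{thm: r is 1}, which forces $p \geq 17$, and already at $p = 17$ the real root $\theta$ is quite close to $0$.

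First, I will rewrite the explicit formula \eqref{eq504}. With $r = 1$, setting $u = (1+\sqrt{2})^{-2/p}$ and combining the two terms over a common denominator yields
\[ \theta \;=\; \sqrt{2}\cdot\frac{u - 1}{u + 1}, \qquad |\theta| \;=\; \sqrt{2}\cdot\frac{1 - u}{1 + u}. \]
Since $1+\sqrt{2}>1$, as $p$ increases $u$ increases monotonically toward $1$, so $|\theta|$ decreases monotonically toward $0$. Evaluating the expression numerically at $p = 17$ gives $|\theta| < 0.074$, and by monotonicity the same bound holds for every $p \geq 17$. (Proposition \ref{prop:real roots}(2) already tells us $-\sqrt{2}<\theta<0$; the above simply sharpens this.)

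Next, a nontrivial solution to \eqref{eq4} has $b\neq 0$, so $|b|\geq 1$. Since $C_p = p^{-1} 2^{(3-p)/2}$ is manifestly decreasing in $p$, the inequality \eqref{eq31} gives
\[ \left|\frac{a}{b} - \theta\right| \;\leq\; \frac{C_p}{|b|^p} \;\leq\; C_{17} \;=\; \frac{1}{17\cdot 2^{7}} \;<\; 5\times 10^{-4}. \]
The triangle inequality then yields
\[ \left|\frac{a}{b}\right| \;\leq\; |\theta| + \left|\frac{a}{b} - \theta\right| \;<\; 0.074 + 5\times 10^{-4} \;<\; 0.1, \]
as desired. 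There is no genuine obstacle; the only content is the algebraic simplification of \eqref{eq504} that exhibits $|\theta|$ as the manifestly small quantity $\sqrt{2}(1-u)/(1+u)$, which combined with the already-established Diophantine approximation estimate \eqref{eq31} immediately gives the conclusion.
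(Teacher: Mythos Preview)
Your proof is correct and follows essentially the same approach as the paper's: bound $|\theta|$ via the explicit formula at $p=17$ and use monotonicity, then combine with \eqref{eq31} via the triangle inequality. One small improvement in your version is that by using the sharper estimate $C_p \leq C_{17} < 5\times 10^{-4}$ with only $|b|\geq 1$, you avoid the paper's forward reference to Theorem \ref{thm:observation2} (that $b$ is even, hence $|b|\geq 2$); note, however, that the justification for $p\geq 17$ should cite Theorem \ref{thm:thue equations small p} rather than Theorem \ref{thm: r is 1}.
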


\begin{proof}
By Theorem \ref{thm:thue equations small p}, we may assume $p \geq 17$. Then
\begin{align*}
\theta &= \sqrt{2} - \frac{2\sqrt{2}}{(1+\sqrt{2})^{-2/p} + 1},
\end{align*}
which is monotonically increasing in $p$ to $0$. Plugging in $p=17$, we obtain $\abs{\theta} < 0.08$ for $p \geq 17$. It will be shown later in Theorem \ref{thm:observation2} that $b$ is even, so we may assume $\abs{b} \geq 2$. Since $C_p < \frac{1}{2}$, we have $\abs{\frac{a}{b}} \leq \abs{\theta} + \abs{\frac{a}{b}-\theta} < 0.08 + \frac{1}{2^{p+1}} < 0.1$.
\end{proof}

\begin{lemma}\label{thm:bound for y from b}
    Let $x^2-2 = y^p$ be a nontrivial solution to \eqref{eq:main lebesgue nagell eqn} with $a, b$ as in \eqref{eq5} and \eqref{eq4}. Then $y > 1.99 b^2$.
\end{lemma}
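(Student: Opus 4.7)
The plan is a short, direct calculation combining two facts already in the paper. First, recall from Lemma \ref{lemma3.1} that $y = (-1)^r(a^2 - 2b^2)$. Since we have already reduced to $r = 1$ (at the cost of the sign of $x$), this gives $y = 2b^2 - a^2$.

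Next, I would invoke Lemma \ref{thm:ratio is small}, which asserts $|a/b| < 0.1$ for any nontrivial solution of the Thue equation \eqref{eq4}. Squaring yields $a^2 < 0.01\, b^2$, so
\begin{align*}
y \;=\; 2b^2 - a^2 \;>\; 2b^2 - 0.01\, b^2 \;=\; 1.99\, b^2.
\end{align*}

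There is no real obstacle here: both ingredients are already established. The only subtlety worth double-checking is the sign: for a nontrivial solution $y > 0$, so $2b^2 - a^2$ is automatically positive (and indeed $|a/b| < 0.1 < \sqrt{2}$ corroborates this). Note also that Lemma \ref{thm:ratio is small} tacitly uses $p \geq 17$ and the fact (deferred to Theorem \ref{thm:observation2}) that $b$ is even, but these are in force under our standing assumptions for this section.
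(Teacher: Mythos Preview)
Your proof is correct and follows exactly the same approach as the paper: combine $y = 2b^2 - a^2$ from Lemma \ref{lemma3.1} with the bound $|a/b| < 0.1$ from Lemma \ref{thm:ratio is small} to obtain $y > (2 - 0.01)b^2 = 1.99\,b^2$. Your remarks about the sign and the standing assumptions are accurate and match what the paper implicitly relies on.
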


\begin{proof}
    By Lemma \ref{lemma3.1} and Lemma \ref{thm:ratio is small},
    \begin{align*}
    y &= 2b^2-a^2= b^2\left(2-\left(\frac{a}{b}\right)^2\right)> b^2\left(2-0.1^2\right). \qedhere
    \end{align*}
\end{proof}

Let the continued fraction expansion of $\theta$ be $[q_0; q_1, q_2, \dots]$ (by Proposition \ref{prop:real roots}, $-\sqrt{2} < \theta < 0$, so $q_0 < 0$). Denote the $k$th convergent to $\theta$ by $\frac{P_k}{Q_k}$, so that we have the usual recurrence relations
\begin{align*}
&P_{-2} = 0, &&P_{-1} = 1, &&P_{k+1} = P_{k-1} + q_{k+1} P_k,\\
&Q_{-2} = 1, &&Q_{-1} = 0, &&Q_{k+1} = Q_{k-1} + q_{k+1} Q_k.
\end{align*}

\begin{proposition}\label{prop:cont frac lower bound}
    Let $p \geq 17$ be a prime. With notation as above, suppose $q_{i+1} \leq p 2^{\frac{3p-7}{2}}-2$ for $1 \leq i \leq k$. If $a$ and $b$ give a nontrivial solution to \eqref{eq4}, then $\abs{b} \geq Q_{k+1}$.
\end{proposition}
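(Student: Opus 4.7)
The plan is to combine the upper bound \eqref{eq31} on $\abs{a/b - \theta}$ with the classical lower bound for the approximation error of a continued fraction convergent, in order to force the convergent index of $a/b$ to exceed $k$. Since $C_p < 1/2$, inequality \eqref{eq31} yields $\abs{a/b - \theta} < 1/(2b^2)$, so by Khinchin's theorem (cited just after \eqref{eq31}) the fraction $a/b$ is a convergent of $\theta$. Since $\gcd(a,b)=1$ by Lemma \ref{lemma3.1}, after normalizing $b > 0$ we may write $a = P_m$, $b = Q_m$ for a unique $m \geq 0$; the goal becomes to prove $m \geq k+1$.

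Before the main estimate, I would handle the small indices $m \in \{0,1\}$. By the proof of Lemma \ref{thm:ratio is small}, $\theta \in (-0.08, 0)$ for $p \geq 17$, so $q_0 = -1$ and the first complete quotient $\alpha_1 = 1/(1+\theta)$ lies in $(1, 1.09)$, giving $q_1 = 1$. Hence $P_0/Q_0 = -1$, contradicting $\abs{a/b} < 0.1$ from Lemma \ref{thm:ratio is small}, while $P_1/Q_1 = 0/1$ forces $a = 0$, which is incompatible with the Thue equation \eqref{eq4} (as one sees by setting $a = 0$ in \eqref{eq4}, or by invoking Lemma \ref{lemma3.1}). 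Thus $m \geq 2$.

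For general $m \geq 2$, I would apply the standard identity
\[
\theta - \frac{P_m}{Q_m} = \frac{(-1)^m}{Q_m(\alpha_{m+1} Q_m + Q_{m-1})},
\]
where $\alpha_{m+1}$ is the $(m+1)$-st complete quotient, satisfying $q_{m+1} < \alpha_{m+1} < q_{m+1}+1$. Combined with $Q_{m-1} \leq Q_m$, this yields $\abs{\theta - P_m/Q_m} > 1/((q_{m+1}+2)Q_m^2)$, and pairing this with $\abs{\theta - a/b} \leq C_p/Q_m^p$ from \eqref{eq31} produces the key inequality $Q_m^{p-2} < C_p(q_{m+1}+2)$.

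Finally, assume for contradiction that $m \leq k$. Since $m \geq 2$ and $m \leq k$ give $1 \leq m \leq k$, the hypothesis provides $q_{m+1}+2 \leq p \cdot 2^{(3p-7)/2}$. Substituting $C_p = p^{-1} 2^{(3-p)/2}$ reduces $Q_m^{p-2} < C_p(q_{m+1}+2)$ to $Q_m^{p-2} < 2^{p-2}$, forcing $Q_m = 1$. But for $m \geq 2$ one has $Q_m \geq Q_2 = q_2 Q_1 + Q_0 \geq 2$, a contradiction. Hence $m \geq k+1$, giving $\abs{b} = Q_m \geq Q_{k+1}$. The delicate step of this argument is the preliminary exclusion of $m \in \{0,1\}$, since the continued fraction inequality alone does not produce a contradiction at those small indices; this is settled by combining the sharp estimate $\abs{\theta} < 0.08$ with the fact that $a \neq 0$.
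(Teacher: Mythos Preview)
Your proof is correct and follows essentially the same approach as the paper: combine \eqref{eq31} with the standard lower bound $|\theta - P_m/Q_m| > 1/((q_{m+1}+2)Q_m^2)$ to force $q_{m+1}$ large, contradicting the hypothesis unless $m \geq k+1$. The only cosmetic difference is that the paper obtains $Q_m = |b| \geq 2$ in one stroke from the fact that $b$ is even (Theorem \ref{thm:observation2}, invoked via Lemma \ref{thm:ratio is small}), which disposes of all small indices simultaneously, whereas you eliminate $m=0$ and $m=1$ by explicitly computing the first two convergents.
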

\begin{proof}
By \cite[equation (34)]{khinchin1964continued}, the convergents for $k \geq 0$ satisfy\[
\abs{\theta-\frac{P_k}{Q_k}} > \frac{1}{(q_{k+1}+2)Q_k^2}.
\]
Assuming $\frac{P_k}{Q_k} = \frac{a}{b}$, this together with \eqref{eq31} implies
\begin{align*}
&\frac{1}{(q_{k+1}+2)Q_k^2} < \frac{p^{-1} 2^{\frac{3-p}{2}}}{Q_k^p},
\end{align*}
which holds if and only if
\begin{align}
    q_{k+1} > p 2^{\frac{p-3}{2}} Q_k^{p-2}-2.\label{eq32}
\end{align}
As in the proof of Lemma \ref{thm:ratio is small}, we have $\abs{b} \geq 2$. Thus, weakening \eqref{eq32} further still, we must have
\begin{align}
q_{k+1} > p 2^{\frac{p-3}{2}} 2^{p-2}-2 = p 2^{\frac{3p-7}{2}}-2. \label{eq33}
\end{align}
Thus, if $q_{i+1}$ never exceeds this bound for $1 \leq i \leq k$, then $\abs{b} \geq Q_{k+1}$.
\end{proof}

\begin{proposition}\label{thm:bound for a and y from b}
    Let $p \geq 17$ be a prime, and set $\theta$ as above. Let $x, y$ be a nontrivial solution to \eqref{eq:main lebesgue nagell eqn} with $a, b$ as in \eqref{eq5} and \eqref{eq4}. If we have a bound $\abs{b} \geq b_0$, and if $b_0 \geq a_0 / (\abs{\theta}-C_p/2^p)$ and $b_0 \geq \sqrt{y_0/1.99}$, then $\abs{a} \geq a_0$ and $y \geq y_0$.
\end{proposition}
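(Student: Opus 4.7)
The plan is that this proposition is essentially a bookkeeping corollary of results already established in the section, so I would prove the two conclusions $|a| \geq a_0$ and $y \geq y_0$ separately, each by a one-line chain of inequalities.

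For the bound $y \geq y_0$, I would simply invoke Lemma \ref{thm:bound for y from b}, which gives $y > 1.99 b^2$ for any nontrivial solution. Combined with the hypothesis $|b| \geq b_0$, this yields $y > 1.99 b_0^2$. The hypothesis $b_0 \geq \sqrt{y_0/1.99}$ is equivalent to $1.99 b_0^2 \geq y_0$, giving $y \geq y_0$ immediately.

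For the bound $|a| \geq a_0$, the idea is to use the approximation estimate \eqref{eq31}, namely $\bigl|\tfrac{a}{b}-\theta\bigr| \leq C_p/|b|^p$. By Theorem \ref{thm:observation2}, $b$ is even, so $|b| \geq 2$ and $\bigl|\tfrac{a}{b}-\theta\bigr| \leq C_p/2^p$. The reverse triangle inequality then gives $\bigl|\tfrac{a}{b}\bigr| \geq |\theta| - C_p/2^p$. (For $p \geq 17$ this quantity is strictly positive, since $|\theta|$ is bounded below by a concrete positive constant coming from the explicit formula for $\theta$, while $C_p/2^p$ is super-exponentially small in $p$; in particular the first hypothesis implicitly requires $|\theta| - C_p/2^p > 0$ for $a_0 > 0$.) Multiplying through by $|b| \geq b_0$ and applying the first hypothesis $b_0 \geq a_0/(|\theta| - C_p/2^p)$ yields $|a| \geq (|\theta| - C_p/2^p) b_0 \geq a_0$.

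There is no genuine obstacle here; the content of the statement is merely to package, in a form convenient for the continued-fraction computations of Section \ref{sec:small y}, the translation of a lower bound on $|b|$ into simultaneous lower bounds on $|a|$ and on $y$.
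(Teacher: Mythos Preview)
Your proposal is correct and matches the paper's own proof essentially line for line: the paper also derives $|a/b| \geq |\theta| - C_p/2^p$ from \eqref{eq31} together with $|b| \geq 2$, multiplies through by $|b| \geq b_0$ to get $|a| \geq a_0$, and then invokes Lemma~\ref{thm:bound for y from b} to obtain $y > 1.99 b_0^2 \geq y_0$.
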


\begin{proof}
    That $|a| \geq a_0$ follows from \eqref{eq31} and the fact that $\abs{b} \geq 2$, so\[
    \abs{\frac{a}{b}} \geq \abs{\theta} - \frac{C_p}{2^p}.
    \]
    Hence, $\abs{a} = \abs{b} \abs{\frac{a}{b}} \geq b_0 (\abs{\theta}-C_p/2^p) \geq a_0$. That $y \geq y_0$ follows from Lemma \ref{thm:bound for y from b}, since $y > 1.99b^2 \geq 1.99b_0^2 \geq y_0$.
\end{proof}

\begin{proof}[Proof of Theorem \ref{thm:only triv solns for p big}]
  We ran code in Sage to compute the continued fraction expansion of $\theta$ for different primes $p$ and obtain lower bounds via Propositions \ref{prop:cont frac lower bound} and \ref{thm:bound for a and y from b}. This proves that $y$ exceeds the lower bounds in Table \ref{Tab:parameters} for $919 \leq p \leq 1951$.  
\end{proof}

In addition, we followed the same procedure on the values of $p$ from $17$ to $911$ to prove that $|a|, |b|, y \geq 10^{1000}$. This proves Theorem \ref{thm:lower bound on y}. We also state the result for $a$ and $b$ below.

\begin{theorem}\label{thm:lower bound on a and b}
    Let $(a, b)$ be a nontrivial solution to \eqref{eq4}. Then $\abs{a}, \abs{b} > 10^{1000}$.
\end{theorem}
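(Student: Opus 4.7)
The plan is to mirror the continued fraction computation already used to establish Theorem \ref{thm:lower bound on y}, this time extracting simultaneous lower bounds on $|a|$ and $|b|$. As in Section \ref{sec:small y}, Theorem \ref{thm: r is 1} lets us assume $r = 1$, and the Thue equation \eqref{eq4} forces $(a,b)$ to be a genuine solution with $|b| \geq 2$ and $a/b$ a convergent to the unique real root $\theta = \theta_{1,p}$ of $f_{1,p}$.

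For each prime $p$ with $17 \leq p \leq 911$, I would compute $\theta$ to high precision from formula \eqref{eq504} with $r=1$, and then compute its continued fraction expansion $[q_0; q_1, q_2, \ldots]$ together with the convergent denominators $Q_k$. Fix a target $b_0$ just large enough to guarantee $b_0 (|\theta| - C_p/2^p) \geq 10^{1000}$; since $|\theta|$ is bounded below by an explicit positive constant depending only on $p$, one can take $b_0$ of the order of $10^{1001}$. I would then iterate the continued fraction algorithm until $Q_{k+1} \geq b_0$, verifying at each step that the partial quotient $q_{i+1}$ stays below the threshold $p \cdot 2^{(3p-7)/2} - 2$ coming from Proposition \ref{prop:cont frac lower bound}.

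Once such a $k$ is reached without any partial quotient exceeding the threshold, Proposition \ref{prop:cont frac lower bound} forces $|b| \geq Q_{k+1} \geq b_0 \geq 10^{1000}$. Applying Proposition \ref{thm:bound for a and y from b} with $a_0 = y_0 = 10^{1000}$, the conditions $b_0 \geq a_0/(|\theta| - C_p/2^p)$ and $b_0 \geq \sqrt{y_0/1.99}$ are satisfied by our choice of $b_0$, so $|a| \geq 10^{1000}$ (and $y \geq 10^{1000}$, recovering Theorem \ref{thm:lower bound on y} as a byproduct). The procedure is the same finite computation that proved Theorem \ref{thm:only triv solns for p big} in the range $919 \leq p \leq 1951$, just carried out for the remaining 149 primes and with a more stringent target.

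The main obstacle, as usual for such calculations, is computational rather than conceptual: for each of these primes one must certify that the continued fraction of the algebraic number $\theta_{1,p}$ has no enormous partial quotient appearing before the convergent denominator reaches size $10^{1001}$. Although heuristically the partial quotients of a typical algebraic irrational are small and the threshold $p \cdot 2^{(3p-7)/2}$ is enormous, this must be verified, not assumed, so the computation should be done in exact or certified interval arithmetic with enough precision to reliably read off at least $1000 \log_{10}\varphi$-many correct partial quotients. Since the analogous computation was already performed for $919 \leq p \leq 1951$ in the proof of Theorem \ref{thm:only triv solns for p big}, extending it to $17 \leq p \leq 911$ and to the stronger target is a routine but not trivial extension.
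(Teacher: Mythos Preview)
Your proposal is correct and matches the paper's approach essentially verbatim: the paper states that it ``followed the same procedure on the values of $p$ from $17$ to $911$ to prove that $|a|, |b|, y \geq 10^{1000}$,'' i.e., exactly the continued-fraction computation via Propositions \ref{prop:cont frac lower bound} and \ref{thm:bound for a and y from b} that you describe. The only minor slip is the prime count (there are 150 primes in $[17,911]$, not 149), and your estimate $b_0 \approx 10^{1001}$ is a bit optimistic since $|\theta| \sim c/p$ forces $b_0$ closer to $10^{1003}$ for the largest $p$; neither affects the argument.
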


Further computations could rule out even larger values of $y, |a|, |b|$.

\section{Further observations} \label{sec:other observations}

From the Thue equation \eqref{eq4}, we collect some additional observations on $x, y, a$, and $b$. Throughout this section, we assume that $r=1$ and $a, b$ are a nontrivial solution to \eqref{eq4}.

\begin{theorem}\label{thm:observation1}
    With notation as above, $y = 2b^2-a^2$ and $\gcd(a, b) = 1$.
\end{theorem}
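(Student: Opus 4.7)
The plan is to recognize that Theorem \ref{thm:observation1} is essentially an immediate specialization of Lemma \ref{lemma3.1}, which was already established during the setup for linear forms in logarithms. That lemma asserts, for the factorization \eqref{eq5} with any admissible value of $r$, that
\begin{align*}
y = (-1)^r(a^2 - 2b^2) \qquad \text{and} \qquad \gcd(a,b) = 1.
\end{align*}
Since the present section is working under the standing assumption $r = 1$, substituting gives $y = -(a^2 - 2b^2) = 2b^2 - a^2$, which is the first claim. The coprimality statement is exactly the second conclusion of Lemma \ref{lemma3.1} and requires no modification.

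For completeness, I would also sketch the short direct derivation in case the reader prefers not to jump back: apply the norm $N_{\Q(\sqrt{2})/\Q}$ to \eqref{eq5} with $r = 1$, using $N(1+\sqrt{2}) = -1$ and $N(a+b\sqrt{2}) = a^2-2b^2$, to obtain
\begin{align*}
y^p = x^2 - 2 = -\bigl(a^2 - 2b^2\bigr)^p.
\end{align*}
Because $p$ is odd and both $y$ and $-(a^2-2b^2)$ are real integers, taking $p$th roots yields $y = 2b^2 - a^2$. For coprimality, any common divisor $d$ of $a$ and $b$ divides $a + b\sqrt{2}$ in $\Z[\sqrt{2}]$; combined with \eqref{eq5} this forces $d \mid (x + \sqrt{2})$, hence $d \mid 1$, since $d \in \Z$ and $\sqrt{2}$ has trivial $\Z$-coefficient on the left but not on the right unless $d = \pm 1$.

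There is no real obstacle: the theorem is placed here simply to record, in the cleaner post-reduction notation with $r = 1$, two basic identities that are needed repeatedly in the subsequent observations of Section \ref{sec:other observations}. Both halves are already in hand from Lemma \ref{lemma3.1}.
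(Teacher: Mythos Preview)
Your proposal is correct and matches the paper's own proof exactly: the paper simply says the result follows by specializing Lemma \ref{lemma3.1} to $r=1$. Your additional ``for completeness'' sketch is fine (though the coprimality phrasing could be tightened: if $d\in\Z$ divides both $a$ and $b$, then $d^p\mid(a+b\sqrt{2})^p$, so $d^p\mid x+\sqrt{2}$ in $\Z[\sqrt{2}]$, forcing $d^p\mid 1$).
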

\begin{proof}
This follows from specializing Lemma \ref{lemma3.1} to $r = 1$ (we already used the fact that $y=2b^2-a^2$ in Section \ref{sec:small y}).
\end{proof}

\begin{theorem}\label{thm:observation2}
    With notation as above, $a$ is odd and $b$ is even.
\end{theorem}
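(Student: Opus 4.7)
The plan is to use the Thue equation \eqref{eq4} directly, combined with the parity information already extracted in earlier sections.

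First, I would dispose of the parity of $a$ quickly. By Theorem \ref{thm:observation1}, we have $y = 2b^2 - a^2$, and by Theorem \ref{thm:elementary 1}, $y$ is odd. Therefore $a^2$ is odd, which forces $a$ to be odd.

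For the parity of $b$, the idea is to reduce the Thue equation \eqref{eq4} modulo $2$. Recall that \eqref{eq4} says
\begin{align*}
1 = \sum_{k=0}^{p} \binom{p}{k} 2^{\lfloor k/2 \rfloor} a^{p-k} b^{k}.
\end{align*}
For every $k \geq 2$ we have $\lfloor k/2 \rfloor \geq 1$, so these terms all vanish modulo $2$. Thus the congruence collapses to
\begin{align*}
1 \equiv a^{p} + p\,a^{p-1} b \pmod{2}.
\end{align*}
Since $a$ is odd (and $p$ is odd), $a^{p} \equiv 1$ and $p a^{p-1} \equiv 1 \pmod 2$, giving $1 \equiv 1 + b \pmod{2}$, which forces $b$ to be even.

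There is no genuine obstacle here — the statement is essentially a two-line parity computation once one has $y$ odd and the binomial expansion of \eqref{eq4} at hand. The only thing to be careful about is to note that the $2^{\lfloor k/2\rfloor}$ factor kills every term with $k \geq 2$ modulo $2$, which is what reduces the expansion to a single effective equation in $a$ and $b$ mod $2$. (As a sanity check, one may equivalently write $(a+b\sqrt{2})^p = A+B\sqrt{2}$ with integer $A, B$, observe that the Thue equation is the assertion $A+B=1$, and compute $A \equiv a^p \pmod 2$ and $B \equiv p\,a^{p-1}b \pmod 2$; this yields the same conclusion.)
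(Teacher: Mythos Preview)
Your proof is correct. The argument for $a$ odd is identical to the paper's. For $b$ even, however, the paper takes a slightly different route: having established $a$ odd, it invokes the stronger congruence $y \equiv -1 \pmod{8}$ from Theorem~\ref{thm:elementary 1} and reads off $2b^2 \equiv y + a^2 \equiv -1 + 1 \equiv 0 \pmod{8}$, so $4 \mid b^2$ and $b$ is even. You instead reduce the Thue equation \eqref{eq4} itself modulo $2$ and note that only the $k=0,1$ terms survive. Both arguments are equally short and elementary; the paper's stays entirely within the single identity $y = 2b^2 - a^2$ (plus the mod $8$ input), while yours shows that the parity of $b$ is already forced by the Thue equation alone, without appealing to the finer congruence on $y$.
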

\begin{proof}
We have $y=2b^2-a^2$ by Theorem \ref{thm:observation1}. Since $y \equiv -1 \pmod{8}$ by Theorem \ref{thm:elementary 1}, we must have $a$ is odd.Then $-1 \equiv 2b^2-1 \pmod{8}$, so $2b^2 \equiv 0 \pmod{8}$, which implies $b$ is even.
\end{proof}

\begin{theorem}\label{thm:observation3}
    With notation as above, $\sgn(a) = -\sgn(x)$ and $\sgn(b) = \sgn(x)$.
\end{theorem}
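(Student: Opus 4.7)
The strategy is to split on the sign of $x$ and, in each case, reduce to an application of Lemma \ref{lemma2}. When $x>0$, the triple $(x,a,b)$ satisfies the hypotheses of Lemma \ref{lemma2} directly with $r=1$. Since $r>0$ is odd, that lemma yields $\sgn(a)=(-1)^r=-1$ and $\sgn(b)=(-1)^{r+1}=+1$, which matches $-\sgn(x)=-1$ and $\sgn(x)=+1$ respectively, as required.

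When $x<0$, I would reduce to the previous case via the involution already described at the end of Section \ref{sec:ClassicalApproach}. Set $(x_0,a_0,b_0):=(-x,a,-b)$. Substituting this into \eqref{eq5} with $r=1$ and taking algebraic conjugates shows $x_0+\sqrt{2}=(1+\sqrt{2})^{-1}(a_0+b_0\sqrt{2})^p$; that is, $(x_0,a_0,b_0)$ is a nontrivial $r=-1$ solution with $x_0>0$. Applying Lemma \ref{lemma2} now in its $r<0$ branch yields $\sgn(a_0)=\sgn(b_0)=+1$, hence $\sgn(a)=\sgn(a_0)=+1=-\sgn(x)$ and $\sgn(b)=-\sgn(b_0)=-1=\sgn(x)$.

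The only delicate point is keeping track of the substitution: the map $(x,a,b)\mapsto(-x,a,-b)$ exchanges the $r=+1$ and $r=-1$ branches of \eqref{eq5}, so after invoking Lemma \ref{lemma2} in the $r<0$ regime one must flip the sign of $b$ when reverting to $(x,a,b)$. Beyond this bookkeeping, I anticipate no obstacle; the proof should occupy only a short case split. (A more self-contained alternative is to read the signs straight off \eqref{eq5}: since $|x|\ge\sqrt{y^p}>\sqrt{2}$, one has $\sgn(x\pm\sqrt{2})=\sgn(x)$, and because $1+\sqrt{2}>0>1-\sqrt{2}$ with $p$ odd, \eqref{eq5} and its conjugate give $\sgn(a+b\sqrt{2})=\sgn(x)$ and $\sgn(a-b\sqrt{2})=-\sgn(x)$; subtracting yields $\sgn(b)=\sgn(x)$, and a magnitude comparison $(|a-b\sqrt{2}|/|a+b\sqrt{2}|)^p=(1+\sqrt{2})^2|x-\sqrt{2}|/|x+\sqrt{2}|>1$ for $|x|>2$ forces $ab<0$, giving $\sgn(a)=-\sgn(x)$.)
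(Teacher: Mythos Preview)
Your main argument is correct and is precisely the paper's approach: apply Lemma \ref{lemma2} directly when $x>0$ (with $r=1$ odd and positive), and for $x<0$ pass to $(-x,a,-b)$, which lands in the $r=-1$ branch of \eqref{eq5}, then invoke Lemma \ref{lemma2} with $r<0$ and undo the sign flip on $b$. The paper states this in a single sentence, and your write-up simply unpacks that sentence; the parenthetical self-contained argument is a valid alternative but not needed.
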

\begin{proof}
This follows from Lemma \ref{lemma2} and the fact that we are taking $r$ to be $1$, so if $x$ is negative then we must must negate both $x$ and $b$ and take $r$ to be $-1$ in Lemma \ref{lemma2}.
\end{proof}

\begin{theorem}\label{thm:observation4}
    With notation as above, we have the following equalities:
    \begin{align}
        x-1 &= (1+\sqrt{2})((a+b\sqrt{2})^p-1)\label{eq1005}\\
        &= (1+\sqrt{2})(a-1+b\sqrt{2})\left(\sum_{i=0}^{p-1} (a+b\sqrt{2})^i\right)\nonumber\\
        &= \frac{1}{2\sqrt{2}} \left((a+b\sqrt{2})^p-(a-b\sqrt{2})^p\right)\label{eq1003}\\
        &= \sum_{j=0}^{\frac{p-1}{2}} \binom{p}{2j+1} a^{p-2j-1} b^{2j+1} 2^j. \label{eq1000}\\
        x-2 &= -\frac{1}{2} ((a+b\sqrt{2})^p+(a-b\sqrt{2})^p)\label{eq1004}\\
        &= -\sum_{j=0}^{\frac{p-1}{2}} \binom{p}{2j} a^{p-2j} b^{2j} 2^j. \label{eq1001}
    \end{align}
\end{theorem}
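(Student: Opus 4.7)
The proof is essentially a direct algebraic computation using \eqref{eq5}. The plan is to work throughout in $\Z[\sqrt{2}]$, starting from the two equations
\begin{align*}
x+\sqrt{2} = (1+\sqrt{2})(a+b\sqrt{2})^p, \qquad x-\sqrt{2} = (1-\sqrt{2})(a-b\sqrt{2})^p,
\end{align*}
the second of which is just the Galois conjugate of the first (using $r=1$).

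For \eqref{eq1005}, I would observe directly that
\begin{align*}
(1+\sqrt{2})\bigl((a+b\sqrt{2})^p-1\bigr) = (x+\sqrt{2})-(1+\sqrt{2}) = x-1,
\end{align*}
and then factor $z^p-1 = (z-1)\sum_{i=0}^{p-1}z^i$ with $z=a+b\sqrt{2}$ to obtain the second form.

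For \eqref{eq1003} and \eqref{eq1004}, the plan is to solve the system above for $(a+b\sqrt{2})^p$ and $(a-b\sqrt{2})^p$ individually. Since $(1+\sqrt{2})^{-1} = \sqrt{2}-1$ and $(1-\sqrt{2})^{-1}=-(1+\sqrt{2})$, one finds
\begin{align*}
(a+b\sqrt{2})^p = (2-x) + (x-1)\sqrt{2}, \qquad (a-b\sqrt{2})^p = (2-x) - (x-1)\sqrt{2}.
\end{align*}
Subtracting gives \eqref{eq1003}, and negating the average of the sum gives \eqref{eq1004}.

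Finally, \eqref{eq1000} and \eqref{eq1001} follow by expanding $(a\pm b\sqrt{2})^p$ with the binomial theorem and separating even and odd powers of $\sqrt{2}$: the odd part yields $(a+b\sqrt{2})^p-(a-b\sqrt{2})^p = 2\sqrt{2}\sum_{j=0}^{(p-1)/2}\binom{p}{2j+1}a^{p-2j-1}b^{2j+1}2^{j}$, and the even part yields $(a+b\sqrt{2})^p+(a-b\sqrt{2})^p = 2\sum_{j=0}^{(p-1)/2}\binom{p}{2j}a^{p-2j}b^{2j}2^{j}$. Substituting into the formulas just derived for $x-1$ and $x-2$ gives the stated identities. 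There is no real obstacle here; the entire statement is a bookkeeping exercise in the ring $\Z[\sqrt{2}]$.
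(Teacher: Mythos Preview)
Your proposal is correct and essentially identical to the paper's proof: both start from \eqref{eq5} with $r=1$, subtract $1+\sqrt{2}$ to obtain \eqref{eq1005}, compute $(a+b\sqrt{2})^p=(\sqrt{2}-1)(x+\sqrt{2})=(2-x)+(x-1)\sqrt{2}$ and its conjugate to extract \eqref{eq1003} and \eqref{eq1004}, and then invoke the binomial expansion for the explicit sums.
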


\begin{proof}
    By \eqref{eq5} with $r=1$, we have
    \begin{align*}
    x-1 &= x+\sqrt{2}-(1+\sqrt{2})= (1+\sqrt{2})((a+b\sqrt{2})^p-1),
    \end{align*}
    and this gives \eqref{eq1005}. Next, note that\[
    (a+b\sqrt{2})^p = (\sqrt{2}-1)(x+\sqrt{2}) = 2-x + (x-1)\sqrt{2}
    \]
    so by adding and subtracting this equation from its conjugate, we obtain \eqref{eq1003} and \eqref{eq1004}. The other equalities follow from factoring and the binomial theorem.
\end{proof}

\begin{corollary}\label{thm:observation5}
    With notation as above, $b \mid (x-1)$ and $a \mid (x-2)$.
\end{corollary}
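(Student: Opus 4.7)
The proof is an immediate consequence of Theorem \ref{thm:observation4}, so the plan is simply to inspect the two binomial expansions \eqref{eq1000} and \eqref{eq1001} and observe that $b$ (respectively $a$) is a common factor of every term on the right-hand side.

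For the divisibility $b \mid (x-1)$, I would start from
\begin{align*}
x-1 &= \sum_{j=0}^{\frac{p-1}{2}} \binom{p}{2j+1} a^{p-2j-1} b^{2j+1} 2^j.
\end{align*}
Each summand contains the factor $b^{2j+1}$ with $2j+1 \geq 1$, so $b$ divides every term, and hence divides the sum.

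For $a \mid (x-2)$, I would use
\begin{align*}
x-2 &= -\sum_{j=0}^{\frac{p-1}{2}} \binom{p}{2j} a^{p-2j} b^{2j} 2^j.
\end{align*}
Here each summand carries the factor $a^{p-2j}$. Since $j$ ranges from $0$ to $\tfrac{p-1}{2}$, the minimum value of the exponent $p-2j$ is $p - 2 \cdot \tfrac{p-1}{2} = 1$, so every summand is divisible by $a$, and thus so is the total. There is no real obstacle here; the corollary is just a direct reading of the formulas already established in Theorem \ref{thm:observation4}.
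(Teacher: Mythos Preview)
Your proof is correct and follows exactly the same approach as the paper: both simply observe that every term in \eqref{eq1000} contains a positive power of $b$ and every term in \eqref{eq1001} contains a positive power of $a$.
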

\begin{proof}
Every term in \eqref{eq1000} is divisible by $b$, and every term in \eqref{eq1001} is divisible by $a$.
\end{proof}

It is also fruitful to rearrange the Thue equation \eqref{eq4} as follows:
\begin{align}
&\sum_{k=0}^p \binom{p}{k} 2^{\floor{\frac{k}{2}}} a^{p-k} b^k = 1 \iff a^p-1 = -\sum_{k=1}^p \binom{p}{k} 2^{\floor{\frac{k}{2}}} a^{p-k} b^k \nonumber\\
\iff &(a-1)(1 + a + \dots + a^{p-1}) = -b\sum_{k=1}^p \binom{p}{k} 2^{\floor{\frac{k}{2}}} a^{p-k} b^{k-1}. \label{eq1002}
\end{align}

\begin{theorem}\label{thm:observation6}
    With notation as above,
    \begin{enumerate}
        \item $v_p(a-1) = v_p(b)$.
        
        \item If $p \mid b$, then $v_p(x-1) = v_p(b)+1$, and otherwise $p \nmid (x-1)$. In particular, either $p \nmid (x-1)$ or $p^2 \mid (x-1)$ (this agrees with Remark \ref{rmk:elementary 1}).
    	
        \item If $p \mid a$, then $v_p(x-2) = v_p(a)+1$, and otherwise $p \nmid (x-2)$. In particular, either $p \nmid (x-2)$ or $p^2 \mid (x-2)$.
    \end{enumerate}
\end{theorem}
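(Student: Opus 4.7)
The plan is to compare $p$-adic valuations on both sides of \eqref{eq1002}, \eqref{eq1000}, and \eqref{eq1001}, using the coprimality $\gcd(a,b)=1$ from Theorem \ref{thm:observation1} to exclude the degenerate case $p\mid\gcd(a,b)$ throughout.

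For part (1), I would apply Lemma \ref{lemma1} (with its $(y,a)$ taken to be our $(a,1)$) to the left-hand side of \eqref{eq1002}, which yields $v_p\bigl((a^p-1)/(a-1)\bigr)\in\{0,1\}$, equal to $1$ exactly when $p\mid(a-1)$. On the right-hand side, every summand with $1\leq k\leq p-1$ carries a factor $\binom{p}{k}$ with $v_p=1$, while the $k=p$ summand is $2^{(p-1)/2}b^{p-1}$. The case $p\mid a$ follows immediately from coprimality (forcing $p\nmid b$) together with the fact that $p\mid a$ implies $p\nmid(a-1)$, so both valuations vanish. When $p\nmid a$, write $\beta=v_p(b)$: if $\beta=0$, the $k=p$ term makes the right-hand side a unit mod $p$, forcing $v_p(a-1)=0$; if $\beta\geq 1$, the $k=1$ term $pa^{p-1}$ uniquely minimizes the inner sum at valuation $1$, so the right-hand side has valuation $\beta+1$, which by Lemma \ref{lemma1} forces $v_p(a-1)=\beta$.

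For parts (2) and (3), I would tabulate $v_p$ of each summand in \eqref{eq1000} and \eqref{eq1001} and identify the unique dominant term. In \eqref{eq1000}, when $p\nmid b$ (which holds automatically if $p\mid a$), the $j=(p-1)/2$ term $2^{(p-1)/2}b^p$ has $v_p=0$ while every other term is divisible by $p$, yielding $p\nmid(x-1)$; when $p\mid b$ (and hence $p\nmid a$), the $j=0$ term $pa^{p-1}b$ achieves the unique minimum $1+v_p(b)$, since every other term has valuation at least $\min(1+3v_p(b),\,p\,v_p(b))>1+v_p(b)$ for $p\geq 3$. The analysis of \eqref{eq1001} is symmetric in the roles of $a$ and $b$: the $j=0$ term $a^p$ dominates with $v_p=0$ when $p\nmid a$, and the $j=(p-1)/2$ term uniquely achieves $v_p=1+v_p(a)$ when $p\mid a$ (which forces $p\nmid b$). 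The \emph{in particular} statements then follow at once from $v_p(x-1)=v_p(b)+1\geq 2$ whenever $p\mid b$, and analogously for $x-2$. The only real obstacle is bookkeeping: one must verify in each subcase that the proposed minimum valuation is strictly smaller than the others, so that the valuation of the sum equals that minimum rather than exceeding it.
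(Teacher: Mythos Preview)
Your proposal is correct and follows essentially the same approach as the paper: compare $p$-adic valuations across \eqref{eq1002}, \eqref{eq1000}, and \eqref{eq1001}, using Lemma~\ref{lemma1} for the left-hand side of \eqref{eq1002} and identifying the unique term of minimal valuation on each right-hand side. The paper's case split is simply $p\mid b$ versus $p\nmid b$ rather than your three-way split, but the content is identical; your write-up is in fact slightly more careful in verifying that the claimed minimum valuation is strictly attained.
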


\begin{proof}
    Notice if $p \nmid b$, then the only term of the right-hand side of \eqref{eq1002} not divisible by $p$ is $-2^{\frac{p-1}{2}} b^p$, and so the left-hand side is not divisible by $p$, so $p \nmid (a-1)$. On the other hand, if $p \mid b$, then the $k=1$ term has a lower $p$-adic valuation than any other, so the $p$-adic valuation of the right-hand side is $v_p(-p a^{p-1} b) = v_p(b)+1$. The $p$-adic valuation of the left-hand side is $v_p(a-1)+1$ by Lemma \ref{lemma1}, so $v_p(a-1) = v_p(b)$. Thus, $v_p(a-1) = v_p(b)$ in either case. This proves statement (1).
    
    For statements (2) and (3), we consider expressions \eqref{eq1000} and \eqref{eq1001}, respectively. For instance, if $p \nmid b$, then the only term of \eqref{eq1000} not divisible by $p$ is $b^p 2^{\frac{p-1}{2}}$, so $x-1$ is not divisible by $p$. On the other hand, if $p \mid b$, then since $p \geq 3$, the $p$-adic valuation of each term with $j>0$ is greater than that of the $j=0$ term, so $v_p(x-1) = v_p (p a^{p-1} b) = v_p(b)+1$. Statement (3) is similar.
\end{proof}

\begin{theorem}\label{thm:observation7}
    With notation as above,
    \begin{enumerate}
        \item $x-1 \equiv \left(\frac{2}{p}\right) b \pmod{p}$,
        \item $x-2 \equiv -a \pmod{p}$,
        \item $a-1 \equiv -\left(\frac{2}{p}\right) b \pmod{p}$.
    \end{enumerate}
\end{theorem}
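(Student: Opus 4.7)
The key is that the formulas \eqref{eq1000} and \eqref{eq1001} from Theorem \ref{thm:observation4} express $x-1$ and $x-2$ as polynomial sums in $a$, $b$, and $2$ whose coefficients are $\binom{p}{k}$. Since $p$ is prime, $\binom{p}{k} \equiv 0 \pmod{p}$ for $1 \leq k \leq p-1$, so when we reduce modulo $p$, only the extremal terms survive. Then Fermat's little theorem and Euler's criterion finish the computation.

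In more detail, for statement (1), I would begin with
\begin{align*}
x-1 &= \sum_{j=0}^{\frac{p-1}{2}} \binom{p}{2j+1} a^{p-2j-1} b^{2j+1} 2^j.
\end{align*}
Modulo $p$, the only surviving summand is $j = \frac{p-1}{2}$ (since $2j+1 = p$ is the only value in $\{1, 3, \dots, p\}$ for which $p \mid \binom{p}{2j+1}$ fails), giving $x - 1 \equiv b^{p} \cdot 2^{(p-1)/2} \pmod{p}$. By Fermat, $b^p \equiv b \pmod p$, and by Euler's criterion, $2^{(p-1)/2} \equiv \left(\frac{2}{p}\right) \pmod p$, yielding (1). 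For statement (2), I would apply the same reduction to
\begin{align*}
x-2 &= -\sum_{j=0}^{\frac{p-1}{2}} \binom{p}{2j} a^{p-2j} b^{2j} 2^j.
\end{align*}
The only surviving summand is $j=0$, giving $x-2 \equiv -a^p \equiv -a \pmod p$.

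For statement (3), the cleanest route is to simply subtract: since $(x-1) - (x-2) = 1$, combining (1) and (2) yields $1 \equiv \left(\frac{2}{p}\right)b + a \pmod p$, which rearranges to $a - 1 \equiv -\left(\frac{2}{p}\right) b \pmod p$. Alternatively, one could derive (3) directly from the factored identity \eqref{eq1002}: reducing $-b\sum_{k=1}^p \binom{p}{k}2^{\floor{k/2}} a^{p-k} b^{k-1}$ modulo $p$ leaves only the $k = p$ term, and using $a^p - 1 \equiv a - 1 \pmod p$ on the left-hand side gives the same conclusion.

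There is no real obstacle here; the whole argument is a one-line binomial reduction modulo $p$ once Theorem \ref{thm:observation4} is in hand. The only minor subtlety is keeping track of parity (which binomial coefficients appear in which sum) so as to correctly identify the surviving monomial $b^p 2^{(p-1)/2}$ in the formula for $x-1$ and the monomial $-a^p$ in the formula for $x-2$.
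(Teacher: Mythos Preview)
Your proof is correct and follows essentially the same approach as the paper: reduce \eqref{eq1000} and \eqref{eq1001} modulo $p$ so that only the extremal binomial terms survive, apply Fermat and Euler's criterion, and then combine (1) and (2) to obtain (3). The paper also remarks that (3) is equivalent to the Thue equation \eqref{eq4} reduced modulo $p$, which is your alternative route via \eqref{eq1002}.
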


\begin{proof}
    By reducing \eqref{eq1000} modulo $p$, all terms vanish except the $j=\frac{p-1}{2}$ term, and we obtain $x-1 \equiv b^p 2^{\frac{p-1}{2}} \equiv b \left(\frac{2}{p}\right) \pmod{p}$. Similarly, reducing \eqref{eq1001} modulo $p$, we obtain $x-2 \equiv -a^p \equiv -a \pmod{p}$. Combining these congruences yields $a-1 \equiv -\left(\frac{2}{p}\right) b \pmod{p}$, which, in fact, is equivalent to the Thue equation \eqref{eq4} modulo $p$.
\end{proof}

\begin{theorem}\label{thm:observation8}
    With notation as above, $v_2(x-1) = v_2(b) = v_2(a-1)$.
\end{theorem}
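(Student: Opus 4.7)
The plan is to compute $v_2(x-1)$ and $v_2(a-1)$ separately, both in terms of $v_2(b)$, using the explicit expansions already available. By Theorem \ref{thm:observation2}, $a$ is odd and $b$ is even, so $v_2(b) \geq 1$ and $v_2(a) = 0$; this is the key input that makes the 2-adic analysis essentially trivial.

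For the first equality $v_2(x-1) = v_2(b)$, I would apply the expansion \eqref{eq1000}:
\begin{align*}
x - 1 = \sum_{j=0}^{(p-1)/2} \binom{p}{2j+1} a^{p-2j-1} b^{2j+1} 2^j.
\end{align*}
The $j = 0$ term is $p a^{p-1} b$, which has 2-adic valuation exactly $v_2(b)$ since $p$ and $a$ are odd. For $j \geq 1$, the 2-adic valuation of the $j$th term is at least $(2j+1) v_2(b) + j \geq 3 v_2(b) + 1 > v_2(b)$. Hence the $j=0$ term dominates and $v_2(x-1) = v_2(b)$.

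For the second equality $v_2(a-1) = v_2(b)$, I would use the rearrangement \eqref{eq1002}:
\begin{align*}
(a-1)\bigl(1 + a + \dots + a^{p-1}\bigr) = -b \sum_{k=1}^{p} \binom{p}{k} 2^{\lfloor k/2 \rfloor} a^{p-k} b^{k-1}.
\end{align*}
Since $a$ is odd, the factor $1 + a + \dots + a^{p-1}$ is a sum of $p$ odd integers and hence odd (as $p$ is odd). On the right, the $k=1$ term of the inner sum is $p a^{p-1}$, which is odd, while each $k \geq 2$ term is divisible by $b^{k-1}$ and thus even. Therefore the inner sum has 2-adic valuation $0$, and the entire right-hand side has 2-adic valuation exactly $v_2(b)$. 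Comparing valuations then yields $v_2(a-1) = v_2(b)$, completing the chain $v_2(x-1) = v_2(b) = v_2(a-1)$.

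There is no genuine obstacle here: once the binomial expansions of Theorem \ref{thm:observation4} and the rearrangement \eqref{eq1002} are in hand, the whole argument reduces to identifying the single term of minimal 2-adic valuation in each sum, using only that $a, p$ are odd and $b$ is even.
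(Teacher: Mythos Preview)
Your proof is correct and is essentially identical to the paper's own argument: both use the expansion \eqref{eq1000} to show the $j=0$ term dominates $2$-adically (giving $v_2(x-1)=v_2(b)$), and then use the rearrangement \eqref{eq1002}, noting that $1+a+\cdots+a^{p-1}$ is odd and that the $k=1$ term of the inner sum is the unique odd term, to conclude $v_2(a-1)=v_2(b)$.
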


\begin{proof}
    Since $b$ is even and $a$ is odd by Theorem \ref{thm:observation2}, the $j=0$ term in \eqref{eq1000} has a lower $2$-adic valuation than any other term, and so $v_2(x-1) = v_2(p a^{p-1} b) = v_2(b)$. Also, the $k=1$ term in \eqref{eq1002} has a lower $2$-adic valuation than any other term, so the $2$-adic valuation of the right-hand side is $v_2(-p a^{p-1} b) = v_2(b)$. Since $1 + a + \dots + a^{p-1} \equiv p \equiv 1 \pmod{2}$, the $2$-adic valuation of the left-hand side is $v_2(a-1)$. Thus, $v_2(a-1) = v_2(b)$.
\end{proof}

\begin{theorem}\label{thm:observation9}
    For every prime $\ell \mid b$ such that $\ell \not\equiv 1 \pmod{p}$, we have $v_\ell(a-1) = v_\ell(b)$.
\end{theorem}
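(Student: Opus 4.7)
The plan is to extract the $\ell$-adic valuation of both sides of the rearranged Thue identity \eqref{eq1002}. The case $\ell = p$ is already covered by Theorem \ref{thm:observation6}(1), so I would reduce to $\ell \neq p$. Since $\ell \mid b$ and $\gcd(a,b)=1$ (Lemma \ref{lemma3.1}), automatically $\ell \nmid a$.

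For the right-hand side $-b\sum_{k=1}^{p}\binom{p}{k}\,2^{\lfloor k/2\rfloor}\,a^{p-k}\,b^{k-1}$, I would isolate the $k=1$ term $-p\,a^{p-1}b$, which has $v_\ell$ exactly $v_\ell(b)$ (using $\ell \ne p$ and $\ell \nmid a$). Every term with $k \ge 2$ carries a full factor $b^k$, contributing $v_\ell \ge 2\,v_\ell(b) > v_\ell(b)$. Therefore $v_\ell$ of the right-hand side equals $v_\ell(b)$ on the nose.

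For the left-hand side $(a-1)(1+a+\cdots+a^{p-1})$, the key tool is Lemma \ref{lemma1}\eqref{item:lemma1.3}, applied with its $(y,a)$ taken to be $(a,1)$. This tells me that every prime divisor $q \ne p$ of $1+a+\cdots+a^{p-1} = (a^p-1)/(a-1)$ satisfies $q \equiv 1 \pmod p$. Since our $\ell$ is neither $p$ nor $\equiv 1 \pmod p$, it does not divide $1+a+\cdots+a^{p-1}$, so $v_\ell$ of the left-hand side equals $v_\ell(a-1)$. Equating the two sides then yields $v_\ell(a-1)=v_\ell(b)$, as required.

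The one piece of bookkeeping is verifying $a \ne 1$, which Lemma \ref{lemma1} needs in order to apply. This I would deduce from Theorem \ref{thm:observation6}(1): if $a = 1$, then $\infty = v_p(a-1) = v_p(b)$, forcing $b = 0$ and contradicting nontriviality. Beyond this sanity check there is no real obstacle, and the proof is pure valuation-chasing on the single identity \eqref{eq1002}.
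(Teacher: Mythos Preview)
Your proposal is correct and follows essentially the same route as the paper: both split off the case $\ell=p$ via Theorem~\ref{thm:observation6}, then compare $\ell$-adic valuations across the identity~\eqref{eq1002}, using Lemma~\ref{lemma1}\eqref{item:lemma1.3} to kill $\ell$ from the factor $1+a+\cdots+a^{p-1}$ and isolating the $k=1$ term on the right. Your explicit check that $a\neq 1$ (needed for the hypotheses of Lemma~\ref{lemma1}) is a detail the paper leaves implicit, so if anything your write-up is slightly more careful.
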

\begin{proof}
    Note that, since $b$ divides every term in the sum on the right-hand side of \eqref{eq1002} except the $k=1$ term, the GCD of the two factors on the right-hand side of \eqref{eq1002} is $\gcd(b, p a^{p-1})$. Since $\gcd(a, b) = 1$ by Theorem \ref{thm:observation1}, this GCD is $p$ or $1$, according to whether $p \mid b$ or not. By Lemma \ref{lemma1}, any prime $\ell \not\equiv 1 \pmod{p}$ with $\ell \neq p$ does not divide $\frac{a^p-1}{a-1}$. Hence, if $\ell \mid b$ and $\ell \neq p$, then since $\ell$ does not divide the other factor in the right-hand side, the $\ell$-adic valuation of the right-hand side is $v_\ell(b)$, so $v_\ell(a-1) = v_\ell(b)$. For $\ell=p$, this was proven in Theorem \ref{thm:observation6}.
\end{proof}

\begin{corollary}\label{thm:observation10}
    There is a prime $\ell \equiv 1 \pmod{p}$ with $\ell \mid b$.
\end{corollary}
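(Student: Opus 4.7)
The plan is a short argument by contradiction, with Theorem \ref{thm:observation9} doing all the heavy lifting.

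First, I would note that $p \le 13$ is trivial by Theorem \ref{thm:thue equations small p} (no nontrivial solutions exist), so I only need to treat $p \ge 17$. Suppose, for contradiction, that no prime $\ell \equiv 1 \pmod{p}$ divides $b$. Then every prime $\ell \mid b$ satisfies $\ell \not\equiv 1 \pmod{p}$; note in particular that the prime $p$ itself is $\equiv 0 \not\equiv 1 \pmod{p}$, so this assumption covers the possibility $\ell = p$ as well. Applying Theorem \ref{thm:observation9} to each prime $\ell \mid b$ yields $v_\ell(a-1) = v_\ell(b)$, and since this holds for every prime dividing $b$, we get the integer divisibility
\[
|b| \,\bigm|\, |a-1|.
\]

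The next step is to rule out that $a-1 = 0$, which is the only way the divisibility above can fail to give a lower bound on $|a-1|$. Here I would invoke Theorem \ref{thm:lower bound on a and b}, which guarantees $|a| > 10^{1000}$; in particular $a \neq 1$, so $|a-1| \geq |b|$.

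Finally, this lower bound collides with the upper bound provided by Lemma \ref{thm:ratio is small}, which asserts $|a/b| < 0.1$. Indeed, using also that $b$ is even and nonzero (Theorem \ref{thm:observation2}), so $|b| \ge 2$, one has
\[
|a - 1| \,\leq\, |a| + 1 \,<\, 0.1\,|b| + 1 \,<\, |b|,
\]
contradicting $|a-1| \ge |b|$. This gives the desired contradiction and produces a prime $\ell \equiv 1 \pmod{p}$ dividing $b$. There is no real obstacle here beyond having the correct ingredients assembled; the main content is Theorem \ref{thm:observation9}, and the rest is bookkeeping with the smallness estimate on $|a/b|$.
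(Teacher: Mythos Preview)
Your proof is correct and essentially identical to the paper's: both argue by contradiction via Theorem \ref{thm:observation9} to obtain $b \mid (a-1)$, rule out $a=1$, and then contradict Lemma \ref{thm:ratio is small} using $|b|\ge 2$ from Theorem \ref{thm:observation2}. The only cosmetic difference is that you cite Theorem \ref{thm:lower bound on a and b} explicitly to justify $a\neq 1$ (the paper simply asserts it), and your final inequality $|a-1|\le |a|+1<0.1|b|+1<|b|$ is a rephrasing of the paper's $|a/b|\ge 1-1/|b|\ge 1/2$.
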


\begin{proof}
Assume for contradiction that, for every prime $\ell$ that divides $b$, we have $\ell \not \equiv 1 \pmod{p}$. Then, by Theorem \ref{thm:observation9}, we must have $b \mid (a-1)$. However, since $a \neq 1$, this implies $\abs{a-1} \geq \abs{b}$, so $\abs{\frac{a}{b}-\frac{1}{b}} \geq 1$. Then $\abs{\frac{a}{b}} \geq 1-\frac{1}{\abs{b}} \geq \frac{1}{2}$ since $b$ is nonzero and even (Theorem \ref{thm:observation2}), but this contradicts Lemma \ref{thm:ratio is small}.
\end{proof}

\begin{corollary}\label{thm:observation11}
    With notation as above, $b \equiv \gcd(a-1, b) \pmod{p}$
\end{corollary}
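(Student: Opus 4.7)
The plan is to write $b = d \cdot b'$ where $d = \gcd(a-1,b)$ and show that $b' \equiv 1 \pmod{p}$, which immediately gives the claim.

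The key step is to argue that every prime $\ell$ dividing $b'$ must satisfy $\ell \equiv 1 \pmod{p}$. Indeed, suppose $\ell \mid b'$; then $\ell \mid b$. If $\ell = p$, then Theorem \ref{thm:observation6}(1) gives $v_p(a-1) = v_p(b)$. If $\ell \neq p$ and $\ell \not\equiv 1 \pmod{p}$, then Theorem \ref{thm:observation9} gives $v_\ell(a-1) = v_\ell(b)$. In either case $v_\ell(d) = \min(v_\ell(a-1), v_\ell(b)) = v_\ell(b)$, so $v_\ell(b') = 0$, a contradiction. Therefore every prime divisor of $b'$ is $\equiv 1 \pmod{p}$.

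Since $|b'|$ is a product of primes each congruent to $1 \pmod{p}$, we conclude $|b'| \equiv 1 \pmod{p}$. By Theorem \ref{thm:observation3} we may arrange $b > 0$ (after consistently flipping the sign of $x$ if needed, which does not affect the congruence we wish to prove since $\gcd(a-1,b) = \gcd(a-1,-b)$), so in fact $b' > 0$ and $b' \equiv 1 \pmod{p}$. Multiplying by $d$ yields $b \equiv d \pmod{p}$, as desired.

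The only mildly delicate point is the sign bookkeeping in the last step, but everything else is a direct unpacking of the already-established $\ell$-adic valuation identities; no new computation is required.
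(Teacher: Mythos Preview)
Your argument is essentially identical to the paper's: both show that every prime dividing $b/\gcd(a-1,b)$ is $\equiv 1\pmod p$ (via Theorem~\ref{thm:observation9}, which already subsumes the $\ell=p$ case), whence the quotient is $\equiv 1\pmod p$. One small slip in your sign remark: flipping the sign of $x$ sends $b\mapsto -b$ while leaving $a$ fixed, so the congruence $b\equiv\gcd(a-1,b)$ is \emph{not} invariant under that substitution; the clean fix is simply to note that one may take $b>0$ without loss of generality (the paper's own proof tacitly does the same).
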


\begin{proof}
    For any prime $\ell \mid b$ such that $\ell \not\equiv 1 \pmod{p}$, by Theorem \ref{thm:observation9} we have $v_\ell(a-1) = v_\ell(b) = v_\ell(\gcd(a-1, b))$, so $\ell \nmid \frac{b}{\gcd(a-1, b)}$. Therefore, $\frac{b}{\gcd(a-1, b)}$ is only divisible by primes that are $1$ modulo $p$, so it is itself $1$ modulo $p$. Thus $b = \gcd(a-1, b) \cdot \frac{b}{\gcd(a-1, b)} \equiv \gcd(a-1, b) \pmod{p}$.
\end{proof}

\begin{theorem}\label{thm:observation12}
    With notation as above, we have $a^p \equiv 1 \pmod{b}$ and $b^p \equiv 2^{\frac{1-p}{2}} \pmod{a}$. Also, $-y$ and $a$ are both primitive $p$th roots of $1$ modulo $b$.
\end{theorem}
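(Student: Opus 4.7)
The plan is to handle the three claims in order. The two $p$th-power congruences will fall out by direct reduction: reducing the Thue equation \eqref{eq4} modulo $b$ annihilates every term with $k \geq 1$, leaving $a^p \equiv 1 \pmod{b}$; reducing modulo $a$ annihilates every term except $k = p$, giving $2^{(p-1)/2} b^p \equiv 1 \pmod{a}$, and since $a$ is odd by Theorem \ref{thm:observation2} I can invert $2$ modulo $a$ and rearrange to $b^p \equiv 2^{(1-p)/2} \pmod{a}$.

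For the primitive-root assertion, my plan is to first use Theorem \ref{thm:observation1} to write $-y \equiv a^2 \pmod{b}$, so that $(-y)^p \equiv (a^p)^2 \equiv 1 \pmod{b}$. The orders of $a$ and $-y$ modulo $b$ must then each divide the prime $p$ and so equal $1$ or $p$; because $\gcd(2, p) = 1$, the order of $a^2 \equiv -y$ coincides with the order of $a$, and it remains only to rule out $a \equiv 1 \pmod{b}$.

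The hard part will be this last step. I would suppose $b \mid (a - 1)$. By Theorem \ref{thm:observation2}, $b$ is even and nonzero, so $|b| \geq 2$, and by Lemma \ref{thm:ratio is small} (applicable since $p \geq 17$---for $p \leq 13$, Theorem \ref{thm:thue equations small p} leaves no nontrivial Thue solution with $r = 1$), $|a/b| < 0.1$, so $|a| < 0.1|b|$. Either $a = 1$, or $|a - 1| \geq |b|$ gives $|b| \leq |a| + 1 < 0.1|b| + 1$, forcing $|b| < 10/9$, contradiction. Hence $a = 1$. Substituting $a = 1$ into \eqref{eq4} and dividing by $b \neq 0$ then yields a polynomial equation in $b$ with constant term $p$, and the rational root theorem forces $b \mid p$, which contradicts $b$ being even and $p$ being odd.
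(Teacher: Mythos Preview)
Your proof is correct and follows essentially the same route as the paper: reduce \eqref{eq4} modulo $b$ and modulo $a$ for the two congruences, use $-y \equiv a^2 \pmod{b}$ to transfer primitivity from $a$ to $-y$, and rule out $a \equiv 1 \pmod{b}$ via Lemma~\ref{thm:ratio is small}. The only difference is in how $a=1$ is excluded: the paper simply cites the argument in Corollary~\ref{thm:observation10} (which uses $a\neq 1$, a fact already available from Theorem~\ref{thm:lower bound on a and b}), whereas you treat $a=1$ by a direct rational-root argument on the Thue polynomial---a minor, self-contained variant.
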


\begin{proof}
    From \eqref{eq1002} we see $b \mid (a^p-1)$, so $a^p \equiv 1 \pmod{b}$. Reducing \eqref{eq4} modulo $a$, we see $2^{\frac{p-1}{2}} b^p \equiv 1 \pmod{a}$, so $b^p \equiv 2^{\frac{1-p}{2}} \pmod{a}$.

    As in the proof of Corollary \ref{thm:observation10}, $a \not\equiv 1 \pmod{b}$. Since $a^p \equiv 1 \pmod{b}$, $a$ is a primitive $p$th root of $1$ mod $b$. Note that $-y = a^2-2b^2 \equiv a^2 \pmod{b}$. Since $a$ is a primitive $p$th root of $1$, so is $a^2$, so $-y$ is as well.
\end{proof}

\begin{theorem}\label{thm:observation13}
    We have $\gcd\left(b, \frac{x-1}{b}\right) = 1$ or $p$. For every prime $\ell$ other than $p$ that divides $\frac{x-1}{b}$, we have 
    \begin{align*}
        \ell \equiv 1 \pmod{p}, \ \ \ \ \ell \equiv \pm 1 \pmod{8},
    \end{align*}
    or
    \begin{align*}
        \ell \equiv \pm 1 \pmod{p}, \ \ \ \ \ell \equiv \pm 3 \pmod{8}.
    \end{align*}
Also, $\gcd\left(a, \frac{x-2}{a}\right) = 1$ or $p$, and the same conditions apply to all primes dividing $\frac{x-2}{a}$. In particular, if $\ell \mid b$ is prime and $\ell \not\equiv \pm 1, 0 \pmod{p}$, then $v_\ell(b) = v_\ell(x-1)$. Similarly, if $\ell \mid a$ is prime and $\ell \not\equiv \pm 1, 0 \pmod{p}$, then $v_\ell(a) = v_\ell(x-2)$.
\end{theorem}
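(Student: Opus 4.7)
The first pair of claims follows by reducing the binomial formulas of Theorem \ref{thm:observation4}. Every term of $(x-1)/b=\sum_{j=0}^{(p-1)/2}\binom{p}{2j+1}a^{p-2j-1}b^{2j}2^j$ from \eqref{eq1000} with $j\geq 1$ contains $b^{2j}$, so $(x-1)/b\equiv pa^{p-1}\pmod{b}$; since $\gcd(a,b)=1$ (Theorem \ref{thm:observation1}), this forces $\gcd(b,(x-1)/b)\mid p$. Analogously, every term of $(x-2)/a$ in \eqref{eq1001} except the $j=(p-1)/2$ term is divisible by $a$, giving $(x-2)/a\equiv -pb^{p-1}2^{(p-1)/2}\pmod{a}$, and since $a$ is odd (Theorem \ref{thm:observation2}) and coprime to $b$, $\gcd(a,(x-2)/a)\mid p$.

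For the prime-factor conditions on a prime $\ell\neq p$ dividing $(x-1)/b$, I would first rule out $\ell=2$ using $v_2(x-1)=v_2(b)$ from Theorem \ref{thm:observation8}, and then rule out $\ell\mid b$ by reducing the congruence $(x-1)/b\equiv pa^{p-1}\pmod{b}$ modulo $\ell$, which would force $\ell\mid p$. The key step is then to work in the residue field of a prime $\mathfrak{l}$ of $\Z[\sqrt{2}]$ above $\ell$: reducing \eqref{eq1005} modulo $\mathfrak{l}$ and using that $1+\sqrt{2}$ is a unit yields $(a+b\sqrt{2})^p\equiv 1\pmod{\mathfrak{l}}$, so $a+b\sqrt{2}$ has order $1$ or $p$ in the cyclic group $(\Z[\sqrt{2}]/\mathfrak{l})^\times$.

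The stated dichotomy now arises from the splitting behaviour of $\ell$ in $\Z[\sqrt{2}]$: $\ell$ splits iff $\ell\equiv\pm 1\pmod{8}$, and is inert iff $\ell\equiv\pm 3\pmod{8}$. If $\ell$ is inert, the residue field is $\F_{\ell^2}$; order $1$ directly forces $b\equiv 0\pmod{\ell}$, contradicting $\ell\nmid b$, so the order must be $p$, giving $p\mid\ell^2-1$ and $\ell\equiv\pm 1\pmod{p}$. If $\ell$ splits, the residue field at each of $\mathfrak{l},\overline{\mathfrak{l}}$ is $\F_\ell$, with $\sqrt{2}$ reducing to $\pm t$ for some $t\in\F_\ell$ with $t^2\equiv 2\pmod{\ell}$; order $1$ at both primes simultaneously gives $a+bt\equiv a-bt\equiv 1\pmod{\ell}$, forcing $\ell\mid b$ and the same contradiction, so order $p$ must occur at one of the primes, giving $p\mid\ell-1$ and $\ell\equiv 1\pmod{p}$.

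The argument for $\ell\mid(x-2)/a$ is the exact analogue: using \eqref{eq1004} together with the factorisation $A^p+B^p=(A+B)\sum_{j=0}^{p-1}(-1)^j A^{p-1-j}B^j$, one writes $-2(x-2)=2aM$ and notes $\ell\mid M$. Reduction modulo a prime above $\ell$ then yields $\alpha^p+\beta^p\equiv 0$, where $\alpha=a+b\sqrt{2}$ and $\beta=a-b\sqrt{2}$ are the reductions (both nonzero, since $\beta\equiv 0$ would force $\ell\mid y$ and hence, via $x\equiv 2\pmod{\ell}$, the contradiction $\ell\mid 2$). Thus $(\alpha/\beta)^p\equiv -1$, so $\alpha/\beta$ has order $2$ or $2p$; order $2$ forces $\ell\mid a$ (excluded as in the previous case), so the order is $2p$, which divides $\ell-1$ in the split case and $\ell^2-1$ in the inert case, producing exactly the same two alternatives. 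The ``in particular'' statements are then immediate: a prime $\ell\mid b$ with $\ell\not\equiv 0,\pm 1\pmod{p}$ cannot divide $(x-1)/b$ by what has just been proved, so $v_\ell(x-1)=v_\ell(b)$, and symmetrically for $a$. The main technical obstacle is the careful bookkeeping in the split case, where one must use the reductions at both $\mathfrak{l}$ and $\overline{\mathfrak{l}}$ simultaneously to exclude the ``order $1$'' subcase.
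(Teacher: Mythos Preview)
Your proof is correct and follows essentially the same strategy as the paper: the gcd statements are obtained identically, and the congruence conditions on $\ell$ are deduced by passing to the residue field of a prime of $\Z[\sqrt{2}]$ above $\ell$ and using that the $p$-th power map has trivial kernel unless $p$ divides the order of the multiplicative group. The only real difference is that for $\frac{x-1}{b}$ you invoke \eqref{eq1005} to get $(a+b\sqrt{2})^p\equiv 1\pmod{\mathfrak{l}}$ and argue via the multiplicative order, whereas the paper uses \eqref{eq1003} to get $(a+b\sqrt{2})^p\equiv(a-b\sqrt{2})^p$ and argues via injectivity of the $p$-th power map on $\F_\ell(\theta)^\times$; the paper's formulation avoids the separate bookkeeping at the two primes in the split case, but the content is the same.
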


\begin{proof}
Recall from Corollary \ref{thm:observation5} that $b \mid(x-1)$. We mimic the proof of Lemma \ref{lemma1}. Suppose $d \mid b$ and $d \mid \frac{x-1}{b}$. Then by \eqref{eq1000}, $0 \equiv \frac{x-1}{b} \equiv p a^{p-1} \pmod{d}$. Since $d$ divides $b$ and $b$ is relatively prime to $a$ (by Theorem \ref{thm:observation1}), we must have that $d \mid p$. Thus, $\gcd\left(b, \frac{x-1}{b}\right)$ is $1$ or $p$. In fact, if the GCD is $p$, then $p \parallel \frac{x-1}{b}$ by Theorem \ref{thm:observation6}.
    
    Suppose $\ell \mid \frac{x-1}{b}, \ell \neq p$ is prime. Then by \eqref{eq1003}, $\ell \mid \frac{(a+b\sqrt{2})^p-(a-b\sqrt{2})^p}{2\sqrt{2}}$, so
    \begin{align}\label{eq36}
    (a+b\sqrt{2})^p \equiv (a-b\sqrt{2})^p \pmod{\ell \, \Z[\sqrt{2}]}.
    \end{align}
    Note that $\ell \neq 2$ by Theorem \ref{thm:observation8}. Let $\F$ denote the field $\F_\ell(\theta)$, where $\theta$ is a square root of $2$. We apply the homomorphism from $\mathbb{Z}[\sqrt{2}] \rightarrow \F$ given by $u+v\sqrt{2}\mapsto u+v\theta$ and find \eqref{eq36} implies $(a+b\theta)^p = (a-b\theta)^p$ in $\F$. In this case, if $p \nmid (\abs{\F}-1)$, then the map $x \mapsto x^p$ is injective on $\F$, hence $a+b\theta = a-b\theta$, so $2b\theta = 0$. Since $2\theta$ is a unit in $\F$, $b \equiv 0 \pmod{\ell}$. This contradicts that $\gcd\left(b, \frac{x-1}{b}\right) \mid p$. Therefore, we have $p \mid (\abs{\F}-1)$, so if $\ell \equiv \pm 1 \pmod{8}$ then $\ell \equiv 1 \pmod{p}$, and if $\ell \equiv \pm 3 \pmod{8}$ then $\ell \equiv \pm 1 \pmod{p}$.

    A similar argument works for $a$ and $\frac{x-2}{a}$. If $d \mid a$ and $d \mid \frac{x-2}{a}$, then by \eqref{eq1001}, $0 \equiv \frac{x-2}{a} \equiv -p b^{p-1} 2^{\frac{p-1}{2}} \pmod{d}$, and since $a$ is odd and relatively prime to $b$, this implies $d \mid p$. If $\ell \mid \frac{x-2}{a}, \ell \neq p$ is prime, then since by \eqref{eq1004}, $\ell \mid \frac{(a+b\sqrt{2})^p+(a-b\sqrt{2})^p}{2a}$, we have\[
    (a+b\sqrt{2})^p \equiv (-a+b\sqrt{2})^p \pmod{\ell\Z[\sqrt{2}]}.
    \]
    Note that $\ell \neq 2$ since $\frac{x-2}{a}$ is odd, because $x$ is odd by Theorem \ref{thm:elementary 1}. By the same reasoning as before, if $\F = \F_\ell(\theta)$ where $\theta$ is a square root of $2$, then $(a+b\theta)^p = (-a+b\theta)^p$, so if $p \nmid (\abs{\F}-1)$, then $a+b\theta = -a+b\theta$, so $2a \equiv 0 \pmod{\ell}$. Since $\ell \neq 2$, $\ell \mid a$, contradicting that $\gcd\left(a, \frac{x-2}{a}\right) \mid p$.
\end{proof}

\section{On solutions modulo $p$}\label{sec:simple modular observations}

Despite the difficulty in showing that $x^2-2=y^p$ has only trivial solutions, one might hope for partial progress. For instance, one might try to show that all solutions are ``locally trivial.'' The following conjecture is one possible example of this.

\begin{conjecture}\label{conj:triv solns mod p}
    Let $p\geq 3$ be a prime, and let $x,y \in\mathbb{Z}$ such that $x^2-2=y^p$. Then $x\equiv \pm 1 \pmod{p}$ and $y \equiv -1 \pmod{p}$.
\end{conjecture}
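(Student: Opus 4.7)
My first step would be to observe that the two congruences in Conjecture \ref{conj:triv solns mod p} are equivalent. By Fermat's little theorem, $y \equiv y^p \pmod p$, so $y \equiv x^2 - 2 \pmod p$, and therefore
\[
y + 1 \equiv x^2 - 1 = (x-1)(x+1) \pmod{p}.
\]
Hence $y \equiv -1 \pmod p$ if and only if $x \equiv \pm 1 \pmod p$, and it suffices to prove the single statement $y \equiv -1 \pmod p$. (This is consistent with Theorem \ref{thm:elementary 2}(1), which already gives one direction.)

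Next I would reformulate the conjecture in terms of the Thue parameters $(a,b)$ of Theorem \ref{thm:reduction to thue equation}, taking $r=1$ as permitted by Theorem \ref{thm: r is 1} (the very small primes being handled by Theorem \ref{thm:thue equations small p}). Reducing the identity $x+\sqrt{2} = (1+\sqrt{2})(a+b\sqrt{2})^p$ modulo $p\Z[\sqrt{2}]$ via the Frobenius-like relation $(a+b\sqrt{2})^p \equiv a + b\bigl(\tfrac{2}{p}\bigr)\sqrt{2}$ (which recovers Theorem \ref{thm:observation7}), a short computation yields
\[
y + 1 \equiv b\Bigl(b + 2\bigl(\tfrac{2}{p}\bigr)\Bigr) \pmod{p}.
\]
Thus the conjecture is equivalent to the dichotomy that $b \equiv 0 \pmod p$ or $b \equiv -2\bigl(\tfrac{2}{p}\bigr) \pmod p$.

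The natural tool for this dichotomy is the modular method used to prove Theorem \ref{thm: r is 1}. The Frey curve $E: Y^2 = X(X^2 + 2xX + 2)$ attached to any nontrivial solution satisfies a level-lowering congruence modulo $p$ to one of the four newforms $F$ of level $128$. For each triple $(p, F, \bar x)$ with $\bar x \in \F_p \setminus \{\pm 1\}$, I would search for auxiliary primes $\ell$ satisfying the analogues of the conditions in Section \ref{subsec:r is 1} whose incompatibility $a_\ell(E_{\bar x}) \not\equiv a_\ell(F) \pmod p$ rules out $\bar x$.

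The main obstacle, and the source of the paper's honest admission of only ``very modest progress,'' is structural. The $r$-argument of Section \ref{subsec:r is 1} leverages the a priori bound $|r| \leq (p-1)/2$ to reduce the problem to pinning down a single residue class mod $p$, whereas here we must eliminate $p-2$ residues out of $p$. Moreover, an auxiliary prime $\ell$ intrinsically gives information about $x \bmod \ell$, not $x \bmod p$; the bridge exploited for $r$, namely the composition $\F_\ell^\times \to \Z/(\ell-1)\Z \to \Z/p\Z$, works precisely because $r$ sits in the exponent, and there is no clean analogue for $x$ itself. For this reason I expect the modular approach to prove Conjecture \ref{conj:triv solns mod p} only for a sparse family of primes $p$ (e.g.\ those for which one of the four newforms admits a suitable auxiliary prime, or those satisfying extra congruence conditions in the spirit of Theorem \ref{thm:chens theorem}), and a full resolution of the conjecture would likely require a genuinely new ingredient beyond the standard modularity toolkit.
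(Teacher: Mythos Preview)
The statement in question is a \emph{conjecture}, and the paper does not prove it; the paper explicitly presents it as open and offers only the modest partial result that $x \not\equiv 0 \pmod{p}$ and $y \not\equiv 0 \pmod{p}$ (the Proposition immediately following the conjecture). So there is no ``paper's own proof'' to compare against.

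Your proposal is likewise not a proof, and to your credit you say so: the final paragraph correctly identifies the structural obstruction to pushing the modular/auxiliary-prime method through, and you conclude that a new idea is needed. The preliminary observations you make are correct and worth recording. The equivalence of the two congruences via $y \equiv y^p \equiv x^2-2 \pmod{p}$ is valid, and your reformulation $y+1 \equiv b\bigl(b+2(\tfrac{2}{p})\bigr) \pmod{p}$ checks out using Theorem~\ref{thm:observation7} and $y=2b^2-a^2$. This dichotomy on $b$ is a clean restatement, though it does not appear in the paper.

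What the paper actually accomplishes toward the conjecture is narrower than your outline: it rules out the single residue $x\equiv 0$ via a Wieferich-prime argument (if $p\mid x$ then $2^{p-1}\equiv 1\pmod{p^2}$, and there are no Wieferich primes below the bound $p\leq 911$), and rules out $p\mid y$ by checking $a_p(F)\not\equiv \pm 1\pmod{p}$ for the level-$128$ newforms. Your sketch of eliminating each nontrivial residue $\bar{x}\in\F_p\setminus\{\pm 1\}$ via auxiliary primes is a plausible program, but you have correctly diagnosed why it does not go through: the method of Section~\ref{subsec:r is 1} pins down $r$ because $r$ lives in the exponent and the discrete-log map $\F_\ell^\times\to\Z/p\Z$ sees it directly, whereas $x\bmod p$ is not accessible that way. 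In short, neither you nor the paper proves the conjecture, and your discussion is a reasonable explanation of why.
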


Conjecture \ref{conj:triv solns mod p} already seems quite difficult, but it is possible to make some modest progress.

\begin{proposition}
    Let $p\geq 3$ be a prime, and let $x,y \in\mathbb{Z}$ such that $x^2-2=y^p$. Then
    \begin{enumerate}
        \item $x \not \equiv 0 \pmod{p}$,
        \item $y \not \equiv 0 \pmod{p}$.
    \end{enumerate}
\end{proposition}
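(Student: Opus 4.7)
Both parts reduce to the nontrivial case, since for the trivial solution $(x, y) = (\pm 1, -1)$ we have $p \nmid x$ and $p \nmid y$ for any $p \geq 3$. In the nontrivial case, Theorem \ref{thm:only triv solns for p big} forces $p \leq 911$ and Theorem \ref{thm: r is 1} lets us assume $r = 1$.

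For part (1), my plan is a Wieferich-style collapse. If $p \mid x$, then $y^p \equiv x^2 - 2 \equiv -2 \pmod{p^2}$. Writing $y = d + pk$ with $d = y \bmod p$, the binomial expansion $(d + pk)^p$ collapses modulo $p^2$ to $d^p$ (since every other term carries $p^j$ with $j \geq 2$), so $d^p \equiv -2 \pmod{p^2}$; Fermat's little theorem then forces $d \equiv -2 \pmod p$. Writing $y = -2 + pn$ and repeating the collapse yields $y^p \equiv (-2)^p = -2^p \pmod{p^2}$, whence $2^{p-1} \equiv 1 \pmod{p^2}$, the Wieferich congruence base $2$. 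Since no prime in $[3, 911]$ is Wieferich base $2$ (the smallest such primes are $1093$ and $3511$), this gives the contradiction.

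For part (2), Fermat's little theorem applied to $y^p = x^2 - 2$ gives $y \equiv x^2 - 2 \pmod p$; if $p \mid y$, this forces $x^2 \equiv 2 \pmod p$, so $\left(\tfrac{2}{p}\right) = 1$, hence $p \equiv \pm 1 \pmod 8$. This immediately dispatches all primes $p \not\equiv \pm 1 \pmod 8$. The primes that remain are $p \equiv \pm 1 \pmod 8$ with $p \leq 911$, further narrowed by Theorem \ref{thm:thue equations small p} to $p \geq 17$ and by Chen's Theorem \ref{thm:chens theorem} to $p \equiv 17$ or $23 \pmod{24}$. For these, my plan is to adapt the auxiliary-prime argument from the proof of Theorem \ref{thm: r is 1}: for each remaining $p$, select auxiliary primes $\ell \equiv 1 \pmod p$ with $\ell \equiv \pm 1 \pmod 8$ whose level-lowering conditions $a_\ell(F) \not\equiv \pm(\ell+1) \pmod p$ for the four weight-$2$ newforms $F$ of level $128$ force $\ell \nmid y$, and then use the resulting Galois-theoretic constraints on the pair $(x \bmod \ell,\, y \bmod \ell)$, aggregated over several $\ell$, to derive a contradiction with $y \equiv 0 \pmod p$.

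The main obstacle lies in part (2): the original auxiliary-prime method was engineered to constrain the exponent $r$ modulo $p$ rather than the residue of $y$, so a nontrivial reformulation of the endgame is required to make the aggregated congruence data pin down $y \bmod p$ rather than $y \bmod \ell$. Once this reformulation is in place, the remaining finite check (across $\sim 40$ primes and four newforms each) should be computationally feasible, comparable in scope to the computation behind Theorem \ref{thm: r is 1}.
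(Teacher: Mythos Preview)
Your argument for part (1) is correct and essentially identical to the paper's: reduce to $p\leq 911$, force $y\equiv -2\pmod p$, collapse $y^p\pmod{p^2}$ via the binomial theorem to obtain the Wieferich congruence $2^{p-1}\equiv 1\pmod{p^2}$, and note there are no Wieferich primes below $1000$.

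Part (2), however, is not a proof but a plan with a self-acknowledged obstacle, and the obstacle is real. The auxiliary-prime machinery you invoke produces, for each admissible $\ell$, a constraint on $x\bmod\ell$ (and hence on $y\bmod\ell$); there is no evident mechanism by which aggregating such constraints over several $\ell\equiv 1\pmod p$ yields information about $y\bmod p$. The original argument pinned down $r\bmod p$ only because $r$ appears as an exponent on a fixed base $1+\theta$, so the discrete logarithm $\Phi$ extracts it directly; no analogous exponent encodes $y\bmod p$.

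The paper's route for (2) is much shorter and avoids this difficulty entirely. It applies the level-lowering congruence at the prime $\ell=p$ itself: if $p\mid y$ then $p\parallel N=2^7\,\mathrm{rad}(y)$ while $p\nmid 128$, so the second clause of the modularity relation gives $a_p(F)\equiv\pm(p+1)\equiv\pm 1\pmod p$ for one of the four level-$128$ newforms $F$. A direct computation then checks that $a_p(F)^2\not\equiv 1\pmod p$ for every such $F$ and every prime $17\leq p\leq 911$, yielding the contradiction. No auxiliary primes, no aggregation, no reformulation are needed.
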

\begin{proof}
    Let $p$ be an odd prime, and let $x,y \in \mathbb{Z}$ such that $x^2-2=y^p$.

    (1): Assume by way of contradiction that $p \mid x$. By Theorem \ref{thm:only triv solns for p big}, we then have that $p \leq 911$.

    Reducing \eqref{eq:main lebesgue nagell eqn} modulo $p$, we have $y \equiv y^p \equiv -2 \pmod{p}$. Write $y = -2 + ap$ for some $a \in \mathbb{Z}$. We insert this expression for $y$ into \eqref{eq:main lebesgue nagell eqn} and reduce modulo $p^2$. Expanding out with the binomial theorem and simplifying, we find that
    \begin{align}\label{eq:wieferich condition}
        2^{p-1} \equiv 1 \pmod{p^2}.
    \end{align}
    Thus, $p$ is a \emph{Wieferich prime}, which are defined to be those primes $p$ such that \eqref{eq:wieferich condition} holds. There are no Wieferich primes less than 1000 (see \cite{DK2011}, for instance, for a large-scale search for Wieferich primes), so we have a contradiction.

    (2): Assume by way of contradiction that $p \mid y$. Theorem \ref{thm:only triv solns for p big} implies $p \leq 911$. We may also assume $p\geq 17$ by Theorem \ref{thm:thue equations small p}. If $p \mid y$, then $p \| N$ and $p \nmid 128$, so by the modularity theorem we have $a_p(F) \equiv \pm 1 \pmod{p}$. However, we find by computer calculation that $|a_p(F)|^2 \not \equiv 1 \pmod{p}$ for all $5 \leq p < 5000$.
\end{proof}

\section{Explicit formulas for coefficients of newforms} \label{sec:newform coefficients}

The trivial solutions to $x^2-2=y^p$ correspond to certain newforms of level 128. It is possible that studying the coefficients of these newforms could allow for a more effective deployment of modularity techniques. We make here some preliminary comments, without proofs, sketching how one can obtain explicit formulas for some of the coefficients of these newforms.

As noted in Section \ref{sec:simple modular observations}, the four rational newforms of level 128 are all quadratic twists of one another. The twist-minimal form $F$ has label 128.2.a.a on LMFDB \cite{LMFDB-newf-128}. The $q$-expansion of $F$ begins
\begin{align*}
    F(q) = \sum_{n=1}^\infty a_n(F)q^n= q-2q^3-2q^5-4q^7+q^9+2q^{11}-2q^{13}+\cdots,
\end{align*}
and one can show the coefficient of $q^n$ is zero whenever $n$ is even.

The key observation is that $F$ may be written as a linear combination of eta-quotients; this follows from work of Rouse and Webb \cite[Theorem 6]{RW2015}.  (See the introduction of \cite{RW2015} for a definition of \emph{eta-quotient} and related discussion.) Sixteen different eta-quotients appear in the linear combination.

Many of the eta-quotients have coefficients supported on even powers of $q$, which cannot contribute to the coefficients of $F$ by the remark above. 

Several of the eta-quotients have level smaller than 128. For these lower-level forms, one can write the form in terms of Eisenstein series and cusp forms. All the arising cusp forms at lower level have complex multiplication, and their coefficients have relatively simple explicit formulas. For instance, the newform of level 64 is a quadratic twist of the newform of level 32. The explicit formula for the coefficients of the form of level 32 played a key role in a new proof of Watkins on the class number one problem \cite{Wat2019}.

There are two eta-quotients of level 128 that can contribute to the coefficient of $q^n$, $n$ odd. One can obtain formulas for the coefficients of these eta-quotients by using the Jacobi triple product identity (see \cite{And1965}, for instance, for a statement and proof of the triple product identity). For one of these eta-quotients, the coefficients of $q^m$ are nonzero only when $m \equiv 3 \pmod{8}$, starting with $m=11$.

When $p$ is an odd prime, the forms of lower level only contribute to the coefficient of $q^p$ when $p \equiv 3 \pmod{8}$. Hence, when $p \not \equiv 3 \pmod{8}$, the coefficient of $q^p$ in $F$ comes entirely from one eta-quotient of level 128, namely
\begin{align*}
    \frac{\eta(z)^2 \eta(4z)^7}{\eta(2z)^3\eta(8z)\eta(16z)}.
\end{align*}
Using this, one can then show that, if $p$ is an odd prime with $p \not \equiv 3 \pmod{8}$, the coefficient of $q^p$ in $F$ is
\begin{align*}
    a_p(F) = \frac{1}{2}\chi_8(p)\mathop{\sum}_{p=a^2+2b^2+4c^2+8d^2} (-1)^d,
\end{align*}
where $\chi_8$ is the primitive Dirichlet character modulo 8 given by
\begin{align*}
    \chi_8(n) = 
    \begin{cases}
        1, &n \equiv 1,3 \pmod{8}, \\
        -1, &n\equiv 5,7 \pmod{8},
    \end{cases}
\end{align*}
and the sum ranges over all representations of $p$ by the diagonal quaternary quadratic form $a^2+2b^2+4c^2+8d^2$. The coefficient of $q^p$, $p \equiv 3 \pmod{8}$, is much more complicated; it arises from the two eta-quotients of level 128 and several Eisenstein series.

\section{Solutions to the Thue equation modulo $n$}\label{sec:solutions mod n}

In this section, we study the number of local solutions to the Thue equation \eqref{eq4} modulo a positive integer $n$. Because \eqref{eq4} always has the trivial solution $(a, b) = (1, 0)$, we cannot hope to completely rule out solutions this way. However, one could hope that, like what was done in the proof of Theorem \ref{thm: r is 1}, some non-trivial information about the solutions may be gained by combining local information about the solutions to \eqref{eq4} with other techniques.

The first observation to make is that the number of solutions modulo $n$ is a multiplicative function of $n$, with the solutions modulo $mn$ for coprime $m$ and $n$ being the residue classes that are solutions mod $m$ and $n$ separately, so they can be pieced together with the Chinese Remainder Theorem. Thus, we can restrict our attention to $n$ being a prime power.

\begin{theorem}\label{thm:thue equations mod n}
The number of solutions to \eqref{eq4} modulo a prime power $q^s$ is:
\begin{enumerate}

	\item $q^s$, if any of the following conditions hold:
	
	\begin{enumerate}
	
		\item $q \not\in \{p, 2\}$ and $q \not\equiv \pm 1 \pmod{p}$,
		
		\item $q \equiv -1 \pmod{p}$ and $\left(\frac{2}{q}\right) = 1$,
		
		\item $(q, s) = (p, 1)$.
	
	\end{enumerate}
	
	\item Either $q^s+q^{s-1}$ or $q^s+(1-p)q^{s-1}$, if $q \equiv -1 \pmod{p}$ and $\left(\frac{2}{q}\right) = -1$. The first case occurs when $1+\sqrt{2}$ is not a $p$th power in $\mathbb{F}_q(\sqrt{2})$, and the second case occurs when $1+\sqrt{2}$ is a $p$th power in $\mathbb{F}_q(\sqrt{2})$.
	
	\item $2^{s-1}$ if $q = 2$.
	
	\item $pq^{s-1}d$ for some positive integer $d$ depending only on $p, q$, if $q \equiv 1 \pmod{p}$ or $q = p, s > 1$.

\end{enumerate}
\end{theorem}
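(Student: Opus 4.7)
The idea is to rewrite \eqref{eq4} inside the ring $R = \Z[\sqrt{2}]/q^s\Z[\sqrt{2}]$. Setting $u = a + b\sqrt{2}$, the Thue equation becomes
\[
(1+\sqrt{2})\, u^p - (1-\sqrt{2})\, \bar u^p \equiv 2\sqrt{2} \pmod{q^s},
\]
so counting $(a,b) \pmod{q^s}$ amounts to counting $u \in R$ satisfying this congruence. The proof then proceeds by case analysis based on how $q$ decomposes in $\Z[\sqrt{2}]$ (split when $\left(\frac{2}{q}\right)=1$, inert when $\left(\frac{2}{q}\right)=-1$, ramified when $q=2$) and whether $p$ divides the order of the relevant multiplicative group.

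Whenever the $p$-th power map on $R^\times$ is a bijection---covering Cases 1(a) and 1(b)---the approach is to substitute $w = u^p$, linearizing the equation to $(1+\sqrt{2}) w - (1-\sqrt{2}) \bar w \equiv 2\sqrt{2} \pmod{q^s}$. Writing $w = c + d\sqrt{2}$ and computing, this reduces to $c + d \equiv 1 \pmod{q^s}$, which has exactly $q^s$ solutions; each such $w$ is a unit and pulls back uniquely under the $p$-th power bijection, giving $q^s$ solutions. For Case 1(c), the multinomial coefficients $\binom{p}{k}$ with $0 < k < p$ collapse mod $p$, and Fermat's little theorem reduces \eqref{eq4} to $a + 2^{(p-1)/2} b \equiv 1 \pmod p$, which has $p$ solutions. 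For Case 3 ($q = 2$), $(a,b) \equiv (1,0) \pmod 2$ is the unique solution mod $2$; since $\partial f/\partial a = \partial f/\partial b = p$ is odd at $(1,0)$, Hensel's lemma in two variables delivers exactly $2^{s-1}$ lifts mod $2^s$.

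Case 2 is the technical heart. Here $q$ is inert and $p \mid q+1$, so $R/qR \cong \F_{q^2}$ and the $p$-th power map on $\F_{q^2}^\times$ is $p$-to-$1$ onto a subgroup of index $p$. The linearized equation in $w$ cuts out an affine line of $q$ elements in $\F_{q^2}$, and only those $w$ lying in the subgroup of $p$-th powers contribute solutions for $u$ (each contributing $p$ preimages). My plan for extracting the dichotomy is to exploit the always-valid trivial orbit $u \in \mu_p$, visible because $(1+\sqrt{2})\cdot 1 - (1-\sqrt{2})\cdot 1 = 2\sqrt{2}$ and $\zeta^p = 1$ for $\zeta \in \mu_p$: when $1+\sqrt{2} = \alpha^p$ is itself a $p$-th power in $\F_{q^2}$, the substitution $v = \alpha u$ (valid because $\bar\eta = -\eta^{-1}$ for $\eta = 1+\sqrt{2}$) simplifies the equation to $v^p - \bar v^p = 2\sqrt{2}$, rearranging the count of $p$-th powers on the affine line by exactly $p$. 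Evaluating the resulting Kummer-type character sum in the two cases yields $q+1$ or $q+1-p$ solutions mod $q$, and Hensel's lemma on the smooth locus then scales by $q^{s-1}$.

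Case 4 collects the remaining situations where the $p$-th power map has nontrivial kernel. For $q \equiv 1 \pmod p$ (split or inert), the linearized equation in $w$ has $q^s$ solutions, each contributing $0$, $p$, or $p^2$ preimages depending on whether its coordinates are $p$-th powers, yielding a total of the form $pq^{s-1} d$. For $(q,s) = (p, s>1)$, a Taylor expansion of $f$ around $(1,0) \pmod p$ shows all partials equal $p$ and hence vanish mod $p$, so entire translation cosets of the trivial residue lift simultaneously, again yielding a count of the form $pq^{s-1} d$. The hardest step will be Case 2: making the count of $p$-th powers on the explicit affine line in $\F_{q^2}$ exact, and linking its value cleanly to the Kummer criterion that $1 + \sqrt{2} \in (\F_{q^2}^\times)^p$. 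My plan is to use the always-present $\mu_p$ orbit together with the symmetry $u \leftrightarrow \bar u^{-1}$ of the equation to reduce to counting roots of a single Kummer equation whose solvability is governed precisely by this criterion.
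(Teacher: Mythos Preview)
Your linearization idea---substituting $w = u^p$ in $R = \Z[\sqrt{2}]/q^s$ to reduce \eqref{eq4} to the affine equation $c+d \equiv 1$---is a genuinely different route from the paper's, which instead works on $\proj^1(\Z/q^s\Z)$ and exploits homogeneity of the Thue form to reduce everything to counting roots of $f_{1,p}$ modulo $q$. Your Hensel argument for $q=2$ is cleaner than the paper's direct projective count there, and Case 1(c) matches the paper exactly.

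However, your treatment of Cases 1(a) and 1(b) has a concrete gap. You assert that each $w = c + d\sqrt{2}$ on the line $c+d=1$ is a unit in $R$ and pulls back uniquely under the $p$-th power map. The first claim is false whenever $\left(\frac{2}{q}\right)=1$, which covers all of Case 1(b) and half of Case 1(a): one has $N(w) = 2-(d+1)^2$, which vanishes modulo $q$ exactly when $d+1 \equiv \pm\theta$ with $\theta^2 = 2$ in $\F_q$. More seriously, the map $u \mapsto u^p$ is \emph{not} a bijection on $R$ for $s \geq 2$: in the split case $R \cong (\Z/q^s)^2$, every $u_1$ with $v_q(u_1) \geq \lceil s/p\rceil$ satisfies $u_1^p = 0$, so distinct $u$ collapse to the same $w$. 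And there \emph{are} solutions with $u$ a non-unit---for instance $u_1 = 0$ and $u_2$ the unique unit $p$-th root of $4+2\theta$---so restricting to $R^\times$ misses them. The paper avoids this entirely because its projective count $q^s + (1-r)q^{s-1}$ holds for all $s$ simultaneously.

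The cleanest repair is to run your linearization only modulo $q$, where $R/qR$ is a product of fields and the $p$-th power map genuinely is bijective in Cases 1(a) and 1(b). Then note that the Thue form is smooth at every $\F_q$-solution: writing $f(a,b) = \frac{1}{2\sqrt{2}}\bigl((1+\sqrt{2})(a+b\sqrt{2})^p - (1-\sqrt{2})(a-b\sqrt{2})^p\bigr)$, one finds $\nabla f$ vanishes modulo $q$ only when $a \equiv b \equiv 0$, which is never a solution. Hensel then lifts each of your $q$ solutions modulo $q$ to exactly $q^{s-1}$ solutions modulo $q^s$. This is the same lifting step you already invoke in Cases 2 and 3, and it restores the argument without altering its spirit. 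Your Case 2 plan (counting $p$-th powers on the line via a character sum and the substitution $v = \alpha u$) is plausible but remains a sketch; the paper's route there---computing the number of $\F_q$-roots of $f_{1,p}$ directly from the formula \eqref{eq503} and showing it is $0$ or $p$ according to whether $1+\sqrt{2} \in (\F_{q^2}^\times)^p$---is more direct and avoids any character-sum estimate.
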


\begin{proof}

We first find the solutions mod $p$. In this case, \eqref{eq4} reduces to
\begin{align*}
a^p + 2^{\frac{p-1}{2}}b^p \equiv  a + 2^{\frac{p-1}{2}} b \equiv 1 \pmod{p},
\end{align*}
and clearly there is a unique value of $b$ that solves this congruence for every value of $a$. Thus, there are $p$ solutions mod $p$.

\begin{lemma}\label{thm:solutions mod n nondegenerate}
    Assume $q \not\in \{p, 2\}$ and $q \not\equiv 1 \pmod{p}$. Let $n = q^s$. Let $f(x) = f_{1,p}(x)$ be the polynomial corresponding to the Thue equation \eqref{eq4}. Then the number of solutions to \eqref{eq4} mod $n$ is $q^s + (1-r) q^{s-1}$, where $r$ is the number of roots of $f$ mod $q$.
\end{lemma}
\begin{proof}
We examine the values of the polynomial in \eqref{eq4} when $(a, b)$ ranges over elements of the projective line $\proj^1(\Z/n\Z)$. Thus, we say that $(a, b)$ is equivalent to $(ua, ub)$ for $u \in (\Z/n\Z)^\times$. To define the projective line over $\Z/n\Z$, we need $a$ and $b$ to additively generate all of $\Z/n\Z$; this is equivalent to the condition that $q$ does not divide both $a$ and $b$. However, if both $a$ and $b$ are divisible by $q$, then clearly \eqref{eq4} cannot be satisfied. Now, fix an equivalence class, and let $(a, b)$ range over all the representatives of the class. The values of
\begin{align}
\sum_{k=0}^p \binom{p}{k} a^{p-k} b^k 2^{\floor{\frac{k}{2}}} \pmod{n} \label{expr1}
\end{align}
then differ by a $p$th power of an arbitrary unit, since the polynomial in \eqref{expr1} is homogeneous in $a$ and $b$ of degree $p$. Since we are assuming $q \not\equiv 0, 1 \pmod{p}$, the map $u \to u^p$ is bijective on $(\Z/n\Z)^\times$. Thus, there is precisely one solution in the class of $(a, b)$ assuming \eqref{expr1} is invertible.

We now investigate when the sum is not invertible, or equivalently, when \eqref{expr1} is $0$ mod $q$. If $q \mid a$, then \eqref{expr1} is $\equiv b^p 2^{\frac{p-1}{2}}$ mod $q$, which is not $0$ modulo $q$ since in this case $b$ must be invertible and $q \neq 2$. Similarly, if $q \mid b$, then \eqref{expr1} is $\equiv a^p \pmod{q}$, which is not $0$ modulo $q$. Therefore, the only classes for which \eqref{expr1} could not be invertible are those for which both $a$ and $b$ are invertible. In this case, we set $c = ab^{-1}$, which is well-defined on $\proj^1(\Z/n\Z)$. Factoring out $b^p$ in \eqref{expr1}, we must have that $c$ is a root of $f(x)$ modulo $q$. For any class for which $c$ is a root of $f(x)$, there are no solutions, but otherwise, there is a unique solution. The number of $(a, b)$ equivalence classes is $q^s + q^{s-1}$, and if there are $r$ roots of $f$ modulo $q$, then there are $rq^{s-1}$ classes for which $c$ is a root, so there are $q^s + (1-r)q^{s-1}$ solutions total.
\end{proof}

Assume the hypotheses of Lemma \ref{thm:solutions mod n nondegenerate}. We now show that under the additional assumption that $q \not\equiv -1 \pmod{p}$ or $\left(\frac{2}{q}\right) = 1$, then $r = 1$. In this case, if $\F$ is the field $\F_q(\sqrt{2})$, then $p \nmid (\abs{\F}-1)$, so the map $x \mapsto x^p$ is bijective on $\F$. Therefore, we can uniquely take $p$th roots in $\F$. Also, fixing a choice of $\sqrt{2} \in \F$, we recall $f$ can be written as
\begin{align*}
f(x) &= \frac{1}{2\sqrt{2}}\left((1+\sqrt{2})(x+\sqrt{2})^p-(1-\sqrt{2})(x-\sqrt{2})^p\right),
\end{align*}
and using the same manipulations as those leading to \eqref{eq501}, we find that there is a unique root
\begin{align*}
\theta &= -\sqrt{2} \frac{(\sqrt{2}+1)^{1/p} - (\sqrt{2}-1)^{1/p}}{(\sqrt{2}+1)^{1/p}+(\sqrt{2}-1)^{1/p}}.
\end{align*}
Now, if $\left(\frac{2}{q}\right) = 1$, then we are done since we have shown $f$ has a unique root in $\F_q$. If $\left(\frac{2}{q}\right) = -1$, then we must show that $\theta$ lies in $\F_q$. But since the coefficients of $f$ are in $\F_q$, any Galois conjugate of $\theta$ must also be a root of $f$. Since there is only one root, $\theta$ is fixed under $\gal(\F/\F_q)$, hence must lie in $\F_q$. Therefore, there is exactly one root in $\F_q$, so $r = 1$. Thus, we have proven (1).

Now, if $q \equiv -1 \pmod{p}$ and $\left(\frac{2}{q}\right) = -1$, the hypotheses of Lemma \ref{thm:solutions mod n nondegenerate} are still satisfied, but we are guaranteed neither existence nor uniqueness of $p$th roots in $\F = \F_q(\sqrt{2})$. In fact, since $\F^\times \cong \Z/(q^2-1)\Z$, if a $p$th root of an element of $F^\times$ exists, then the number of $p$th roots is $p$. Therefore, the manipulations of \eqref{eq501} either give $0$ or $p$ roots of the form
\begin{align*}
\sqrt{2} - \frac{2\sqrt{2}}{\sqrt[p]{(1+\sqrt{2})^2}+1}.
\end{align*}
Since $2$ is coprime to $p$, $(1+\sqrt{2})^2$ is a $p$th power in $\F$ if and only if $1+\sqrt{2}$ is a $p$th power in $\F$ (since if $(1+\sqrt{2})^2 = c^p$, then $\left((1+\sqrt{2})c^{-\frac{p-1}{2}}\right)^p = 1+\sqrt{2}$). If $1+\sqrt{2}$ has a $p$th root $x+y\sqrt{2}$, then since $(1+\sqrt{2})(1-\sqrt{2}) = -1$, $(x+y\sqrt{2})(x-y\sqrt{2})$ is $-1$ times a $p$th root of unity. But this $p$th root of unity is fixed under the nontrivial automorphism $\sigma$ that sends $\sqrt{2}$ to $-\sqrt{2}$, so it must be in $\F_q$, and since $p \nmid (q-1)$ it must be $1$. Therefore, $(x+y\sqrt{2})^{-1} = -x+y\sqrt{2}$. Then if $\zeta$ is a primitive $p$th root of unity in $\F$, the roots of $f$ are of the form
\begin{align*}
\rho_i &= \sqrt{2} - \frac{2\sqrt{2}}{(x+y\sqrt{2})^2\zeta^i + 1}= \sqrt{2} \frac{(x+y\sqrt{2})^2 \zeta^i - 1}{(x+y\sqrt{2})^2 \zeta^i + 1}= \sqrt{2} \frac{(x+y\sqrt{2})\zeta^i + x-y \sqrt{2}}{(x+y\sqrt{2})\zeta^i - x+y \sqrt{2}}.
\end{align*}
Now, $\zeta  \sigma(\zeta)$ is a $p$th root of unity in $\F_q$, so $\sigma(\zeta) = \zeta^{-1}$. Therefore,
\begin{align*}
\sigma(\rho_i) &= -\sqrt{2} \frac{(x-y\sqrt{2})\zeta^{-i}+x+y\sqrt{2}}{(x-y\sqrt{2})\zeta^{-i} - x-y\sqrt{2}}= \sqrt{2} \frac{x+y\sqrt{2}+(x-y\sqrt{2})\zeta^{-i}}{x+y\sqrt{2}-(x-y\sqrt{2})\zeta^{-i}}= \rho_i.
\end{align*}
Therefore, $\rho_i \in \F_q$. Therefore, either $1+\sqrt{2}$ is not a $p$th power in $\F$ and $r = 0$, or $1+\sqrt{2}$ is a $p$th power in $\F$ and $r = p$. Thus, we have proven (2).

Now suppose $q = 2$. Thus, we are looking at solutions to
\begin{align*}
\sum_{k=0}^p \binom{p}{k} a^{p-k} b^k 2^{\floor{\frac{k}{2}}} = 1 \pmod{2^s}.
\end{align*}
Again, not both $a$ and $b$ are even, so we look at equivalence classes in $\proj^1(\Z/2^s\Z)$. Each class has a unique solution if the sum is invertible and no solutions otherwise. If $b$ is odd, then factoring out a power of $b^p$ and letting $c = a b^{-1}$, the sum is equal to
\begin{align*}
\sum_{k=0}^p \binom{p}{k} c^{p-k} 2^{\floor{\frac{k}{2}}} &\equiv c^p + p c^{p-1} = c^{p-1}(c+p) \pmod{2},
\end{align*}
up to a $p$th power of a unit. Since $p$ is odd, this is $0$ regardless of what $c$ is. Therefore, there are no solutions for $b$ odd. Otherwise, we must have $a$ odd, $b$ even. Then factoring out a power of $a^p$ and letting $c = ba^{-1} \equiv 0 \pmod{2}$, the sum is equal to
\begin{align*}
\sum_{k=0}^p \binom{p}{k} c^k 2^{\floor{\frac{k}{2}}} \equiv 1 \pmod{2},
\end{align*}
up to a $p$th power of a unit. Hence, the sum is invertible. Therefore, the equivalence classes with solutions are precisely those of the form $(1, 2k)$, each with a unique solution. Hence, there are $2^{s-1}$ solutions mod $2^s$, and we have proven (3).

Lastly, suppose $q = p$ and $s \geq 2$, or $q \equiv 1 \pmod{p}$. In this case, for each class in $\proj^1(\Z/n\Z)$ represented by $(a, b)$, since the values of the sum
\begin{align*}
\sum_{k=0}^p \binom{p}{k} a^{p-k} b^k 2^{\floor{\frac{k}{2}}}
\end{align*}
differ by $p$th powers of units, there will be $p$ solutions if the sum is a unit $p$th power mod $q^s$, and $0$ solutions otherwise. Hence, the number of solutions is $pr$ where $r$ is the number of equivalence classes in $\proj^1(\Z/n\Z)$ such that the sum is a unit $p$th power. Equivalently, $r$ is the number of values of $c$ mod $q^s$ such that $\sum_{k=0}^p \binom{p}{k} c^{p-k} 2^{\floor{\frac{k}{2}}}$ is a unit $p$th power plus the number of values of $c$ mod $q^{s-1}$ such that $\sum_{k=0}^p \binom{p}{k} (cq)^k 2^{\floor{\frac{k}{2}}}$ is a unit $p$th power mod $q^s$. It remains to show that $r = q^{s-1} d$ for a positive integer $d$ depending only on $p$ and $q$.

In the case that $q = p, s \geq 2$, note that $x$ is a unit $p$th power mod $p^s$ if and only if $x$ is a unit $p$th power mod $p^2$. This can be proven by a straightforward counting argument, since there are $p^{s-2}(p-1)$ unit $p$th powers mod $p^s$, each maps to a unit $p$th power mod $p^2$, there are $p-1$ options to which to map, and at most $p^{s-2}$ can map to each one. Similarly, when $q \equiv 1 \pmod{p}$, $x$ is a unit $p$th power mod $q^s$ if and only if $x$ is a unit $p$th power mod $q$. This also follows from a counting argument, since there are $q^{s-1}(q-1)/p$ unit $p$th powers mod $q^s$, each one maps to a unit $p$th power mod $q$, there are $(q-1)/p$ options to map to, and at most $q^{s-1}$ can map to each one. (One could also use Hensel lifting to obtain these results.)

For the case $q = p, s \geq 2$, note that $\sum_{k=0}^p \binom{p}{k} (cp)^k 2^{\floor{\frac{k}{2}}} \equiv 1 \pmod{p^2}$ is always a unit $p$th power. This contributes $p^{s-1}$ to $r$. Next, we note that the value of $\sum_{k=0}^p \binom{p}{k} c^{p-k} 2^{\floor{\frac{k}{2}}}$ mod $p^2$ only depends on the value of $c\equiv c_0$ mod $p$. For $1\leq k \leq p-1$, we have $p \mid {p \choose k}$ and $c^{p-k} \equiv c_0^{p-k} \pmod{p}$. The $k=p$ term does not depend on $c$ at all, and for the $k=0$ term, $(c_0+jp)^p = \sum_{k=0}^p \binom{p}{k} c_0^k (jp)^{p-k} \equiv c_0^p \pmod{p^2}$. Each value of $c_0$ mod $p$ for which $\sum_{k=0}^p \binom{p}{k} c^{p-k} 2^{\floor{\frac{k}{2}}}$ is a unit $p$th power mod $p^2$ has $p^{s-1}$ lifts mod $p^s$. Therefore, if we let $d$ be $1$ plus the number of such values of $c_0$, we have $r = p^{s-1} d$.

For the case $q \equiv 1 \pmod{p}$, note that $\sum_{k=0}^p \binom{p}{k} (cq)^k 2^{\floor{\frac{k}{2}}} \equiv 1 \pmod{q}$ is always a unit $p$th power. This contributes $q^{s-1}$ to $r$. Each value of $c$ mod $q$ for which $\sum_{k=0}^p \binom{p}{k} c^{p-k} 2^{\floor{\frac{k}{2}}}$ is a unit $p$th power has $q^{s-1}$ lifts mod $q^s$. Therefore, if we let $d$ be $1$ plus the number of such values of $c$, we have $r = q^{s-1} d$.
\end{proof}

\section*{Acknowledgments}

We thank Nick Andersen for valuable conversations and computations regarding modular forms and eta-quotients. We thank Maurice Mignotte and Samir Siksek for helpful correspondence regarding their work on the Lebesgue--Nagell equation studied in this paper. We are grateful to Michael Bennett for bringing the reference \cite{BPV2024} to our attention, which allowed us to strengthen some of the results in this paper.

The second author is partially supported by the National Science Foundation (DMS-2418328) and the Simons Foundation (MPS-TSM-00007959).

\bibliographystyle{plain}
\bibliography{refs}

\end{document}